\title{Flows on Gentle Algebras}
\author[JB]{Jonah Berggren}
\address{Department of Mathematics, University of Kentucky, Lexington, KY, United States}
\email{jrberggren@uky.edu}
\date{}
\newtheorem{thm}{Theorem}[section]
\newtheorem{prop}[thm]{Proposition}
\newtheorem{lemma}[thm]{Lemma}
\newtheorem{cor}[thm]{Corollary}
\newtheorem{thmIntro}{Theorem}    
\theoremstyle{definition}
\newtheorem{defn}[thm]{Definition}
\newtheorem{remk}[thm]{Remark}
\newtheorem{example}[thm]{Example}
\newcommand{\spiral}{\scalerel*{\tikz[xscale=-1]{\useasboundingbox(-.2,-.4)rectangle(.2,.3);
    \draw [line width=1.5, -{[length=2]}, line cap=round, domain=10:685 ,variable=\t, smooth, samples=30] plot (\t: 0.00047*\t);}}{(}
}
\newcommand{\spircle}{\scalerel*{\tikz[xscale=-1]{\useasboundingbox(-.2,-.4)rectangle(.2,.3);
    \draw [line width=1.5, -{[length=2]}, line cap=round, domain=10:685 ,variable=\t, smooth, samples=30] plot (\t: 415*0.00047);}}{(}
}
\newcommand{\C}{\mathcal C}
\newcommand{\fac}{\textup{Fac}\ }
\renewcommand{\P}{\mathcal P}
\renewcommand{\k}{k}
\newcommand{\e}{\varepsilon}
\newcommand{\f}{\zeta}
\newcommand{\inn}{\textup{in}}
\newcommand{\out}{\textup{out}}
\renewcommand{\hom}{\textup{Hom}}
\renewcommand{\epsilon}{\varepsilon}
\newcommand{\sttilt}{\textup{s}\tau\textup{-tilt}}
\newcommand{\F}{\mathcal F}
\newcommand{\tL}{{\tilde\Lambda}}
\renewcommand{\L}{{\Lambda}}
\newcommand{\I}{\mathcal{I}}
\newcommand{\g}{\mathfrak{g}}
\renewcommand{\for}{\textup{For}}
\newcommand{\bac}{\textup{Back}}
\newcommand{\xx}{\textbf{x}}
\newcommand{\yy}{\textbf{y}}
\newcommand{\B}{\mathcal{B}}
\newcommand{\R}{\mathcal{R}}
\newcommand{\GCD}{\textup{GCD}}
\renewcommand{\g}{\textbf{g}}
\renewcommand{\epsilon}{\varepsilon}
\newcommand{\bK}{\bar{K}}
\newcommand{\K}{K}
\renewcommand{\B}{\mathcal{B}}
\renewcommand{\int}{\circ}
\newcommand{\D}{\Delta}
\newcommand{\Sbs}{\mathcal S}
\newcommand{\Tt}{\mathcal T}
\renewcommand{\R}{R}
\renewcommand{\emph}{\textbf}
\begin{document}

\maketitle

\begin{abstract}
	The space of unit flows on a directed acyclic graph (DAG) is known to admit regular unimodular triangulations induced by framings of the DAG.
	Amply framed DAGs and their triangulated flow polytopes have recently been connected with the representation theory of certain gentle algebras. We expand on this connection by defining a flow on the fringed quiver of an arbitrary gentle algebra. We call the space of unit flows its turbulence polyhedron. We show that support $\tau$-tilting modules of a gentle algebra index a unimodular triangulation of its turbulence polyhedron. In the representation-infinite case, this triangulation is not complete and we give two different larger polyhedral dissections given by adding lower-dimensional walls to the picture.
	The turbulence polyhedron has a quotient map to what we define as the g-polyhedron lying in the ambient space of the g-vector fan, proving that gentle algebras are g-convex. Moreover, the images of our two types of walls under this quotient map provide two different interpretations for the complement of the g-vector fan of a gentle algebra.
\end{abstract}

\setcounter{tocdepth}{1}
\tableofcontents

\section{Introduction}

We split the introduction into three parts. First, we give a brief overview of the main results of the paper. We then go into more detail about our results on turbulence polyhedra and g-polyhedra.

\subsection{Overview of the main ideas}

Flow polytopes, which model the space of unit \emph{flows} on a directed acyclic graph (DAG), are a fundamental object of combinatorial optimization and have relations to many fields such as representation theory~\cite{WV}, Grothendieck polynomials~\cite{LMD}, and algebraic geometry~\cite{EM}.
Danilov, Karzanov, and Koshevoy~\cite{DKK} introduced \emph{framings} on DAGs and defined a notion of pairwise compatibility on routes (paths from source to sink). Using the correspondence between routes on a DAG and vertices of the flow polytope, the complex of \emph{cliques}, or sets of pairwise compatible routes, of a framed DAG $(G,\mathfrak{F})$ serves as a combinatorial model for a regular unimodular \emph{DKK triangulation} of the associated flow polytope. Many important classes of polytopes and their canonical triangulations appear in this way, such as associahedra, generalized permutahedra~\cite{MD}, $s$-permutahedra~\cite{Y2}, and many order polytopes~\cite{LMD}.

Introduced in~\cite{AIR}, $\tau$-tilting theory has proven to be a successful generalization of tilting and cluster-tilting theory~\cite{BMRRT,BB,HR,BMR}.
Gentle algebras are a rich source of examples in the representation theory of algebras which have received much study since their introduction in the 1980's~\cite{AH81,AS87,BR87}.
It was shown in~\cite{PPP,BDMTY} that given a gentle algebra $\L$, one may construct a \emph{fringed quiver} $\tL$ of $\L$ by adding some extra ``fringe'' vertices and arrows; then the \emph{clique complex} (or non-kissing complex) of routes (certain walks between fringe vertices) on $\tL$ describes the $\tau$-tilting theory of $\L$.
Recently, a connection between flow polytopes and the $\tau$-tilting theory of gentle algebras was established in showing that certain amply framed DAGs $(G,\mathfrak{F})$ give rise to a fringed quiver $\tL(G,\mathfrak{F})$ whose clique complex agrees with the clique complex of $(G,\mathfrak{F})$~\cite{WIWT}.

The aim of the current article is to broaden this connection between flow polytopes and gentle algebras.
To this end, we define a (nonnegative) \emph{unit flow} on an arbitrary fringed quiver $\tL$. The \emph{turbulence polyhedron} $\F_1(\tL)$ of $\tL$ is the space of unit flows on $\tL$. 

When the gentle algebra $\tL$ is \emph{paired} and \emph{representation-finite}, it may be associated to an amply framed DAG $(G,\mathfrak{F})$ as in~\cite{WIWT} and the turbulence polyhedron $\F_1(\tL)$ is equal to the flow polytope $\F_1(G)$.
Dropping the assumption of representation-finiteness corresponds on the level of framed DAGs to losing the assumption of acyclicity, and results in unbounded turbulence polyhedra. Dropping the assumption of pairedness of the gentle algebra preserves the boundedness of the turbulence polyhedron, and corresponds in some sense to losing the assumption of ``directedness'' of the framed directed acyclic graph. Hence, gentle algebras generalize amply framed DAGs by relaxing the conditions of acyclicity and directedness. The fringed quiver of Figure~\ref{KRONINTRO} is paired but not representation-finite, while the fringed quiver of Figure~\ref{GEMINTRO} is representation-finite but not paired.
{We remark that the concurrent work~\cite{UQAM} studies a different generalization of amply framed DAGs to allow cycles, obtaining triangulation results analogous to our own (see Remark~\ref{remk:UQAM}).}

A \emph{string} is a walk across the fringe quiver $\tL$ which does not cross any relations of $\tL$. We say that a \emph{route} of $\tL$ is a maximal string starting and ending at fringe vertices, and a \emph{band} is a string appearing as a repeatable cycle. We refer to routes and bands collectively as \emph{trails}.
The indicator vector $\I(p)$ of a trail $p$ is always a flow, and it is unit if $p$ is a route.
We classify certain routes and bands as \emph{elementary} in order to show that the vertices of $\F_1(\tL)$ are precisely the indicator vectors of {elementary} routes, and that the indicator vectors of {elementary} bands form a minimal generating set for the recession cone of $\F_1(\tL)$.

Next, we obtain triangulation and dissection results for the turbulence polyhedron $\F_1(\tL)$ of a fringed quiver $\tL$.
We show that the clique complex of~\cite{PPP,BDMTY} of routes on the fringed quiver $\tL$ models through indicator vectors a unimodular \emph{clique triangulation} of $\F_1(\tL)$. 
Note that we require our triangulations to cover only a dense subset of $\F_1(\tL)$, and indeed when $\tL$ is representation-infinite there will be a nonempty positive-codimension space of flows missed by the clique triangulation of $\F_1(\tL)$.
In response to this, we give two different larger dissections by adding some positive-codimension cells to the clique triangulation of $\F_1(\tL)$.

First, we extend the compatibility condition on routes defining the clique complex to a compatibility condition on all trails, including both routes and bands. A \emph{bundle} is a maximal collection of pairwise compatible trails. We show that the complex of bundles on $\tL$ induces a \emph{bundle subdivision} of $\F_1(\tL)$. All maximal cliques are also maximal bundles, meaning that the clique triangulation is contained in the bundle subdivision. Maximal bundles containing bands give additional lower-dimensional walls which provide information about the complement of the clique triangulation. While the bundle subdivision may still miss some irrational flows of $\F_1(\tL)$, we show that all rational flows of $\F_1(\tL)$ are contained in a cell of the bundle subdivision.

Finally, we provide a \emph{vortex dissection} of $\F_1(\tL)$ indexed by what we call \emph{band-stable cliques}. Like the bundle subdivision, the vortex dissection contains the clique triangulation as well as some extra lower-dimensional walls. We use the word dissection rather than subdivision because the intersection of two cells may fail to be a common face. The vortex dissection has the benefit of being complete, meaning that it covers all points of $\F_1(\tL)$.

After proving our results on turbulence polyhedra, we will treat the g-vector fan $\g_{\geq0}(\L)$ of the underlying gentle algebra $\L$ of $\tL$.
The g-vector fan of a finite-dimensional algebra $\L$ was introduced in~\cite{AIR} and is modelled by the support $\tau$-tilting complex of certain extended modules over the gentle algebra $\L$.
Hence, when $\L$ is a gentle algebra, its g-vector fan $\g_{\geq0}(\L)$ is modelled by the (reduced) clique complex of routes on $\tL$~\cite{PPP,BDMTY}. We provide a normalized-volume-preserving quotient map $\phi$ from $\F_{1}(\tL)$ to the \emph{g-polyhedron} $\g_1(\L)$ of $\L$, which lies within the ambient space $\mathbb R^E$ of the g-vector fan $\g_{\geq0}(\L)$. In particular, this shows a version of g-convexity for gentle algebras in the sense of~\cite{AHIKM}.

We then transport our triangulation and dissection results for $\F_1(\tL)$ through the map $\phi$. The image of the clique triangulation of $\F_1(\tL)$ gives a \emph{clique triangulation} for $\g_1(\L)$, which is merely the restriction of the usual representation-theoretic g-vector fan to the g-polyhedron $\g_1(\L)$ (in other words, the cone over the clique triangulation of $\g_1(\L)$ recovers the g-vector fan).
Similarly, we obtain the \emph{bundle subdivision} and \emph{vortex dissection} of $\g_1(\L)$ by adding to the clique triangulation some lower-dimensional walls indexed by maximal bundles and band-stable cliques, respectively.
These walls provide two different interpretations for the complement of the g-vector fan and it would be interesting to connect them with other structures in the representation theory of algebras.

\subsection{Results on turbulence polyhedra}

We now go into more detail on our results about turbulence polyhedra of fringed quivers.

Recall that the vertices of a unit flow polytope are precisely the indicator vectors of its routes. We obtain a similar presentation for turbulence polyhedra.
Generalizing routes of a framed DAG are \emph{routes} of a gentle algebra (maximal strings starting and ending at fringe vertices) as well as \emph{bands} of a gentle algebra (strings appearing as repeatable cycles). We refer to routes and bands collectively as \emph{trails}.
The indicator vector $\I(p)$ of a trail $p$ is always a flow, and it is unit if $p$ is a route.
We define some routes and bands to be \emph{elementary} (see Definitions~\ref{defn:elband} and~\ref{defn:elroute} for the technical definition or Figure~\ref{ELEMENTARYINTRO} for the pictorial summary) to obtain the following presentation of $\F_1(\tL)$:
\begin{thmIntro}[{Theorem~\ref{thm:turb-vert-dir}}]\label{thmZ}
	The map $p\mapsto\I(p)$ bijects elementary routes to vertices of $\F_1(\tL)$. The map $B\mapsto\I(B)$ bijects elementary bands to a minimal generating set for the recession cone of $\F_1(\tL)$.
\end{thmIntro}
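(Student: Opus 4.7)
The plan is to combine standard polyhedral theory---vertices are extreme points and minimal generators of the recession cone are its extreme rays---with a flow-decomposition lemma tailored to fringed quivers. Writing $\F_1(\tL) = \{f \in \mathbb R^E_{\geq 0} : Mf = d\}$ for the matrix $M$ encoding conservation and the unit-flow constraint, the recession cone is $\F_0(\tL) = \{f \in \mathbb R^E_{\geq 0} : Mf = 0\}$, and the two halves of the theorem become a characterization of the extreme points of $\F_1(\tL)$ and of the extreme rays of $\F_0(\tL)$.

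The central technical step is a decomposition lemma: every $f \in \F_1(\tL)$ can be written as
\[ f = \sum_i \lambda_i \I(p_i) + \sum_j \mu_j \I(B_j) \]
with $\lambda_i, \mu_j \geq 0$, $\sum_i \lambda_i = 1$, each $p_i$ a route, and each $B_j$ a band. I would prove this by induction on $|\supp(f)|$. Pick any arrow in $\supp(f)$ and extend it greedily in both directions along positively-weighted arrows, respecting the string condition so that the walk never crosses a relation. The gentle condition forces a compatible continuation at each interior vertex, so the walk either reaches fringe vertices on both sides (producing a route) or first revisits an already-used arrow and closes into a repeatable cycle (producing a band); subtracting a sufficiently small positive multiple of that trail's indicator preserves every flow constraint while strictly shrinking the support, completing the induction.

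Granting the lemma, both statements of the theorem follow quickly. A vertex $v$ of $\F_1(\tL)$ is an extreme point, so its decomposition must collapse to a single route indicator $\I(p)$ with $\lambda = 1$; conversely $\I(p)$ is a vertex iff $p$ is elementary, since Definition~\ref{defn:elroute} is tailored so that a non-elementary $p$ admits a proper convex decomposition into other trail indicators (precluding extremality) while elementary ones do not. Applying the lemma to $f \in \F_0(\tL)$ forces $\sum_i \lambda_i = 0$, eliminating the route terms and exhibiting $\F_0(\tL)$ as the conical hull of band indicators; the analogous extremality analysis applied to Definition~\ref{defn:elband} then cuts this generating set down to indicators of elementary bands and verifies minimality.

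The main obstacle is executing the greedy trail-extraction cleanly and matching the ``elementary'' definitions with extremality. The greedy step uses essentially that the gentle condition permits at most two arrows at each interior vertex and distinguishes allowed from forbidden two-arrow compositions, giving a canonical pairing of incoming and outgoing strings through which positive flow can be routed. The band case is subtler than the route case: bands may wind multiply, share arrows with routes, or sit inside larger cyclic support configurations, so the elementary condition must carefully exclude bands whose indicators decompose further. Confirming that Definition~\ref{defn:elband} captures exactly the right non-decomposability, and that no spurious band indicator appears as an extreme ray, is the delicate technical point of the argument.
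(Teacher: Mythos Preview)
Your overall structure---decompose flows into trail indicators, then use extremality to pin down vertices and extreme rays---is the same as the paper's. But the greedy trail-extraction lemma, as you describe it, does not go through in the fringed-quiver setting, and this is exactly where the paper invests its effort.

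Two concrete problems. First, strings in a gentle algebra may traverse an arrow in either direction, so ``revisits an already-used arrow'' does not cleanly produce a band: the revisit may be in the opposite orientation (a criss-crossed substring), and then there is no cycle to peel off. Your dichotomy ``reaches fringe on both sides or closes into a repeatable cycle'' is the directed-graph picture and fails once $\tL$ is non-paired. Second, even when your walk does produce a route $p$ with $\I(p)$ equal to the vertex, nothing in the greedy construction guarantees $p$ is self-compatible, and the definition of \emph{elementary} (Definition~\ref{defn:elroute}) presupposes self-compatibility. Different routes can share an indicator vector (see Example~\ref{ex:gemstone-vert-dir}, where one representative is self-compatible and another is not), so you would still need to argue that \emph{some} self-compatible representative exists.

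The paper's fix is the flow algorithm of Section~\ref{sec:triang}: the functions $\for_F$ and $\bac_F$ make a canonical choice at every branch point (governed by the countercurrent order), and Corollary~\ref{cor:compat} proves the extracted trails are pairwise compatible, hence in particular self-compatible. This is what lets the paper conclude, in Proposition~\ref{prop:elementary-routes}, that a vertex $F$ satisfies $\bK_F^+=\{p\}$ for a \emph{self-compatible} route $p$, after which the case analysis on elementariness proceeds as you sketch. The same algorithm seeds the band argument in Proposition~\ref{prop:elementary-bands}. Your proposal is not wrong in outline, but the ``canonical pairing of incoming and outgoing strings'' you allude to is doing all the work, and making it precise is essentially reinventing the flow algorithm.
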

\begin{figure}
\centering
	\begin{minipage}[b]{.145\textwidth}
  \centering
	\def\svgscale{.21}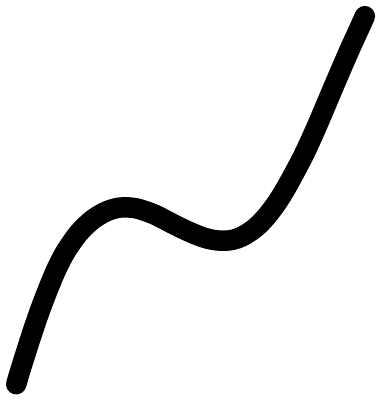

	simple
\end{minipage}
	\begin{minipage}[b]{.145\textwidth}
  \centering
	\def\svgscale{.21}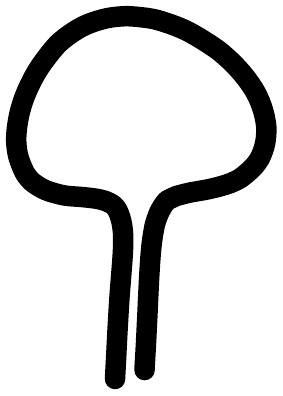

	lollipop
\end{minipage}
	\begin{minipage}[b]{.145\textwidth}
  \centering
	\def\svgscale{.21}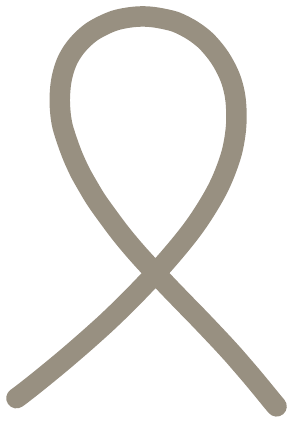

	nonelementary
\end{minipage}
	\begin{minipage}[b]{.145\textwidth}
  \centering
	\def\svgscale{.21}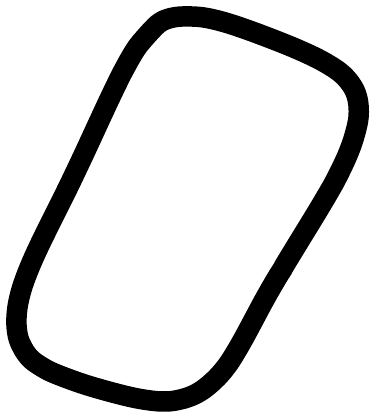

	simple
\end{minipage}
	\begin{minipage}[b]{.145\textwidth}
  \centering
	\def\svgscale{.21}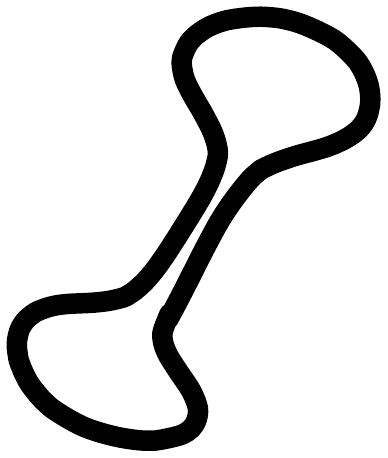

	barbell
\end{minipage}
	\begin{minipage}[b]{.145\textwidth}
  \centering
	\def\svgscale{.21}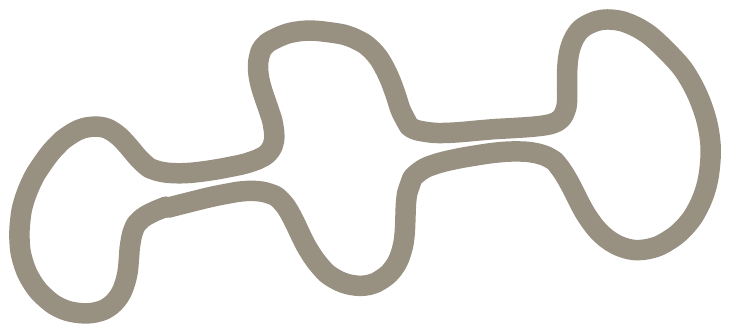

	nonelementary
\end{minipage}
	\caption{Examples of elementary (black) and nonelementary (gray) routes and bands.}
	\label{ELEMENTARYINTRO}
\end{figure}
Note, then, that a turbulence polyhedron is unbounded if and only if its fringed quiver $\tL$ has no bands. This is equivalent to representation-finiteness and hence $\tau$-tilting finiteness of $\L$ by~\cite{Plamondon}.

\begin{example}\label{ex:KRONINTRO0}
	See the Kronecker fringed quiver of Figure~\ref{KRONINTRO}. The three-dimensional turbulence polyhedron is drawn on the top-right; closed points denote vertices and open points denote lattice points which are not vertices.
	There are four elementary routes: the straight routes $\{e_1e_2e_3,f_1f_2f_3\}$ (whose indicator vectors are the top and bottom vertices of the turbulence polyhedron), the orange route $e_1f_1^{-1}$ (giving the orange vertex), and the blue route $e_3^{-1}f_3$ (giving the blue vertex). All other routes are nonelementary. There is only one elementary band $e_2f_2^{-1}$, whose indicator vector generates the ray moving to the right of the page.
\end{example}

\begin{figure}
	\centering
	\def\svgscale{.21}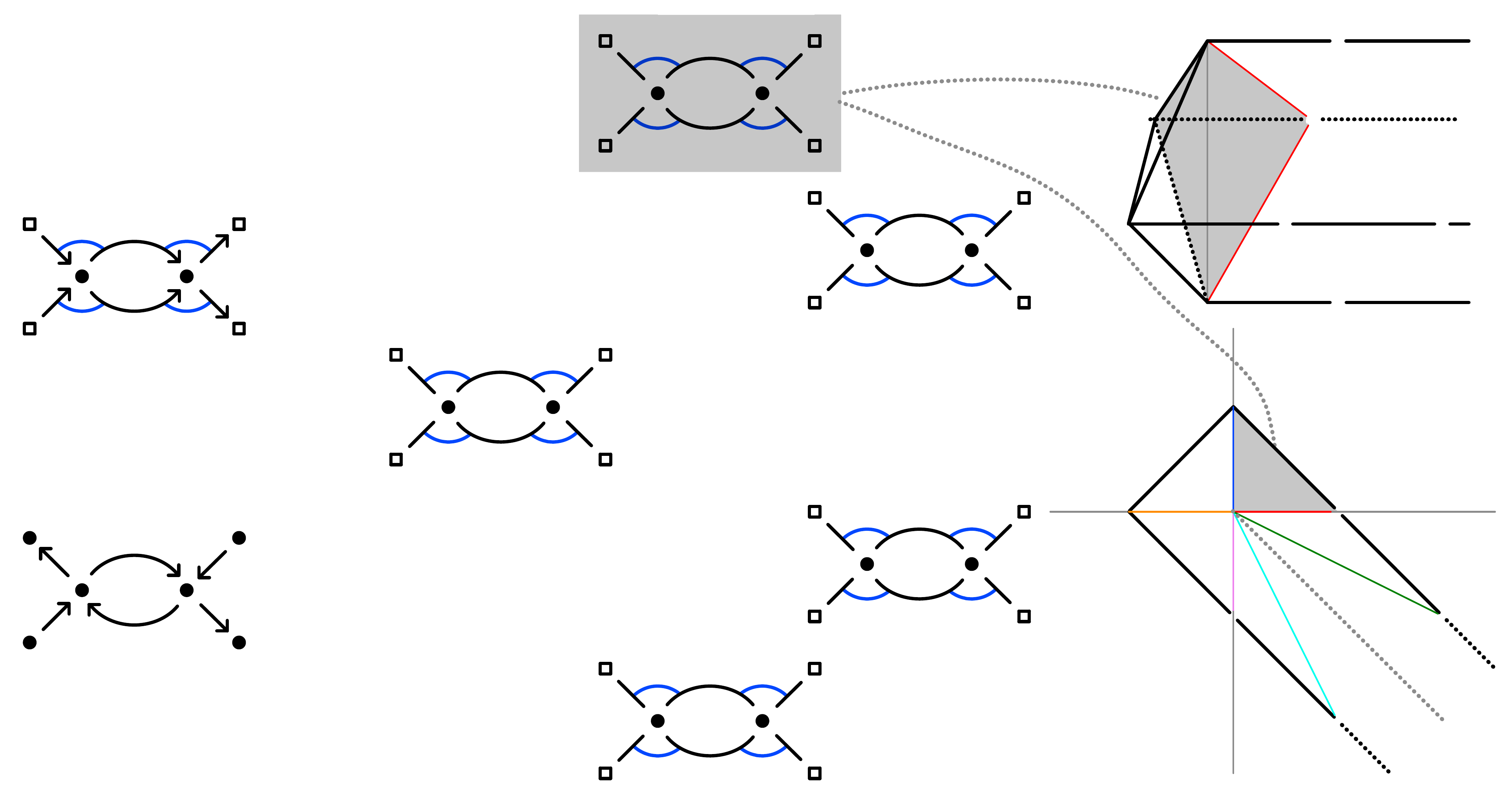
	\caption{The Kronecker fringed quiver with five maximal cliques (drawn without straight routes). On the right is its three-dimensional turbulence polyhedron and two-dimensional g-polyhedron.}
	\label{KRONINTRO}
\end{figure}

We now phrase three generalizations of the theory of DKK triangulations to the setting of gentle algebras and their turbulence polyhedra.
Routes come equipped with a notion of pairwise compatibility corresponding to $\tau$-rigidity of associated string modules over $\L$; a \emph{clique} is a collection of pairwise compatible routes.
If $\K$ is a clique of $\tL$, then a unit \emph{($\K$-)clique combination} is a convex combination of indicator vectors of routes of $\K$. 
It is \emph{positive} if routes of $\K$ appear in the combination with positive coefficient.
If $\K$ is a clique, then the \emph{clique simplex $\D_1(\K)$} is the space of unit $\K$-clique combinations.
\begin{thmIntro}[{Theorem~\ref{thm:comb}, Lemma~\ref{prop:b}, Corollary~\ref{cor:clique-subd}}]\label{ITHM:CLIQUE}
	Any flow has at most one representation as a clique combination.
	A dense subset of flows $F\in\F_{\geq0}(\tL)$ may be obtained as a clique combination. If $F$ is an integer flow, then its clique combination has integral coefficients.
	Consequently, the \emph{clique triangulation}
	\[\Tt(\F_1(\tL)):=\{\D_1(\K)\ :\ \K\text{ is a maximal clique of }\tL\}\]
	is a unimodular triangulation of $\F_1(\tL)$. 
\end{thmIntro}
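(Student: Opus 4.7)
The plan is to establish the three assertions in sequence: unique decomposability of any flow as a clique combination, existence of such a decomposition on a dense subset (with integer coefficients whenever $F$ is integral), and then to assemble these into the unimodular triangulation statement.

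For uniqueness, I would show that for every maximal clique $\K$, the indicator vectors $\{\I(r):r\in\K\}$ are linearly independent, which immediately forces uniqueness of the coefficients given $\K$. The cleanest argument I see is local and iterative: using the non-kissing/compatibility condition on the routes in $\K$, I would show that at each internal arrow $a$ the value $F(a)$ determines a partial sum of coefficients $\lambda_r$ over routes of $\K$ passing through $a$, and that by reading off these values along an appropriate sequence of arrows one can solve for each $\lambda_r$ individually. To rule out that two \emph{different} maximal cliques $\K,\K'$ produce the same flow $F$, I would argue by contradiction: any route $r\in\K\setminus\K'$ uses some arrow (or arrow pair across a vertex) that distinguishes it from the routes of $\K'$, forcing a coefficient $\lambda_r>0$ on the $\K$-side that cannot be matched on the $\K'$-side.

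For existence on a dense set, I would use a greedy peeling argument. Given a flow $F$ supported in the dense subset where no band contribution is present, find a route $r$ with $\I(r)\le F$ pointwise on arrows, set $\lambda_r=\min_{a\in r}F(a)>0$, and replace $F$ by $F-\lambda_r\I(r)$. The two main claims are: (a) such a route $r$ always exists when $F$ has nonzero support on an internal arrow, which follows from a standard fringe-to-fringe extension argument on $\tL$, with gentle relations dictating the permitted turns at each internal vertex; and (b) the collection of routes extracted by consistent greedy choices (for instance, always rotating toward a fixed side at each internal vertex relative to the kissing orientation) is pairwise non-kissing and hence a clique. Integrality is then immediate: when $F$ is integer-valued each $\lambda_r\in\mathbb Z_{>0}$, the support of $F-\lambda_r\I(r)$ is strictly smaller than that of $F$, and the algorithm terminates in finitely many steps with an integer clique combination.

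With uniqueness and existence in place, the triangulation and unimodularity statements follow essentially formally. Linear independence of the vertex set makes each $\D_1(\K)$ a geometric simplex of the correct dimension; uniqueness implies that the relative interiors of distinct $\D_1(\K)$ are disjoint and that overlaps between their closures occur along common subclique faces indexed by $\K\cap\K'$; the existence statement gives that $\bigcup_\K\D_1(\K)$ is dense in $\F_1(\tL)$, and since each $\D_1(\K)$ is closed this assembles into a triangulation of a dense subset of $\F_1(\tL)$; finally, the integral-decomposition statement shows that every lattice point of $\D_1(\K)$ is a nonnegative integer combination of vertices summing to one, so each maximal clique simplex has normalized volume one, proving unimodularity. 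I expect the main obstacle to be claim (b) in the peeling step: controlling the greedy choices so that every pair of extracted routes satisfies the non-kissing condition, which should reduce to a careful local analysis at each internal vertex of $\tL$ using the dichotomy imposed by the gentle relations.
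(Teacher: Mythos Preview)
Your outline has the right architecture, but both hard steps contain genuine gaps. For existence and compatibility (your claim (b)), the paper does not peel greedily: it builds a \emph{flow algorithm} in which forward/backward step maps $\for_F,\bac_F$ act on arrow-flows $(\alpha^\e,C)$ with $C\in[0,F(\alpha)]$, the branch taken at each internal vertex depending on the precise value of $C$ relative to the flow on the neighboring arrows; the coefficient $a^F_p$ is then the length of the interval of $C$'s producing the marked route $p$. Compatibility of all resulting strings is proved via a countercurrent order, by showing that $C\le D$ implies $p^F_{(\alpha^\e,C)}\preceq_{\alpha^\e}p^F_{(\alpha^\e,D)}$. Your heuristic ``always rotate toward a fixed side'' is too coarse---the correct turn at a vertex depends on how much flow is still available---and in the representation-infinite case a naive fringe-to-fringe walk need not terminate at all (the paper needs rationality of $F$ to guarantee that each $p^F_{(\alpha^\e,C)}$ is a trail rather than an irrational string). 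Uniqueness is handled by the same machinery: the paper shows that any positive bundle combination for $F$ must coincide with the flow-algorithm output $\sum_{p\in\bK_F^+}a^F_p\I(p)$, which simultaneously gives within-clique linear independence and rules out coincidences across cliques. Your cross-clique claim that some $r\in\K\setminus\K'$ uses a distinguishing arrow is not evident, since distinct routes of $\tL$ can share supports and even share indicator vectors.

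Density is a separate gap: you assume it, but it is not a byproduct of the existence argument. The paper obtains it indirectly, by constructing a normalized-volume-preserving linear map $\phi$ from $\F_1(\tL)$ onto the g-polyhedron $\g_1(\L)$ and pulling back the known density of the g-vector fan of a gentle algebra in $\mathbb R^{V_{\text{int}}}$. The paper remarks that a direct proof within $\F_1(\tL)$ is possible but does not supply one, and neither does your outline.
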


We note that we require our triangulations and subdivisions to cover only a dense subset of a polyhedron. When $\tL$ is representation-finite, the turbulence polyhedron $\F_1(\tL)$ is bounded and the clique triangulation is a unimodular triangulation covering all points of $\F_1(\tL)$ (see Figure~\ref{GEMINTRO}). When $\tL$ is paired and representation-finite, the clique triangulation of $\F_1(\tL)$ is the same as the DKK triangulation of the flow polytope of an associated amply framed DAG as in~\cite{WIWT}. Note that DKK triangulations are known to be regular, though we do not show regularity in this paper.

The results of this introduction have analogs for the cone of nonnegative flows $\F_{\geq0}(\tL)$. For example, we obtain a clique triangulation of $\F_{\geq0}(\tL)$ into clique cones, which are the cones generated by the clique simplices. For brevity, we phrase results only for $\F_1(\tL)$.

Outside of clique simplices, we will give two larger polyhedral dissection of $\F_1(\tL)$ by adding extra lower-dimensional cells to the maximal clique simplices.
The first comes from extending the notion of compatibility of routes to a notion of compatibility on all trails, including bands.
A \emph{bundle} is a (necessarily finite) collection of pairwise compatible trails. All cliques are then bundles; in fact, all maximal cliques are maximal bundles.
If $\bK$ is a bundle of $\tL$ and $\K$ is the clique given by routes of $\bK$, then we say that a \emph{nonnegative ($\bK$-)bundle combination} is a $\K$-clique combination plus a nonnegative sum of indicator vectors of bands of $\bK$.
The unit \emph{bundle space $\D_1(\K)$} is the space of unit $\bK$-bundle combinations.
If $\bK$ is a clique, then the bundle space of $\bK$ is equal to the clique simplex of $\K$ and is full-dimensional in $\F_1(\tL)$. If $\bK$ contains at least one band, $\D_1(\K)$ is not full-dimensional in $\F_1(\tL)$ and we call it a \emph{bundle wall}.

\begin{thmIntro}[{Theorem~\ref{thm:comb}, Lemma~\ref{prop:b}, Corollary~\ref{cor:bundle-subd}}]\label{ITHM:BRIQUE}
	Any flow has at most one representation as a bundle combination.
	A dense subset of flows $F\in\F_{\geq0}(\tL)$, including all rational flows, may be obtained as a bundle combination.
	Consequently, the \emph{bundle subdivision}
	\[\Sbs^{\spircle}(\F_1(\tL)):=\{\D_1(\bK)\ :\ \bK\text{ is a maximal bundle of }\tL\}\]
	formed by adding the bundle walls to the clique triangulation is a polyhedral subdivision of $\F_1(\tL)$ which covers every rational point of $\tL$.
\end{thmIntro}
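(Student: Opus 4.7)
The plan is to deduce the three assertions in the stated order, leaning respectively on Theorem~\ref{thm:comb} (uniqueness), Lemma~\ref{prop:b} (existence on a dense set including the rationals), and a short gluing argument (the subdivision conclusion).

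\emph{Uniqueness.} Given any $F\in\F_1(\tL)$ and any bundle $\bK$, I would show that the coefficients of a representation $F=\sum_{p\in\bK} c_p\I(p)$ are forced by $F$ alone, by induction on $|\bK|$. The pairwise non-kissing compatibility of trails in $\bK$ induces, at every arrow $a$ of $\tL$, a total order on the trails of $\bK$ passing through $a$ (using the framing at the endpoints of $a$). Choose an arrow $a$ in the support of $F$ which is extremal in a suitable global order (for instance, the leftmost arrow at a fringe vertex used by any trail of $\bK$); then one can isolate the trail $p$ of $\bK$ sitting at the top of the local order at $a$, and read off $c_p$ as the value of $F$ on some arrow used by $p$ but, thanks to non-kissing, used by no trail of $\bK$ still above $p$ in the peeling order. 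Subtracting $c_p\I(p)$ from $F$ produces a flow decomposable over the smaller bundle $\bK\setminus\{p\}$, and the induction closes.

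\emph{Existence on a dense set.} For existence I would run the reverse procedure. Given $F\in\F_{\geq 0}(\tL)$, pick a trail $p$ to peel off first, let $c_p$ be the largest scalar with $F-c_p\I(p)\geq 0$ coordinatewise, and iterate. Each subtraction preserves the flow property, because indicator vectors of trails are themselves flows by Theorem~\ref{thmZ}, and maximality of $c_p$ zeroes out at least one arrow, so for rational $F$ the algorithm terminates after finitely many steps with all coefficients rational. The chosen trails must be pairwise compatible to form a bundle; this is arranged by reusing the peeling order from the uniqueness argument so that later choices cannot kiss earlier ones. For irrational $F$ the process may fail to terminate, but the set of successfully decomposed flows is open and contains all rationals, hence is dense.

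\emph{Subdivision property.} Each bundle space $\D_1(\bK)$ is visibly a polyhedron: it is the Minkowski sum of the clique simplex on the routes of $\bK$ with the cone generated by the indicator vectors of the bands of $\bK$, intersected with the unit-flow affine hyperplane. Uniqueness guarantees that the relative interiors of distinct $\D_1(\bK)$ are disjoint, and the rational existence half guarantees that every rational flow lies in some $\D_1(\bK)$, so the family covers a dense subset of $\F_1(\tL)$. For the face-intersection property, a flow lying in both $\D_1(\bK_1)$ and $\D_1(\bK_2)$ admits two bundle representations; uniqueness forces the trails in its support to lie in $\bK_1\cap\bK_2$, so $\D_1(\bK_1)\cap\D_1(\bK_2)$ is precisely the bundle space of this sub-bundle, which is a face of each.

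\emph{Main obstacle.} The crux is the uniqueness step. Bundles may contain bands whose indicator vectors are supported on every arrow of the underlying cycle, so multiple routes and bands can simultaneously pass through each arrow in the support of $F$, and one cannot simply invert a full-rank system coordinatewise. Instead the non-kissing compatibility must be leveraged to produce an arrow at which exactly one trail of $\bK$ is ``exposed'' at the top of the peeling order, and proving such an arrow always exists --- in particular, in the presence of bands that could a priori wrap around to contribute to every arrow --- is the main combinatorial obstacle.
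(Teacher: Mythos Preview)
Your proposal has genuine gaps in both the uniqueness and existence arguments, and the paper's approach is fundamentally different.

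For uniqueness, your peeling strategy requires finding, for some trail $p\in\bK$, an arrow used by $p$ and by no other remaining trail of $\bK$. Such an arrow need not exist. In the Kronecker fringed quiver (Figure~\ref{fig:kron1}), the compatible routes $p_1=e_1e_2f_2^{-1}f_1^{-1}$ and $p_2=e_1e_2f_2^{-1}e_2f_2^{-1}f_1^{-1}$ use exactly the same underlying arrow set $\{e_1,e_2,f_1,f_2\}$; neither has a private arrow. The coefficients in $F=c_1\I(p_1)+c_2\I(p_2)$ are still determined (from $F(e_1)=c_1+c_2$ and $F(e_2)=c_1+2c_2$), but not by your peeling mechanism. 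You acknowledge this as the ``main obstacle,'' but the proposed fix (a global order making the top trail exposed) does not work.

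For existence, greedy subtraction does not produce a bundle. Take $F=\I(p_2)$ above and greedily peel $p_1$: the maximal coefficient is $1$ (forced by $e_1$), leaving $F-\I(p_1)=\I(e_2f_2^{-1})$, so your algorithm outputs $F=\I(p_1)+\I(e_2f_2^{-1})$. But the band $e_2f_2^{-1}$ kisses $p_1$ (the string $e_2f_2^{-1}$ is a top substring of the band and a bottom substring of $p_1$), so this is not a bundle combination. The correct decomposition is the trivial one $F=\I(p_2)$. Your assertion that ``reusing the peeling order'' ensures compatibility is therefore unjustified. Also, the claim that the decomposable set is open is false (bundle walls are lower-dimensional), though density does follow once you know all rationals decompose.

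The paper instead builds a canonical \emph{flow algorithm}: for each arrow-flow $(\alpha^\e,C)$ with $C\in[0,F(\alpha)]$, iterated maps $\for_F$ and $\bac_F$ (Definition~\ref{defn:forbac}) trace out an extended string $p^F_{(\alpha^\e,C)}$, and the coefficient $a_p^F$ is the length of the interval of values $C$ giving the same marked trail. Compatibility of the resulting trails is proved directly from the algorithm (Corollary~\ref{cor:compat}), and uniqueness follows because any positive bundle combination must reproduce exactly this interval structure (Lemma~\ref{lem:in} and Proposition~\ref{lem:only-way}). Termination for rational $F$ uses that the flow values $C_j$ encountered differ from $C$ by multiples of $\GCD(F)$ (Lemma~\ref{lem:alg}). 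This continuous ``slicing'' of each interval $[0,F(\alpha)]$ is what replaces your discrete peeling and sidesteps both obstacles.
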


{The technical core of this paper is a constructive \emph{flow algorithm} to take an arbitrary flow $F\in\F_{\geq0}(\tL)$ and express it as a bundle combination if such a representation exists. We prove Theorem~\ref{ITHM:BRIQUE} by studying this algorithm.}

In some cases, there may be (irrational) points of the turbulence polyhedron which cannot be realized as a bundle combination and hence the bundle subdivision is not complete. We now provide an alternative dissection called the \emph{vortex dissection} (a dissection is slightly weaker than a subdivision in that the intersection of two cells may fail to be be a common face). Like the bundle subdivision, this will amount to adding extra lower-dimensional walls to the maximal clique simplices, but the vortex dissection will always be complete.

A \emph{vortex} is a flow which is zero on all fringe arrows. If $\K$ is a clique of $\tL$, then a vortex is \emph{$\K$-compatible} if it is a positive linear combination of indicator vectors of $\K$-compatible bands. A \emph{($\K$-)vortex combination} is a sum of the form $W+\sum_{p\in\K}a_p\I(p)$, where $W$ is a $\K$-compatible vortex and $a_p\geq0$ for all $p\in\K$. The vortex combination is \emph{unit} if $\sum_{p\in\K}a_p=1$, and it is \emph{positive} if $a_p>0$ for all $p\in\K$. 
Note that any clique combination is also a vortex combination.

If $\K$ is a clique, then the \emph{(unit) vortex space} $\D^{\spiral}_1(\K)$ is the space of $\K$-vortex combinations. This is a polyhedron whose vertices are the indicator vectors of routes of $\K$ and whose recession cone is generated by indicator vectors of $\K$-compatible bands.
The maximal vortex spaces are indexed by \emph{band-stable cliques} (Definition~\ref{defn:bandstable}).
If $\K$ is a maximal clique, then $\K$ is band-stable and the vortex space $\D^{\spiral}_1(\K)$ is equal to the clique simplex $\D_1(\K)$. If $\K$ is band-stable but not maximal, then $\D^{\spiral}_1(\K)$ is unbounded and lower-dimensional and we call it the \emph{vortex wall} of $\K$.
Modeling Theorem~\ref{ITHM:BRIQUE}, we show that adding the vortex walls to the clique triangulation gives a complete polyhedral dissection of $\F_1(\tL)$:
\begin{thmIntro}[{Theorem~\ref{thm:vort-decomp}, Theorem~\ref{thmCbody}}]\label{ITHM:VORTEX}
	Every flow $F\in\F_{\geq0}(\tL)$ has a unique description as a positive vortex combination.
	Consequently, the \emph{vortex dissection} \[\Sbs^{\spiral}(\F_1(\tL)):=\{\D_1^{\spiral}(\K)\ :\ \K\text{ is a band-stable clique of }\tL\}\] is a complete polyhedral dissection of the turbulence polyhedron $\F_1(\tL)$.
\end{thmIntro}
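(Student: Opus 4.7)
The plan is to establish the existence and uniqueness of positive vortex combinations for flows in $\F_{\geq 0}(\tL)$, from which the dissection statement for $\F_1(\tL)$ follows at once by intersecting with the hyperplane of unit flows.

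For existence, I would modify the flow algorithm underlying Theorem~\ref{ITHM:BRIQUE}. Given $F \in \F_{\geq 0}(\tL)$, iteratively subtract $a_p \I(p)$ for routes $p$ in a greedy manner, maintaining at each stage pairwise compatibility of the extracted routes and taking $a_p > 0$ maximal subject to nonnegativity of the residual. When the algorithm terminates, let $\K$ be the clique of extracted routes and set $F' := F - \sum_{p \in \K} a_p \I(p)$. Two facts about $F'$ must be verified. First, $F'$ vanishes on fringe arrows, because any nonzero fringe value could be continued into a full route still compatible with $\K$, contradicting termination. Second, $F'$ is a nonnegative combination of indicator vectors of $\K$-compatible bands, which also witnesses band-stability of $\K$. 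The second assertion requires a local analysis at each arrow carrying positive residual flow: algorithmic termination forces the admissible turning directions on $F'$ at each vertex to be compatible with every route of $\K$, so the cyclic decomposition of $F'$ can be chosen using only $\K$-compatible bands.

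For uniqueness, suppose $F = W_1 + \sum_{p \in \K_1} a^1_p \I(p) = W_2 + \sum_{p \in \K_2} a^2_p \I(p)$ are two positive vortex combinations of the same flow. The first task is to show $\K_1 = \K_2$: for $p \in \K_1 \setminus \K_2$, I would localize at an arrow or turn witnessing the incompatibility of $p$ with some route of $\K_2$ or band contributing to $W_2$, and derive a contradiction between $a^1_p > 0$ and the expression through $\K_2$. Once $\K_1 = \K_2 = \K$, the coefficients $a^1_p = a^2_p$ coincide by restricting to fringe arrows, where both $W_i$ vanish, combined with the affine independence of $\{\I(p) : p \in \K\}$ supplied by the uniqueness half of Theorem~\ref{ITHM:CLIQUE}. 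The vortex parts $W_1 = W_2 = F - \sum_{p\in\K} a_p \I(p)$ then agree automatically.

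The main obstacle I anticipate is the second half of existence, namely forcing the residual vortex into the cone of $\K$-compatible bands rather than a strictly larger band cone, particularly for irrational flows where the greedy algorithm need not terminate in finitely many rational steps. I would address this by first establishing the rational case via the bundle subdivision of Theorem~\ref{ITHM:BRIQUE}, then extending to general flows either through a limiting argument or by directly verifying that each $\D^{\spiral}_1(\K)$ is closed and polyhedral, so that the rational points in a given cell force the cell itself to be consistent on closure. The deduction of the dissection statement is then routine: each $\D^{\spiral}_1(\K)$ is a polyhedron with vertex set $\{\I(p) : p \in \K\}$ and finitely generated recession cone spanned by indicator vectors of $\K$-compatible bands; existence yields $\F_1(\tL) = \bigcup_{\K \text{ band-stable}} \D^{\spiral}_1(\K)$; and uniqueness of the positive vortex representation translates precisely to disjointness of the relative interiors of distinct cells, giving the claimed complete polyhedral dissection.
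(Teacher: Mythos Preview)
Your outline has the right shape but misses the central technical device of the paper, and your uniqueness argument has a genuine gap.

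For existence, the paper does not extract routes greedily. Instead it uses the flow algorithm (Definition~\ref{defn:THE-ALG}) to produce a canonical clique $\K_F^+$ with coefficients $a_p^F$ directly from $F$, and Corollary~\ref{cor:blos-fl} shows this agrees with $F$ on fringe arrows. The hard part---showing the residual $G_F$ lies in the cone of $\K_F^+$-compatible bands---is handled by the ``blank space'' machinery (Definition~\ref{defn:blank}) and Proposition~\ref{prop:nnew}, which together give an induction: one finds a $\K_F^+$-compatible band $B$ splitting a proper blank space, subtracts $M_{F,B}\I(B)$, and strictly decreases the number of proper blank spaces while preserving $\K_F^+$ and all coefficients. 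This argument works for irrational $F$ directly and also yields the length bound on bands needed to show each $\D_1^{\spiral}(\K)$ is polyhedral (Corollary~\ref{cor:vort-poly}). Your proposed rational-then-limit route runs into circularity here: you need polyhedrality to take limits, but polyhedrality comes from the band-length bound, which in turn comes from the blank-space induction.

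Your uniqueness argument is where the real gap lies. You assume that $p\in\K_1\setminus\K_2$ must be incompatible with some route of $\K_2$ or some band in a decomposition of $W_2$, but nothing forces this: $\K_2$ could be a proper subclique of $\K_1$ with $W_2$ decomposed entirely into $\K_1$-compatible bands, and then there is no incompatibility to localize at. The paper instead proves (Proposition~\ref{prop:vort-2dir}) that adding any $\K$-compatible band indicator to a flow preserves $\K_F^+$ and the coefficients $a_p^F$; this is again Proposition~\ref{prop:nnew} applied in the forward direction. Hence any positive $\K$-vortex combination, built up from $\sum_{p\in\K}a_p\I(p)$ by adding compatible bands one at a time, must satisfy $\K=\K_F^+$ and $a_p=a_p^F$. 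This stability-under-band-addition is the key lemma driving both halves of the theorem, and your proposal has no analog of it.
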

Since every maximal clique is band-stable, the vortex dissection contains every clique simplex and hence contains the clique triangulation. If the bundle subdivision is complete (as in Figure~\ref{KRONINTRO}), then the bundle subdivision and vortex dissection agree.

\begin{example}\label{ex:KRONINTRO}
	Consider the Kronecker fringed quiver of Figure~\ref{KRONINTRO}.
	The full list of maximal bundles is
	\begin{align*}
		&\{e_1e_2e_3,\ f_1f_2f_3,\ e_2f_2^{-1}\}, \ \ \
		\{e_1e_2e_3,\ f_1f_2f_3,\ e_1f_1^{-1},\ e_3^{-1}f_3\}, \\
		&\{e_1e_2e_3,\ f_1f_2f_3,\ e_1(e_2f_2^{-1})^mf_1^{-1},\ e_1(e_2f_2^{-1})^{m+1}f_1^{-1}\}\text{ (for any $m\geq0$)},\text{ and }\\ 
		&\{e_1e_2e_3,\ f_1f_2f_3,\ e_3^{-1}(e_2^{-1}f_2)^mf_3,\ e_3^{-1}(e_2^{-1}f_2)^{m+1}f_3\}\text{ (for any $m\geq0$)}.
	\end{align*}
	Note that all maximal bundles contain the straight routes $\{e_1e_2e_3,f_1f_2f_3\}$ and that there is only one maximal bundle containing a band (namely, $e_2f_2^{-1}$).
	In the middle of the figure are depicted five maximal cliques, with the caveat that the straight routes $\{e_1e_2e_3,f_1f_2f_3\}$ are not drawn for readability.
	Five maximal cliques are drawn in the figure.
	Each maximal clique gives a full-dimensional clique simplex consisting of the two black vertices (which correspond to the straight routes) and two of the colored points, and these clique simplices cover a dense subset of the turbulence polyhedron. The maximal bundle $\{e_1e_2e_3,f_1f_2f_3,e_2f_2^{-1}\}$ gives a bundle space whose vertices are the black vertices and which continues infinitely to the right to make a two-dimensional strip. Note that every point of the turbulence polyhedron is contained in a maximal bundle space.
	The maximal clique $\{e_1e_2e_3,f_2f_2f_3,e_3^{-1}f_3,e_3^{-1}e_2^{-1}f_2f_3\}$ is highlighted and connected to the corresponding clique simplex of $\F_1(\tL)$.

	The only band-stable clique of $\tL$ which is not maximal is the clique $\{e_1e_2e_3,f_1f_2f_3\}$.
	In fact, the vortex wall $\D_1^{\spiral}(\{e_1e_2e_3,f_1f_2f_3\})$ of this clique is equal to the bundle wall $\D_1(\{e_1e_2e_3,f_1f_2f_3,e_2f_2^{-1}\})$ of the unique maximal bundle with a band. This means that the bundle subdivision and vortex dissection of $\tL$ are the same for this fringed quiver, though this is not the case in general.
\end{example}

\subsection{Results on g-polyhedra}

We connect our results on turbulence polyhedra to the g-vector fan $\g_{\geq0}(\L)$ by providing an explicit normalized-volume-preserving quotient map $\phi$ from $\F_{1}(\tL)$ to the ambient space $\mathbb R^E$ of the g-vector fan $\g_{\geq0}(\L)$. The map $\phi$ sends the indicator vector $\I(p)$ of any trail to its \emph{g-vector $\g(p)$}. Up to unimodular equivalence, this map on $\F_1(\tL)$ is a quotient by the affine span of the indicator vectors of straight routes of $\tL$, which are the only routes not corresponding to extended modules over $\L$. We call its image the \emph{g-polyhedron} $\g_1(\L)$. When $\L$ is $\tau$-tilting finite, this coincides with the g-polytope of $\L$ as defined in~\cite{AHIKM}. In particular, this shows that g-polytopes of $\tau$-tilting finite gentle algebras are convex polytopes, hence that $\tau$-tilting finite gentle algebras are g-convex in the sense of~\cite{AHIKM}. It is noted in~\cite[Theorem 5.10]{AHIKM} that if $\L$ is $\tau$-tilting infinite, then its g-polytope is never convex; in this sense, g-polytopes are studied only for $\tau$-tilting finite algebras. On the other hand, if a gentle algebra $\L$ is $\tau$-tilting infinite then our g-polyhedron is an unbounded analog of the g-polytope which coincides with the completion of the g-polytope. Hence, we argue that even $\tau$-tilting infinite gentle algebras ought to be considered as g-convex.

Furthermore, we may translate our results about presentations and dissections of turbulence polyhedra through the map $\phi$ to obtain results about g-polyhedra and even g-vector fans of gentle algebras.
Since $\phi$ precisely collapses the affine span of the indicator vectors of straight routes, we show that the map $p\mapsto\g(p)$ is a bijection from elementary non-straight routes to vertices of $\g_1(\L)$, and that the map $B\mapsto\g(B)$ on elementary bands is a bijection onto a minimal generating set for the recession cone of $\g_1(\L)$.
More generally, we obtain descriptions of all faces of $\g_1(\L)$ and their defining inequalities (Theorem~\ref{thm:faces-crooked} and Theorem~\ref{thm:facets-crooked}).

We may also take our triangulation and dissection results through the map $\phi$.
By slightly modifying the relevant definitions from the flow setting, we may define (unit or nonnegative) g-clique combinations, g-clique spaces, g-bundle combinations, g-bundle spaces, g-vortex combinations, and g-vortex spaces.
Mirroring our triangulatation and dissection results on turbulence polyhedra, we show that 
		the g-clique triangulation $\Tt(\g_1(\L))$ is a unimodular triangulation of $\g_1(\L)$,
the g-bundle subdivision $\Sbs^{\spircle}(\g_1(\L))$ is a subdivision of $\g_1(\L)$ which covers every rational point, and that
the g-vortex dissection $\Sbs^{\spiral}(\g_1(\L))$ is a complete dissection of $\g_1(\L)$.
Figure~\ref{KRONINTRO} also includes the g-polyhedron with a g-simplex highlighted in the bottom right.

This completes our dissection and presentation results about the g-polyhedron $\g_1(\L)$.
The cone over the g-clique triangulation recovers the usual g-vector fan, and the cones over the g-bundle and g-vortex dissections add some lower-dimensional walls to this fan to obtain larger dissections.
Note, then, that the g-bundle walls and g-vortex walls provide two different interpretations for the complement of the g-vector fan.
It would be interesting to study these additions to the g-vector fan in connection with other similar structures in the representation theory of algebras: for example, torsion classes~\cite{DIRRT} (in particular, the semistable and morphism torsion classes of~\cite{AI24}), the wall-and-chamber picture of $\L$~\cite{ITW,BSH,BHIT,BRIDGE,BST}, the purely non-rigid region of $\tL$~\cite{Asa22}, or scattering diagrams~\cite{GHKK,READING}.

In Section~\ref{sec:example} we study the fringed quiver of Figure~\ref{fig:doubkron-g} as an example whose bundle and vortex dissections differ.
We are able to use new techniques derived from the flow algorithm to understand in detail its maximal bundles containing bands and the complement of its g-vector fan.

{We finally remark that a broad strategy of the paper is to prove results about flows on fringed quivers before using the quotient map $\phi$ to turn them into results about g-vectors. This reflects the idea that working with flows and indicator vectors is in many ways more intuitive than working with g-vectors. For example, the facet-defining hyperplanes of $\F_1(\tL)$ always amount to zeroing out flow through some arrow, while facet-defining hyperplanes of $\g_1(\L)$ are more complicated (Theorem~\ref{thm:facets-crooked}). Moreover, it is not even clear what the analog of the flow algorithm on the g-vector picture ought to be. We hope that the strategy of proving results on flows and transporting them through the map $\phi$ will continue to be a useful strategy in the study of g-vector fans of gentle algebras.}

\subsection*{Structure of the Paper and Acknowledgments}

The structure of this paper is as follows. In Section~\ref{sec:bac}, we give background on gentle algebras and $\tau$-tilting theory. In Section~\ref{sec:DAG}, we give  background on framed DAGs. In Section~\ref{sec:tp}, we define flows and turbulence polyhedra on gentle algebras. We give some basic results about dimension, define bundles, and connect flows on gentle algebras to flows on amply framed directed graphs.
In Section~\ref{sec:triang}, we define the flow algorithm to obtain bundle subdivisions, proving Theorem~\ref{ITHM:BRIQUE}. In Section~\ref{sec:vert-unb}, we use this theory to prove our presentation result Theorem~\ref{thmZ} of turbulence polyhedra.
In Section~\ref{sec:g}, we begin our work on g-vector fans and g-polyhedra. We define the transformation $\phi$ and define its image to be the g-polyhedron $\g_1(\L)$. By pulling back density of the g-vector fan, we are able to prove that the clique simplices are dense in $\F_1(\tL)$ and hence complete the proof of Theorem~\ref{ITHM:CLIQUE}. In Section~\ref{sec:CBS}, we transport our presentation and bundle subdivision theory from turbulence polyhedra to g-polyhedra. In this way, we prove our presentation
and g-bundle subdivision
results about g-polyhedra and g-vector fans. We then describe the faces, vertices, and unbounded directions of g-polyhedra.
In Section~\ref{sec:vortex}, we obtain vortex dissections of turbulence polyhedra and then use the map $\phi$ to get vortex dissections of g-polyhedra and the g-vector fan.
In Section~\ref{sec:example} we will use the flow algorithm to study a fringed quiver whose bundle and vortex dissections differ.
Finally, Section~\ref{sec:appendix} is an appendix where we give the (new) specialization of the flow algorithm to amply framed DAGs and a skeleton for some of our methods in the language of framed directed graphs.

The author thanks Khrystyna Serhiyenko, Benjamin Braun, Hugh Thomas, and Alejandro H. Morales for helpful discussions and comments on a preliminary version.
The author was supported by the NSF grant DMS-2054255.

\section{Background on Gentle Algebras and $\tau$-Tilting Theory}
\label{sec:bac}

We give background on $\tau$-tilting theory, particularly as it applies to gentle algebras.
Throughout, let $\k$ be an algebraically closed field. 
A {quiver} is a directed graph.

\subsection{$\tau$-tilting theory}\label{sec:ttt}

We collect some definitions and results on $\tau$-tilting theory, introduced in~\cite{AIR}. We first treat the $\tau$-tilting theory of an arbitrary finite-dimensional algebra $\L$; in the following subsections we will specialize to the particularly combinatorial setting when $\Lambda$ is gentle and its $\tau$-tilting theory is described by a simplicial complex of routes on its fringed quiver.
Let $\Lambda$ be a finite dimensional algebra.
If $P$ is a projective module over $\Lambda$, the symbol $P[1]$ denotes the associated formal \emph{shifted projective}. We say that the set of \emph{extended modules} is the set of modules and shifted projectives. We allow ourselves to form direct sums of shifted projectives with other shifted projectives and with ordinary modules.

A module $M$ over $\Lambda$ is \emph{$\tau$-rigid} if $\hom(M,\tau M)=0$, where $\tau$ is the Auslander-Rieten translate. By convention, the shifted projectives are $\tau$-rigid.

\begin{defn}
	Following~\cite{AIR}, we say that $M\oplus P[1]$ is
		\emph{$\tau$-rigid} (over $\Lambda$) if
		\begin{enumerate}
			\item $M$ is a module over $\Lambda$ and $P$ is a projective module over ${\Lambda}$,
			\item $M$ is $\tau$-rigid, and
			\item $\hom_\Lambda(P,M)=0$.
		\end{enumerate}
		If, in addition, the number of pairwise non-isomorphic indecomposable summands of $M\oplus P$ is the number of vertices of $Q$, then we say that $M\oplus P[1]$ is a \emph{support $\tau$-tilting module}.
		We write $\sttilt\Lambda$ for the set of support $\tau$-tilting modules of $\Lambda$.
	We say that two extended modules $M$ and $N$ are \emph{$\tau$-rigid} if $M\oplus N$ is $\tau$-rigid.
	A module is \textit{basic} if its direct summands are pairwise nonisomorphic.
\end{defn}
\begin{thm}[\cite{AIR}]
	Let $M=\oplus_{i\in[n]}M_i$ be a basic support $\tau$-tilting module. Then for any $i\in[n]$, there exists a unique indecomposable extended module $M'_i$ not isomorphic to $M_i$ such that
	\[\mu_i(M):=M_i'\oplus\bigoplus_{j\neq i}M_j\]
	is a support $\tau$-tilting module, called the \emph{mutation of $M$ at $M_i$}.
	Moreover, either $\fac M\subsetneq\fac\mu_i(M)$ or $\fac M\supsetneq\fac\mu_i(M)$.
\end{thm}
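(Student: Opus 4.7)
The plan is to reduce the statement to an exchange property for the ``almost complete'' $\tau$-rigid part $N:=\bigoplus_{j\neq i}M_j$. Writing $M=N\oplus M_i$ (viewing the shifted-projective summands of $M$ as absorbed into $N$ or $M_i$ as appropriate), the module $N$ is basic, $\tau$-rigid, and has exactly $n-1$ indecomposable summands. The goal becomes: prove that any such basic almost complete support $\tau$-tilting module $N$ admits exactly two completions to a basic support $\tau$-tilting module, so that $M$ is one of them and $\mu_i(M)$ is forced to be the other.

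For existence of two completions, the strategy is to produce them explicitly as Bongartz-type completions. On one side, let $\mathcal T^+:={}^\perp(\tau N)\cap P_N^\perp$, where $P_N$ is a maximal projective $\hom$-killed by $N$; this is a functorially finite torsion class containing $\fac N$, and its associated support $\tau$-tilting module $T^+$ contains $N$ as a summand. On the other side, take the smallest functorially finite torsion class $\mathcal T^-$ containing $\fac N$ (essentially $\fac N$ itself, once one checks functorial finiteness), with associated support $\tau$-tilting module $T^-$ also containing $N$ as a summand. Writing $T^\pm=N\oplus X^\pm$, the key point is to verify that $X^+\not\cong X^-$, so we obtain two distinct completions. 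To show these are the \emph{only} completions, the plan is to take any support $\tau$-tilting module $T=N\oplus X$ containing $N$ and observe that $\fac T$ is a functorially finite torsion class containing $\fac N$ and contained in $\mathcal T^+$, forcing $\fac T\in\{\mathcal T^-,\mathcal T^+\}$ via a minimality argument on the approximation triangles defining $X$.

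With the exchange property in hand, applying it to $N$ gives a unique $M'_i\not\cong M_i$ with $\mu_i(M)$ support $\tau$-tilting, which is the first claim. For the second claim, $M=T^+$ or $M=T^-$, and correspondingly $\mu_i(M)=T^-$ or $T^+$, so $\fac M$ and $\fac\mu_i(M)$ are precisely the two torsion classes $\mathcal T^-\subsetneq\mathcal T^+$; the inclusion is strict because $X^+$ and $X^-$ represent genuinely different indecomposables (equivalently, $\fac\mathcal T^+/\fac\mathcal T^-$ supports a nonzero brick, namely the cokernel or kernel arising from the minimal left/right $\mathrm{add}(N)$-approximation of $X^\pm$).

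The main obstacle is the ``at most two completions'' half of the exchange lemma: one must control all functorially finite torsion classes $\T$ with $\fac N\subseteq\T\subseteq\mathcal T^+$ and show there are only two, which requires a careful analysis of the minimal left and right $\mathrm{add}(N)$-approximations of the candidate new summand $X$ and the resulting short exact sequences $0\to N'\to X\to Y\to 0$ and $0\to Y'\to X\to N''\to 0$. Showing that $Y$ (or $Y'$) must itself be indecomposable, and that these two approximation triangles produce precisely the $X^+$ and $X^-$ above, is the technical heart of the argument and is where the hypothesis that $N\oplus X$ is $\tau$-\emph{rigid} (and not merely a generator of a torsion class) gets used crucially through the $\tau$-rigidity criterion $\hom(X,\tau N)=0=\hom(N,\tau X)$.
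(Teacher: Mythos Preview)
The paper does not give a proof of this theorem: it is stated as a background result with the citation \cite{AIR} and no argument is supplied. There is therefore nothing in the paper to compare your proposal against.

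For what it is worth, your sketch is a faithful outline of the original Adachi--Iyama--Reiten argument: reduce to showing that an almost complete support $\tau$-tilting pair $N$ has exactly two completions, construct them as the support $\tau$-tilting modules associated to the maximal and minimal functorially finite torsion classes containing $\fac N$, and deduce the strict inclusion of $\fac$-classes from $T^+\not\cong T^-$. The one place where your write-up is slightly loose is the handling of shifted projectives (the pair formalism $(M,P)$ is what makes the Bongartz completion work cleanly, and ``absorbing'' $P[1]$ into $N$ or $M_i$ needs to be done carefully so that the torsion-class characterisation ${}^\perp(\tau N)\cap P_N^\perp$ is correct), but the overall strategy is right.
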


\begin{defn}
	Let $\L$ be a finite-dimensional algebra.
	\begin{enumerate}
		\item The \emph{support $\tau$-tilting complex} $\C(\Lambda)$ is the simplicial complex whose vertices are the isomorphism classes of indecomposable extended modules of $\Lambda$ and whose faces are the collections of extended modules $\{M_1,\dots,M_k\}$ such that $M_1\oplus\dots\oplus M_k$ is $\tau$-rigid.
		\item The \emph{support $\tau$-tilting poset} $\P_\Lambda$ is the poset whose ground set is the set of support $\tau$-tilting objects of $\Lambda$, and $M\geq N$ if $\fac M\supsetneq\fac N$. The top element of $\P_\Lambda$ is $\Lambda$ (considered as a module over itself) and the bottom element is $\Lambda[1]$.
	\end{enumerate}
\end{defn}

We say that $\Lambda$ is \emph{$\tau$-tilting finite} if the set of isomorphism classes of support $\tau$-tilting modules is finite. In this case, $\P_\Lambda$ is a lattice.
We now define the g-vector of a module over $\L$.

\begin{defn}\label{defn:g-vector}
	Let $M$ be a module over $\L$ with minimal projective presentation $P_1^M\to P_0^M\to M\to0$.
	The \emph{g-vector} of $M$ is the element $\g(M)=[P_0^M]-[P_1^M]$ of the Grothendieck group $K_0(\text{proj}\ \L)$ of the additive category $\text{proj}\ \L$ of projective modules of $\L$. If $P$ is a projective module, then the g-vector of the shifted projective $P[1]$ is $\g(P[1]):=-[P]$.
\end{defn}

Note that if $\L=\k Q/I$ is an admissible path algebra with relations then $\text{proj}\ \L$ is isomorphic to $\mathbb Z^{Q_0}$. We may then think of the g-vector $\g(M)$ as a tuple in $\mathbb R^{Q_0}$.

\begin{defn}\label{defn:g-stuff}
If $M=\oplus_{i=1}^nM_i$ is a support $\tau$-tilting $\Lambda$-module, define the \emph{g-simplex} 
	\[\g_{1}(M):=\textup{conv}(\hat0,\g(M_1),\dots,\g(M_n))\] as the convex hull of the g-vectors of indecomposable summands of $M$. 
	The \emph{g-cone} $\g_{\geq0}(M)$ of $M$ is the cone of nonnegative combinations of the g-vectors $\{\g(M_1),\dots,\g(M_n)\}$.  The \emph{g-vector fan} $\g_{\geq0}(\L)$ of $\L$ is the fan whose cells are the cones $\g_{\geq0}(M)$, as $M$ ranges over all support $\tau$-tilting $\Lambda$-modules.
\end{defn}

The g-vectors $\g(M_1),\dots,\g(M_n)$ form a basis for the Grothendieck group $K_0(\text{proj}\L)$~\cite[Theorem 5.1]{AIR}.
This gives the following result:
\begin{thm}[{\cite[Theorem 5.1]{AIR}}]\label{thm:uni-cite}
	If $M=\oplus_{i=1}^nM_i$ is a support $\tau$-tilting $\Lambda$-module, then 
	the g-simplex $\g_{1}(M):=\textup{conv}(\hat0,\g(M_1),\dots,\g(M_n))$
	is an $n$-dimensional unimodular simplex in $\mathbb R^n$.
\end{thm}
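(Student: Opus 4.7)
The plan is to deduce this essentially immediately from the basis statement cited from~\cite{AIR} in the paragraph preceding the theorem. Recall that a lattice simplex $\textup{conv}(v_0, v_1, \dots, v_n) \subset \mathbb{R}^n$ is \emph{unimodular} precisely when the edge vectors $v_1 - v_0, \dots, v_n - v_0$ form a $\mathbb{Z}$-basis of the ambient integer lattice; in particular such a simplex is automatically $n$-dimensional, since a $\mathbb{Z}$-basis of $\mathbb{Z}^n$ is in particular $\mathbb{R}$-linearly independent.

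First I would fix the identification $K_0(\textup{proj}\,\Lambda) \cong \mathbb{Z}^n$ using the basis of isomorphism classes of indecomposable projectives, so that g-vectors live as integer points in $\mathbb{R}^n$, matching Definition~\ref{defn:g-vector}. Next, I would apply the unimodularity criterion with $v_0 = \hat 0$ and $v_i = \g(M_i)$: the edge vectors $v_i - v_0$ are then literally the g-vectors $\g(M_1), \dots, \g(M_n)$. By the cited result that these g-vectors form a $\mathbb{Z}$-basis of $K_0(\textup{proj}\,\Lambda)$, the simplex $\g_1(M)$ is unimodular. Linear independence of the edge vectors over $\mathbb{R}$ follows from their $\mathbb{Z}$-linear independence in a lattice of rank $n$, so the simplex has full dimension $n$.

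The \emph{hard part}, honestly, is entirely in the input from~\cite{AIR}: proving that g-vectors of the indecomposable summands of a support $\tau$-tilting module give a $\mathbb{Z}$-basis of the Grothendieck group of projectives. Since that statement is quoted verbatim and the theorem is attributed to~\cite[Theorem 5.1]{AIR}, the only step in the present paper's proof is the translation from ``basis of the Grothendieck group'' to ``unimodular simplex with apex at the origin,'' which is formal. If desired, one could spell out the unimodularity criterion via the determinantal formulation: the $n \times n$ integer matrix whose columns are $\g(M_1), \dots, \g(M_n)$ has determinant $\pm 1$ iff these columns are a $\mathbb{Z}$-basis, iff the normalized volume of $\g_1(M)$ equals $1$.
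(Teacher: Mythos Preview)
Your proposal is correct and matches the paper's approach exactly: the paper provides no separate proof for this theorem, but simply notes in the preceding sentence that the g-vectors $\g(M_1),\dots,\g(M_n)$ form a basis of $K_0(\textup{proj}\,\Lambda)$ by \cite[Theorem 5.1]{AIR} and states that this ``gives the following result.'' Your write-up just makes explicit the formal translation from ``$\mathbb{Z}$-basis'' to ``unimodular simplex with apex at the origin,'' which is all there is to do.
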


\subsection{Gentle algebras}

We recall the definition of a gentle algebra from~\cite{AS87}.
For an arrow $\alpha$ of a quiver $Q$, we write $h(\alpha)$ for the head (or endpoint) of $\alpha$ and $t(\alpha)$ for the tail (or start point) of $\alpha$. We consider the formal inverse $\alpha^{-1}$ to have head $t(\alpha)$ and tail $h(\alpha)$. A \emph{signed arrow} is an expression of the form $\alpha^{\e}$, where $\alpha$ is an arrow and $\e\in\{1,-1\}$.
We consider the head of $\alpha^{-1}$ to be $h(\alpha^{-1}):=t(\alpha)$ and the tail of $\alpha^{-1}$ to be $t(\alpha^{-1}):=h(\alpha)$.

\begin{defn}
	A finite-dimensional $\k$-algebra $\Lambda$ is \emph{gentle} if it is (Morita equivalent to) a bound path algebra $\k Q/I$, where $Q$ is a quiver and $I$ is an admissible ideal in $\k Q$ such that
	\begin{enumerate}
		\item for each vertex $v\in Q_0$, there are at most two arrows starting at $v$ and at most two arrows ending at $v$,
		\item for each arrow $\alpha\in Q_1$, there is at most one arrow $\beta$ with $h(\alpha)=t(\beta)$ such that $\alpha\beta\not\in I$, and there is at most one arrow $\gamma$ with $t(\alpha)=h(\gamma)$ such that $\gamma\alpha\not\in I$,
		\item for each arrow $\alpha\in Q_1$, there is at most one arrow $\beta$ with $h(\alpha)=t(\beta)$ such that $\alpha\beta\in I$, and there is at most one arrow $\gamma$ with $t(\alpha)=h(\gamma)$ such that $\gamma\alpha\in I$, and
		\item the ideal $I$ is generated by length-two monomial relations.
	\end{enumerate}
	We say that $(Q,I)$ is a \textit{gentle bound quiver}.
\end{defn}

	See the left of Figure~\ref{fig:bloss} for an example of a gentle algebra $\L$. The ideal $I$ is generated by the relations $\beta\gamma$ and $\gamma\beta$ of $\L$. The relation $\beta\gamma$ (resp. $\gamma\beta$) is marked visually by a blue arc connecting $\beta$ to $\gamma$ (resp. $\gamma$ to $\beta$) around the vertex $h(\beta)=t(\gamma)$ (resp. $h(\gamma)=t(\beta)$). In the following, we will express the relations of a gentle algebra $\L=\k Q/I$ using this visual convention without writing them in text.

Henceforth, let $\Lambda=\k Q/I$ be a gentle algebra.
We collect some facts about gentle algebras, following~\cite{PPP}.

\begin{defn}
		A \emph{string} of $\Lambda$ is a word of the form $p=\alpha_1^{\e_1}\alpha_2^{\e_2}\dots\alpha_m^{\e_m}$, where
			\begin{enumerate}
				\item $\alpha_i\in Q_1$ and $\e_i\in\{-1,1\}$ for all $i\in[m]$,
				\item $h(\alpha_i^{\e_i})=t(\alpha_{i+1}^{\e_{i+1}})$ for all $i\in[m-1]$,
				\item for any $\beta\gamma\in I$, neither $\beta\gamma$ nor $\gamma^{-1}\beta^{-1}$ appears in $p$, and
				\item no factor $\alpha\alpha^{-1}$ or $\alpha^{-1}\alpha$ appears in $p$, for $\alpha\in Q_1$.
			\end{enumerate}
			The integer $m$ is called the \emph{length} of $p$. We write $t(p):=t(\alpha_1^{\e_1})$ and $h(p):=h(\alpha_m^{\e_m})$. For each vertex $v\in Q_0$, there is a \emph{lazy string} of length zero, denoted by $e_v$, starting and ending at $v$. A \emph{substring} of $p$ is a word of the form $\alpha_i^{\e_i}\dots\alpha_j^{\e_j}$, for some $1\leq i\leq j\leq m$. We also consider a lazy string $e_v$, where $v$ is some vertex through which $p$ passes, to be a substring. We may sometimes write lazy strings into our strings for emphasis: for example, if $\alpha^\e\beta^\f$ is a string, then $\alpha^\e e_{h(\alpha^\e)}\beta^\f$ refers to the same string while emphasizing the lazy substring $e_{h(\alpha^\e)}$ in the middle.
\end{defn}

We consider the two inverse strings $p$ and $p^{-1}$ to be \emph{equivalent}.
We use the symbol $p^{\pm1}$ to refer to the (equivalent) strings $p$ and $p^{-1}$ simultaneously as an abuse of notation. For example, we may write ``$q$ is a substring of $p^{\pm1}$'' to mean, ``$q$ is a substring of $p$ or $p^{-1}$.''

	The strings of the gentle algebra on the left of Figure~\ref{fig:bloss}, up to equivalence, are 
	\[e_1\ ,e_2\ ,e_3,\ \alpha,\ \beta,\ \gamma,\ \alpha\beta,\text{ and }\alpha\gamma^{-1}\] (labelling the vertices from 1 to 3, left to right).
	Note, for example, that the string $\alpha$ is equivalent to $\alpha^{-1}$ and $\gamma\alpha^{-1}$ is equivalent to $\alpha\gamma^{-1}$.

\begin{defn}
	A substring $p=\alpha_i^{\e_i}\dots\alpha_j^{\e_j}$ of a string $q=\alpha_1^{\e_1}\dots\alpha_m^{\e_m}$ is said to be:
	\begin{enumerate}
		\item a \emph{top substring of $q$} if $q$ either ends or has an outgoing arrow at each endpoint of $p$. In other words, if $i=1$ or $\e_{i-1}=-1$, and $j=m$ or $\e_{j+1}=1$.
		\item a \emph{bottom substring of $q$} if $q$ either ends or has an incoming arrow at each endpoint of $p$. In other words, if $i=1$ or $\e_{i-1}=1$, and $j=m$ or $\e_{j+1}=-1$.
	\end{enumerate}
\end{defn}

In the gentle algebra on the left of Figure~\ref{fig:bloss}, the lazy string $e_2$ at the middle vertex is a bottom substring of $\alpha\gamma^{-1}$ and a top substring of $\beta$.

A \emph{band} of $\Lambda$ is a string $B=\alpha_1^{\e_1}\dots\alpha_m^{\e_m}$ of length at least one such that $h(B)=t(B)$, the walk $\alpha_m^{\e_m}\alpha_1^{\e_1}$ is a string, and $B$ is not itself a power of a strictly smaller string.
The gentle algebra of Figure~\ref{fig:bloss} has no bands.
We consider a \emph{substring} of $B$ to be a substring of any power of $\alpha_1^{\e_1}\dots\alpha_m^{\e_m}$.
Two bands $B$ and $B'$ are \emph{equivalent} if the underlying string of $B'$ is a substring of $B^2$ or $(B^{-1})^2$ (equivalently, if $B^{\pm1}$ and $(B')^{\pm1}$ are cyclically equivalent).

Strings and bands of a gentle algebra $\L$ are important because they describe the indecomposable modules over $\L$.
Given a string $p$, one may define the \emph{string module} $M(p)$ over $\Lambda$. We note that $M(p)$ is isomorphic to $M(p^{-1})$, which is why we consider strings up to equivalence.
Similarly, given a band $B$ of a gentle algebra $\Lambda$, a power $n\in\mathbb Z_{>0}$, and $\lambda\in\k^\times$, we may define a \textit{band module} $M(B,n,\lambda)$ which depends on $B$ only up to equivalence. The specific constructions of band and string modules are not important in this paper, and hence we refer to~\cite{BR87},~\cite[\S2]{BDMTY},~\cite[\S1]{PPP} for more information on band modules and string modules. For context, we cite the following theorem.

\begin{thm}[{\cite[p. 161]{BR87}}]
	The string and band modules are a complete list of the indecomposable $\Lambda$-modules up to isomorphism. Moreover,
	\begin{enumerate}
		\item a string module is never isomorphic to a band module,
		\item two string modules $M(p)$ and $M(p')$ are isomorphic if and only if $p'=p^{\pm1}$, and
		\item two band modules $M(B,n,\lambda)$ and $M(B',n',\lambda')$ are isomorphic if and only if $n=n'$, $\lambda=\lambda'$, and the bands $B$ and $B'$ are equivalent.
	\end{enumerate}
\end{thm}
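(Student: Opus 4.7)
The plan is to prove the theorem in two largely independent parts: (a) an explicit classification of morphisms between string and band modules, which yields parts (1)--(3) together with indecomposability; and (b) completeness, i.e.\ that every indecomposable $\Lambda$-module is isomorphic to some $M(p)$ or $M(B,n,\lambda)$.

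For (a), I would begin from the concrete basis description of string and band modules. The module $M(p)$ has a basis indexed by the sequence of vertices traversed by $p$, with each arrow acting by the corresponding basis shift dictated by the signed letters of $p$; the band module $M(B,n,\lambda)$ similarly has a basis obtained by taking $n$ copies of the vertex sequence of $B$, with one chosen arrow of $B$ acting through a Jordan block of size $n$ with eigenvalue $\lambda$. Following the morphism analysis of Crawley-Boevey and Wald--Waschb\"usch, every homomorphism between two such modules decomposes as a sum of \textbf{graph maps}, indexed by the common factor substrings meeting appropriate top/bottom conditions. The ensuing computation shows that $\textup{End}_\Lambda(M(p))$ is local (generated by the identity and some nilpotents), while $\textup{End}_\Lambda(M(B,n,\lambda))\cong \k[T]/(T^n)$ is also local; in particular both families consist of indecomposables. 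Parts (2) and (3) then drop out by comparing the composition factors with the locations where eigenvalues can appear on a nilpotent-free piece of the graph-map basis, and (1) follows because a band module admits a degree-$n$ cyclic automorphism from the Jordan block that no string module can support.

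For (b), the standard tool is the \textbf{functorial filtration} method, developed by Gel'fand--Ponomarev and refined for string algebras by Butler--Ringel. The idea is to exploit the gentle conditions at each vertex $v \in Q_0$: since at most two arrows enter and two leave $v$, and since the quadratic monomial relations prescribe which of the four pairs vanishes in the algebra, the kernels and images of the arrow actions on the vector space $M_v$ interact in a tightly controlled way. One uses this to define a canonical, functorial decomposition of $M_v$ into subspaces labelled by the "direction" in which a basis vector may be extended into an arrow word. Propagating these decompositions along arrows produces a Krull--Schmidt decomposition of $M$ whose indecomposable summands are string modules when a trajectory terminates (because it hits a relation or a source/sink) and band modules when a trajectory is periodic, with the parameters $(n,\lambda)$ recording the Jordan structure on the single "free" arrow of the band.

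The principal obstacle is verifying that the vertex-by-vertex decompositions glue functorially and respect the relations of $I$; the full force of the gentle conditions is used here, and it is also what forces the parameter $(B,n,\lambda)$ to be canonical only up to cyclic rotation of $B$, matching exactly the equivalence of bands in the statement of (3). Once completeness is secured, parts (1)--(3) are immediate consequences of the $\textup{Hom}$-space analysis in (a), since any isomorphism must appear as a graph map, and the combinatorics of graph maps is rigid enough to pin down the parameters $p$ and $(B,n,\lambda)$ up to the stated equivalences.
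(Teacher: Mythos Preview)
The paper does not prove this theorem; it is stated purely as background and attributed to Butler--Ringel~\cite{BR87}, with no proof given. The surrounding text even says explicitly that ``the specific constructions of band and string modules are not important in this paper'' and that the result is cited ``for context.'' So there is no proof in the paper to compare your proposal against.

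That said, your sketch is a faithful outline of the standard Butler--Ringel argument: functorial filtrations for completeness, and the graph-map description of homomorphism spaces for indecomposability and the isomorphism criteria. If you were asked to supply a proof here, this is the right strategy, though in the context of the present paper the appropriate response is simply to cite \cite{BR87} (and perhaps the expositions in \cite{BDMTY,PPP}) rather than to reproduce the argument.
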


The above shows that the combinatorics of strings carries representation-theoretic information about the corresponding string modules. In the next section, we will get at the $\tau$-tilting theory of $\Lambda$ by using certain string modules over a larger gentle algebra.

\subsection{$\tau$-tilting theory of gentle algebras and fringed quivers}\label{ssec:bloss}

The $\tau$-tilting theory of gentle algebras has a particularly nice combinatorial interpretation, developed independently in~\cite{BDMTY} and~\cite{PPP}. We refer to either of these papers for more information on this topic.

If $v\in Q$, we say that $\textup{indeg}(v)$ is the number of arrows with head $v$ and we say that $\textup{outdeg}(v)$ is the number of arrows with tail $v$. If $\Lambda$ is gentle, then every vertex of $\Lambda$ has $\textup{indeg}(v)\leq2$ and $\textup{outdeg}(v)\leq2$.

\begin{defn}[{\cite[Definition 2.1]{PPP},\cite[Definition 3.2]{BDMTY}}]
	The \emph{fringed algebra} of a gentle algebra $\Lambda=\k Q/I$ is the gentle bound quiver $(\tilde Q,\tilde I)$ obtained by adding at each vertex $v\in Q_0$:
	\begin{itemize}
		\item\label{g1} arrows from new vertices in and out of $v$ such that its in-degree and out-degree both become $2$, and
		\item relations such that vertex $v$ fulfills the gentle bound quiver conditions.
	\end{itemize}
	Any arrow added through this construction is called a \emph{fringe arrow}, and is considered to be incident to $v$ as well as a new \emph{fringe vertex}. Vertices of $\tL$ coming from $\L$ are called \emph{internal vertices}. By construction, all {internal vertices} of $\tL$ have in-degree and out-degree 2, and all fringe vertices are incident to only one (fringe) arrow.
	We use the symbols $V_{\text{int}}$, $V_{\text{fringe}}$, $E_{\text{int}}$, and $E_{\text{fringe}}$ to refer to the internal and fringe vertices and edges of $\tL$.
	The \textit{fringed algebra} of $\Lambda$ is $\tilde\Lambda=\k\tilde Q/\tilde I$.
\end{defn}

See Figure~\ref{fig:bloss} for an example of a gentle algebra and its fringed quiver.
When we draw fringed quivers, we draw fringe vertices as squares.

Abusing notation, we will use the term \emph{fringed quiver} to refer to the fringed algebra $\tL$ as well as its bound quiver $(\tilde Q,\tilde I)$.
We may refer to vertices or arrows of $\tilde Q$ as vertices or arrows of the fringed quiver $\tilde\Lambda$, respectively.
We say that the \emph{relations} of $\tL$ at an internal vertex $v$ are those compositions $\alpha_1\alpha_2$ and $\beta_1\beta_2$ such that $h(\alpha_1)=h(\beta_1)=t(\alpha_2)=t(\beta_2)=v$ and neither $\alpha_1\alpha_2$ nor $\beta_1\beta_2$ is a string of $\tL$. 

\begin{figure}
	\centering
	\def\svgscale{.21}
\begingroup%
  \makeatletter%
  \providecommand\color[2][]{%
    \errmessage{(Inkscape) Color is used for the text in Inkscape, but the package 'color.sty' is not loaded}%
    \renewcommand\color[2][]{}%
  }%
  \providecommand\transparent[1]{%
    \errmessage{(Inkscape) Transparency is used (non-zero) for the text in Inkscape, but the package 'transparent.sty' is not loaded}%
    \renewcommand\transparent[1]{}%
  }%
  \providecommand\rotatebox[2]{#2}%
  \newcommand*\fsize{\dimexpr\f@size pt\relax}%
  \newcommand*\lineheight[1]{\fontsize{\fsize}{#1\fsize}\selectfont}%
  \ifx\svgwidth\undefined%
    \setlength{\unitlength}{1141.06018066bp}%
    \ifx\svgscale\undefined%
      \relax%
    \else%
      \setlength{\unitlength}{\unitlength * \real{\svgscale}}%
    \fi%
  \else%
    \setlength{\unitlength}{\svgwidth}%
  \fi%
  \global\let\svgwidth\undefined%
  \global\let\svgscale\undefined%
  \makeatother%
  \begin{picture}(1,0.16803498)%
    \lineheight{1}%
    \setlength\tabcolsep{0pt}%
    \put(0,0){\includegraphics[width=\unitlength,page=1]{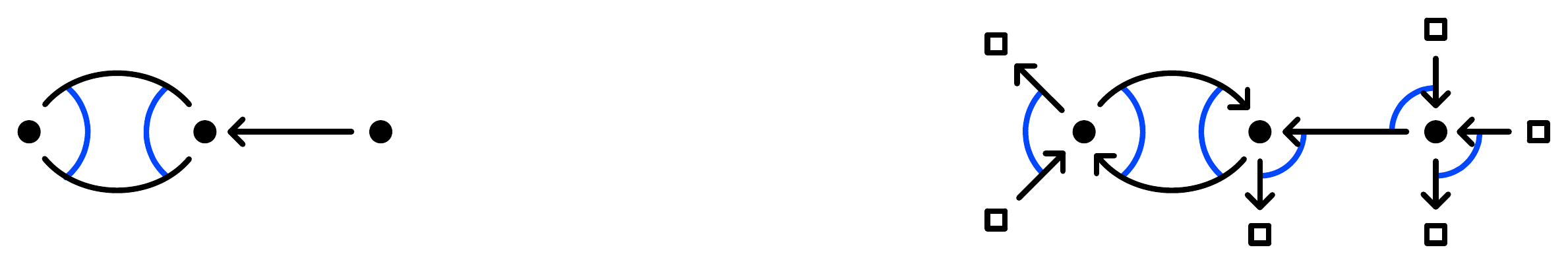}}%
    \put(0.73080615,0.05943422){\color[rgb]{0,0,0}\makebox(0,0)[lt]{\lineheight{1.25}\smash{\begin{tabular}[t]{l}$\scriptstyle{\beta}$\end{tabular}}}}%
    \put(0.72712019,0.13727586){\color[rgb]{0,0,0}\makebox(0,0)[lt]{\lineheight{1.25}\smash{\begin{tabular}[t]{l}$\scriptstyle{\gamma}$\end{tabular}}}}%
    \put(0.84160075,0.09682401){\color[rgb]{0,0,0}\makebox(0,0)[lt]{\lineheight{1.25}\smash{\begin{tabular}[t]{l}$\scriptstyle{\alpha}$\end{tabular}}}}%
    \put(0,0){\includegraphics[width=\unitlength,page=2]{genttoblos.pdf}}%
    \put(0.05774884,0.05944474){\color[rgb]{0,0,0}\makebox(0,0)[lt]{\lineheight{1.25}\smash{\begin{tabular}[t]{l}$\scriptstyle{\beta}$\end{tabular}}}}%
    \put(0.05406279,0.1372855){\color[rgb]{0,0,0}\makebox(0,0)[lt]{\lineheight{1.25}\smash{\begin{tabular}[t]{l}$\scriptstyle{\gamma}$\end{tabular}}}}%
    \put(0.16854335,0.09683453){\color[rgb]{0,0,0}\makebox(0,0)[lt]{\lineheight{1.25}\smash{\begin{tabular}[t]{l}$\scriptstyle{\alpha}$\end{tabular}}}}%
  \end{picture}%
\endgroup%

	\caption{{A gentle algebra (left) and its fringed quiver (right).}}
	\label{fig:bloss}
\end{figure}

\begin{defn}
	A \emph{route} of $\tL$ is a maximal string in $\tilde \Lambda$ (thus beginning and ending at fringe vertices of $\tilde \Lambda$).
	A route is \emph{straight} if it is an oriented walk in $\tilde \Lambda$, and \emph{bending} otherwise.
\end{defn}

\begin{defn}\label{defn:tilt-complex}
	Let $p$ and $q$ be two strings of $\tilde\Lambda$. We say that $p$ and $q$ \emph{kiss} if without loss of generality
	there exists a (possibly lazy) string $\sigma$ which is a top substring of $p^{\pm1}$ and a bottom substring of $q^{\pm1}$.
	We call $\sigma$ an \emph{incompatibility} between $p$ and $q$.
\end{defn}

\begin{defn}
	We say that two routes $p$ and $q$ are \emph{compatible} if $p$ does not kiss $q$.
	A \emph{clique} is a set of pairwise compatible (self-compatible) routes.
	The \emph{clique complex} $\R(\tL)$ of $\tL$ is the simplicial complex of cliques.
\end{defn}

The clique complex is also called the \emph{non-kissing complex}; we use the term clique complex to differentiate it from the bundle complex, to be defined later.
We particularly care about the \emph{maximal cliques}. 
A clique is \emph{reduced} if it contains no straight route. 
Note that no string may be incompatible with a straight route, hence every maximal clique contains every straight route. This means that the \emph{reduced clique complex} $\R_{\text{red}}(\tL)$, defined as the simplicial complex of reduced cliques, has the same dual graph as the clique complex. More specifically, given a clique $\K$, we say that the $\K_{\text{red}}$ is the clique given by the bending routes of $\K$ (without any straight routes). Then maximal cliques are in bijection with maximal reduced cliques through the map $\K\mapsto\K_{\text{red}}$.
See Figure~\ref{GEMINTRO} for an example of a fringed quiver and its maximal reduced cliques.

\begin{example}\label{ex:kiss}
	Shown in Figure~\ref{fig:kiss} are two routes which kiss -- the string $\alpha$ is at the top of the red route and at the bottom of the blue route.
	\begin{figure}
		\centering
		\def\svgscale{.21}
\begingroup%
  \makeatletter%
  \providecommand\color[2][]{%
    \errmessage{(Inkscape) Color is used for the text in Inkscape, but the package 'color.sty' is not loaded}%
    \renewcommand\color[2][]{}%
  }%
  \providecommand\transparent[1]{%
    \errmessage{(Inkscape) Transparency is used (non-zero) for the text in Inkscape, but the package 'transparent.sty' is not loaded}%
    \renewcommand\transparent[1]{}%
  }%
  \providecommand\rotatebox[2]{#2}%
  \newcommand*\fsize{\dimexpr\f@size pt\relax}%
  \newcommand*\lineheight[1]{\fontsize{\fsize}{#1\fsize}\selectfont}%
  \ifx\svgwidth\undefined%
    \setlength{\unitlength}{426.14199829bp}%
    \ifx\svgscale\undefined%
      \relax%
    \else%
      \setlength{\unitlength}{\unitlength * \real{\svgscale}}%
    \fi%
  \else%
    \setlength{\unitlength}{\svgwidth}%
  \fi%
  \global\let\svgwidth\undefined%
  \global\let\svgscale\undefined%
  \makeatother%
  \begin{picture}(1,0.41174772)%
    \lineheight{1}%
    \setlength\tabcolsep{0pt}%
    \put(0,0){\includegraphics[width=\unitlength,page=1]{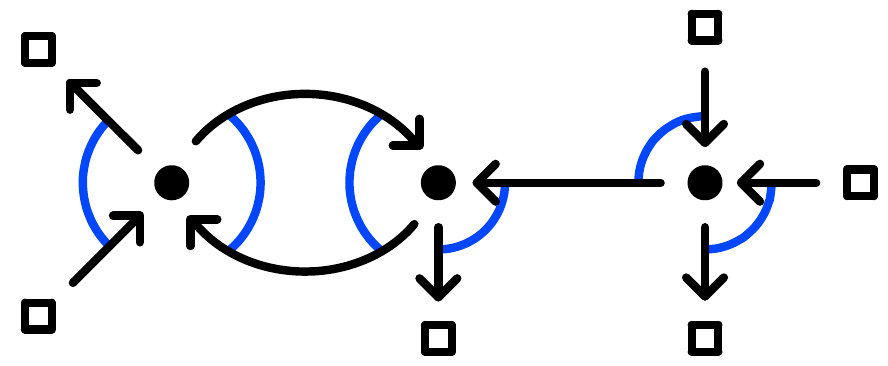}}%
    \put(0.29828508,0.14004721){\color[rgb]{0,0,0}\makebox(0,0)[lt]{\lineheight{1.25}\smash{\begin{tabular}[t]{l}$\scriptstyle{\beta}$\end{tabular}}}}%
    \put(0.28841537,0.34848008){\color[rgb]{0,0,0}\makebox(0,0)[lt]{\lineheight{1.25}\smash{\begin{tabular}[t]{l}$\scriptstyle{\gamma}$\end{tabular}}}}%
    \put(0.5949545,0.24016407){\color[rgb]{0,0,0}\makebox(0,0)[lt]{\lineheight{1.25}\smash{\begin{tabular}[t]{l}$\scriptstyle{\alpha}$\end{tabular}}}}%
    \put(0,0){\includegraphics[width=\unitlength,page=2]{kiss.pdf}}%
  \end{picture}%
\endgroup%

		\caption{Two routes which kiss.}
		\label{fig:kiss}
	\end{figure}
\end{example}

\begin{defn}
	Let $p=\alpha_1^{\e_1}\dots\alpha_m^{\e_m}$ be a bending route of $\tL$. We define a module $\tilde M(p)$ of $\Lambda$ as follows:

	Let $i$ be minimal such that $\alpha_{i-1}=-1$ and let $j$ be maximal such that $\alpha_{j+1}=1$. If $i\leq j$, then define the string $S(p):=\alpha_i^{\e_i}\dots\alpha_j^{\e_j}$. If $i=j+1$, then 
\[\e_k=\begin{cases}
	1&k<j\textup{ or }k=j+1=i\\
	-1&k>j+1\textup{ or }k=j=i-1.
\end{cases}\]
	In this case, let the string $S(p)$ be the lazy string $S(p):=e_{t(\alpha_i)}=e_{h(\alpha_j^{-1})}$. In either of these cases, let $\tilde M(p)$ be the string module $\tilde M(S(p))$. Otherwise, we must have $i=j+2$, and 
\[\e_k=\begin{cases}
	1&k\leq j+1\\
	-1&k\geq j+1.
\end{cases}
	\]
	In this case, let $\tilde M(p)$ be the shifted projective indecomposable module at the vertex $t(\alpha_{i-1}^{-1})=h(\alpha_{j+1})$.
\end{defn}

\begin{thm}[{\cite[Theorem 2.46]{PPP},\cite[Theorem 5.1]{BDMTY}}]
	\label{thm:kiss-compat}
	The above map $\tilde M$ gives a bijection between bending routes of $\tL$ and string modules of $\Lambda$.
	Moreover,
	\begin{enumerate}
		\item a route $p$ is self-compatible if and only if $\tilde M(p)$ is $\tau$-rigid, and
		\item a set of routes $\K$ is a clique if and only if $\oplus_{p\in\K}\tilde M(p)$ is $\tau$-rigid.
	\end{enumerate}
	Hence, the bijection $\tilde M$ induces an isomorphism between the reduced clique complex of bending routes of the fringed quiver $\tL$ and the support $\tau$-tilting complex of the gentle algebra $\L$.
\end{thm}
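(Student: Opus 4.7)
The plan is to establish the vertex-level bijection between bending routes of $\tL$ and string modules/shifted projectives of $\L$, then to translate the kissing condition on routes into the vanishing of appropriate $\hom$-spaces into $\tau$-translates; the statement about complexes then follows formally.

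First, I would check that the map $p\mapsto \tilde M(p)$ is well-defined and bijective. By construction, every bending route begins and ends at a fringe vertex, and the sign pattern near each endpoint is forced: for the top-left endpoint the route must begin with an inverse arrow, and dually at the right. Removing these forced ``fringe tails'' from $p$ leaves exactly the substring $S(p)$, which is either a string of $\L$ (giving a string module) or a lazy string at some internal vertex in the degenerate case $i=j+1$ (again giving a string module) or corresponds to the indecomposable projective $P_v$ being shifted in the case $i=j+2$. Surjectivity is shown by reversing this process: given any string $\sigma$ of $\L$, the fringed quiver structure provides exactly one way to extend $\sigma$ into a maximal string of $\tL$ at each end, and these extensions force a change of direction so that the resulting route is bending. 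The shifted-projective case is handled analogously, by inserting the appropriate ``up-then-down'' fringe detour at $v$. Injectivity follows because the fringe tails are uniquely determined by the endpoint combinatorics.

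Next, I would prove part (1). Over a gentle algebra, the AR translate of a string module has a combinatorial description: $\tau$ corresponds to a well-known ``hook/cohook'' operation on strings, and one has the Crawley--Boevey/Schr\"oer-style formula expressing $\dim\hom(M(\sigma),M(\sigma'))$ as a count of ``graph maps'' between the strings $\sigma,\sigma'$. Combining these two facts, $\hom(M(p),\tau M(q))$ is identified with a count of pairs (top substring of $\sigma$, bottom substring of $\sigma'$) that agree as strings, which is exactly the data of an incompatibility between the corresponding routes in $\tL$ (the hook/cohook operation under $\tau$ is precisely what pushes the kissing interface from $\L$ out to the fringe). Setting $p=q$ gives $\tau$-rigidity iff self-non-kissing, and the shifted-projective summands contribute the conditions $\hom(P,M(q))=0$ which translate, under the same dictionary, to the absence of incompatibilities involving a lazy string at the projective's vertex. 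For (2), biadditivity reduces $\tau$-rigidity of $\bigoplus_{p\in\K}\tilde M(p)$ to pairwise $\tau$-rigidity, which by (1) is equivalent to pairwise compatibility of the routes.

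Finally, to upgrade to an isomorphism of the reduced clique complex with the support $\tau$-tilting complex, I would note that both complexes are pure of dimension $|Q_0|-1$: on the algebra side this is Theorem~\ref{thm:uni-cite}, and on the fringed-quiver side one checks that a maximal reduced clique has exactly $|Q_0|$ elements by a counting/mutation argument. The vertex bijection combined with the face-preserving equivalence established in (2) then gives the isomorphism.

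The main obstacle will be the translation between kissing of routes in $\tL$ and vanishing of $\hom(M,\tau N)$ in $\L$: one must carefully match the endpoint conventions in the definition of $S(p)$ (especially the case distinctions $i<j$, $i=j+1$, $i=j+2$) with the hook/cohook recipe for $\tau$, and verify that the passage from $\L$ to $\tL$ neither invents nor destroys incompatibilities. The purity claim used at the end (that maximal reduced cliques have exactly $|Q_0|$ elements) is another point that requires attention; the cleanest route is to import it from \cite{PPP,BDMTY}.
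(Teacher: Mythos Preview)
The paper does not prove this statement; it is cited from \cite[Theorem 2.46]{PPP} and \cite[Theorem 5.1]{BDMTY} as background. There is therefore no ``paper's own proof'' to compare against.

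Your outline is essentially the argument carried out in those references: the bijection is the ``peel off the fringe tails'' construction, and the core equivalence between kissing and non-vanishing of $\hom(M,\tau N)$ is established via the combinatorial description of $\tau$ on string modules together with the basis of graph maps for morphism spaces. One small point: the codomain of $\tilde M$ is not just string modules but string modules together with shifted indecomposable projectives (your case $i=j+2$); the theorem statement in the paper is slightly loose on this, but your proof sketch already handles it correctly. The purity statement you flag at the end is indeed proved separately in the cited references (and restated later in this paper as Theorem~\ref{thm:ccc}), so importing it is the right move.
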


Band modules are not associated to routes and are never $\tau$-rigid. On the other hand, a band of $\tL$ may naturally be considered as a band of $\L$.

Moreover, g-vectors of string modules can be calculated conveniently using their routes over the fringed quiver.

\begin{prop}[{\cite[\S1.2.1]{PPP}}]\label{prop:g-vect}
	Let $p$ be a route of $\tL$.
	Then $\g(\tilde M(p))=\textbf{a}-\textbf{b}$, where $\textbf{a}=(a_v)_{v\in V_{\text{int}}}$ such that $a_v$ is the number of times the lazy string $e_v$ is a top substring of $p$, and $\textbf{b}=(b_v)_{v\in V_{\text{int}}}$ such that $b_v$ is the number of times the lazy string $e_v$ is a bottom substring of $p$.
\end{prop}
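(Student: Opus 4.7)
The plan is to compute $\g(\tilde M(p))$ directly from the minimal projective presentation $P_1^{\textup{pres}}\to P_0\to \tilde M(p)\to 0$ and to match each indecomposable summand of $P_0$ (respectively of $P_1^{\textup{pres}}$) bijectively with an internal top (respectively bottom) lazy substring of the route $p$. The two possibilities for $\tilde M(p)$---string module or shifted projective---are handled separately.

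The shifted projective case is a direct calculation. If $\tilde M(p)=P_v[1]$, then $p$ has the characteristic peak shape $\e_k=1$ for $k\le j+1$ and $\e_k=-1$ for $k\ge j+2$, peaking at $v=h(\alpha_{j+1})=h(\alpha_{j+2})$. Every interior position of $p$ other than $j+1$ has $\e_k=\e_{k+1}$ and is therefore neither a top nor a bottom substring. Position $j+1$ alone contributes, as a bottom substring at the internal vertex $v$. Hence $\mathbf{a}-\mathbf{b}=-e_v=\g(P_v[1])$.

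The main work is the string module case $\tilde M(p)=M(S(p))$. Here I would invoke the classical combinatorial description (going back to~\cite{BR87} and refined for gentle algebras in~\cite{PPP,BDMTY}) of the minimal projective presentation of a string module: $P_0=\bigoplus_v P_v$ runs over peaks of $S(p)$, while $P_1^{\textup{pres}}=\bigoplus_{v'} P_{v'}$ runs over specific syzygy tops dictated by the hook-and-cohook structure around endpoints of $S(p)$ and around each interior valley. I would match peaks of $S(p)$ with internal top substrings of $p$: at interior positions the top condition for $p$ and the peak condition for $S(p)$ agree, since they depend only on the two surrounding $\e$'s; at the boundary positions $i-1$ and $j$ the two conditions also agree because $\e_{i-1}=-1$ and $\e_{j+1}=1$ by definition of $i$ and $j$; and in the trimmed prefix and suffix the $\e$'s are of uniform sign, ruling out any internal top substring. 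I would likewise match syzygy tops with internal bottom substrings of $p$: interior valleys of $S(p)$ match interior bottom substrings of $p$, while the prefix of $p$ contributes an internal bottom substring at $h(\alpha_{i-1})$ precisely when $i\ge 3$, matching the condition that the syzygy extension at the left end of $S(p)$ reaches an internal vertex of $\L$ before fringing out; the right boundary is symmetric via the suffix.

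The main obstacle will be the bookkeeping at the two boundaries. One must check case by case that the three possible shapes of the prefix (trivial, a single fringe arrow, or several arrows ending in a turn at an internal vertex) correspond exactly to the three possibilities for the left-boundary syzygy of $M(S(p))$ (no syzygy contribution, a syzygy top which fringes out immediately in $\L$, or a syzygy top at the returning internal vertex), and that only the last of these contributes a summand to $P_1^{\textup{pres}}$; and analogously for the suffix and the right boundary. Once each boundary case is matched with the combinatorial description of $P_0$ and $P_1^{\textup{pres}}$, summing the matched contributions yields $\g(\tilde M(p))=[P_0]-[P_1^{\textup{pres}}]=\mathbf{a}-\mathbf{b}$.
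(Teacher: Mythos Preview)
The paper does not prove this proposition; it is quoted from \cite[\S1.2.1]{PPP}. Your plan follows the argument given there: for the string-module case, use the explicit peak/valley/hook description of the minimal projective presentation of a string module over a gentle algebra, and observe that the prefix and suffix of $p$ are precisely the hook extensions of $S(p)$ out to the fringe, so that internal top lazy substrings of $p$ biject with summands of $P_0$ and internal bottom lazy substrings of $p$ biject with summands of $P_1$. Your treatment of the shifted-projective case is correct, and you rightly identify the boundary bookkeeping as the only delicate point. One small quibble: the prefix really has only two relevant shapes, not three---either $i=2$ and $\alpha_{i-1}$ is fringe, giving no left-boundary $P_1$ summand, or $i\ge 3$, producing the internal bottom at $h(\alpha_{i-1})$ that matches the top of the left syzygy tail---but this does not affect your argument.
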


For example, in the Kronecker fringed quiver of Figure~\ref{fig:kron1}, the route $p_1:=e_1e_2f_2^{-1}e_2f_2^{-1}f_1^{-1}$ has g-vector $\g(p_1)=(1,-2)$.
Since all $\tau$-rigid indecomposable modules have an associated route of $\tL$, and support $\tau$-tilting modules over $\L$ correspond to maximal reduced cliques of $\tL$, Proposition~\ref{prop:g-vect} allows us to understand the g-vector fan of $\L$ using the clique complex of $\tL$.

We cite one more fact about the g-vector fan of a gentle algebra.
\begin{thm}[{\cite{AY,KPY}}]\label{thm:g-vector-dense}
	If $\L$ is a gentle algebra, then the g-vector fan of $\L$ is dense in $\mathbb R^{V_{\text{int}}}$.
\end{thm}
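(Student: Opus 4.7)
The plan is to split the argument by $\tau$-tilting finiteness of $\L$. If $\L$ is $\tau$-tilting finite, then by the cited theorem of Plamondon the fringed quiver $\tL$ has no bands, and by Demonet--Iyama--Jasso the g-vector fan $\g_{\geq0}(\L)$ is a complete fan of $\mathbb{R}^{V_{\text{int}}}$; density is then immediate. So I may assume throughout that $\L$ is $\tau$-tilting infinite, in which case $\tL$ has at least one band.

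The main tool in the infinite case is the wall-and-chamber picture of $\L$. For each indecomposable $\L$-module $M$, define the semistability wall
\[
\Theta_M := \{\theta \in \mathbb{R}^{V_{\text{int}}} : M \text{ is } \theta\text{-semistable in the sense of King}\}.
\]
A standard criterion due to Asai identifies each open connected component of the complement $\mathbb{R}^{V_{\text{int}}} \setminus \bigcup_M \Theta_M$ with the interior of some g-cone $\g_{\geq0}(T)$, where $T$ is a support $\tau$-tilting module. Granting this identification, density of the g-vector fan is equivalent to showing that $\bigcup_M \Theta_M$ has empty interior in $\mathbb{R}^{V_{\text{int}}}$.

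To verify emptiness of interior, I would use that every indecomposable $\L$-module is either a string module $M(p)$ (corresponding to a bending route $p$ of $\tL$) or a band module $M(B,n,\lambda)$. For $\tau$-rigid string modules the wall $\Theta_{M(p)}$ is already known to be a codimension-one subset. For band modules, the crucial observation is that for any band $B$ of $\tL$, all modules $M(B,n,\lambda)$ with $n\geq 1$ and $\lambda \in \k^\times$ share the same g-vector $n\cdot \g(B)$ independent of $\lambda$, so their walls all sit in the single codimension-one hyperplane $\g(B)^\perp$. Since there are only countably many equivalence classes of bands of $\tL$, the total union $\bigcup_M \Theta_M$ is contained in a locally finite union of codimension-one affine subspaces, and thus has empty interior.

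The main obstacle is the cited chamber-to-g-cone identification, which amounts to showing that the torsion class associated to each open chamber is functorially finite. For gentle algebras this can be established combinatorially using Theorem~\ref{thm:kiss-compat}: given an open chamber, one explicitly builds a maximal clique of routes whose g-vectors span a full-dimensional cone containing that chamber, proceeding by an iterative mutation across codimension-one walls and tracking kissing compatibility of routes on $\tL$. Making this mutation procedure well-defined and terminating (or at least eventually covering the chamber) is the technical heart of \cite{AY,KPY}, and is where any direct attack would have to do real work; once it is in place, the density statement follows immediately from the two observations above.
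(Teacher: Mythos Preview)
The paper does not give its own proof of this statement: Theorem~\ref{thm:g-vector-dense} is a background result quoted from \cite{AY,KPY} and is only used (in Proposition~\ref{prop:density}) to pull density back to the turbulence polyhedron. So there is nothing in the paper to compare your argument against.

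That said, your sketch has a structural issue. You correctly reduce the problem to showing that every open chamber of the wall-and-chamber structure is the interior of a g-cone, and you correctly observe that the union of walls is nowhere dense (a countable union of codimension-one pieces, by Baire category; ``locally finite'' is not justified and not needed). But the chamber-to-g-cone step is the entire content of the theorem, and you defer it wholesale to \cite{AY,KPY}. The mutation heuristic you describe (``iteratively mutate across walls to build a clique covering the chamber'') is exactly the hard part: for $\tau$-tilting infinite algebras there is in general no reason such a process reaches a given chamber, and indeed for non-gentle algebras it can fail. The actual arguments in \cite{AY,KPY} use the specific combinatorics of gentle algebras (via the surface model or the string/band description) to establish g-tameness directly, rather than going through the general wall-and-chamber framework. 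So as written your proposal is a correct framing of the question but not a proof.

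A small slip: in your wall argument you should treat \emph{all} string modules, not just $\tau$-rigid ones; fortunately every $\Theta_M$ lies in the hyperplane orthogonal to $\underline{\dim}\,M$, and there are only countably many strings, so the conclusion survives.
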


\section{Background on Polyhedra, DAGs, and Ample Framings}\label{sec:DAG}

Unit flow polytopes are a fundamental and widely studied class of polytopes which may be defined from a directed acyclic graph (DAG). \emph{Framings} of a DAG were introduced by Danilov, Karzanov, and Koshevoy~\cite{DKK} and shown to induce regular unimodular triangulations on the flow polytope.
In~\cite{WIWT}, a connection was developed between certain gentle algebras and \emph{amply framed} directed acyclic graphs (DAGs), which we now include.
The results of this section are not directly used in the rest of the paper, but the intuition is fundamental to our definitions and our results will pertain to this theory.

\subsection{Triangulations, subdivisions, and dissections}
\label{ssec:subd}

Before talking about flow polytopes, we define what we ask of a subdivision of a polyhedron.

\begin{defn}\label{defn:subd}
	Let $P$ be a lattice polyhedron. A \emph{(polyhedral) dissection} of $P$ is a set $\Sbs$ of lattice polyhedra satisfying:
\begin{enumerate}
	\item \textbf{Density:} $\cup_{Q\in\Sbs}Q$ is a dense subset of $P$, and
    \item \textbf{Weak Intersection Property:} for distinct $Q_1,Q_2\in\Sbs$, the polyhedron $Q_1 \cap Q_2$ has strictly lower dimension than $Q_1$ and $Q_2$.
\end{enumerate}
	The dissection is \emph{complete} if $P=\cup_{Q\in\Sbs}Q$.
	We say that a dissection is a \emph{subdivision} if it satisfies the \emph{strong intersection property}: for distinct $Q_1,Q_2\in\Sbs$, the intersection $Q_1\cap Q_2$ is a proper face of both $Q_1$ and $Q_2$.
	A \emph{triangulation} is a strong subdivision in which each cell is a simplex of dimension $\text{dim}(P)$.
	A triangulation is \emph{unimodular} if each cell has normalized volume 1 (equivalently, if the volume of each cell is the factorial of $\dim P$).
\end{defn}

Other sources may require all dissections to be complete, or require each cell of a dissection to be full-dimensional.
Note that if $P$ is a lattice polytope, then every dissection of $P$ is complete.
We may also use the term \emph{(unimodular) triangulation} to refer to the cone over a unimodular triangulation as in Definition~\ref{defn:subd}.

\subsection{Flow polytopes}

Let $G=(V,E)$ be a finite directed acyclic graph with vertex set $V$ and edge set $E$. For each $v\in V$, let $\inn(v)$ and $\out(v)$ denote the incoming and outgoing edges of $v$, respectively. A vertex $v$ is called a \emph{source} if $\inn(v)=\emptyset$ and it is called a \emph{sink} if $\out(v)=\emptyset$.
All other vertices are called \emph{internal vertices}. 
An edge $\alpha\in E$ is \emph{internal} if it is between two internal vertices, and otherwise is a \emph{source/sink edge}. More specifically, it is either a \emph{source edge} (if its tail is a source) and/or a \emph{sink edge} (if its head is a sink). 
A \emph{route} of $G$ is a maximal (directed) path in $G$.

\begin{defn}\label{defn:flow-polytope}
	A \emph{flow} $F$ on a directed graph $G$ is a function $F:E\to\mathbb R$ which preserves flow at each internal vertex, i.e., for every internal vertex $v$ we have
	\[\sum_{e\in\inn(v)}F(e)=\sum_{e\in\out(v)}F(e).\]
	A flow $F$ is \emph{nonnegative} if $F(e)\geq0$ for all edges $e\in E$.
	The \emph{cone of (nonnegative) flows} $\mathcal F_{\geq0}(G)$ is the space of nonnegative flows on $G$. The \emph{(unit) flow polyhedron} $\mathcal F_1=\mathcal F_1(G)$ is the set of all \emph{unit} flows of $G$, i.e., flows satisfying
	\[\sum_{\substack{v\text{ is a source}\\ e\in\out(v)}}F(e)=1.\]
	If $G$ is acyclic, then $\F_1(G)$ is a polytope and we call it the \emph{flow polytope} $\F_1(G)$.
\end{defn}

The left of Figure~\ref{fig:flow} shows a flow on a directed graph labelled in blue.

\begin{prop}\label{prop:dimflow}
The dimension of $\F_1(G)$ is
	\[\textup{dim}(\F_1(G))=|E|-\#\{v\in V\ :\ v\textup{ is an internal vertex}\}-1.\]
\end{prop}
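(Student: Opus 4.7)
The plan is to realize $\F_1(G)$ as the intersection of the nonnegative orthant $\mathbb R^E_{\geq 0}$ with the affine subspace cut out by the $k := \#\{v \in V : v \text{ internal}\}$ flow-conservation equations, one per internal vertex, together with the single unit-flow equation. From this I get $\dim \F_1(G) \leq |E| - k - 1$ provided these $k+1$ equations are linearly independent, and equality provided $\F_1(G)$ meets the interior of the orthant. The proof therefore splits into (i) checking linear independence of the defining equations, and (ii) exhibiting a unit flow that is strictly positive on every edge.

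For (i), write $L_v$ for the flow-conservation linear form $F \mapsto \sum_{e\in\inn(v)}F(e)-\sum_{e\in\out(v)}F(e)$ attached to an internal vertex $v$, and consider a relation $\sum_{v \text{ internal}} c_v L_v = 0$. Adopting the convention $c_v = 0$ whenever $v$ is a source or sink, the coefficient of $F(e)$ for an edge $e$ from $u$ to $w$ works out to $c_w - c_u$ in every case. Thus $c$ is constant along every chain of internal edges and vanishes at any internal vertex adjacent to a source edge or a sink edge. Because $G$ is a finite DAG, walking backward along in-edges from any internal vertex $v_0$ must terminate at a source; the last internal vertex on that walk is adjacent to a source edge and hence has $c$-value zero, and this value propagates along the internal chain back to $v_0$. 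So every $c_v$ is zero, proving the $L_v$ are independent. The unit-flow equation is inhomogeneous while every $L_v$ is homogeneous, so it is independent of them as soon as $\F_1(G)$ is nonempty.

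For (ii), the same backward/forward walking argument shows that every edge of $G$ lies on some route (path from source to sink). Averaging the indicator vectors $\I(r)$ over all routes $r$ with strictly positive weights summing to $1$ yields a unit flow $F$ with $F(e) > 0$ on every edge, and any such strictly positive flow sits in the relative interior of $\F_1(G)$ relative to its affine hull. Combining (i) and (ii) gives $\dim \F_1(G) = |E| - k - 1$.

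The main delicacy is the independence step: the analogous combination summed over \emph{all} vertices is trivially zero, so the argument must use that the sum is restricted to internal vertices, and the DAG hypothesis is exactly what makes the backward walk terminate at a source where a zero coefficient can be read off. Everything else is bookkeeping, and standard edge cases (e.g.\ $|E|=0$, for which the formula gives $-1$ and $\F_1(G)$ is empty) are consistent with the convention that $\dim \emptyset = -1$.
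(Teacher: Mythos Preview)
Your proof is correct. Note that the paper does not actually prove Proposition~\ref{prop:dimflow}; it is stated as background. The paper does prove the analogous statement for turbulence polyhedra of fringed quivers (Lemma~\ref{lem:flowdim}), and the overall architecture there matches yours: show the $|V_{\text{int}}|+1$ defining hyperplanes are independent, then exhibit a strictly positive unit flow by averaging routes.

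The one genuine difference is in the independence step. You argue algebraically: assume a dependence $\sum_v c_v L_v = 0$, read off $c_w = c_u$ along every edge, and use acyclicity to walk back to a source where $c$ must vanish. The paper instead constructs explicit witness points: for each internal vertex $v$ it takes a route $p = p^+ e_v p^-$ through $v$ and observes that $2\I(p^+)$ satisfies unit flow and conservation at every internal vertex except $v$, so $H_v$ is not implied by the others; and the zero flow witnesses that $H_U$ is not implied by the $H_v$'s. Your argument is cleaner linear algebra and makes the role of acyclicity transparent; the paper's witness-point argument is more concrete and generalizes verbatim to the fringed-quiver setting where ``walking backward to a source'' has no direct analog. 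Either approach proves the DAG statement without difficulty.
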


Given a route $p$ of $G$, the \emph{indicator vector} $\I(p)$ with 1's in the coordinates of edges used by $p$ and 0's elsewhere, is a vertex of $\F_1(G)$. All vertices of $\F_1(G)$ are obtained in this way. 

\subsection{Framed DAGs}
\label{ssec:dkkback}

We give some definitions, following~\cite{WIWT}.

\begin{defn}
	Let $G=(V,E)$ be a DAG.  For each internal vertex $v$ of $G$, assign a linear order to the edges in $\inn(v)$ and assign a linear order to the edges in $\out(v)$. This assignment is called a \emph{framing} of $G$, which we denote by $\mathfrak{F}$. We call a DAG $G$ with a framing $\mathfrak{F}$ a \emph{framed DAG}, which we often denote by $\Gamma=(G,\mathfrak{F})$. If $e$ is less than $f$ in the linear order for $\mathfrak{F}$ on $\inn(v)$, we write $e<_{\mathfrak{F},\inn(v)}f$ (and similarly for $\out(v)$). When $\mathfrak{F}$ and/or $\inn(v)$ or $\out(v)$ is clear, we may drop one or both subscripts.
\end{defn}

In this paper, to denote a framing we label the edges of a DAG with integers.
For example, in the left of Figure~\ref{fig:difdagc}, the edge $\alpha$ is the lowest element of $\out(t(\alpha))$. 

If $\Gamma=(G,\mathfrak{F})$ is a framed DAG, then a route (vertex, edge, \dots) of $\Gamma$ is a route (vertex, edge, \dots) of $G$.

\begin{defn}\label{defn:compatible}
	Let $R$ be a path in $\Gamma$ from $v$ to $w$, where both vertices are internal (we may have $v=w$, in which case $R$ is the empty path from $v$ to $v$).
	Let $\alpha_1$ and $\alpha_2$ be edges of $G$ ending at $v$ such that $\alpha_1<_{\mathfrak{F},\inn(v)}\alpha_2$. Let $\beta_1$ and $\beta_2$ be edges of $G$ starting at $w$ such that $\beta_1<_{\mathfrak{F},\out(v)}\beta_2$. Then the paths $\alpha_1R\beta_2$ and $\alpha_2R\beta_1$ are \emph{incompatible}.
	Two routes $p$ and $q$ are \emph{incompatible} if there exist subpaths $p'$ of $p$ and $q'$ of $q$ which are incompatible. Otherwise, they are \emph{compatible}.
	If a route $p$ in $\Gamma$ is compatible with every other route in $\Gamma$, we say that $p$ is \emph{exceptional}. Otherwise, it is \emph{nonexceptional}.
\end{defn}

Similar to the case of gentle algebras, we will see that sets of pairwise compatible routes will form a pure simplicial complex.

\begin{defn}
	A \emph{clique} is a set of pairwise-compatible routes in $\Gamma$. A clique is \emph{reduced} if it contains no exceptional routes.
\end{defn}

Note that an exceptional route is a route which is in every maximal clique.
The \emph{clique complex} $\R({\Gamma})$ of $\Gamma$ is the simplicial complex of cliques of $\Gamma$. The \emph{reduced clique complex} $\R_{\text{red}}(\Gamma)$ is the simplicial complex of cliques of $\Gamma$ which contain no exceptional routes. The maximal cliques of $\R_{\text{red}}(\Gamma)$ are precisely the maximal cliques of $\R(\Gamma)$ with the exceptional routes removed.

Recall that taking indicator vectors gives a bijection from routes of $G$ onto vertices of $\F_1(G)$. Through this correspondence, we may view a maximal clique of $\Gamma$ as a collection of vertices of $\F_1(G)$ which form a simplex. The set of such simplices forms a regular unimodular triangulation of $\F_1(G)$.

\begin{thm}[\cite{DKK}]
	\label{thm:dkk}
	Let $\Gamma$ be a framed DAG. The set of maximal cliques of $\Gamma$ forms a regular unimodular triangulation of the flow polytope $\mathcal F_1(G)$.
\end{thm}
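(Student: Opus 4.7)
The strategy is to prove Theorem~\ref{thm:dkk} via an explicit canonical decomposition algorithm that takes any unit flow $F \in \F_1(G)$ and expresses it uniquely as a convex combination of indicator vectors of pairwise compatible routes. Such an algorithm would simultaneously establish covering (density), the simplicial structure, the intersection property, and unimodularity in one stroke; regularity would need a separate height-function argument.

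\textbf{The algorithm.} At each internal vertex $v$, the framing $\mathfrak{F}$ supplies linear orders $\alpha_1 <_{\inn} \cdots <_{\inn} \alpha_k$ and $\beta_1 <_{\out} \cdots <_{\out} \beta_\ell$ on the incoming and outgoing edges. Given $F$, think of the values $F(\alpha_i)$ and $F(\beta_j)$ as lengths of intervals laid end-to-end in these orders; since flow is conserved at $v$, the total lengths agree, and this determines a canonical pairing of "in-flow units" with "out-flow units" at $v$. Starting at a source edge $e$ with $F(e) > 0$, follow the pairings through each internal vertex to trace out a single route $p$ from source to sink, and assign the weight $a_p := \min_{\alpha \in p} F(\alpha)$. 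Then replace $F$ by $F - a_p \I(p)$ and iterate. Each step zeros out at least one edge, so the algorithm terminates and produces $F = \sum_p a_p \I(p)$.

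\textbf{Compatibility, uniqueness, and unimodularity.} Any two routes produced by the algorithm are pairwise compatible: at any common internal vertex $v$, the pairing rule matches lower-indexed in-edges to lower-indexed out-edges, which is precisely the non-crossing configuration of Definition~\ref{defn:compatible}. For uniqueness, suppose $F = \sum_p a_p \I(p)$ is any decomposition into a compatible set of routes; then the restriction to each internal vertex determines a pairing of in- to out-edges that is non-crossing with respect to $\mathfrak{F}$, and the only such pairing is the canonical one, so by induction on the number of edges the decomposition must coincide with the algorithm's. Affine independence of $\{\I(p) : p \in \K\}$ for any maximal clique $\K$ follows from uniqueness, and a count using Proposition~\ref{prop:dimflow} together with flow-conservation equations shows $|\K| = \dim \F_1(G) + 1$, so each maximal clique gives a full-dimensional simplex. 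The intersection $\D(\K_1)\cap \D(\K_2)$ is the face spanned by $\K_1 \cap \K_2$, again by uniqueness. Finally, integrality of $F$ is preserved at every step of the algorithm (each $a_p$ is a positive integer and each subtraction preserves integer values), so each integer point in a simplex $\D(\K)$ is a nonnegative integer combination of its vertices, which characterizes unimodularity.

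\textbf{Regularity.} The deepest part is producing a concrete height function $h$ on the vertex set $\{\I(p) : p \text{ is a route of } G\}$ whose lower envelope realizes the triangulation. The natural candidate is to define $h(\I(p))$ as a weighted count of "inversions" of $p$ relative to $\mathfrak{F}$ (roughly, how often $p$ uses edges in an order that disagrees with some reference monotone path), chosen so that the lifted points corresponding to a compatible family of routes all lie on a common face of the lower convex hull, while any incompatible pair strictly exceeds it.

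\textbf{Main obstacle.} The algorithmic steps are conceptually clean but require careful bookkeeping at repeated internal vertices, particularly when verifying that the pairing rule and the compatibility definition match exactly; this is the combinatorial heart of the argument. The more serious obstacle is producing and verifying an explicit regularizing height function — without such a construction one only obtains a unimodular triangulation, not a regular one.
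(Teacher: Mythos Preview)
Your approach is essentially the paper's own flow algorithm (Section~\ref{sec:triang} and Appendix~\ref{sec:appendix}), which the paper presents as a \emph{new} constructive alternative to the original proof in~\cite{DKK}; that original proof is inductive on the length of the DAG, not algorithmic. So you have independently rediscovered the paper's method rather than the cited one. The core idea---pair in-flow intervals to out-flow intervals at each internal vertex according to the framing order, then read off routes---is exactly Definition~\ref{defn:forbac} specialized to DAGs (Definition~\ref{defn:forbacdag}). The paper's presentation is parametric (for each $C\in[0,F(\alpha)]$ trace $p^F_{(\alpha,C)}$, and the coefficient is the length of the interval of $C$-values yielding that route), whereas yours is greedy-iterative (trace one route, subtract, repeat). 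These agree in the end, but your version has a gap: the instruction ``follow the pairings through each internal vertex to trace out a single route'' is ambiguous without specifying \emph{which} flow-unit you are following, and the claimed coefficient $a_p=\min_{\alpha\in p}F(\alpha)$ is only correct if you are tracing the $\prec$-lowest available route at each step---this needs to be stated and checked. The paper's parametric formulation sidesteps this by tracking all $C$ at once.

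On regularity: the paper explicitly notes (after Theorem~\ref{ITHM:CLIQUE} and again in the Appendix) that its flow-algorithm method does \emph{not} establish regularity, and that finding a combinatorial height function is an open problem. Your ``inversion-count'' sketch is in the right spirit but is not a proof; you correctly flag this as the main obstacle. So your proposal, like the paper's alternative method, yields the unimodular triangulation but not the ``regular'' in the theorem statement---for that one still needs~\cite{DKK}.
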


The triangulation from Theorem~\ref{thm:dkk} is called the \emph{DKK triangulation} of $\Gamma$. 
See Example~\ref{ex:cube}.
By Theorem~\ref{thm:dkk}, the dual graph on maximal cliques of $\Gamma$ is isomorphic to the dual graph of the DKK triangulation of $\Gamma$.

\begin{example}\label{ex:cube}
	Figure~\ref{fig:cube} shows a framed DAG whose flow polytope is a square. In the middle are its two maximal cliques and on the right is the corresponding triangulated flow polytope. The flow polytope is a square embedded in the four-dimensional coordinate system with a coordinate for each of the four edges of the DAG. The vertex labelled $e_2f_1$, for example, is the indicator vector $\I(e_2f_1)$.
	\begin{figure}
		\centering
		\def\svgscale{.21}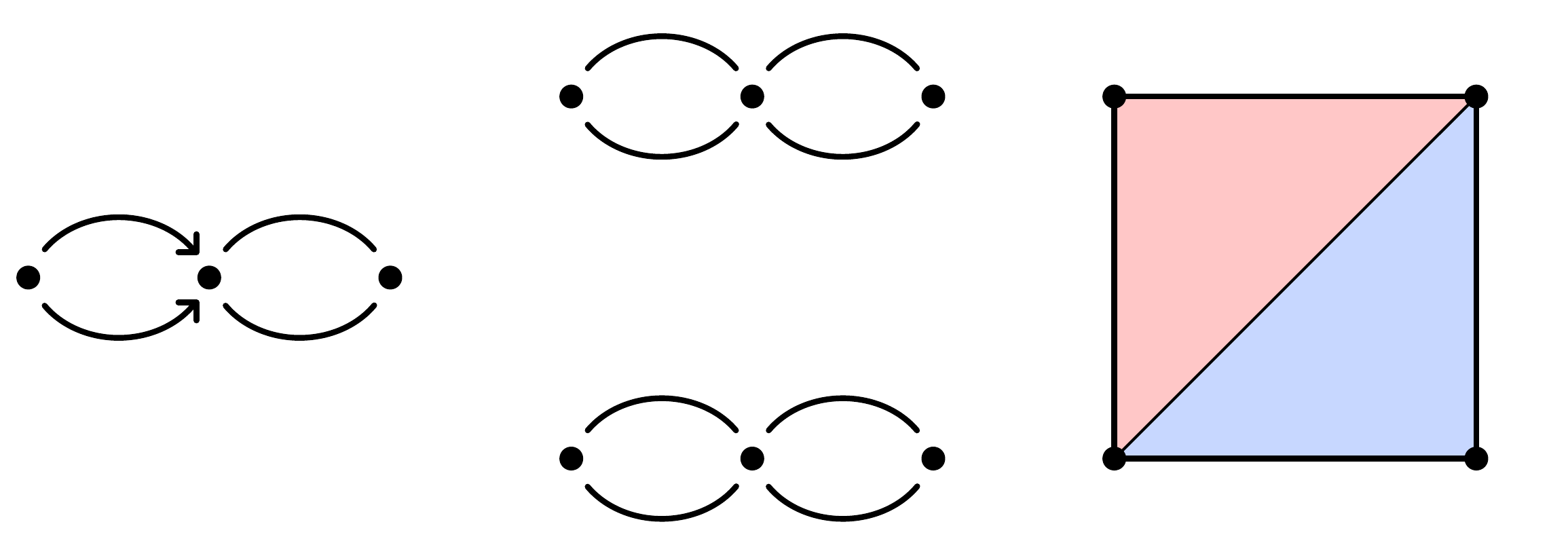
		\caption{A framed DAG with its maximal cliques and triangulated flow polytope.}
		\label{fig:cube}
	\end{figure}
\end{example}

\subsection{Ample framings}
\label{ssec:ampleframe}

A framed DAG is \emph{amply framed}~\cite{DKK} if every edge is contained in some exceptional route.
In~\cite{WIWT} it was shown that, up to contraction, amply framed DAGs may be assumed to be of the following form, which we henceforth take as the definition of ample framedness.

\begin{defn}\label{defn:amp}
	A DAG is \emph{full} if every internal vertex is incident to precisely two outgoing edges and two incoming edges.
	A framed full DAG $\Gamma=(G,\mathfrak{F})$ is \emph{amply framed} if
		there is a map $\phi_\mathfrak{F}:E\to\{1,2\}$ such that, if $\alpha$ and $\beta$ are edges with the same internal source or target, we have $\alpha<_{\mathfrak{F}}\beta$ (in the partial order of $\inn(h(\alpha))$ or $\out(t(\alpha))$) if and only if $\phi_\mathfrak{F}(\alpha)<\phi_\mathfrak{F}(\beta)$. We say that the framing $\mathfrak{F}$ is \emph{realized by} $\phi_{\mathfrak{F}}$.
\end{defn}

We will always assume that an amply framed DAG is of the form given in Definition~\ref{defn:amp}.
We say that an edge $\alpha$ of an amply framed DAG $\Gamma$ is a \emph{1-edge} (respectively \emph{2-edge}) if $\phi_\mathfrak{F}(\alpha)=1$ (respectively $\phi_\mathfrak{F}(\alpha)=2$).
The exceptional routes of an amply framed DAG are the sets of routes which consist entirely of 1-edges or entirely of 2-edges.
All examples of framed DAGs in this paper are amply framed.

\begin{defn}\label{defn:gent-from-dag}
	Let $\Gamma=(G,\mathfrak{F})$ be an amply framed DAG. As in~\cite{WIWT}, we associate to $\Gamma$ a gentle algebra $\Lambda(\Gamma)=\k Q/I$ as follows. The vertices of $Q$ correspond to internal vertices of $\Gamma$. For each internal 1-edge $\alpha$ of $\Gamma$ there is an arrow $\alpha_{\Lambda(\Gamma)}$ of $Q$ from $t(\alpha)$ to $h(\alpha)$. For each internal 2-edge $\beta$ of $\Gamma$ there is an arrow $\beta_{\Lambda(\Gamma)}$ from $h(\beta)$ to $t(\beta)$. In other words, we obtain $Q$ by taking the internal subgraph of $\Gamma$ and flipping the direction of all 2-edges. The set of relations $I$ is generated by all pairs of the form $\alpha\beta$, where either $\alpha$ came from a 1-edge and $\beta$ came from a 2-edge, or $\alpha$ came from a 2-edge and $\beta$ came from a 1-edge.
	Let $\tL(\Gamma)$ be the fringed quiver of $\Lambda(\Gamma)$.
\end{defn}
For an example of this map, see Figure~\ref{fig:difdagc}.

There is a natural bijection between internal arrows of $\Gamma$ and internal edges of $\tL(\Gamma)$, and a bijection between source/sink edges of $\Gamma$ which do not go directly from a source vertex to a sink vertex and fringe edges of $\tL(\Gamma)$.
This induces a bijection between nonexceptional routes of $\Gamma$ and bending routes of $\Lambda(\Gamma)$. It was observed in~\cite{WIWT} that this map preserves compatibility. In other words, two routes of $\Gamma$ are compatible if and only if the corresponding routes of $\Lambda(\Gamma)$ do not kiss.

\begin{thm}[{\cite[Theorem 5.7, Theorem 5.12, Theorem 5.13]{WIWT}}]
	\label{thm:gentdag}
	Let $\Gamma$ be an amply framed DAG. There is a natural bijection between non-exceptional routes of $\Gamma$ and bending routes of $\L(\Gamma)$ which preserves compatibility and induces an isomorphism between the reduced clique complex $\R_{\text{red}}(\Gamma)$ and the support $\tau$-tilting complex $\C_{\Lambda(\Gamma)}$.
\end{thm}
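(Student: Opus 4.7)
My plan is to construct the bijection explicitly edge-by-edge, verify that it sends (non-)exceptional routes to (non-)straight routes, then check that the compatibility/kissing conditions match, and finally glue with Theorem~\ref{thm:kiss-compat} to obtain the complex-level statement.

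First I would define the map on edges. By construction of $\L(\Gamma)$, the internal 1-edges of $\Gamma$ are in bijection with the internal arrows of $\L(\Gamma)$ coming from the 1-labels (kept in their original direction), while internal 2-edges biject with arrows of $\L(\Gamma)$ whose direction was reversed. I would send a 1-edge $\alpha$ to the signed arrow $\alpha_{\L(\Gamma)}^{+1}$ and a 2-edge $\beta$ to $\beta_{\L(\Gamma)}^{-1}$, and extend this to source/sink edges using the prescribed bijection with fringe arrows of $\tL(\Gamma)$, where trivial source-to-sink edges are discarded. Given a route $p = e_1\cdots e_n$ in $\Gamma$, define $\tilde p$ by applying this translation termwise. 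I would check well-definedness by a local case analysis at each internal vertex $v$: among the four possible through-transitions (1/2 in, 1/2 out), each produces a length-two subword at $v$ whose signed-arrow form avoids both words $\beta\gamma$ and $\gamma^{-1}\beta^{-1}$ for $\beta\gamma\in \tilde I$, since the relations of $\tL(\Gamma)$ are by Definition~\ref{defn:gent-from-dag} precisely the mixed 1/2-compositions. Conversely, given a route of $\tL(\Gamma)$, reading off the signed arrows recovers a maximal directed path of $\Gamma$, giving the inverse. Maximality translates on both sides into starting and ending at extremal (source/sink or fringe) vertices. Because an exceptional route of $\Gamma$ is a pure-1 or pure-2 route and any pure-color route is sent to a word with all plus-signs or all minus-signs, exceptional routes of $\Gamma$ biject with straight routes of $\tL(\Gamma)$, and hence non-exceptional routes with bending routes.

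Next I would verify that the bijection preserves compatibility. Suppose $p,q$ are incompatible in $\Gamma$, witnessed by a shared internal path $R$ from $v$ to $w$ and edges $\alpha_1 <_{\mathfrak{F}} \alpha_2$ in $\inn(v)$, $\beta_1 <_{\mathfrak{F}} \beta_2$ in $\out(w)$, with $p \supseteq \alpha_1 R \beta_2$ and $q \supseteq \alpha_2 R \beta_1$ (or the reverse). Amplitude of the framing means the orderings are realized by $\phi_\mathfrak{F}$, so $\alpha_1,\beta_1$ are 1-labeled and $\alpha_2,\beta_2$ are 2-labeled. Under the translation, $\alpha_1$ and $\beta_1$ become forward arrows while $\alpha_2$ and $\beta_2$ become inverses. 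I would show that the image $\tilde R$ of $R$ is a top substring of $\tilde p^{\pm 1}$ at both endpoints (because $\tilde p$ enters $\tilde R$ via a forward arrow $\tilde\alpha_1$ and leaves via an inverse $\tilde\beta_2^{-1}$, exactly the sign pattern making $\tilde R$ a top substring) and a bottom substring of $\tilde q^{\pm 1}$. This is precisely the definition of a kiss. The converse direction is symmetric: an incompatibility $\sigma$ of $\tilde p,\tilde q$ as in Definition~\ref{defn:tilt-complex} lifts via the edge bijection to a common internal sub-path $R$ of $p,q$ which, reading off the sign pattern at its endpoints, determines 1/2-labels on the outer edges and hence produces the required DAG incompatibility via the ample framing.

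Finally I would assemble the statement about complexes. A set of routes $\K$ in $\Gamma$ is a clique iff it is pairwise compatible iff its image $\tilde\K$ is pairwise non-kissing in $\tL(\Gamma)$. Removing exceptional routes on the left corresponds to removing straight routes on the right, giving a simplicial isomorphism $\R_{\text{red}}(\Gamma) \cong \R_{\text{red}}(\tL(\Gamma))$. By Theorem~\ref{thm:kiss-compat}, the right-hand complex is isomorphic to the support $\tau$-tilting complex $\C(\L(\Gamma))$, so the composition gives the desired isomorphism. I expect the main obstacle to be the compatibility/kissing correspondence of the second paragraph: the ample framing condition and the top/bottom substring condition are phrased very differently, and the translation requires a careful local analysis of how the 1/2-labels at the endpoints of the shared path $R$ determine the sign pattern of the corresponding arrows in $\tL(\Gamma)$. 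Every other step is either a direct local check or an application of the cited result.
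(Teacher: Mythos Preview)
The paper does not give its own proof of this statement: it is quoted from \cite{WIWT}, and the paragraph preceding the theorem merely sketches the edge-by-edge bijection and asserts (citing \cite{WIWT}) that it preserves compatibility. Your plan is exactly this sketch fleshed out, together with the composition with Theorem~\ref{thm:kiss-compat}, so there is nothing to compare on the level of strategy.

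Your argument is correct in outline, but there is one labeling slip in the compatibility step. With $p\supseteq\alpha_1 R\beta_2$ (so $\alpha_1$ is a $1$-edge and $\beta_2$ a $2$-edge), the translated word has $\e_{i-1}=+1$ (from $\tilde\alpha_1$) before $\tilde R$ and $\e_{j+1}=-1$ (from $\tilde\beta_2^{-1}$) after it. By the definition of top/bottom substrings in Section~\ref{sec:bac}, this makes $\tilde R$ a \emph{bottom} substring of $\tilde p$, not a top one; symmetrically $\tilde R$ is a top substring of $\tilde q$. The kissing condition only requires one of each, so your conclusion survives, but the sentence ``exactly the sign pattern making $\tilde R$ a top substring'' is the wrong way around and should be fixed. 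Everything else (the local well-definedness check at each internal vertex, the exceptional/straight correspondence, and the final composition with Theorem~\ref{thm:kiss-compat}) is routine as you say.
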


Figure~\ref{fig:difdagc} shows an example of a framed DAG $\Gamma$ and its corresponding algebra $\L(\Gamma)$ as in Theorem~\ref{thm:gentdag}. Note that routes on $\Gamma$ are in pairwise-compatibility-preserving bijection with routes on $\Lambda(\Gamma)$.
\begin{figure}
	\centering
	\def\svgscale{.21}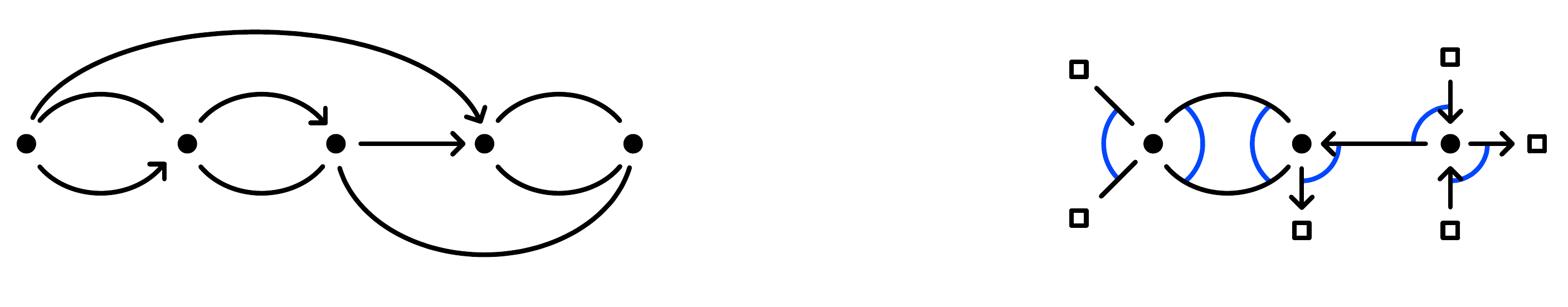
	\caption{Shown is an amply framed DAG on the left, and the fringed quiver of its corresponding gentle algebra on the right. The edge labels on the left and right show the natural bijection between source/sink edges of the DAG and fringe arrows of the fringed quiver.}
	\label{fig:difdagc}
\end{figure}
Figure~\ref{fig:cubegent} shows the fringed quiver associated to the framed DAG of Figure~\ref{fig:cube}. Shown also are the maximal cliques of this fringed quiver. Note the similarity to the maximal cliques of $\Gamma$ shown in Figure~\ref{fig:cube}.
\begin{figure}
	\centering
	\def\svgscale{.21}
\begingroup%
  \makeatletter%
  \providecommand\color[2][]{%
    \errmessage{(Inkscape) Color is used for the text in Inkscape, but the package 'color.sty' is not loaded}%
    \renewcommand\color[2][]{}%
  }%
  \providecommand\transparent[1]{%
    \errmessage{(Inkscape) Transparency is used (non-zero) for the text in Inkscape, but the package 'transparent.sty' is not loaded}%
    \renewcommand\transparent[1]{}%
  }%
  \providecommand\rotatebox[2]{#2}%
  \newcommand*\fsize{\dimexpr\f@size pt\relax}%
  \newcommand*\lineheight[1]{\fontsize{\fsize}{#1\fsize}\selectfont}%
  \ifx\svgwidth\undefined%
    \setlength{\unitlength}{737.77001953bp}%
    \ifx\svgscale\undefined%
      \relax%
    \else%
      \setlength{\unitlength}{\unitlength * \real{\svgscale}}%
    \fi%
  \else%
    \setlength{\unitlength}{\svgwidth}%
  \fi%
  \global\let\svgwidth\undefined%
  \global\let\svgscale\undefined%
  \makeatother%
  \begin{picture}(1,0.66510292)%
    \lineheight{1}%
    \setlength\tabcolsep{0pt}%
    \put(0,0){\includegraphics[width=\unitlength,page=1]{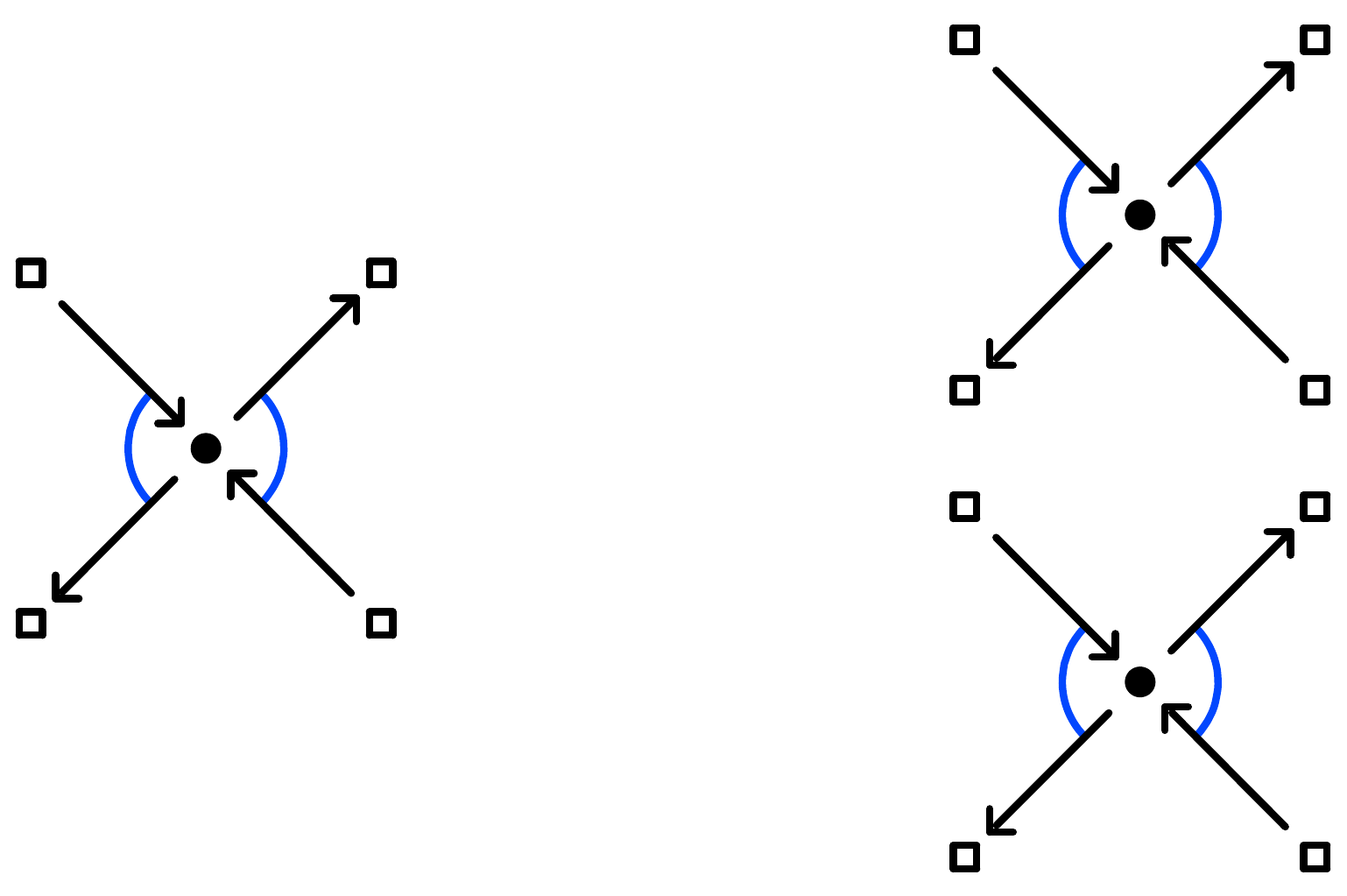}}%
    \put(0.08668135,0.40964526){\color[rgb]{0,0,0}\makebox(0,0)[lt]{\lineheight{1.25}\smash{\begin{tabular}[t]{l}$\scriptstyle{e_1}$\end{tabular}}}}%
    \put(0.17449815,0.40964526){\color[rgb]{0,0,0}\makebox(0,0)[lt]{\lineheight{1.25}\smash{\begin{tabular}[t]{l}$\scriptstyle{f_1}$\end{tabular}}}}%
    \put(0.17449815,0.23614946){\color[rgb]{0,0,0}\makebox(0,0)[lt]{\lineheight{1.25}\smash{\begin{tabular}[t]{l}$\scriptstyle{f_2}$\end{tabular}}}}%
    \put(0.08668135,0.23614946){\color[rgb]{0,0,0}\makebox(0,0)[lt]{\lineheight{1.25}\smash{\begin{tabular}[t]{l}$\scriptstyle{e_2}$\end{tabular}}}}%
    \put(0,0){\includegraphics[width=\unitlength,page=2]{squerg.pdf}}%
  \end{picture}%
\endgroup%

	\caption{The fringed quiver given by the framed DAG of Figure~\ref{fig:cube}.}
	\label{fig:cubegent}
\end{figure}

For the convenience of the reader, we include a table comparing analogous terms in the setting of amply framed DAGs and fringed quivers.
\begin{center}
\begin{tabular}{|c|c|}
	\hline
	\textbf{Amply Framed DAG} & \textbf{Fringed Quiver}\\
	\hline
	path & string\\
	\hline
	route & route\\
	\hline
	clique & clique\\
	\hline
	clique complex & clique complex \\
	\hline
	exceptional route & straight route\\
	\hline
	reduced clique & reduced clique\\
	\hline
	nonexceptional route & bending route\\
	\hline
	------ & band\\
	\hline
\end{tabular}
\end{center}

\section{Turbulence Polyhedra}
\label{sec:tp}

In Section~\ref{sec:DAG}, we saw that the $\tau$-tilting theory of certain gentle algebras matches the theory of amply framed DAGs and their triangulated flow polytopes.
In this section, we extend this situation by defining a notion of a (unit) flow on the fringed quiver $\tL$ of an arbitrary gentle algebra $\Lambda$. We refer to the space of unit flows as the \emph{turbulence polyhedron} $\F_1(\tL)$, and we refer to the space of nonnegative flows as the \emph{cone of flows} $\F_{\geq0}(\tL)$. In this section, we give some examples and we prove some basic results about turbulence polyhedra. We will connect turbulence polyhedra back to flow polyhedra of amply framed directed (possibly acyclic) graphs.

In the following, let $\tL$ be a fringed quiver with vertex set $V$ and edge set $E$.
We use the symbols $V_{\text{int}}$, $V_{\text{fringe}}$, $E_{\text{int}}$, and $E_{\text{fringe}}$ to refer to the internal and fringe vertices and edges of $\tL$.

\begin{defn}\label{defn:flowgent}
	A function $F:E\to\mathbb R$ is a \emph{(nonnegative) flow} on $\tL$ if it satisfies
	\begin{enumerate}
		\item \textbf{Nonnegativity:} for any arrow $\alpha\in E$, we have $F(\alpha)\geq0$,
		\item \textbf{Conservation of Flow:} for any internal vertex $v$ of $\tL$ with relations $\alpha_1\alpha_2$ and $\beta_1\beta_2$, we have 
			\[F(\alpha_1)+F(\alpha_2)=F(\beta_1)+F(\beta_2).\]
	\end{enumerate}
	The \emph{strength} of a flow $F$ is the sum $\frac{1}{2}\sum_{\alpha\in E\text{ fringe}}F(\alpha)$. A flow is \emph{unit} if it has strength 1, and it is a \emph{vortex} if it has strength 0.
	The \emph{cone of  flows} $\F_{\geq0}(\tL)$ is the space of nonnegative flows on $\tL$. The \emph{turbulence polyhedron} $\F_1(\tL)$ is the space of unit nonnegative flows on $\tL$.
\end{defn}

Both $\F_1(\tL)$ and $\F_{\geq0}(\tL)$ are rational polyhedra, since they can be defined as finite intersections of rational hyperplanes and half-spaces in $\mathbb R^E$. We make this explicit now.

\begin{defn}
We define the following subsets of the set $\mathbb R^{E}$ of functions from the arrows of $\tL$ to $\mathbb R$.
	\begin{enumerate}
		\item If $\alpha$ is an arrow of $\tL$, then let $H_\alpha^{\geq0}$ be the half-space of points $F\in\mathbb R^E$ satisfying $F(\alpha)\geq0$ and let $H_\alpha^0$ be the hyperplane satisfying $F(\alpha)=0$.
		\item If $v$ is an internal vertex of $\tL$ with relations $\alpha_1\alpha_2$ and $\beta_1\beta_2$, then let $H_v$ be the \emph{conservation of flow at $v$} hyperplane of points $F\in\mathbb R^{E}$ satisfying $F(\alpha_1)+F(\alpha_2)=F(\beta_1)+F(\beta_2)$.
		\item Let $H_U$ be the \emph{unit flow} hyperplane of points $F\in\mathbb R^{{E}}$ satisfying $\sum_{\alpha\in{E}\text{ fringe}}F(\alpha)=2$.
	\end{enumerate}
\end{defn}

Then $\F_{\geq0}(\tL)$ is the intersection $(\cap_\alpha H_\alpha^{\geq0})\cap(\cap_vH_v)$, where $\alpha$ and $v$ range over all arrows and internal vertices of $\tL$. Moreover, $\F_1(\tL)=H_U\cap\F_{\geq0}(\tL)$.

The \emph{recession cone} of a polyhedron $P$ in $\mathbb R^n$ is the set
\[\text{Rec}(P)=\{\xx\in\mathbb R^n\ :\ \yy+\xx\in P\text{ for all }\yy\in P\}.\]
Intuitively, the recession cone is the cone of unbounded directions of $P$. It is immediate that the recession cone of $\F_1(\tL)$ is precisely the set of vortices (vortexes) of $\tL$.

Recall the construction of Definition~\ref{defn:gent-from-dag}, which starts with an amply framed DAG $\Gamma$ and obtains a gentle algebra $\Lambda(\Gamma)$ and hence a fringed quiver $\tL(\Gamma)$ whose reduced clique complex agrees with the reduced clique complex of $\Gamma$ by Theorem~\ref{thm:gentdag}.
We remark that if $\Gamma$ has no arrows directly from source to sink, then arrows (resp. routes) of $\Gamma$ are in natural bijection with arrows (resp. routes) of $\tL(\Gamma)$. Moreover, the conservation of flow at an internal vertex $v$ of $\Gamma$ is the same as the conservation of flow at the analogous internal vertex of $\tL(\Gamma)$, so the turbulence polyhedron $\F_1(\tL(\Gamma))$ is the same as the flow polyhedron $\F_1(\Gamma)$. This will be made more explicit in Sections~\ref{sec:PRF} and~\ref{ssec:PGAaAFDG}, though we point it out now to motivate Definition~\ref{defn:flowgent}.
The left of Figure~\ref{fig:flow} has an example of a flow on the amply framed DAG of Figure~\ref{fig:difdagc} and the analogous flow on the corresponding fringed quiver.

\begin{figure}
	\centering
	\def\svgscale{.21}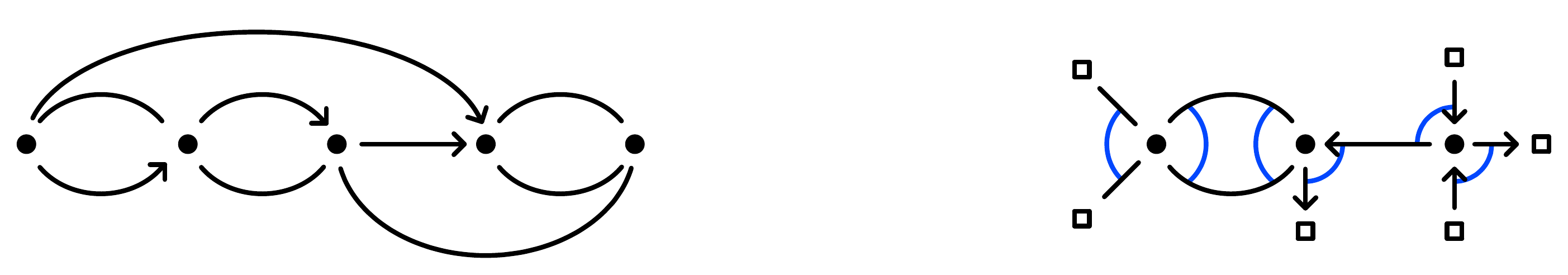
	\caption{Shown is an amply framed DAG and associated fringed quiver, with analogous flows labelled in blue.}
	\label{fig:flow}
\end{figure}

\begin{defn}
	If $p$ is a string of $\tL$, then the \emph{indicator vector} $\I(p)$ is the vector in $\mathbb R^E$ such that the coordinate of an arrow $\alpha$ is the number of times the arrow $\alpha^{\pm1}$ appears in $p$.
	The \emph{indicator vector} of a route or band of $\tL$ is the indicator vector of its underlying string.
\end{defn}

\begin{example}\label{ex:kron}
	Shown in Figure~\ref{fig:kron1} is the fringed quiver of the Kronecker quiver and its associated turbulence polyhedron. The closed dots are vertices and the open dots are lattice points which are not vertices.
	\begin{figure}
		\centering
		\def\svgscale{.21}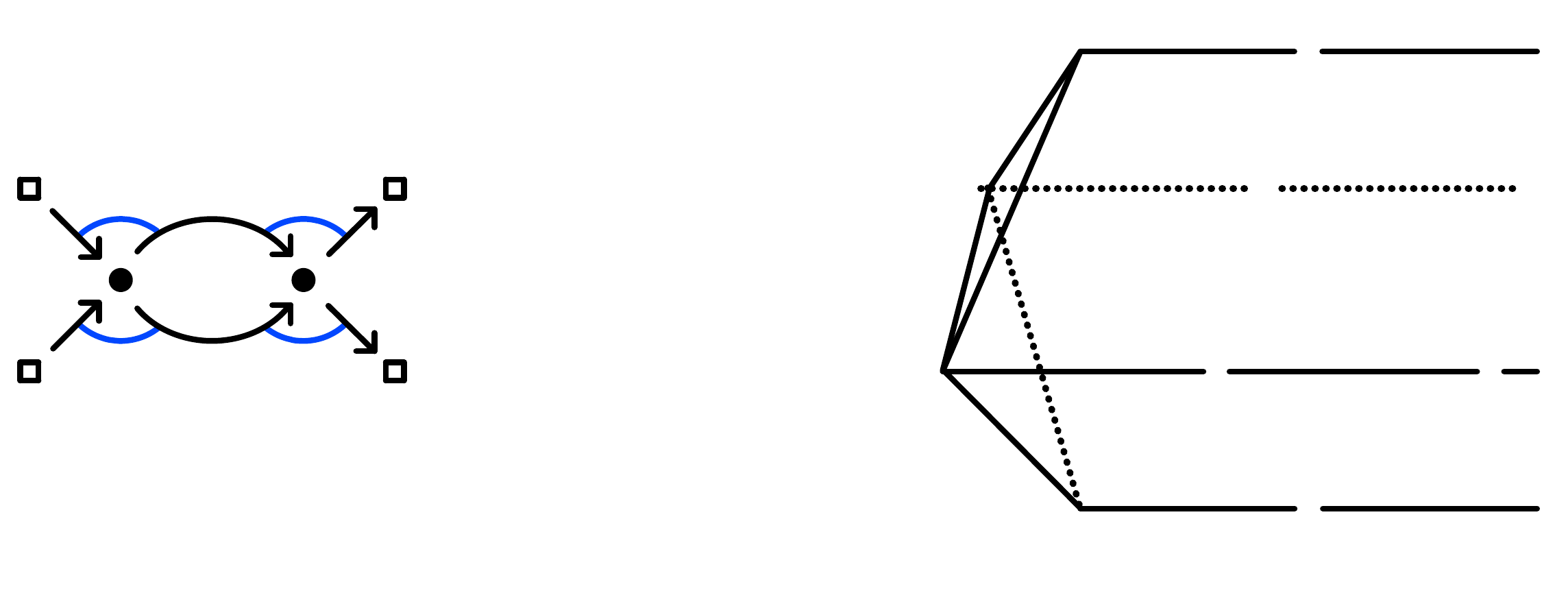
		\caption{The fringed quiver of the Kronecker quiver and its turbulence polyhedron.}
		\label{fig:kron1}
	\end{figure}

	The four vertices of the turbulence polyhedron are the indicator vectors $\I(e_1f_1^{-1})$, $\I(f_3^{-1}e_3)$, $\I(e_1e_2e_3)$, and $\I(f_1f_2f_3)$. The recession cone of $\F_1(\tL)$ is generated by the indicator vector of the band $e_2f_2^{-1}$.
	In Section~\ref{sec:vert-unb}, we will describe the vertices and recession cone of a general turbulence polyhedron in terms of certain routes and bands, called \emph{elementary}.
\end{example}

\begin{lemma}\label{lem:flowdim}
	The turbulence polyhedron $\F_1(\tL)$ has dimension $|E|-|V_{\text{int}}|-1=|V_{\text{int}}|+s-1$, where $s$ is the number of straight routes of $\tL$.
\end{lemma}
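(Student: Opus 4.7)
The plan is to directly compute $\dim \F_1(\tL)$ as an intersection of an affine subspace with the nonnegative orthant of $\mathbb R^E$, then obtain the second equality by counting arrows.

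\textbf{Edge count.} The identity $|E|=2|V_{\textup{int}}|+s$, which rearranges to the second equality, follows from a double-counting argument. Since each internal vertex has $\textup{indeg}=\textup{outdeg}=2$ and each fringe vertex has total degree one, summing $\textup{indeg}(v)+\textup{outdeg}(v)$ over all vertices gives $2|E|=4|V_{\textup{int}}|+|V_{\textup{fringe}}|$. The forward string-extension from any fringe source is uniquely determined at each internal vertex by the gentle bijection pairing incoming with outgoing arrows, and must terminate at a fringe sink: otherwise it would enter a cycle, and at the entry vertex two distinct incoming arrows (the final tail arrow and the cycle's wrap-around) would pair with the same outgoing, violating the gentle bijection. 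Hence fringe sources biject with straight routes via forward extension; the analogous backward argument bijects fringe sinks with straight routes, giving $|V_{\textup{fringe}}|=2s$.

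\textbf{Linear independence.} Let $A := H_U\cap\bigcap_{v\in V_{\textup{int}}}H_v$, so that $\F_1(\tL)=A\cap\mathbb R^E_{\geq 0}$. I claim the $|V_{\textup{int}}|+1$ linear equations defining $A$ are linearly independent, giving $\dim A=|E|-|V_{\textup{int}}|-1$. Given a relation $c\,L_U+\sum_{v}c_v L_v=0$ (where $L_U,L_v$ denote the linear forms of $H_U,H_v$), evaluating at $\I(r)$ for a straight route $r$ gives $c=0$, since $\I(r)\in\bigcap_v H_v$ and has strength one. Evaluating at the standard basis vector $\mathbf 1_\gamma$ for a fringe arrow $\gamma$ incident to an internal vertex $v$ forces $\pm c_v=0$, and evaluation at $\mathbf 1_\gamma$ for an internal arrow between internal vertices $u$ and $w$ gives $\pm c_u\pm c_w=0$, so that $c_u=0$ implies $c_w=0$. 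Since $\tL$ is connected, every connected component of the internal subgraph (internal vertices with edges the internal arrows) contains a vertex adjacent to a fringe arrow, and the vanishing propagates along internal arrows to force $c_v=0$ for every $v$.

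\textbf{Full-dimensionality and main obstacle.} Finally, I exhibit a flow $F\in\F_1(\tL)$ with $F(\alpha)>0$ for every arrow $\alpha$, so that $\F_1(\tL)$ is full-dimensional in $A$. Extending any arrow bi-directionally to a maximal string yields walks that either terminate at fringe vertices on both ends (giving a route containing the arrow) or cycle, in which case the gentle-bijection argument from the edge count forces the arrow itself to lie in the cycle (giving a band containing the arrow). Thus every arrow lies in some route or band, and the positive combination $F := \tfrac{1}{N}\sum_p \I(p)+\varepsilon\sum_B \I(B)$ over all routes $p$ (with $N$ the total number of routes) and all bands $B$, for a sufficiently small $\varepsilon>0$, is a unit flow positive on every arrow. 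The principal technical delicacy is the ``no tail into cycle'' property used to establish the positive-support claim, without which an arrow could lie in neither a route nor a band and full-dimensionality would fail; the remaining pieces (the degree count and sign propagation) are routine.
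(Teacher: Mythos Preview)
Your overall strategy matches the paper's: show the $|V_{\text{int}}|+1$ defining equations are independent, then exhibit a strictly positive unit flow. The paper proves independence by producing, for each hyperplane, an explicit point lying in all the others but not it (namely $2\I(p^+)$ for a half-route up to $v$, and the zero vector for $H_U$), while you push coefficients via evaluations at basis vectors; both are fine. There is, however, a genuine gap in your positive-flow construction: you sum $\frac{1}{N}\sum_p \I(p) + \varepsilon\sum_B \I(B)$ over \emph{all} routes and bands, but when $\Lambda$ is representation-infinite there are infinitely many of each and the sum is undefined. The fix is immediate once you notice that every arrow already lies in a \emph{straight} route: your own ``no tail into cycle'' argument, applied to the forward oriented extension of an arbitrary arrow, forces termination at a fringe vertex (the alternative is a relation-free oriented cycle, contradicting finite-dimensionality of $\Lambda$), so the cycle/band case never actually arises. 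The finite average $\frac{1}{s}\sum_{p\text{ straight}}\I(p)$ is then a strictly positive unit flow; this is essentially what the paper uses.

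A smaller issue: you invoke connectedness of $\tL$ to propagate $c_v = 0$ through the internal subgraph, but connectedness is not assumed. What you actually need is that every component of the internal subgraph contains a vertex adjacent to a fringe arrow, and this follows from finite-dimensionality rather than connectedness (a component with no fringe-adjacent vertex would have all its vertices of in- and out-degree two already in $\Lambda$, again forcing a relation-free oriented cycle).
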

\begin{proof}
	The turbulence polyhedron $\F_1(\tL)$ is the intersection of $|V_{\text{int}}|+1$ hyperplanes (a conservation of flow hyperplane $H_v$ for each internal vertex $v$, and a unit flow hyperplane $H_U$) and $|E|$ half-spaces in $\mathbb R^E$ (a half-space $H_\alpha^{\geq0}$ for each arrow $\alpha\in{E}$).
	We first show that no one of the $|V_{\text{int}}|+1$ hyperplanes $\{H_U\}\cup\{H_w\ :\ w\in{V}\}$ is contained in the intersection of the others, and hence that the intersection of these hyperplanes is of dimension $|E|-|V_{\text{int}}|-1$.
	Indeed, choose $v$ to be an internal vertex and let $p=p^+ e_v p^-$ be any route of $\tL$ passing through $v$. Then $2\I(p^+)$ satisfies unit flow and conserves flow at every vertex except for $v$, hence $H_v\not\subseteq H_U\cap\left(\cap_{w\neq v}H_w\right)$.
	The zero flow (sending every edge to zero) conserves flow at every vertex, but does not lie in the unit flow hyperplane $H_v$, hence $H_U\not\subseteq\cap_{w\in{V}}H_w$. This shows that $H:=H_U\cap\left(\cap_{w\in{V}}H_w\right)$ is of dimension $|E|-|V_{\text{int}}|-1$.

	The turbulence polyhedron $\F_1(\tL)$ is the intersection of $H$ with the nonnegative orthant of $\mathbb R^E$. We claim that this intersection has the same dimension as $H$. It suffices to find a point $v\in H$ which is in the positive orthant (i.e., the interior of the nonnegative orthant).
	Let $S$ be a collection of routes of $\tL$ such that every arrow of $\tL$ is contained in a route of $S$. Then $v:=\frac{1}{|S|}\sum_{p\in S}\I(p)$ is in the intersection of $H$ with the positive orthant. This completes the proof that $\dim\F_1(\tL)=|E|-|V_{\text{int}}|-1$.

	It remains to show that $|E|-|V_{\text{int}}|-1=|V_{\text{int}}|+s-1$.
	Since every straight route of $\tL$ contains precisely one incoming (respectively outgoing) fringe arrow, we have that $s$ is the number of incoming (respectively outgoing) fringe arrows of $\tL$. Then~\cite[Lemma 2.3]{PPP} says that $|E|=4|V_{\text{int}}|-|E_{\text{int}}|$ and
	$s=2|V_{\text{int}}|-|E_{\text{int}}|$, giving the second and fourth equality of the following calculation:
	\begin{align*}
		|E|-|V_{\text{int}}|-1&=(4|V_{\text{int}}|-|E_{\text{int}}|)-|V_{\text{int}}|-1\\
		&=3|V_{\text{int}}|-|E_{\text{int}}|-1\\
		&=|V_{\text{int}}|-1+(2|V_{\text{int}}|-|E_{\text{int}}|)\\
		&=|V_{\text{int}}|-1+s.
	\end{align*}
\end{proof}
In accordance with Lemma~\ref{lem:flowdim}, the dimension of the turbulence polyhedron of the Kronecker blossoming quiver of Figure~\ref{fig:kron1} is 
\[\dim\F_1(\tL)=|E|-|V_{\text{int}}|-1=6-2-1=3.\]

\begin{lemma}\label{lem:facet}
	All facets of $\F_1(\tL)$ are of the form $H_\alpha^{\geq0}$ for some $\alpha\in E$ (i.e., they are obtained by necessitating zero flow through some arrow $\alpha\in E$).
\end{lemma}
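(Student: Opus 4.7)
The plan is to apply standard polyhedral theory to the explicit description of $\F_1(\tL)$ as the intersection $H \cap \bigcap_{\alpha \in E} H_\alpha^{\geq 0}$, where $H := H_U \cap \bigcap_{v \in V_{\text{int}}} H_v$. First I would argue that $H$ is exactly the affine hull of $\F_1(\tL)$. This is essentially already established in the proof of Lemma~\ref{lem:flowdim}: the calculation there shows that $\dim \F_1(\tL) = \dim H$, and since $\F_1(\tL) \subseteq H$ this forces the affine hulls to coincide. Equivalently, that proof explicitly exhibits a point of $\F_1(\tL)$ lying in the strict positive orthant of $\mathbb{R}^E$, which is a relative-interior point of $\F_1(\tL)$ inside $H$; so no nonnegativity inequality is tight on all of $\F_1(\tL)$, and there are no ``hidden'' equality constraints beyond those defining $H$.

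Viewed inside its affine hull $H$, the polyhedron $\F_1(\tL)$ is then cut out purely by the linear inequalities $F(\alpha) \geq 0$, one for each arrow $\alpha \in E$. Standard polyhedral theory now gives the conclusion: any facet $F$ is a proper face of codimension one, so there must be a supporting hyperplane $K$ with $F = \F_1(\tL) \cap K$ and $\F_1(\tL) \not\subseteq K$. Because the defining inequalities of $\F_1(\tL)$ inside $H$ are precisely the $F(\alpha) \geq 0$, the supporting hyperplane $K$ can be taken to be $H_\alpha^0$ for some $\alpha \in E$. The equality hyperplanes $H_U$ and $H_v$ contain all of $\F_1(\tL)$ and therefore cannot support a proper face, so they contribute nothing to the facial structure.

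There is essentially no difficult step here; the main point requiring care is the verification that $H$ really is the affine hull, which is the content (implicit in) of Lemma~\ref{lem:flowdim}. Given that, the rest is immediate from the standard correspondence between facets of a polyhedron and irredundant linear inequalities in its description. We note that not every $\alpha \in E$ need produce a facet in this way (some hyperplanes $H_\alpha^0$ may yield lower-codimension faces, or may be redundant once other inequalities are imposed), but every facet that does arise is forced to be of the form $\F_1(\tL) \cap H_\alpha^0$ for some $\alpha \in E$, as claimed.
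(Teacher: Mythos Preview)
Your proof is correct and follows essentially the same approach as the paper's proof. The paper's argument is a terse one-liner invoking the same principle---that $\F_1(\tL)$ is cut out inside $H=H_U\cap(\cap_v H_v)$ by the half-spaces $H_\alpha^{\geq0}$, so every facet must come from some $H_\alpha^0$---while you are more explicit in verifying that $H$ is genuinely the affine hull (via the dimension count from Lemma~\ref{lem:flowdim}), which is the step the paper leaves implicit.
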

\begin{proof}
	The turbulence polyhedron $\F_1(\tL)$ is defined within the vector space $H:=H_U\cap(\cap_{w\in V}H_w)$ by the half-spaces $H_\alpha^{\geq0}$ for every $\alpha\in{E}$. It follows that every facet of $\F_1(\tL)$ is obtained by intersecting with some hyperplane $H_\alpha$, which amounts to requiring zero flow through an arrow $\alpha$.
\end{proof}

Example~\ref{ex:facetweird} shows that not all half-spaces $H_\alpha^{\geq0}$ define facets of $\F_1(\tL)$.

\begin{example}\label{ex:facetweird}
	Consider the fringed quiver $\tL$ and turbulence polyhedron $\F_1(\tL)$ of Figure~\ref{fig:facetweird}. If the coordinate space $\mathbb R^E$ is labelled $(e_1,e_2,e_3,e_4,e_5)$, then the point of $\F_1(\tL)$ labelled $e_1^2e_2e_3$, for example, is $(2,1,1,0,0)$.
	Note that every point of the turbulence polyhedron has a 1 in the coordinate of $e_3$, hence
	$H_{e_3}^{\geq0}$ is not a facet-defining hyperplane of $\F_1(\tL)$.
	\begin{figure}
		\centering
		\def\svgscale{.21}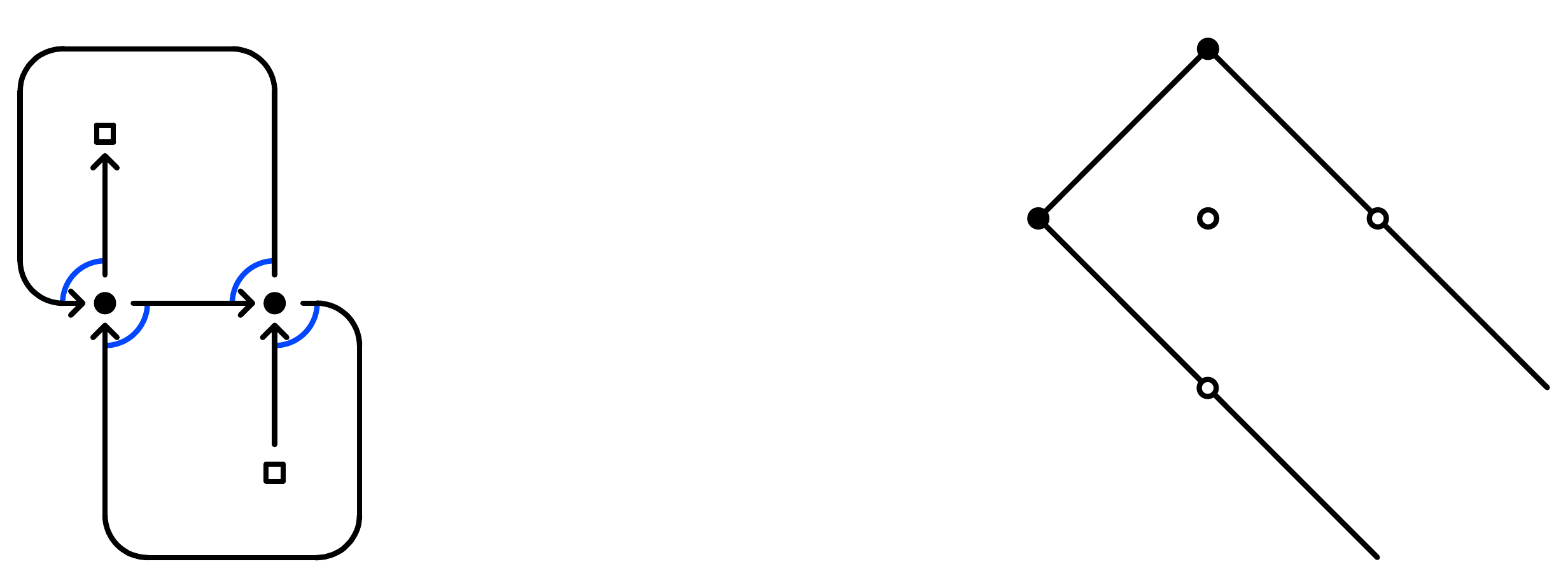
		\caption{A fringed quiver and its two-dimensional turbulence polyhedron.}
		\label{fig:facetweird}
	\end{figure}
\end{example}

\subsection{Extended strings}

We now define irrational strings, which are paths which behave like routes or bands which continue on indefinitely in one or both directions without repeating themselves.

\begin{defn}\label{defn:ext-complex}
	An \emph{irrational string} is an infinite walk $(a=\dots\alpha_{-1}^{\e_{-1}}\alpha_0^{\e_0}\alpha_1^{\e_1}\dots)$, $(a=\alpha_0^{\e_0}\alpha_1^{\e_1}\dots)$, or $a=(\dots\alpha_{-1}^{\e_{-1}}\alpha_0^{\e_0})$ such that
	\begin{enumerate}
		\item $\alpha_1^{\e_1}$ is defined if and only if $h(\alpha_0^{\e_0})$ is not fringe and $\alpha_{-1}^{\e_{-1}}$ is defined if and only if $t(\alpha_0^{\e_0})$ is not fringe,
		\item for any $j\in\mathbb Z$ such that $\alpha_j$ and $\alpha_{j+1}$ are defined, the walk $\alpha_j^{\e_j}\alpha_{j+1}^{\e_{j+1}}$ is a string, and
		\item there is no band $B$ such that $a=\dots B B B\dots$ is an infinite repetition of $B$.
	\end{enumerate}
	We consider the irrational string $a^{-1}$ given by reversing the direction and order of all arrows to be \emph{equivalent} to $a$.
	A \emph{substring} of $a$ is a (finite) string of the form $\alpha_j^{\e_j}\dots\alpha_k^{\e_k}$, for any $j\leq k$; it is a \emph{top substring} if $\e_{k+1}=1$ and $\e_{j-1}=-1$, and a \emph{bottom substring} if $\e_{k+1}=-1$ and $\e_{j-1}=-1$. An \emph{extended string} is a string, band, or irrational string.
	Generalizing Definition~\ref{defn:tilt-complex}, two extended strings $a$ and $b$ \emph{kiss} if there is a string $\sigma$ which is a top substring of $a$ and a bottom substring of $b$, without loss of generality. We call $\sigma$ an \emph{incompatibility} between $a$ and $b$. If $a$ and $b$ do not kiss, then they are \emph{compatible}.
\end{defn}

An extended string is \emph{self-compatible} if it is compatible with itself.
We will work with irrational strings as a technical tool. However, we are chiefly interested in the routes and bands.

\subsection{The bundle complex}

We now define the bundle complex of $\tL$, which is similar to the clique complex on $\tL$ but allows bands.

\begin{defn}
	A \emph{trail} of $\tL$ is a band or route of $\tL$. 
\end{defn}
Since $\tL$ is a finite-dimensional algebra, no band is an oriented path. We then say that a \emph{straight trail} is a straight route and a \emph{bending trail} is a band or a bending route.
Two trails are \emph{compatible} if they are compatible as extended strings as in Definition~\ref{defn:ext-complex}.
A trail is \emph{self-compatible} if it is compatible with itself.

\begin{defn}
	A set $\bK$ of pairwise compatible (self-compatible) trails is a \emph{bundle}. We consider bundles up to equivalence of routes and bands. The \emph{bundle complex} of $\tL$ is the simplicial complex of bundles.
	A bundle is \emph{reduced} if it contains no straight trails.
	We often write a bundle as $\bK=\K\cup\B$, where ${\K}$ is a set of routes and $\B$ is a set of bands.
\end{defn}

We remark that we recover the clique complex of $\tL$ from the bundle complex of $\tL$ by restricting to the cliques (i.e., excluding the bands).

\begin{example}\label{ex:kroncompat}
	In the Kronecker fringed quiver $\tL$ of Figure~\ref{fig:kron1}, there is only one band $B:=e_2f_2^{-1}$. This band is compatible with the straight routes $p:=e_1e_2e_3$ and $q:=f_1f_2f_3$, but is incompatible with every other route of $\tL$. For example, the string $e_2f_2^{-1}$ is an incompatibility between the band $B$ and the route $e_1e_2f_2^{-1}f_1^{-1}$. Hence, the bundle complex of $\tL$ consists of the clique complex as well as the bundles $\left\{\{B\},\ \{B, p\},\ \{B, q\},\ \{B, p, q\}\right\}$. Note that the bundle complex is not pure, as the maximal cliques have cardinality 4 while the maximal bundle containing a band has cardinality 3.
\end{example}

\subsection{Distinguished walks and the countercurrent order}

We loosely follow~\cite[\S2.2.2]{PPP}, though we make our definitions for extended strings rather than routes.

\begin{defn}
	A \emph{marked trail} of $\tL$ is a trail $p=\alpha_1^{\e_1}\dots\alpha_m^{\e_m}$ with a unique marked signed arrow $\alpha_i^{\e_i}$. If $p$ contains multiple instances of $\alpha_i^{\e_i}$, only one is marked. We say that $p$ is \emph{marked at $\alpha_i^{\e_i}$}.
	Similarly, a \emph{marked irrational string} of $\tL$ is an irrational string with a unique marked signed arrow $\alpha_i^{\e_i}$. A \emph{marked extended string} is a marked trail or marked irrational string.
\end{defn}

We remark that if a trail $p$ is marked at $\alpha_i^{-\e_i}$, then we may consider it to be marked at $\alpha_i^{\e_i}$ by reversing the string walk of $p$ to get $p^{-1}$ (which does not change the route up to equivalence). In fact, we consider the marked trail $p$ at some signed arrow $\alpha^\e$ to be equivalent to the marked trail $p^{-1}$ at the corresponding signed arrow $\alpha^{-\e}$.

\begin{defn}\label{defn:mark}
If $\bK$ is a bundle of $\tL$, let $\bK_{\alpha^\e}$ be the set of trails of $\bK$ marked at ${\alpha^\e}$.
\end{defn}

We remark that if $\bK$ contains a trail $p$ which uses some arrow $\alpha$ more than once, then there will be multiple ways to mark $p$ at $\alpha^\e$. All of these markings appear as separate elements of $\bK_{\alpha^\e}$.

\begin{defn}\label{defn:wkordG}
	Let $\alpha^\e$ be a signed arrow of $\tilde\Lambda$. We define the (weak) \emph{post-$\alpha^\e$-order} $\prec_{\alpha^\e}^+$ on the set of extended strings marked at $\alpha^{\pm1}$.
	Let $p$ and $q$ be extended strings marked at $\alpha^{\pm1}$. By possibly replacing $p$ with $p^{-1}$ and/or $q$ with $q^{-1}$, we may suppose that $p$ and $q$ are both marked at ${\alpha^\e}$.
	If $p$ and $q$ are equal after their marked occurrences of $\alpha^\e$, then we say that $p$ and $q$ are equal in the post-$\alpha^\e$ order and write $p=_{\alpha^\e}^+q$. Otherwise, let $\sigma$ be the maximal common substring of $p$ and $q$ beginning with the marked copies of $\alpha^\e$. Since $p$ and $q$ diverge after this common substring $\sigma$, it follows that one of these extended strings must proceed after $\sigma$ backwards along some arrow $\gamma$ and the other must proceed after $\sigma$ forwards along a different arrow $\sigma'$. Suppose without loss of generality that $q$ contains the substring $\sigma\gamma^{-1}$ and $p$ contains the substring $\sigma\gamma'$. We then say that $p\prec_{\alpha^\e}^+q$.

	Dually, we define the (weak) \emph{pre-${\alpha^\e}$-order} $\prec_{\alpha^\e}^-$ on the set of extended strings marked at $\alpha^{\pm1}$.
	Let $p$ and $q$ be extended strings marked at $\alpha^\e$. If these extended strings agree before their marked copies of $\alpha^\e$, then we say that $p=_{\alpha^\e}^-q$. Otherwise, let $\sigma$ be the maximal common substring of $p$ and $q$ ending with the marked copies of ${\alpha^\e}$. Say without loss of generality that $q$ contains the substring $\gamma^{-1}\sigma$ for some arrow $\gamma$, and $p$ contains the substring $\gamma'\sigma$ for some arrow $\gamma'$. We then say that $p\prec_{\alpha^\e}^-q$.
\end{defn}

Note that the straight route through $\alpha$ (marked at $\alpha$) is minimal with respect to both $\prec_\alpha^+$ and $\prec_\alpha^-$.
If $p$ and $q$ are compatible extended strings marked at ${\alpha^\e}$, then $p\preceq_{\alpha^\e}^+q$ if and only if $p\preceq_{\alpha^\e}^-q$. In this case, we say that $p\preceq_{\alpha^\e} q$.
Note that if in addition $p$ and $q$ are distinct, then $p\prec_{\alpha^\e} q$ or $q\prec_{\alpha^\e} p$.
Hence, if $\K\cup \B$ is a bundle, then $\prec_{\alpha^\e}$ gives a total order on the trails of $\K\cup \B$ marked at ${\alpha^\e}$ which we call the \emph{countercurrent order at $\alpha^\e$}.
The lower a marked extended string at ${\alpha^\e}$ is in this partial order, the more positively oriented it stays around its marked copy of ${\alpha^\e}$ (i.e., the more it takes arrows $\alpha^1$ with positive sign).

\subsection{Cardinality of bundles}

It is known that the clique complex is pure.
On the other hand, recall from Example~\ref{ex:kroncompat} that the bundle complex may not be pure. We will now show that the cardinality of a bundle is bounded by
the cardinality of a maximal clique.

\begin{thm}[{\cite[Corollary 2.29]{PPP}, \cite[Theorem 5.1]{BDMTY}}]\label{thm:ccc}
	The cardinality of a maximal clique is $3|V_{\text{int}}|-|E_{\text{int}}|$. The cardinality of a maximal reduced clique is $|V_{\text{int}}|$.
\end{thm}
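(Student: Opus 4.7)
The plan is to first reduce the second statement to the classical theory of support $\tau$-tilting modules, and then derive the first statement by counting straight routes separately.

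For the cardinality of a maximal reduced clique, I would invoke the bijection of Theorem~\ref{thm:kiss-compat}: reduced cliques of $\tL$ correspond to $\tau$-rigid extended modules over $\L$. Under this bijection, a maximal reduced clique must correspond to a maximal basic $\tau$-rigid extended module, i.e.\ a support $\tau$-tilting module. Here I would use the fundamental completion result of Adachi--Iyama--Reiten (implicit in the framework of Section~\ref{sec:ttt}) that every $\tau$-rigid pair can be completed to a support $\tau$-tilting pair. By the defining property of a support $\tau$-tilting module, the number of pairwise non-isomorphic indecomposable summands of such a module is exactly $|Q_0| = |V_{\textup{int}}|$. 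Hence every maximal reduced clique has cardinality $|V_{\textup{int}}|$.

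For the full maximal clique, I would observe that no string can kiss a straight route (since a straight route has no top or bottom substring of positive length that is shared with any other string in the required way). It follows that every straight route is self-compatible and compatible with every other route, so every maximal clique contains every straight route and hence decomposes as the disjoint union of a maximal reduced clique with the set of all straight routes:
\[|\textup{maximal clique}| = |V_{\textup{int}}| + s,\]
where $s$ denotes the number of straight routes. From the proof of Lemma~\ref{lem:flowdim} we already have $s = 2|V_{\textup{int}}| - |E_{\textup{int}}|$, so
\[|\textup{maximal clique}| = |V_{\textup{int}}| + \bigl(2|V_{\textup{int}}| - |E_{\textup{int}}|\bigr) = 3|V_{\textup{int}}| - |E_{\textup{int}}|.\]

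The main obstacle is upgrading the first step from ``$\tau$-rigid'' to ``support $\tau$-tilting,'' i.e.\ showing that a maximal reduced clique really does correspond to a support $\tau$-tilting module rather than merely an indecomposable-summand-maximal $\tau$-rigid module. This requires the Bongartz-style completion theorem of Adachi--Iyama--Reiten, which guarantees purity of the $\tau$-tilting complex. A purely combinatorial alternative would be to argue directly on $\tL$ using exchange/mutation of routes (replacing a route in a maximal reduced clique by the unique other route making the result compatible), but transporting that argument through the kissing combinatorics of strings is significantly more delicate than simply citing the representation-theoretic purity statement.
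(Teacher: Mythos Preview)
Your proposal is correct. The paper does not actually prove this theorem; it cites it from \cite{PPP} and \cite{BDMTY}. Your argument is essentially the representation-theoretic route: transport the question through Theorem~\ref{thm:kiss-compat} and invoke the purity of the support $\tau$-tilting complex from \cite{AIR}. You correctly flag that the only nontrivial step is the Bongartz-type completion guaranteeing that maximal $\tau$-rigid pairs are support $\tau$-tilting, and this is indeed available from \cite{AIR}.

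It is worth noting that the paper's surrounding discussion (Lemma~\ref{lem:dist} and Corollary~\ref{cor:card-upper-bound}) follows the alternative combinatorial argument from \cite[Proposition~2.28, Corollary~2.29]{PPP}: one shows that in a maximal clique every bending route has exactly two distinguished arrows and every straight route exactly one, and then a double count of arrows gives the formula directly. That argument stays entirely within the combinatorics of $\tL$ and is what the paper adapts to bundles, where the representation-theoretic bridge is not available. Your approach is cleaner given the machinery already in place in Section~\ref{sec:bac}, but does not obviously extend to bundles, which is presumably why the paper emphasizes the distinguished-arrow method in the material immediately following this theorem.
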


We will follow the proof of~\cite[Corollary 2.29]{PPP} while allowing bands to get $3|V_{\text{int}}|-|E_{\text{int}}|$ as an upper bound for the cardinality of a bundle.

\begin{defn}
	Let $\bK$ be a bundle of $\tL$. Let $p$ be a trail of $\bK$ and let $\alpha$ be an arrow of $p$. If $p$ or $p^{-1}$ marked at some copy of $\alpha$ is the $\prec_\alpha$-maximum element of $\bK_{\alpha^\e}$ (Definition~\ref{defn:mark}), then we say that $\alpha$ is a \emph{distinguished arrow of $p$ in $\bK$}.
\end{defn}

\begin{lemma}\label{lem:dist}
	Let ${\bK}$ be a bundle (which may have bands). Every straight route of ${\bK}$ has at least one distinguished arrow, and every bending route and band of ${\bK}$ has at least two distinguished arrows.
\end{lemma}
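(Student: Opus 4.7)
The proof has two parts: a trivial case for straight routes and a technical case for bending routes and bands. Both rely on the structure of the countercurrent order $\prec_{\alpha^\e}$ and its reversal symmetry, namely that $(p, \alpha^\e) \sim (p^{-1}, \alpha^{-\e})$ and that this reversal reverses the countercurrent order on compatible trails.

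For a straight route $p$ in $\bK$, every arrow of $p$ is traversed with sign $+1$, so the post- and pre-strings of $(p, \alpha^1)$ at any arrow $\alpha$ of $p$ are all-forward. Hence $(p, \alpha^1)$ is the $\prec_{\alpha^1}$-minimum of $\bK_{\alpha^1}$ for every arrow $\alpha$ of $p$: any divergent trail in $\bK_{\alpha^1}$ must go backward at the divergence (forward continuations are unique by the gentle axioms), forcing it strictly above $p$. Under the reversal bijection between $\bK_{\alpha^1}$ and $\bK_{\alpha^{-1}}$, this minimum property becomes a maximum property, so $(p^{-1}, \alpha^{-1})$ is the $\prec_{\alpha^{-1}}$-maximum of $\bK_{\alpha^{-1}}$. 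Consequently every arrow of $p$ is distinguished for $p$, giving at least one.

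For a bending route or band $p$, I plan to adapt the extremality argument from the proof of \cite[Corollary 2.29]{PPP}. Each bending trail admits at least two extremal positions: for a bending route these are the leftmost and rightmost sign changes in the signed arrow sequence $\alpha_1^{\e_1}\cdots\alpha_m^{\e_m}$; for a band, cyclicity together with the absence of oriented cycles in $\tL$ (since $\L$ is finite-dimensional gentle) guarantees both a peak $(\e_i, \e_{i+1}) = (1, -1)$ and a valley $(\e_j, \e_{j+1}) = (-1, 1)$. At each extremal arrow $\alpha$ of $p$, I would argue distinguishedness by contradiction: if some $q \in \bK_{\alpha^\e}$ were strictly $\prec_{\alpha^\e}$-greater than $p$, then the maximal common substring $\sigma$ of $p$ and $q$ anchored at the marked copy of $\alpha^\e$, together with the extremality of the bend at $\alpha$ (which bounds how $p$'s signs can behave on the side of $\alpha$ facing the rest of $p$), would force $\sigma$ (possibly extended by one arrow in each direction) to simultaneously be a top substring of one of $p, q$ and a bottom substring of the other, producing a kissing incompatibility contradicting the bundle hypothesis.

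The main obstacle is the bending/band case, specifically the construction of the kissing incompatibility at each extremal arrow. The key technical work is to verify that the common substring of $p$ and the hypothetical divergent $q$ extends to produce a top-bottom pair on both ends, and for bands to handle the cyclic structure carefully enough to identify two distinct extremal positions — a single peak-valley pair suffices for length $\geq 2$ but corner cases (e.g., bands of minimal length or bending routes whose leftmost and rightmost bends coincide) need to be checked by hand, possibly falling back on the fringe arrows $\mu_0, \mu_1$ of a route for the second distinguished arrow.
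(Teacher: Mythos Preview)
Your straight-route argument rests on a misreading of the definition of ``distinguished.'' An arrow $\alpha$ is distinguished for $p$ when some marking of $p^{\pm 1}$ at $\alpha$ is the $\prec_\alpha$-\emph{maximum} of $\bK_\alpha$; here $\prec_\alpha$ is the countercurrent order at the positively oriented $\alpha$, not $\prec_{\alpha^{-1}}$. The straight route through $\alpha$ is always the $\prec_\alpha$-\emph{minimum}, so your reversal step (``minimum becomes maximum under the bijection $\bK_{\alpha^1}\to\bK_{\alpha^{-1}}$'') does not yield distinguishedness. Indeed, your conclusion that every arrow of a straight route is distinguished directly contradicts \cite[Proposition~2.28]{PPP}, recalled in the paragraph following Corollary~\ref{cor:card-upper-bound}: in a maximal clique each straight route has \emph{exactly one} distinguished arrow. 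So the straight case is not trivial and needs the same engine as the bending case.

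For bending trails and bands, your plan to single out specific ``extremal'' arrows (first/last sign change, or a peak/valley pair) and argue distinguishedness there by contradiction is a different route from the paper's, and your sketch does not obviously close: the extremality of a bend constrains $p$ only locally, whereas a hypothetical $q\succ_\alpha p$ may agree with $p$ well past that bend before diverging, so it is not clear a kiss must appear at $\sigma$. The paper instead proves one uniform claim (copied from \cite[Lemma~2.23]{PPP}): from \emph{any} signed arrow $\alpha^\e$ of $p$, take $q$ to be the $\prec_\alpha$-minimal element strictly above $p$, let $\beta$ be the arrow where $p$ and $q$ diverge (necessarily with the same sign $\e$ in $p$), and observe that $p$ must be $\prec_\beta$-maximal, since otherwise the actual $\prec_\beta$-maximum would either kiss $q$ or violate the minimality of $q$. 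This produces one distinguished arrow per sign appearing in $p$, hence at least one for straight routes and at least two for bending trails and bands.
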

\begin{proof}
	We copy the proof of~\cite[Lemma 2.23]{PPP} with our notation; this lemma was proven for cliques, but works verbatim for bundles.
	It suffices to show that, if $p$ is a trail of $\bK$ and $\alpha^\e$ is a signed arrow of $p$, then there exists a signed arrow $\beta^\e$ of $p$ (with the same sign $\e$ as $\alpha^\e$) such that $\beta$ is a distinguished arrow of $p$.
	If necessary, replace $p$ with $p^{-1}$ so that $\e=1$.

	If $p$ is not maximal in $\bK$ for $\prec_\alpha$, consider $q:=\min_{\prec_\alpha}\{r\in \bK_\alpha\ :\ p\prec_\alpha r\}$. Let $\sigma$ denote the maximal common substring of $p$ and $q$ containing the marked copies of $\alpha$. Since $p$ and $q$ are distinct, they split at some endpoint of $\sigma$. Since $p\prec_\alpha q$, at this endpoint, the arrow $\beta$ of $p$ immediately after $\sigma$ points in the same direction as $\alpha$. Let $s$ be the $\prec_{\beta}$-maximal element of $\bK_{\beta}$ (so that $\alpha$ is a distinguished arrow of $s$ in $\bK$). If $s=p$, then we are done; if $s\neq p$, then $s$ either kisses $q$ or contradicts the minimality of $q$ among the walks $\{r\in \bK_\alpha\ :\ p\prec_\alpha r\}$.
\end{proof}

\begin{cor}\label{cor:card-upper-bound}
	The cardinality of a maximal bundle is less than or equal to $3|V_{\text{int}}|-|E_{\text{int}}|$. The cardinality of a maximal reduced bundle is less than or equal to $|V_{\text{int}}|$.
\end{cor}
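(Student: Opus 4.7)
The proof will adapt the counting strategy of Theorem~\ref{thm:ccc} (i.e.\ \cite[Corollary 2.29]{PPP}) to bundles, using our Lemma~\ref{lem:dist} in place of its clique analogue. It suffices to bound the cardinality of a maximal bundle, since any bundle embeds into one. As the paper has observed in the clique setting, no string is incompatible with a straight route; the same argument applies when the other string is a band (since bands use only internal arrows and touch only internal vertices, so none of their top/bottom substrings can coincide with a non-trivial top/bottom substring of a straight route, all of which involve fringe arrows or fringe vertices). Hence every maximal bundle $\bK$ contains all $s := 2|V_{\text{int}}| - |E_{\text{int}}|$ straight routes of $\tL$.

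Write $|\bK| = s + r + b$ where $r$ is the number of bending routes and $b$ the number of bands in $\bK$. I will count the pairs $(\alpha,p)$ with $p\in\bK$ and $\alpha$ a distinguished arrow of $p$ in $\bK$. For each signed arrow $\alpha^\e$, the set $\bK_{\alpha^\e}$ is totally ordered by $\prec_{\alpha^\e}$ and therefore has a unique maximum; the equivalence $p\sim p^{-1}$ reverses this order and identifies the maxima of $\bK_{\alpha^1}$ and $\bK_{\alpha^{-1}}$ as the same trail of $\bK$. Consequently, each arrow of $\tL$ is distinguished for at most one trail of $\bK$, giving the upper bound $|E|$ on the sum. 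On the other hand, Lemma~\ref{lem:dist} provides the lower bound $s + 2r + 2b$ on the same sum. Combining,
\[s + 2r + 2b \;\leq\; |E| \;=\; 4|V_{\text{int}}| - |E_{\text{int}}|.\]

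Substituting $s = 2|V_{\text{int}}| - |E_{\text{int}}|$ into this inequality yields $2(r+b) \leq 2|V_{\text{int}}|$, i.e.\ $r + b \leq |V_{\text{int}}|$, which is the bound for the reduced part of $\bK$. Adding back $s$ gives the full bound $|\bK| = s + r + b \leq 3|V_{\text{int}}| - |E_{\text{int}}|$. The main technical care needed is in the uniqueness step: one must carefully handle both the equivalence $p \sim p^{-1}$ on marked trails and the possibility that a single trail (in particular, a band) uses a given arrow more than once, neither of which arises in the clique setting of~\cite{PPP}. Once this uniqueness is granted, the rest is direct substitution using the identities $|E| = 4|V_{\text{int}}| - |E_{\text{int}}|$ and $s = 2|V_{\text{int}}| - |E_{\text{int}}|$ already recalled in the proof of Lemma~\ref{lem:flowdim}.
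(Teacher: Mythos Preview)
Your proof is correct and follows essentially the same approach as the paper's: use Lemma~\ref{lem:dist} to bound the number of (trail, distinguished arrow) pairs from below by $s + 2(\text{bending trails})$ and from above by $|E|$, then substitute $s = 2|V_{\text{int}}| - |E_{\text{int}}|$ and $|E| = 4|V_{\text{int}}| - |E_{\text{int}}|$. The paper simply lumps bending routes and bands into a single count $b$ of ``bending trails,'' whereas you separate them into $r+b$, but the arithmetic is identical. One small expository inaccuracy: your closing remark that repeated arrows do not arise in the clique setting of~\cite{PPP} is not quite right---routes in a clique can already use an arrow multiple times (e.g.\ in the Kronecker examples)---so that subtlety is already present in the original argument, not new to bundles.
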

\begin{proof}

	Let ${\bK}$ be a maximal (non-reduced) bundle of $\tL$. Let $b$ be the number of bending trails (including bands) of ${\bK}$ and let $s$ be the number of straight routes of ${\bK}$. By Lemma~\ref{lem:dist}, we must have $2b+s\leq|E|$. 

	Since ${\bK}$ is maximal, it contains all straight routes of $\tL$. Since each straight route contains precisely two fringe arrows, we have $s=\frac{1}{2}|E_{\text{fringe}}|=2|V_{\text{int}}|-|E_{\text{int}}|$.
	The inequality of the previous paragraph then gives $2b\leq |E|-s=|E|-2|V_{\text{int}}|+|E_{\text{int}}|=2|E_{\text{int}}|+|E_{\text{fringe}}|-2|V_{\text{int}}|$.
	Then 
	\[b\leq|E_{\text{int}}|+\frac{1}{2}|E_{\text{fringe}}|-|V_{\text{int}}|=|E_{\text{int}}|+\big(2|V_{\text{int}}|-|E_{\text{int}}|\big)-|V_{\text{int}}|=|V_{\text{int}}|.\]
	Since every maximal reduced bundle arises as a maximal bundle without its straight routes, we have shown the second statement. To show the first, we calculate
	$b+s=b+2|V_{\text{int}}|-|E_{\text{int}}|\leq 3|V_{\text{int}}|-|E_{\text{int}}|$.
\end{proof}

When ${\K}$ is a maximal clique (i.e., has no bands), every bending trail of ${\K}$ has \emph{exactly} two distinguished arrows and every straight route of $\K$ has \emph{exactly} one distinguished arrow~\cite[Proposition 2.28]{PPP}. On the other hand, this is not true for maximal bundles containing bands -- for example, both straight routes of the maximal bundle $\{e_1e_2e_3,f_1f_2f_3,e_2f_2^{-1}\}$ of the Kronecker fringed quiver of Figure~\ref{fig:kron1} have two distinguished arrows. This is why Corollary~\ref{cor:card-upper-bound} gives an upper bound whereas Theorem~\ref{thm:ccc} gives an equality.

\subsection{Paired and representation-finite gentle algebras}
\label{sec:PRF}

Gentle algebras and their turbulence polyhedra act as a generalization of amply framed DAGs and their flow polytopes. In this section, we will isolate two properties of gentle algebras that do not appear in amply framed DAGs in the form of non-pairedness and representation-infiniteness.
We first focus on the latter.
Note that
\[\Lambda\text{ representation-infinite}\iff\tL\text{ representation-infinite}\iff \tL\text{ has a band}.\]
In other words, representation-infiniteness is equivalent to the existence of a string which contains some arrow $\alpha$ multiple times with the same sign.
Of course, such behavior is impossible in an amply framed DAG due to the acyclicity condition.
We will see in Corollary~\ref{cor:finite-iff-bound} that representation-finiteness of $\tL$ is equivalent to boundedness of $\F_1(\tL)$. While it is possible to prove this directly at this time, we content ourselves to cite it here to save space.
We now define paired gentle algebras.

\begin{defn}
	A gentle algebra $\Lambda=\k Q/I$ is \textit{paired} if there is a map $\psi$ from the arrows of $Q$ to $\{1,2\}$ such that 
		 if $\alpha,\beta\in Q_1$ with $h(\alpha)=t(\beta)$ then $\alpha\beta\in I\iff \psi(\alpha)\neq\psi(\beta)$.
	We say that $\psi$ is a \emph{pairing function} of $\Lambda$.
\end{defn}

It is immediate that $\Lambda$ is paired if and only if $\tL$ is paired.
See the left of Figure~\ref{fig:kron-dg} for one of two possible pairings on the Kronecker fringed quiver.
Since arrows of an amply framed DAG may be labelled by elements of $\{1,2\}$, any gentle algebra coming from an amply framed DAG as in Definition~\ref{defn:gent-from-dag} is paired.
We also remark that a paired gentle algebra may not have loops (i.e., arrows starting and ending at the same vertex). Indeed, if $\alpha$ is a loop, then since $\psi(\alpha)$ must be well-defined we must have $\alpha\alpha\not\in I$. Then any power $\alpha^m$ of $\alpha$ is an element of $\Lambda$, hence $\Lambda$ is not finite dimensional as a vector space, a contradiction.

In practice, it is not difficult to verify whether or not a gentle algebra is paired. Begin by choosing any arrow $\alpha$ and giving it some label in $\{1,2\}$. The labeling of $\alpha$ determines labelings of all other arrows incident to $h(\alpha)$ or $t(\alpha)$. By iterating, this determines a label for every arrow of the connected component of $\Lambda$ containing $\alpha$. Do this for every connected component. If the result is a valid pairing, then $\Lambda$ is paired; otherwise, $\Lambda$ is not paired. One also observes that there are exactly two pairings of a connected paired gentle algebra, as switching the 1's and 2's of a pairing yields another pairing.

We now give examples showing that the properties of pairedness and representation-finiteness of a gentle algebra or fringed quiver may be independently varied.
Figure~\ref{fig:kron1} gives a fringed quiver which is paired but not representation-finite. Figure~\ref{fig:facetweird} gives a fringed quiver which is neither representation-finite nor paired. Figure~\ref{fig:nonv-turb} gives a fringed quiver which is representation-finite but not paired. The right of Figure~\ref{fig:difdagc} shows a fringed quiver which is representation-finite and paired.
\begin{figure}
	\centering
	\def\svgscale{.21}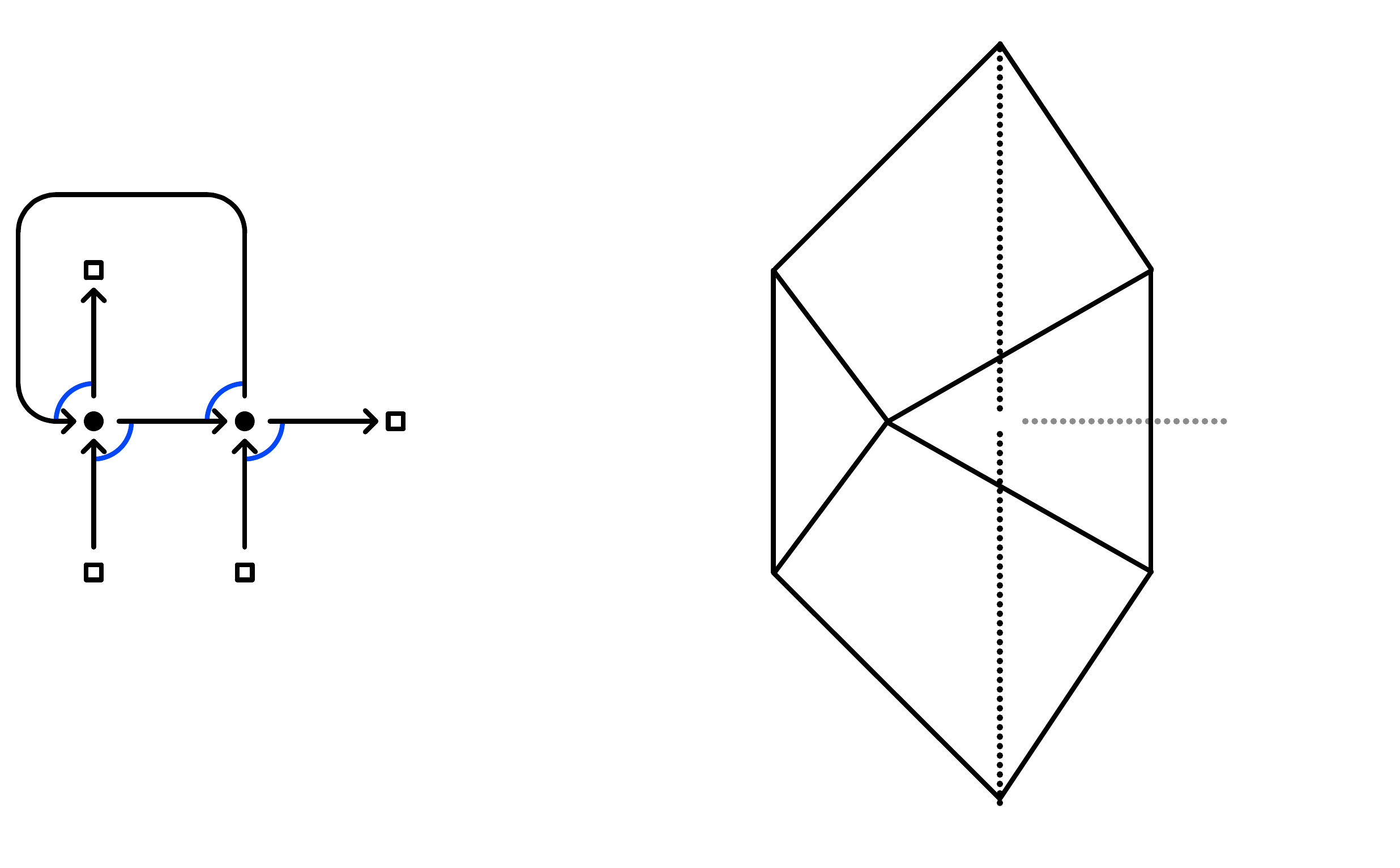
	\caption{The ``shard'' fringed quiver which is representation-finite and nonpaired.}
	\label{fig:nonv-turb}
\end{figure}

\subsection{Paired gentle algebras and amply framed directed graphs}
\label{ssec:PGAaAFDG}

We connect paired gentle algebras with a new definition of \emph{amply framed directed graphs}, which is a generalization of amply framed DAGs allowing cycles.

\begin{defn}\label{defn:framed-dg}
	An \emph{internal} vertex of a directed graph is one which is neither a source nor a sink. An \emph{internal} edge is an edge between two internal vertices. A \emph{framed directed graph} $(G,\mathfrak{F})$ is a directed graph $G$ and a \emph{framing} $\mathfrak{F}$. The framing $\mathfrak{F}$ consists of the data of
	 a linear order to the edges in $\inn(v)$ and a linear order to the edges in $\out(v)$ for every internal vertex $v$. If $e$ is less than $f$ in the linear order for $\mathfrak{F}$ on $\inn(v)$, we write $e<_{\mathfrak{F},\inn(v)}f$ (and similarly for $\out(v)$). When $\mathfrak{F}$ and/or $\inn(v)$ or $\out(v)$ is clear, we may drop one or both subscripts.
\end{defn}

\begin{defn}\label{defn:amply-framed-dg}
	A directed graph is \emph{full} if every internal vertex is incident to precisely two outgoing edges and two incoming edges.
	An \emph{amply framed directed graph} $(G,\mathfrak{F})$ is a framed directed graph $G$ such that
	\begin{enumerate}
		\item the directed graph $G$ is full,
		\item there is a map $\psi_\mathfrak{F}:E\to\{1,2\}$ such that, if $\alpha$ and $\beta$ are edges with the same internal source or target, we have $\alpha<_{\mathfrak{F}}\beta$ (in the partial order of $\inn(h(\alpha))$ or $\out(t(\alpha))$) if and only if $\psi_\mathfrak{F}(\alpha)<\psi_\mathfrak{F}(\beta)$, and
		\item\label{dgh3} if $e_1e_2\dots e_m$ is an oriented cycle of $G$, then the image of $\{e_1,\dots,e_m\}$ under $\psi_\mathfrak{F}$ is $\{1,2\}$.
	\end{enumerate}
\end{defn}

We quickly make a number of definitions extending notions defined on framed DAGs to framed directed directed graphs.
We may consider the \emph{flow polyhedron} $\F_1(\Gamma)$ as defined in Definition~\ref{defn:flow-polytope}.
A \emph{route} of $\Gamma$ is a path from a source to a sink, and a \emph{band} is an oriented cycle (up to cyclic equivalence) which is not a power of a smaller cycle. A \emph{trail} is a route or band.
We may define a notion of compatibility of trails of $\Gamma$ as in Definition~\ref{defn:compatible}. A \emph{clique} is a collection of pairwise compatible routes, and a \emph{bundle} is a collection of pairwise compatible trails.
Cliques and bundles form the \emph{clique complex} and \emph{bundle complex} of $\Gamma$, respectively.
A route is \emph{exceptional} if it consists entirely of 1-edges or entirely of 2-edges, and a clique is \emph{reduced} if it contains no exceptional routes.

We immediately remark that a (resp. amply) framed directed graph $\Gamma=(G,\mathfrak{F})$ is a (resp. amply) framed DAG if and only if $G$ is acyclic, in which case the flow polyhedron of $\Gamma$ is equal to the flow polytope of $\Gamma$.

\begin{remk}\label{remk:UQAM}
	{
	Intuitively, amply framed directed graphs generalize amply framed DAGs (Definition~\ref{defn:amp}) by allowing cycles under the condition that every cycle must contain edges of labels 1 and 2.
	A concurrent paper in progress~\cite{UQAM} studies an orthogonal generalization of amply framed directed graphs which allows cycles under the condition that no cycle must contain edges of labels 1 and 2 (i.e., every cycle contains edges of only one label).
	Among other things, they obtain analogs of our clique triangulation results in this setting.
	}
\end{remk}

We now define two extra conditions which we will assume in addition to ample framedness.

\begin{defn}
	We find a (resp. framed) directed graph $\Gamma$ \emph{convenient} if every source vertex and every sink vertex of $\Gamma$ is incident to precisely one edge.
\end{defn}

Given an arbitrary framed directed graph $\Gamma$, we may separate the source and sink vertices until each is incident to only one edge; this does not affect flow polyhedron, the routes, or the notion of compatibility on routes, so we may assume convenience of our framed directed graphs without meaningfully affecting the class of framed directed graphs.
We now define a condition which does have a mild effect on the class of flow polyhedra we consider.

\begin{defn}\label{defn:gentframe}
	A framed directed graph $\Gamma$ is \emph{gently framed} if $\Gamma$ is amply framed and has no edge directly from a source vertex to a sink vertex.
\end{defn}

Adding an edge to a framed DAG directly from a source to a sink will simply add a straight route to each maximal clique (resp. bundle) of the clique (resp. bundle) complex, and the resulting flow polyhedron will be a cone over the original flow polyhedron. So, Definition~\ref{defn:gentframe} does not meaningfully limit what clique complexes may look like and has only a mild effect on what flow polyhedra may look like.

Note that the map of Definition~\ref{defn:gent-from-dg} works particularly well on gently framed DAGs, as the edges of a gently framed DAG $\Gamma$ are in bijection with the arrows of $\tL(\Gamma)$. On the other hand, if an amply framed DAG $\Gamma$ has an edge $\alpha$ directly from the source to the sink then $\tL(\Gamma)$ has no arrow analogous to $\alpha$.
Building on this idea, we define a map from convenient gently framed directed graphs to fringed quivers.

\begin{defn}\label{defn:gent-from-dg}
	Let $\Gamma=(G,\mathfrak{F})$ be a convenient gently framed directed graph. Extending Definition~\ref{defn:gent-from-dag}, we associate to $\Gamma$ a fringed quiver $\tL(\Gamma)$ as follows.
	The vertices of $\tL(\Gamma)$ are in bijection with vertices of $\Gamma$. For each 1-edge $\alpha$ of $\Gamma$, there is an arrow $\tilde\alpha:t(\alpha)\to h(\alpha)$ of $\tL(\Gamma)$. For each 2-edge $\beta$ of $\Gamma$, there is an arrow $\tilde\beta:h(\beta)\to t(\beta)$ of $\tL(\Gamma)$. The relations of $I$ are all pairs of the form $\tilde\alpha\tilde\beta$, where either $\alpha$ is a 1-edge and $\beta$ is a 2-edge, or $\alpha$ is a 2-edge and $\beta$ is a 1-edge. Define a function $\psi_{\tL(\Gamma)}$ from arrows of $\tL$ to $\{1,2\}$ by $\psi_{\tL(\Gamma)}(\tilde\alpha)=\psi_{\mathfrak{F}}(\alpha)$.
\end{defn}

\begin{lemma}
	If $\Gamma=(G,\mathfrak{F})$ is a convenient gently framed directed graph, then $\tL(\Gamma)$ is a fringed quiver with pairing function $\psi_{\tL(\Gamma)}$.
\end{lemma}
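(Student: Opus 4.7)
The plan is to verify that $\tL(\Gamma)$ satisfies all conditions in the definition of a fringed quiver, and separately that the function $\psi_{\tL(\Gamma)}$ behaves as a pairing function. I will first analyze the local structure at each vertex, then handle the global admissibility of the ideal, and finally read off pairedness from the construction.

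First I would verify the degree conditions. Since $\Gamma$ is convenient, every source (resp.\ sink) of $\Gamma$ is incident to exactly one edge $\alpha$; flipping the direction of $\alpha$ if it is a 2-edge, this vertex is still incident to exactly one arrow of $\tL(\Gamma)$, so it plays the role of a fringe vertex. At an internal vertex $v$ of $\Gamma$, fullness provides two incoming edges and two outgoing edges at $v$, and amply framedness implies that among the two incoming edges one has label $1$ and the other has label $2$ (similarly for outgoing edges). After applying the construction of Definition~\ref{defn:gent-from-dg}, which reverses 2-edges, one sees that at $v$ in $\tL(\Gamma)$ there is exactly one incoming 1-arrow and one incoming 2-arrow, and exactly one outgoing 1-arrow and one outgoing 2-arrow. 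This gives the required in-degree and out-degree of $2$ at internal vertices.

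Next I would check the gentle bound quiver axioms. Conditions (1) and (4) are immediate: in-degrees and out-degrees are bounded by $2$, and $I$ is generated by length-two monomials by construction. For (2) and (3), consider an arrow $\tilde\alpha$ with $h(\tilde\alpha)=v$. Among the (at most) two outgoing arrows $\tilde\beta_1,\tilde\beta_2$ at $v$, one has the same label as $\tilde\alpha$ and the other has the opposite label, so exactly one composition $\tilde\alpha\tilde\beta_i$ is a string and exactly one lies in $I$; the dual statement holds for compositions ending at $\tilde\alpha$. The main technical point is the admissibility of $I$, or equivalently the finite-dimensionality of $\k\tilde Q/\tilde I$. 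An infinite oriented path in $\tL(\Gamma)$ avoiding all relations is an infinite sequence of arrows all sharing the same label; pigeonholing this sequence in the finite arrow set forces such a path to contain a cycle of arrows sharing one label. Such a cycle unravels (undoing the 2-arrow flips) to an oriented cycle in $\Gamma$ whose edges all lie in $\psi_{\mathfrak{F}}^{-1}(1)$ or all lie in $\psi_{\mathfrak{F}}^{-1}(2)$, contradicting condition~(\ref{dgh3}) of Definition~\ref{defn:amply-framed-dg}. Hence $I$ is admissible and $\tL(\Gamma)$ is a gentle algebra; combined with the degree analysis, it is a fringed quiver with the source/sink vertices of $\Gamma$ serving as the fringe vertices.

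Finally I would verify pairedness. By construction $\tilde\alpha\tilde\beta\in\tilde I$ precisely when $\alpha$ and $\beta$ carry different labels under $\psi_\mathfrak{F}$, and by definition $\psi_{\tL(\Gamma)}(\tilde\gamma)=\psi_\mathfrak{F}(\gamma)$ for each arrow. Thus $\tilde\alpha\tilde\beta\in\tilde I$ if and only if $\psi_{\tL(\Gamma)}(\tilde\alpha)\neq\psi_{\tL(\Gamma)}(\tilde\beta)$, which is exactly the pairing condition. The only real obstacle in the whole argument is the admissibility step above, which is where the cycle hypothesis (\ref{dgh3}) of amply framed directed graphs is genuinely used; everything else is a direct unpacking of the construction.
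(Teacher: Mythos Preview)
Your proposal is correct and follows exactly the approach the paper intends: the paper's own proof is essentially ``left to the reader'' with only the remark that condition~(\ref{dgh3}) of Definition~\ref{defn:amply-framed-dg} is what rules out oriented bands and hence guarantees finite-dimensionality. You have fleshed out precisely this skeleton, and your identification of the admissibility step as the one place where~(\ref{dgh3}) is genuinely needed matches the paper's emphasis.
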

\begin{proof}
	We leave the proof to the reader. We note specifically that Condition~\eqref{dgh3} of Definition~\ref{defn:amply-framed-dg} requiring that no cycle of $\Gamma$ is monolabelled implies that no band of $\tL(\Gamma)$ is oriented. This means that $\tL(\Gamma)$ is a finite-dimensional algebra, which is required for gentle algebras.
\end{proof}

\begin{lemma}
	If $\Gamma$ is a convenient gently framed directed graph, then $\F_1(\Gamma)=\F_1(\tL(\Gamma))$.
\end{lemma}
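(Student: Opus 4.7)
The plan is to use the natural bijection between edges of $\Gamma$ and arrows of $\tL(\Gamma)$ given by Definition~\ref{defn:gent-from-dg} (namely $\alpha \mapsto \tilde\alpha$) to identify the ambient coordinate spaces $\mathbb R^{E(\Gamma)}$ and $\mathbb R^{E(\tL(\Gamma))}$, and then verify that the three families of defining conditions match under this identification: nonnegativity on each edge/arrow, conservation of flow at each internal vertex, and the unit-flow normalization.

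Nonnegativity is immediate since the bijection identifies edges with arrows regardless of their direction. For conservation at an internal vertex $v$, I would use ample framedness plus fullness: $v$ has exactly two incoming edges $\alpha_1,\alpha_2$ of $\Gamma$, with $\psi_\mathfrak{F}(\alpha_i)=i$, and two outgoing edges $\beta_1,\beta_2$ with $\psi_\mathfrak{F}(\beta_i)=i$. The edges of label $1$ correspond to arrows preserving direction and those of label $2$ correspond to arrows with direction reversed, so in $\tL(\Gamma)$ the arrows incident to $v$ are $\tilde\alpha_1$ (in), $\tilde\alpha_2$ (out), $\tilde\beta_1$ (out), $\tilde\beta_2$ (in). By the relation rule in Definition~\ref{defn:gent-from-dg} (a relation occurs precisely at a change of label), the two relations at $v$ in $\tL(\Gamma)$ are $\tilde\alpha_1\tilde\alpha_2$ and $\tilde\beta_2\tilde\beta_1$, so the fringed-quiver conservation equation at $v$ reads
\[F(\tilde\alpha_1)+F(\tilde\alpha_2) \;=\; F(\tilde\beta_2)+F(\tilde\beta_1),\]
which under the identification is exactly the DAG conservation equation $F(\alpha_1)+F(\alpha_2)=F(\beta_1)+F(\beta_2)$ at $v$ in $\Gamma$.

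For the unit-flow condition, the fringe vertices of $\tL(\Gamma)$ are exactly the source and sink vertices of $\Gamma$ (convenience ensures each is incident to one edge, which becomes a fringe arrow), and gently framed ensures that source edges and sink edges are disjoint. Summing the conservation equations at all internal vertices and cancelling the internal edges yields the standard identity $\sum_{\text{source edges}}F(e)=\sum_{\text{sink edges}}F(e)$. Therefore
\[\sum_{\alpha\in E_{\text{fringe}}}F(\alpha) \;=\; \sum_{\text{source edges}}F(e)+\sum_{\text{sink edges}}F(e) \;=\; 2\sum_{\text{source edges}}F(e),\]
so the fringed-quiver normalization $\tfrac12\sum_{\alpha\in E_{\text{fringe}}}F(\alpha)=1$ is equivalent to the DAG normalization $\sum_{v\text{ source},\, e\in\out(v)}F(e)=1$.

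The only slightly delicate point is keeping track of which of the two relations at $v$ flips sign under the direction-reversal of $2$-edges, which I handled above; once the local relation pairing is checked the three conditions match on the nose and the two polyhedra coincide as subsets of the shared ambient $\mathbb R^E$.
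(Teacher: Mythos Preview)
Your proof is correct and is precisely the unpacking of what the paper leaves implicit: the paper's own proof is just ``Immediate from the definitions.'' Your verification that the edge/arrow bijection carries nonnegativity, the conservation relation at each internal vertex, and the unit-flow normalization from one structure to the other is exactly the check the paper is asking the reader to supply.
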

\begin{proof}
	Immediate from the definitions.
\end{proof}

We now define a reverse of Definition~\ref{defn:gent-from-dg}:

\begin{defn}\label{defn:dg-from-gent}
	Suppose $\tL$ is paired with pairing function $\psi:E\to\{1,2\}$. We define a gently framed directed graph $\Gamma_\psi(\tL):=(G,\mathfrak{F})$ of $\tL$, which we call the \emph{directed flow-graph} of $\tL$.
	The vertex set of $\Gamma_\psi(\tL)$ is the vertex set of $\tL$. For any arrow $\alpha$ of $\tL$ with $\psi(\alpha)=1$, there is a 1-edge $\alpha':t(\alpha)\to h(\alpha)$ of $\Gamma_\psi(\tL)$, and for any arrow $\beta$ of $\tL$ with $\psi(\beta)=2$, there is a 2-edge $\beta':h(\beta)\to t(\beta)$ of $\Gamma_\psi(\tL)$. See Figure~\ref{fig:kron-dg}.
\end{defn}
See Figure~\ref{fig:kron-dg} for an example of this construction. The framing function $\psi$ is shown in red.
We remark that changing the choice of framing function $\psi$ by switching the 1's and 2's has the effect of switching the orientation and label of all arrows of $\Gamma_\psi(\tL)$. This does not affect the clique complex of $\Gamma_\psi(\tL)$ or the space of flows on $\Gamma_\psi(\tL)$.
\begin{figure}
	\centering
	\def\svgscale{.21}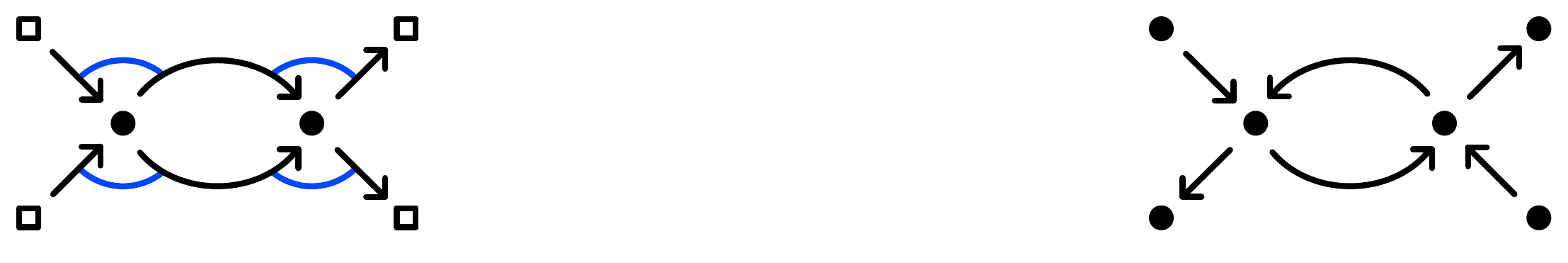
	\caption{The Kronecker fringed quiver and a directed flow-graph.}
	\label{fig:kron-dg}
\end{figure}

\begin{prop}\label{lem:god}
	The maps $\Gamma\mapsto(\tL(\Gamma),\psi_{\tL(\Gamma)})$ and $(\tL,\psi)\mapsto\Gamma_\psi(\tL)$ are mutual inverses realizing a bijection between paired gentle algebras with a pairing function and convenient gently framed directed graphs which preserves the space of unit flows.
	Moreover, $\Gamma_\psi(\tL)$ is a gently framed DAG if and only if $\tL$ is representation finite.
\end{prop}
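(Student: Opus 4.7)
The plan is to verify in turn: (i) well-definedness of the two constructions, (ii) that they are mutually inverse, (iii) that $\F_1(\tL)=\F_1(\Gamma_\psi(\tL))$ under the correspondence, and (iv) the DAG versus representation-finiteness equivalence. Steps (i)--(iii) are diagram chases through the definitions; the content is concentrated in (iv), which rests on a short observation about how signs and labels coordinate in a paired gentle algebra.

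For (i), the only nontrivial part is that $\Gamma_\psi(\tL)$ is a convenient gently framed directed graph. Fullness at each internal vertex $v$ follows from fullness of $\tL$ together with the fact that the two incoming (respectively outgoing) arrows at $v$ carry distinct $\psi$-labels, so exactly one of each pair is reversed by the construction, leaving two incoming and two outgoing edges in $\Gamma_\psi(\tL)$; the function $\psi$ itself realizes the framing. Convenience and the absence of source-to-sink edges are inherited directly from the fact that each fringe vertex of $\tL$ meets exactly one fringe arrow, whose other endpoint is internal. Condition~\ref{dgh3} of Definition~\ref{defn:amply-framed-dg} holds because a monochromatic oriented cycle of $\Gamma_\psi(\tL)$ would pull back to an oriented cycle of arrows in the finite-dimensional algebra $\tL$, which is forbidden. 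For (ii), I would trace a single 1-edge (respectively, 2-edge) of $\Gamma$ through the round trip $\Gamma\to\tL(\Gamma)\to\Gamma_{\psi_{\tL(\Gamma)}}(\tL(\Gamma))$, and analogously a 1-arrow or 2-arrow of $\tL$ through the reverse round trip; directions and labels are manifestly preserved, and the relations of $\tL(\Gamma_\psi(\tL))$ match those of $\tL$ because both are exactly the compositions of arrows with distinct $\psi$-labels, invoking the pairing hypothesis.

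For (iii), I would match the defining hyperplanes of the two polyhedra. The unit-strength condition transfers directly via the bijection between fringe arrows of $\tL$ and source/sink edges of $\Gamma_\psi(\tL)$. At an internal vertex $v$ with incoming arrows $a_1,a_2$ and outgoing arrows $b_1,b_2$ in $\tL$, labelled so that $\psi(a_1)=\psi(b_1)=1$ and $\psi(a_2)=\psi(b_2)=2$, the two relations of $\tL$ at $v$ are $a_1b_2$ and $a_2b_1$, yielding the conservation equation $F(a_1)+F(b_2)=F(a_2)+F(b_1)$. In $\Gamma_\psi(\tL)$ the images of $a_1$ and $b_2$ are incoming at $v$ while the images of $a_2$ and $b_1$ are outgoing, so ordinary inflow-equals-outflow gives exactly the same equation.

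The main obstacle is (iv). The key lemma is that in a paired gentle algebra any two consecutive signed arrows $\alpha_i^{\e_i}\alpha_{i+1}^{\e_{i+1}}$ of a string satisfy $\psi(\alpha_i)=\psi(\alpha_{i+1})$ if and only if $\e_i=\e_{i+1}$: the two ``forward'' cases ($\e_i=\e_{i+1}$) require a forward composition to avoid a relation, forcing equal labels, while the two ``turning'' cases ($\e_i\neq\e_{i+1}$) require that two incoming or two outgoing arrows at a common vertex be distinct, forcing distinct labels. Consequently, along any string the alignment between $\e_i$ and $\psi(\alpha_i)$ is globally constant, which is exactly the statement that every signed arrow $\alpha_i^{\e_i}$ traverses the corresponding edge of $\Gamma_\psi(\tL)$ in its given orientation (after possibly replacing the string by its inverse). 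This yields a bijection between bands of $\tL$ and primitive directed cycles of $\Gamma_\psi(\tL)$, so $\Gamma_\psi(\tL)$ is a DAG if and only if $\tL$ has no band if and only if $\tL$ is representation-finite.
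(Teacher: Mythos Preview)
Your proposal is correct and considerably more detailed than the paper's own proof, which leaves the mutual-inverse and flow-preservation verifications entirely to the reader. Your sign/label alignment lemma is exactly the content underlying the paper's unjustified assertion that ``routes of $\Gamma_\psi(\tL)$ are in natural bijection with routes of $\tL$,'' so you have filled in what the paper glosses over.

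The one genuine difference is in part (iv): the paper argues via routes, using the chain
\[
\Gamma_\psi(\tL)\text{ acyclic}\iff\text{finitely many routes of }\Gamma_\psi(\tL)\iff\text{finitely many routes of }\tL\iff\tL\text{ representation-finite},
\]
whereas you argue via bands, matching bands of $\tL$ directly with primitive directed cycles of $\Gamma_\psi(\tL)$. Both routes-to-paths and bands-to-cycles are immediate consequences of your alignment lemma, so the two arguments are equivalent in strength; yours is arguably cleaner since it avoids the implicit appeal to the fact that a finite directed graph has finitely many source-to-sink paths if and only if it is acyclic.
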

\begin{proof}
	We leave to the reader the proof that these maps are mutual inverses.
	Since routes of $\Gamma_\psi(\tL)$ are in natural bijection with the routes of $\tL$, we have that
	\begin{align*}
		\Gamma\textup{ is acyclic}&\iff\textup{ there are a finite number of routes of }\Gamma\\
		&\iff\textup{ there are a finite number of routes of }\tL\\
		&\iff\ \tL\textup{ is representation finite}.
	\end{align*}
\end{proof}

\begin{cor}
	The fringed quiver $\tL$ appears as $\tL(\Gamma)$ for some gently framed DAG if and only if $\tL$ is paired and representation-finite.
\end{cor}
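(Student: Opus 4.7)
The plan is to deduce this corollary directly from Proposition~\ref{lem:god}, which has already done essentially all the work. The corollary just isolates the subcase where the associated framed directed graph happens to be acyclic.

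For the forward direction, I would assume $\tL = \tL(\Gamma)$ for some gently framed DAG $\Gamma$. A gently framed DAG is in particular a convenient gently framed directed graph, so Definition~\ref{defn:gent-from-dg} equips $\tL$ with the pairing function $\psi_{\tL(\Gamma)}$, witnessing that $\tL$ is paired. Applying the bijection of Proposition~\ref{lem:god} to the pair $(\tL, \psi_{\tL(\Gamma)})$ recovers $\Gamma$ itself, which is a DAG by hypothesis; the second clause of Proposition~\ref{lem:god} then yields representation-finiteness of $\tL$.

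For the backward direction, suppose $\tL$ is paired and representation-finite. Choose any pairing function $\psi$ of $\tL$ (which exists by definition of ``paired'') and form the convenient gently framed directed graph $\Gamma_\psi(\tL)$ of Definition~\ref{defn:dg-from-gent}. By the second clause of Proposition~\ref{lem:god}, representation-finiteness of $\tL$ implies that $\Gamma_\psi(\tL)$ is actually a gently framed DAG. The bijection of Proposition~\ref{lem:god} then gives $\tL(\Gamma_\psi(\tL)) = \tL$, exhibiting $\tL$ as $\tL(\Gamma)$ for the gently framed DAG $\Gamma = \Gamma_\psi(\tL)$.

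There is no real obstacle here: the corollary is a two-line consequence of Proposition~\ref{lem:god} obtained by intersecting the ``paired + pairing function'' side of the bijection with the ``representation-finite'' condition, and the ``convenient gently framed directed graph'' side with the ``acyclic'' condition. The only thing to be careful about is the nonuniqueness of the pairing function $\psi$, but this causes no trouble since we only need existence of some gently framed DAG $\Gamma$ with $\tL(\Gamma) = \tL$.
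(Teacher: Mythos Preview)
The proposal is correct and matches the paper's (implicit) approach: the corollary is stated without proof as an immediate consequence of Proposition~\ref{lem:god}, and your argument is exactly the natural unpacking of that. The only minor imprecision is the claim that a gently framed DAG is automatically convenient---it is not by the paper's definitions, but since $\tL(\Gamma)$ in Definition~\ref{defn:gent-from-dg} is only defined for convenient $\Gamma$, convenience is already implicit in the hypothesis and no harm is done.
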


Proposition~\ref{lem:god} gives a two-to-one correspondence between convenient gently framed directed graphs and paired gentle algebras.
In future sections, we will see that the turbulence polyhedra of a paired gentle algebra $\tL$ is a well-behaved object which is deeply connected with the bundle complex of $\tL$.
In particular, the clique complex (respectively bundle complex) of $\tL$ gives a triangulation (respectively subdivision) of a dense subset of $\F_1(\tL)$.
This, through Proposition~\ref{lem:god}, will give analogous triangulations and subdivisions for convenient gently framed directed graphs.
Since convenient gently framed directed graphs capture most behavior of amply framed directed graphs, this can in turn be used to obtain corresponding results about general amply framed directed graphs.

In the above, we have focused on paired gentle algebras.
On the other hand, if $\tL$ is not paired, we cannot associate to $\tL$ a directed flow-graph
by Proposition~\ref{lem:god}.
In other words, we cannot give a notion of direction to the flow through each arrow of $\tL$. 
For example, a string walk may contain an arrow $\alpha$ as well as its inverse $\alpha^{-1}$, such as the string walk $e_1e_2e_3e_1^{-1}$ of the paired and representation-finite fringed quiver of Figure~\ref{fig:nonv-turb}. We intuitively interpret this as flow being sent ``in both directions'' through $e_1$; one cannot resolve which direction flow should be sent through $e_1$. This is a new behavior allowed in non-paired gentle algebras which we interpret as a relaxation of the condition of directedness.

We summarize the information of this subsection with the following table.

\begin{table}[htbp]
    \centering
    \begin{tabular}{|c||c|c|}
        \hline
	    & \begin{tabular}{c}
		     \textbf{Representation-Finite} \\
		     ($\F_1(\tL)$ bounded)
	    \end{tabular}
	     & \begin{tabular}{c}
		    \textbf{Representation-Infinite} \\
		    ($\F_1(\tL)$ unbounded)
	     \end{tabular} \\
        \hline
	\hline
	    \begin{tabular}{c}
		    \textbf{Paired} \\
		    (directed flow-graph)
		    \end{tabular} & \begin{tabular}{c}bounded,\\directed flow-graph is a\\gently framed DAG\\\textbf{(flow polytope)}\end{tabular} & \begin{tabular}{c}unbounded,\\directed flow-graph is an\\gently framed directed graph\\\textbf{(flow polyhedron)}\end{tabular} \\
	\hline
	    \begin{tabular}{c}
		    \textbf{Nonpaired} \\
		    (no directed flow-graph)
		    \end{tabular} & \begin{tabular}{c}bounded,\\no directed flow-graph\end{tabular} & \begin{tabular}{c}unbounded,\\no directed flow-graph\end{tabular}\\
        \hline
    \end{tabular}
\end{table}

\section{The Flow Algorithm and Bundle Subdivisions}
\label{sec:triang}

In Theorem~\ref{thm:dkk}, we saw that the clique complex of a framed DAG $\Gamma$ induces a unimodular triangulation on its flow polytope $\F_1(\Gamma)$ through taking indicator vectors. 
	    Given a convenient gently framed DAG $\Gamma$, we may consider the gentle algebra $\tL(\Gamma)$
	    (Definition~\ref{defn:gent-from-dg}).
	    Since the clique complex and turbulence polyhedron of $\tL(\Gamma)$ agree with the clique complex and flow polytope of $\Gamma$, it then follows that the clique complex of $\tL$ induces a unimodular triangulation on the flow polytope $\F_1(\tL)$.
	    We wish to extend this result to arbitrary fringed quivers by showing that the clique complex induces a unimodular triangulation on the turbulence polyhedron. When $\tL$ is representation-finite, this triangulation will cover every point of $\F_1(\tL)$, though this will not be the case when $\tL$ is representation-infinite. By considering the larger bundle complex, we will obtain a larger subdivision which covers all rational points of the turbulence polyhedron.

Our strategy will be to develop a \emph{flow algorithm} which takes a flow $F$ and obtains a representation of $F$ as a combination of indicator vectors of pairwise compatible trails, if such a representation exists, and indicates when it doesn't exist.

\begin{defn}
	Let $\bK=\K\cup \B$ be a bundle. A \emph{$\bK$-bundle combination} is a linear combination
	\[F=\sum_{p\in \bK}a_p\I(p)\]
	of indicator vectors of trails, such that each $a_p$ is nonnegative. The \emph{strength} of the bundle combination is $\sum_{p\in \K}a_p$ (note we iterate only over routes $\K$, not all trails $\bK$). Note that this is the strength of the resulting flow $F$. The bundle combination is \emph{unit} if its strength is 1. In other words, a unit bundle combination of $\bK$ is a convex combination of indicator vectors of routes in ${\K}$ plus a nonnegative combination of indicator vectors of bands in $\B$.
	A bundle combination is a \emph{clique combination} if $a_p$ is zero for every $B\in \B$ (in particular, this is the case when $\B=\emptyset$). It is \emph{positive} if $a_p$ is nonzero for every $p\in \bK$.
\end{defn}

\begin{defn}
	Let $\bK$ be a bundle with at least one route.
		    The \emph{bundle space} $\Delta_{\geq0}({\bK})$ of $\bK$ is the space of bundle combinations of ${\bK}$. The \emph{unit bundle space} $\Delta_1(\bK)$ is the space of unit bundle combinations of $\bK$.

		    We introduce some extra terminology to talk about unit bundle spaces.
		If ${\bK}$ has no bands, then $\Delta_1({\bK})$ is a simplex in $\mathbb R^{E}$ and we call $\Delta_1({\bK})$ a \emph{clique simplex}. In this case, we call $\Delta_{\geq0}(\bK)$ a \emph{clique cone}. Otherwise $\Delta_1({\bK})$ is unbounded and we call $\Delta_1({\bK})$ (and $\Delta_{\geq0}(\bK)$) a \emph{bundle wall}. The bundle space $\Delta_1(\bK)$ is \emph{maximal} if $\bK$ is maximal.
		    Note that if $\bK$ has only bands, and no routes, then $\Delta_1(\bK)=\Delta_{\geq0}(\bK)$.
\end{defn}
Figures~\ref{KRONINTRO} and~\ref{GEMINTRO} show examples of clique simplices.
	    Figure~\ref{KRONINTRO} has one maximal (two-dimensional) bundle wall whose vertices are the top and bottom vertices of the polyhedron and whose recession cone is the ray moving to the right.

\subsection{Arrow-flows and the flow algorithm}

Our goal now is to show that any rational flow may be uniquely realized as a positive bundle combination. Before we explain our strategy, consider the following definition.

\begin{defn}
	We say that an \emph{arrow-flow} of a flow $F$ of $\tL$ is a tuple $(\alpha^\e,C)$, where $\alpha^\e$ is an arrow of $\tL$ and $C\in[0,F(\alpha)]$.
\end{defn}

An arrow-flow represents an infinitesimally small amount of flow which travels through a signed arrow $\alpha^\e$ from $t(\alpha^\e)$ to $h(\alpha^\e)$.  We will now give a way to decide where any given arrow-flow must be sent to in order to maintain compatibility, to the end of realizing a flow $F$ as a bundle combination. We call this the \emph{flow algorithm} and it is the object of this section to prove that it returns a bundle.

\begin{defn}\label{defn:forbac}
	Let $(\alpha^\e,C)$ be an arrow-flow of a flow $F$ of $\tL$.
	\begin{enumerate}
		\item\label{fb1} Suppose $h(\alpha^\e)$ is not fringe. Let $\alpha',\beta,\beta'$ be distinct arrows of $\tL$ such that $\alpha^\e(\beta')^\e$ and $\beta\alpha'$ are relations of $h(\alpha^\e)$. In other words, such that $\alpha^\e\alpha',\beta(\beta')^{\e},\alpha^\e\beta^{-1}$ are strings of $\tL$. See Figure~\ref{fig:bet} to see how this looks for $\e\in\{1,-1\}$. Then we say that \[\for_F(\alpha^\e,C)=
			\begin{cases}
				(\alpha',C) & C\leq F(\alpha')\text{ and }\e=1\\
				(\beta^{-1},C-F(\alpha')) & C>F(\alpha')\text{ and }\e=1\\
				(\alpha',C+F(\beta')) & C+F(\beta')< F(\alpha')\text{ and }\e=-1\\
				(\beta^{-1},C+F(\beta')-F(\alpha')) & C+F(\beta')\geq F(\alpha')\text{ and }\e=-1.
			\end{cases}\]
		\item Suppose $t(\alpha^\e)$ is not fringe. Let $\alpha',\beta,\beta'$ be distinct arrows of $\tL$ such that
			$(\beta')^\e\alpha^\e$ and $\alpha'\beta$ are relations of $t(\alpha^\e)$.
Then we say that \[\bac_F(\alpha^\e,C)=
			\begin{cases}
				(\alpha',C) & C\leq F(\alpha')\text{ and }\e=1\\
				(\beta^{-1},C-F(\alpha')) & C>F(\alpha')\text{ and }\e=1\\
				(\alpha',C+F(\beta')) & C+F(\beta')< F(\alpha')\text{ and }\e=-1\\
				(\beta^{-1},C+F(\beta')-F(\alpha')) & C+F(\beta')\geq F(\alpha')\text{ and }\e=-1.
			\end{cases}\]
	\end{enumerate}
	\begin{figure}
		\centering
		\def\svgscale{.21}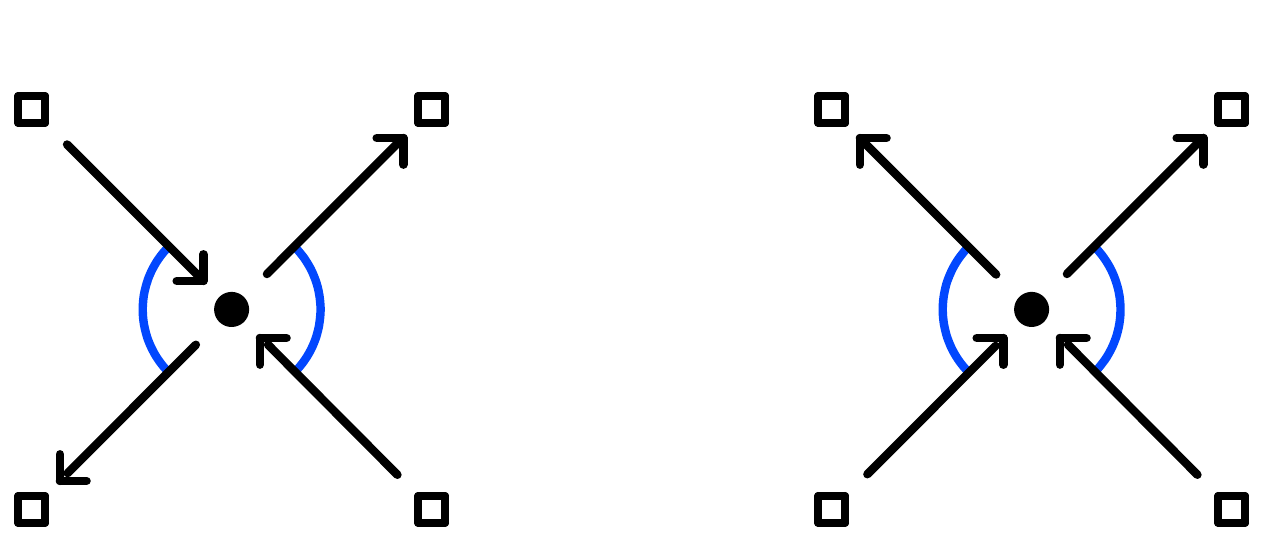
		\caption{Arrows labelled as in Definition~\ref{defn:forbac}~\eqref{fb1}.}
		\label{fig:bet}
	\end{figure}
	When the flow $F$ is clear by context, we omit the subscript $F$.
\end{defn}

Consider the fringed quiver $\tL$ with flow $F$ on the left of Figure~\ref{fig:forbac}. For $C\in[0,\frac{1}{2}]$, we have $\for(\alpha,C)=(\alpha',C)$, whereas for $C\in(\frac{1}{2},1]$, we have $\for(\alpha,C)=(\beta^{-1},C-\frac{1}{2})$.

\begin{lemma}
	Let $(\alpha^\e,C)$ be an arrow-flow of $F$. If $h(\alpha^\e)$ is not fringe, then $\for_F(\alpha^\e,C)$ is an arrow-flow of $F$. If $t(\alpha^\e)$ is not fringe, then $\bac_F(\alpha^\e,C)$ is an arrow-flow of $F$.
\end{lemma}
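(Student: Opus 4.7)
The plan is to argue by case analysis directly from the piecewise definition of $\for_F$ and $\bac_F$. Recall that $(\gamma^\delta, D)$ is an arrow-flow of $F$ if and only if $D \in [0, F(\gamma)]$. Since nonnegativity of $F$ gives $D \geq 0$ almost automatically in every case, the substantive content is to verify the upper bound $D \leq F(\gamma)$, and this will come from conservation of flow at the relevant internal vertex. I'll treat $\for_F$ in the four cases of Definition~\ref{defn:forbac} and observe that $\bac_F$ is entirely analogous at the vertex $t(\alpha^\e)$.

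Suppose $h(\alpha^\e)$ is internal and let $v = h(\alpha^\e)$. Use the arrows $\alpha',\beta,\beta'$ as in Definition~\ref{defn:forbac}. When $\e = 1$, the relations at $v$ are $\alpha\beta'$ and $\beta\alpha'$, so conservation of flow at $v$ reads
\[ F(\alpha) + F(\beta') = F(\beta) + F(\alpha'). \]
If $C \leq F(\alpha')$, then $\for_F(\alpha,C) = (\alpha',C)$, and $0 \leq C \leq F(\alpha')$ is immediate. If $C > F(\alpha')$, then $\for_F(\alpha,C) = (\beta^{-1}, C - F(\alpha'))$; the lower bound $C - F(\alpha') > 0$ holds by hypothesis, and for the upper bound I compute
\[ C - F(\alpha') \;\leq\; F(\alpha) - F(\alpha') \;=\; F(\beta) - F(\beta') \;\leq\; F(\beta), \]
using $C \leq F(\alpha)$, conservation, and $F(\beta') \geq 0$, respectively, and noting that $F(\beta^{-1}) = F(\beta)$.

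When $\e = -1$, the vertex is $v = t(\alpha)$ and the relations at $v$ are $\beta'\alpha$ and $\beta\alpha'$, so conservation at $v$ reads
\[ F(\beta') + F(\alpha) = F(\beta) + F(\alpha'). \]
If $C + F(\beta') < F(\alpha')$, then $\for_F(\alpha^{-1},C) = (\alpha', C + F(\beta'))$, and $0 \leq C + F(\beta') < F(\alpha')$ is immediate from nonnegativity of $F$ and the case hypothesis. If $C + F(\beta') \geq F(\alpha')$, then $\for_F(\alpha^{-1},C) = (\beta^{-1}, C + F(\beta') - F(\alpha'))$; the lower bound holds by hypothesis, and for the upper bound
\[ C + F(\beta') - F(\alpha') \;\leq\; F(\alpha) + F(\beta') - F(\alpha') \;=\; F(\beta), \]
using $C \leq F(\alpha)$ and then conservation.

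The four $\bac_F$ cases are proven identically after replacing $h(\alpha^\e)$ by $t(\alpha^\e)$ and using the conservation identity supplied by the two relations at $t(\alpha^\e)$ prescribed in part~(2) of Definition~\ref{defn:forbac}; the algebraic manipulations are the same. There is no genuine obstacle here, only the need to keep the four cases bookkept correctly and to invoke the correct pairing of relations at the relevant endpoint of $\alpha$.
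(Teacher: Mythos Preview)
Your proof is correct and follows essentially the same approach as the paper: a case analysis over the four branches of Definition~\ref{defn:forbac}, with each upper bound supplied by the conservation-of-flow identity at the internal vertex $h(\alpha^\e)$ and each lower bound by nonnegativity of $F$. The paper's write-up is slightly terser but performs the same computations.
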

\begin{proof}
	We prove the $\for$ statement; the $\bac$ statement is symmetric.
	First, suppose $\e=1$.
	If $C\leq F(\alpha')$ then it is immediate that $\for_F(\alpha^\e,C)=(\alpha',C)$ is an arrow-flow of $F$. If $C>F(\alpha)$, then $C\leq F(\alpha)\leq F(\alpha)+F(\beta')=F(\alpha')+F(\beta)$ implies that $C-F(\alpha')\leq F(\alpha')+F(\beta)-F(\alpha')=F(\beta)$ so that $\for_F(\alpha^\e,C)=(\beta^{-1},C-F(\alpha'))$ is an arrow-flow of $F$.

	Now suppose $\e=-1$. If $C+F(\beta')<F(\alpha')$ then it is immediate that $\for_F(\alpha^\e,C)=(\alpha',C+F(\beta'))$ is an arrow-flow of $F$. If $C+F(\beta')\geq F(\alpha')$, then $C+F(\beta')\leq F(\alpha)+F(\beta')=F(\alpha')+F(\beta)$ implies that $C+F(\beta')-F(\alpha')\leq F(\alpha')+F(\beta)-F(\alpha')=F(\beta)$ so that $\for_F(\alpha^\e,C)=(\beta^{-1},C+F(\beta')-F(\alpha'))$ is an arrow-flow of $F$.
\end{proof}

\begin{lemma}\label{lem:norep}
	Let $(\alpha^\e,C)$ be an arrow-flow of $F$ and let $C\in[0,F(\alpha)]$.
	If $h(\alpha^\e)$ is not fringe, then $\bac(\for(\alpha^\e,C))=(\alpha^\e,C)$. If $t(\alpha^\e)$ is not fringe, then $\for(\bac(\alpha^\e,C))$.
\end{lemma}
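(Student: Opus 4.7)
The plan is a direct case analysis on the sign $\e$ and on which of the four cases of Definition~\ref{defn:forbac} applies to $\for(\alpha^\e, C)$. I will describe the $\bac\circ\for$ identity in detail; the $\for\circ\bac$ direction is entirely symmetric, interchanging the roles of $h$ and $t$.

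The key setup observation is that the output arrow-flow $\for(\alpha^\e, C) = (\gamma^\delta, C')$ always satisfies $t(\gamma^\delta) = h(\alpha^\e) =: v$: in Cases~1 and $1'$ the output arrow is $\alpha'$ with $t(\alpha') = v$ (from the relation $\beta\alpha'$ of $v$), and in Cases~2 and $2'$ the output arrow is $\beta^{-1}$ with $t(\beta^{-1}) = h(\beta) = v$. Consequently, applying $\bac$ to this output involves the same vertex $v$, and the $\bac$-arrows $\alpha'_{\bac}, \beta_{\bac}, \beta'_{\bac}$ appearing in Definition~\ref{defn:forbac}~(2) are merely a relabeling of the $\for$-arrows $\alpha, \alpha', \beta, \beta'$, determined by the two relations at $v$. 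Since $v$ has exactly two incoming arrows, two outgoing arrows, and exactly two relations, this relabeling is essentially unique and can be read off mechanically by matching incoming-outgoing relation pairs.

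Once the relabeling is pinned down, each case reduces to a short algebraic check. For instance, in the case $\e=1$, $C > F(\alpha')$, we have $\for(\alpha, C) = (\beta^{-1}, C - F(\alpha'))$, and applying $\bac$ with $\tilde\e = -1$ uses the identifications $\alpha'_{\bac} = \alpha$, $\beta'_{\bac} = \alpha'$, $\beta_{\bac} = \beta'$; the $\bac$-inequality $(C - F(\alpha')) + F(\alpha') < F(\alpha)$ reduces to $C < F(\alpha)$, which places us in the first case of $\bac$ for $\tilde\e=-1$ and returns $(\alpha, C)$. Similarly, in Case~1 ($\e=1$, $C \leq F(\alpha')$), applying $\bac$ to $(\alpha', C)$ puts us in the first case of $\bac$ (using $C \leq F(\alpha)$ from the arrow-flow condition) and returns $(\alpha, C)$. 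The two cases with $\e=-1$ are analogous, using conservation of flow $F(\alpha) + F(\beta) = F(\alpha') + F(\beta')$ at $v$ to align coordinates. I expect no significant obstacle: the main effort is bookkeeping of which $\for$-arrow corresponds to which $\bac$-arrow, after which each case follows immediately by comparing the inequalities of Definition~\ref{defn:forbac}.
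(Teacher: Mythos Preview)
Your proposal is correct and takes essentially the same approach as the paper: a direct case analysis on the four branches of $\for$ in Definition~\ref{defn:forbac}, followed by identifying the $\bac$-arrows in terms of the $\for$-arrows and checking which branch of $\bac$ applies. The paper's proof is slightly terser (it does not spell out the arrow relabeling as explicitly as you do), but the content is the same.
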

\begin{proof}
	We prove that if $h(\alpha^\e)$ is not fringe, then $\bac(\for(\alpha^\e,C))=(\alpha^\e,C)$. We separate into cases based on which case is triggered in the definition of $\for(\alpha^\e,C)$.
	\begin{enumerate}
		\item If $C\leq F(\alpha')$ and $\e=1$, then $\for(\alpha^\e,C)=(\alpha',C)$. Then $\bac(\alpha',C)=\for(\alpha,C)$ through the first branch, since $C\leq F(\alpha)$.
		\item If $C>F(\alpha')$ and $\e=1$, then $\for(\alpha^\e,C)=(\beta^{-1},C-F(\alpha')$ through the second branch. Then $\bac(\beta^{-1},C-F(\alpha'))=(\alpha,C-F(\alpha')+F(\alpha'))=(\alpha,C)$ through the third branch.
		\item If $C+F(\beta')\leq F(\alpha')$ and $\e=-1$, then $\for(\alpha^\e,C)=(\alpha',C+F(\beta'))$ through the third branch. Then through the second branch, $\bac(\alpha',C+F(\beta'))=(\alpha^\e,C+F(\beta')-F(\beta'))=(\alpha^\e,C)$.
		\item If $C+F(\beta')\geq F(\alpha')$ and $\e=-1$, then $\for(\alpha^\e,C)=(\beta^{-1},C+F(\beta')-F(\alpha'))$ by the fourth branch. Then by the fourth branch again, $\bac(\beta^{-1},C+F(\beta')-F(\alpha'))=(\alpha^\e,C)$.
	\end{enumerate}
	One may show symmetrically that if $t(\alpha^\e)$ is not fringe, then $\for(\bac(\alpha^\e,C))=(\alpha^\e,C)$.
\end{proof}

In light of Lemma~\ref{lem:norep}, one may equivalently take the definition of $\bac_F(\alpha^\e,C)$ to be the arrow-flow $(\beta^\f,D)$ such that $\for(\beta^\f,D)=(\alpha^\e,C)$.

Definition~\ref{defn:forbac} will be used to realize a rational flow $F$ as a bundle combination. The idea is that applying $\for$ and/or $\bac$ repeatedly to an arrow-flow $(\alpha^\e,C)$ of $F$ will walk along an extended string $p_{(\alpha^\e,C)}^F$, which will be a trail under reasonable conditions. Definition~\ref{defn:forbac} is constructed so that lower values of $C$ will give extended strings which are lower in the order $\prec_\alpha$ of Definition~\ref{defn:wkordG} (which totally orders the members of any bundle which pass through $\alpha$). There will be an interval $I$ (containing $C$) such that $p^F_{(\alpha^\e,C)}=p^F_{(\alpha^\e,D)}$ for $D\in I$; the length of this interval $I$ gives the coefficient of $\I(p^F_{(\alpha^\e,C)})$ in the bundle combination. See the following example for a very small case.
\begin{example}\label{ex:sm}
	Consider the fringed quiver $\tL$ with flow $F$ on the left of Figure~\ref{fig:forbac}. We wish to realize $F$ as a bundle combination. For $C\in[0,\frac{1}{2}]$, we have $\for(\alpha,C)=(\alpha',C)$, whereas for $C\in(\frac{1}{2},1]$, we have $\for(\alpha,C)=(\beta^{-1},C-\frac{1}{2})$. This indicates that $\frac{1}{2}\I(\alpha\alpha')$ and $\frac{1}{2}\I(\alpha\beta^{-1})$ should both be in our bundle combination realizing $F$.
	On the other hand, $\for((\beta')^{-1},C)=(\beta^{-1},C+\frac{1}{2})$ for any $C\in[0,1]$, so our bundle combination should also include $1\I((\beta')^{-1}\beta^{-1})$. Indeed, $F$ is realized as the bundle combination
	\[F=\frac{1}{2}\I(\alpha\alpha')+\frac{1}{2}\I(\alpha\beta^{-1})+\I((\beta')^{-1}\beta^{-1}).\]
	This is represented on the right of Figure~\ref{fig:forbac}.
	\begin{figure}
		\centering
		\def\svgscale{.21}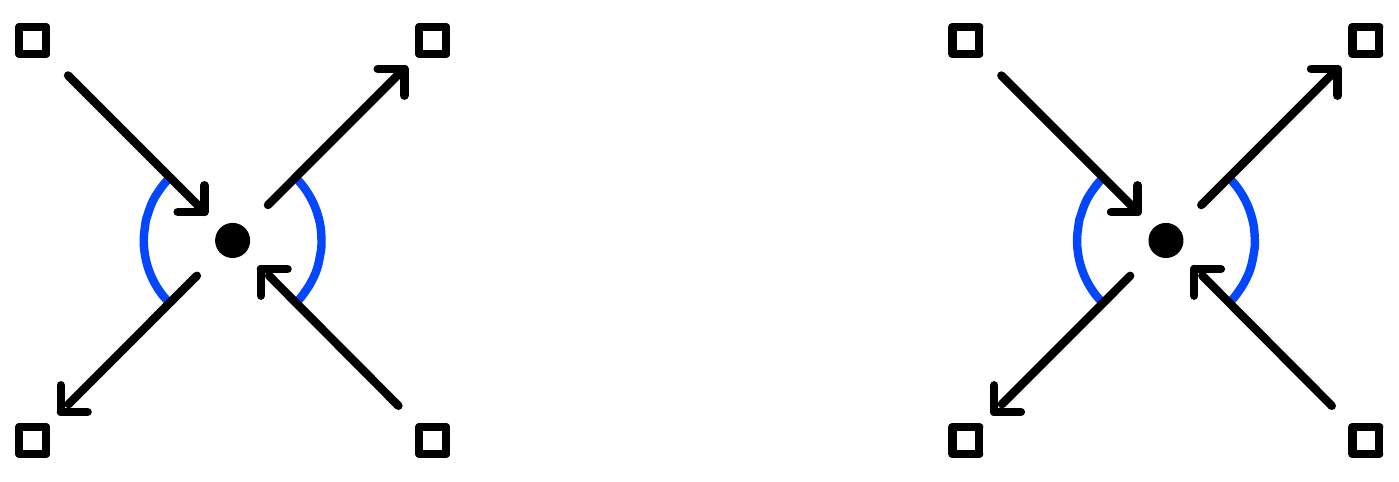
		\caption{On the left is a fringed quiver with a flow labelled in blue. On the right appear the paths indicated by the definitions of $\for$ and $\bac$.}
		\label{fig:forbac}.
	\end{figure}
\end{example}

We begin to formalize the above example with a definition.

\begin{defn}\label{defn:gr}
	Let $(\alpha^\e,C)$ be an arrow-flow of a flow $F$ of $\tL$.
	Take $(\alpha_0^{\e_0},C_0):=(\alpha^\e,C)$. Algorithmically proceed as follows. For any $(\alpha_j^{\e_j},C_j)$ such that $j\geq0$ and $h(\alpha_j^{\e_j})$ is not fringe, define $(\alpha_{j+1}^{\e_{j+1}},C_{j+1}):=\for(\alpha_j^{\e_j},C_j)$. For any $(\alpha_j,{\e_j},C_j)$ such that $j\leq0$ and $t(\alpha_j^{\e_j})$ is not fringe, define $(\alpha_{j-1}^{\e_{j-1}},C_{j-1}):=\bac(\alpha_j^{\e_j},C_j)$.
	The result is a sequence of arrow-flows $\{(\alpha_j^{\e_j},C_j)\ :\ j\in J\}$, where $J$ is some (possibly infinite) interval of integers containing $0$.

	\begin{enumerate}
		\item If $J$ is some finite interval $[m,m']$, then we define the route $p_{(\alpha^\e,C)}^F:=\alpha_{m}^{\e_m}\dots\alpha_{m'}^{\e_{m'}}$ marked at $\alpha^\e=\alpha_0^{\e_0}$.
		\item If $J$ consists of all integers, but there is some minimal positive integer $M$ such that $\alpha_j^{\e_j}=\alpha_i^{\e_i}$ if and only if $j\equiv i$ modulo $M$, then we define the band $p_{(\alpha^\e,C)}^F:=\alpha_0^{\e_0}\dots\alpha_{M-1}^{\e_{M-1}}$ marked at $\alpha_0^{\e_0}$.
		\item If neither of the above conditions are true, then $p_{(\alpha^\e,C)}^F:=\dots\alpha_j^{\e_j}\alpha_{j+1}^{\e_{j+1}}\dots$ is an irrational string (Definition~\ref{defn:ext-complex}) marked at $\alpha^\e$ (note that it may have a left or right endpoint, though not both). We typically wish to avoid this case.
	\end{enumerate}
	Let $I_{(\alpha^\e,C)}^F$ be the largest interval of $[0,F(\alpha)]$ containing $C$ such that $p_{(\alpha^\e,D)}^F=p_{(\alpha^\e,C)}^F$ (as marked paths) for any $D\in I$. Let $a_{(\alpha^\e,C)}^F$ be the length of the interval $I_{(\alpha^\e,C)}^F$. It is possible to have $a_{(\alpha^\e,C)}^F=0$, as shown in Example~\ref{ex:singleton}. We will often omit the superscripts $F$ when the underlying flow is clear by context.
\end{defn}

It is immediate from Definition~\ref{defn:forbac} that, in the notation of the definition, $\for(\alpha,0)=(\alpha',0)$. This gives the following remark.
\begin{remk}\label{remk:flow0}
	For any arrow $\alpha$ of $\tL$, the path $p_{(\alpha,0)}^F$ is the straight route containing $\alpha$ (marked at $\alpha$). Similarly, the path $p_{(\alpha^{-1},F(\alpha))}^F$ is the (equivalent) reverse of this marked route.
\end{remk}

\begin{example}\label{ex:singleton}
	Consider the fringed quiver of Figure~\ref{fig:singleton} with the flow $F$ indicated in blue (all arrows without a blue label are given zero flow). Shown in red is the path $p^F_{(\alpha,1)}=\alpha\gamma\delta^{-1}\kappa^{-1}$. On the other hand, we have $p_{(\alpha,C)}^F=\alpha\beta^{-1}$ for $C>1$ and $p^F_{(\alpha,C)}=\alpha\gamma\delta^{-1}\epsilon$ for $C<1$. Hence, the interval $I^F_{(\alpha,1)}=\{1\}$ and $a_{(\alpha,1)}^F=0$.
\begin{figure}
	\centering
	\def\svgscale{.21}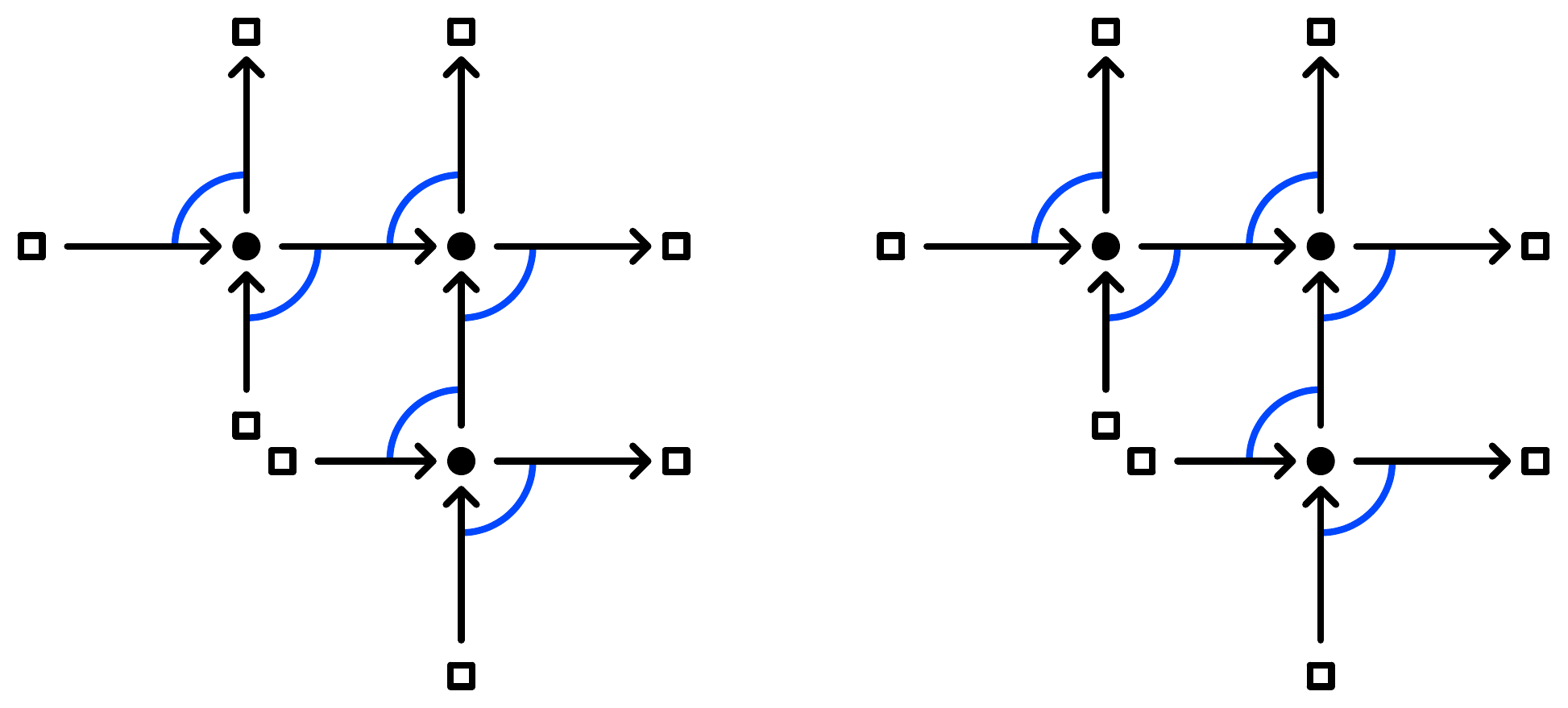
	\caption{A blossoming quiver with a flow $F$ in blue and $p_{(\alpha,1)}^F$ in red.}
	\label{fig:singleton}
\end{figure}
\end{example}

\begin{example}\label{ex:gottabemarked}
	Consider the fringed quiver $\tL$ with flow $F$ given in Figure~\ref{fig:gottabemarked}. As noted in Remark~\ref{remk:flow0}, we have $p_{e_1}^0=e_1e_2e_3e_4$. Say $C\in(0,1]$. Then $\for_F(e_1,C)=(e_2,C)$ and $\for_F(e_2,C)=(e_3,C)$ and $\for_F(e_3,C)=(e_1^{-1},C)$. Hence, the we get the marked route $p_{(e_1,C)}=e_1e_2e_3e_1^{-1}$ marked at the first $e_1$, and $I^F_{(e^\e,C)}=(0,1]$. Similarly, if $D\in(1,2]$, then $\for_F(e_1,D)=(e_3^{-1},D-1)$ and $\for_F(e_3^{-1},D-1)=(e_2^{-1},D-1)$ and $\for_F(e_2^{-1},D-1)=(e_1^{-1},(D-1)+1)=(e_1^{-1},D)$. Then $p_{(e_1,D)}=e_1e_3^{-1}e_2^{-1}e_1^{-1}$ is marked at the first copy of $e_1$ and $I^F_{(e_1,D)}=(1,2]$.

	Note that the unmarked routes $p_{(e_1,C)}$ and $p_{(e_1,D)}$ are the same, but their marked routes differ, as
	$p_{(e_1,D)}=e_1e_3^{-1}e_2^{-1}e_1^{-1}$ marked at the first copy of $e_1$ is equivalent to $p_{(e_1,C)}=e_1e_2e_3e_1^{-1}$ marked at the final arrow $e_1^{-1}$.
	Note also that $I^F_{(e_1,0)}=\{0\}$, as no nonzero value $E$ gives $p_{(e_1,E)}=e_1e_2e_3e_4$.
	\begin{figure}
		\centering
		\def\svgscale{.21}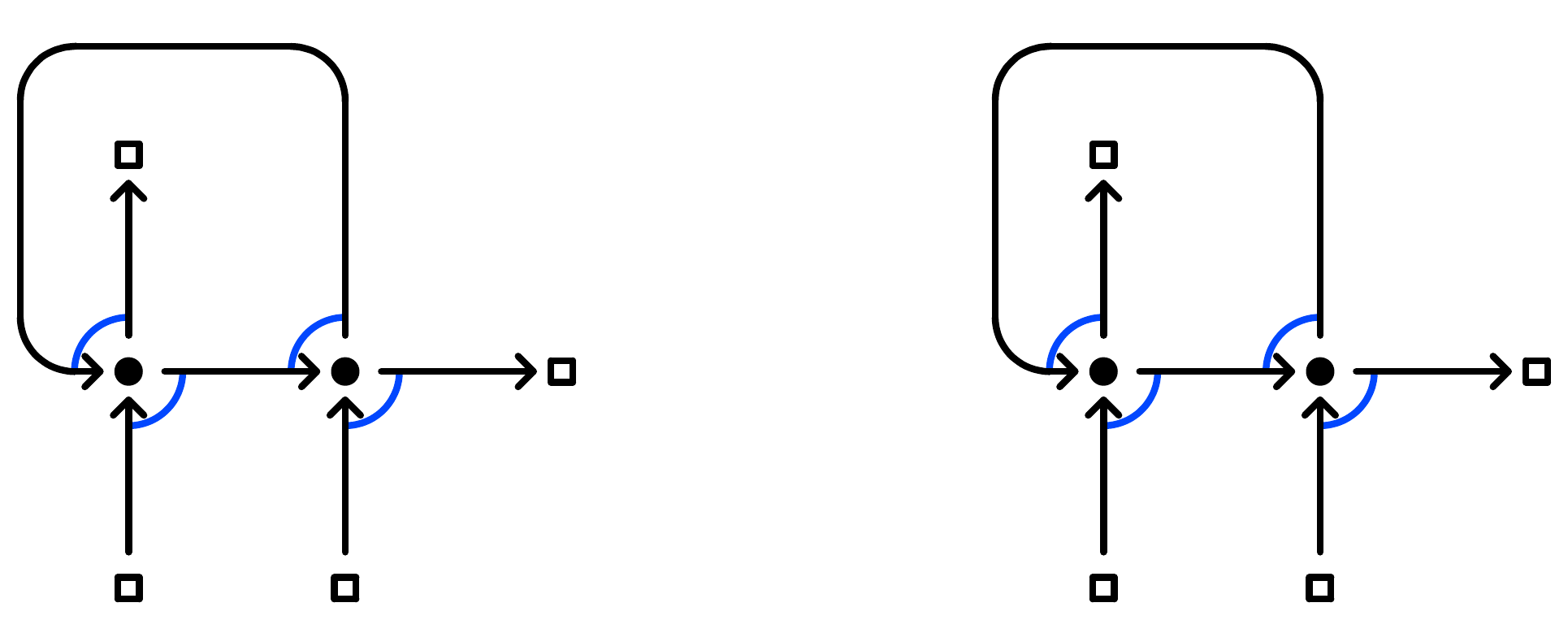
		\caption{The unmarked paths given by $(e_1,\frac{1}{2})$ and $(e_1,\frac{3}{2})$ are the same, but the marked paths differ.}
		\label{fig:gottabemarked}
	\end{figure}
\end{example}

It follows from Definition~\ref{defn:forbac} that, in the notation of the definition, for any arrow-flow $(\alpha^\e,C)$ such that $h(\alpha^\e)=t(\alpha^{-\e})$ is not fringe we have $\for(\alpha^\e,C)=\bac(\alpha^{-\e},F(\alpha)-C)$. Applying this result repeatedly yields the following remark.
\begin{remk}\label{remk:convenient}
	If $(\alpha^\e,C)$ is an arrow-flow of $\tL$, then $p^F_{(\alpha^\e,C)}=p^F_{(\alpha^{-\e},{F(\alpha)-C})}$ as marked extended strings (note that the former is marked at $\alpha^\e$, and the latter at $\alpha^{-\e}$).
\end{remk}

We now show that the extended strings obtained from arrow-flows are compatible with each other.

\begin{prop}\label{prop:arrow-order}
	Let $\alpha^\e$ be a signed arrow of $\tL$ and let $0\leq C\leq D\leq F(\alpha)$. Then the extended strings $p^F_{(\alpha^\e,C)}$ and $p^F_{(\alpha^\e,D)}$ satisfy $p^F_{(\alpha^\e,C)}\preceq_{\alpha^\e}p^F_{(\alpha^\e,D)}$.
\end{prop}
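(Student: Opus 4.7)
The plan is to compare the extended strings $p := p^F_{(\alpha^\e,C)}$ and $q := p^F_{(\alpha^\e,D)}$ by tracking the two arrow-flow sequences step-by-step under iterated application of $\for$ and $\bac$, and checking at each step how they relate to the countercurrent order. The key observation, read off directly from the four branches of Definition~\ref{defn:forbac}, is the following monotonicity: suppose $h(\alpha^\e)$ is not fringe, and let $\alpha', \beta, \beta'$ be the labels in Definition~\ref{defn:forbac}. Then there is a single threshold in $[0, F(\alpha)]$ (namely $F(\alpha')$ when $\e = 1$, or $F(\alpha') - F(\beta')$ when $\e = -1$) below which $\for$ sends the arrow-flow forward along the positively oriented arrow $\alpha'$, and above which $\for$ sends it along the negatively oriented arrow $\beta^{-1}$; moreover, within each region the new flow value produced by $\for$ is weakly increasing in $C$.

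First, I would iterate this observation forward: applying $\for$ to $(\alpha^\e, C)$ and to $(\alpha^\e, D)$ with $C \le D$ either produces the same next signed arrow with new values $C' \le D'$ (allowing the induction to continue), or else $p$ proceeds along the positively oriented $\alpha'$ while $q$ proceeds along $\beta^{-1}$. In the latter case, the common prefix of $p$ and $q$ through the divergence point witnesses the comparison $p \prec^+_{\alpha^\e} q$ as in Definition~\ref{defn:wkordG}. Iterating until a possible divergence therefore yields either $p = q$ or $p \prec^+_{\alpha^\e} q$, hence $p \preceq^+_{\alpha^\e} q$. A symmetric analysis of the backward chain via $\bac$ yields $p \preceq^-_{\alpha^\e} q$; alternatively, one applies the forward argument to $(\alpha^{-\e}, F(\alpha)-D)$ and $(\alpha^{-\e}, F(\alpha)-C)$ using Remark~\ref{remk:convenient} to reduce the backward direction to the forward one.

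To upgrade the two weak orders to $p \preceq_{\alpha^\e} q$, I would verify that $p$ and $q$ are compatible. If they kissed via some $\sigma$ which is a top substring of (say) $p$ and a bottom substring of $q$, then at one endpoint of $\sigma$ we would have $p$ exiting along a positive arrow while $q$ exits at the same vertex along a negative arrow (or the reverse at the other endpoint). But the step-by-step analysis above shows that whenever the two arrow-flow sequences split at an internal vertex, it is always the lower-valued chain that proceeds positively and the higher-valued chain that proceeds negatively; this matches the splitting pattern that avoids kissing in the sense of Definition~\ref{defn:tilt-complex}. Hence $p$ and $q$ are compatible, and the remark after Definition~\ref{defn:wkordG} then identifies $\preceq^+_{\alpha^\e}$ with $\preceq_{\alpha^\e}$.

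The main obstacle is the bookkeeping across the four sign branches of Definition~\ref{defn:forbac}: one has to verify uniformly that ``lower $C$'' is always sent to the positively-oriented forward arrow $\alpha'$, regardless of whether $\e = 1$ or $\e = -1$ and of which threshold case is triggered. A secondary technical point is the rigorous verification of compatibility of $p$ and $q$: one must rule out the possibility that $p$ and $q$ agree, diverge, re-synchronize, and then produce an incompatibility elsewhere, which is handled by applying the divergence analysis at each reentry point.
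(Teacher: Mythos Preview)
Your approach is essentially the same as the paper's: track the two arrow-flow chains under $\for$, observe that the branching in Definition~\ref{defn:forbac} always sends the lower flow value along the positively oriented arrow at the point of divergence, and reduce the $\bac$ direction to the $\for$ direction via Remark~\ref{remk:convenient}. The only organizational difference is that the paper interprets the conclusion $p \preceq_{\alpha^\e} q$ as the conjunction of the two weak orders and defers your compatibility check to the immediately following Corollary~\ref{cor:compat}, whose proof is exactly the ``divergence analysis at each reentry point'' you describe.
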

\begin{proof}
	Write $p_{(\alpha^\e,C)}^F=p_{(\alpha^\e,C)}^-\alpha^\e p_{(\alpha^\e,C)}^+$, where $\alpha^\e$ is the marked arrow of $p_{(\alpha^\e,C)}^F$ and $p_{(\alpha^\e,C)}^+$ is the (possibly infinite) part of this extended string after this marked arrow.
	We claim that $p_{(\alpha^\e,C)}^+$ than the analogously defined $p_{(\alpha^\e,D)}^+$ in the post-$\alpha^\e$ order $\prec_{\alpha^\e}^+$.

	For $j\in\mathbb N_{\geq0}$ such that $\for^j(\alpha^\e,C)$ is defined, let $\for^j(\alpha^\e,C)=(\alpha_j^{\e_j},C_j)$.
	For $j\in\mathbb N_{\geq0}$ such that $\for^j(\alpha^\e,D)$ is defined, let $\for^j(\alpha^\e,D)=((\alpha'_j)^{\e'_j},D_j)$.
	If $\alpha_j^{\e_j}=(\alpha'_j)^{\e_j}$ for all $j\geq0$, then $p_{(\alpha^\e,C)}^+=p_{(\alpha^\e,D)}^+$ and hence they are equal in $\prec_{\alpha^\e}^+$ and we have shown the claim.
	Otherwise, let $m\in\mathbb N_{\geq0}$ be maximal such that $\alpha_j^{\e_j}=(\alpha'_j)^{\e_j}$ for all $0\leq j\leq m$. If $\alpha_m^{\e_m}$ is fringe, then $p^F_{(\alpha^\e,C)}$ and $p^F_{(\alpha^\e,D)}$ agree after the marked $\alpha^\e$ and the claim is again shown. It remains to treat the case when $\alpha_m^{\e_m}$ is not fringe.

	We now prove that $C_k\leq D_k$ for all $0\leq k\leq m$.
	We prove this by induction on $k$. The base case $k=0$ is trivial, as $C_0=C\leq D=D_0$.
	Given $k<m$ such that $C_k\leq D_k$, we prove that $C_{k+1}\leq D_{k+1}$. Indeed, this is immediate by Definition~\ref{defn:forbac}; since $k<m$, we have $\alpha_k^{\e_k}=(\alpha'_k)^{\e'_k}$ and $\alpha_{k+1}^{\e_{k+1}}=(\alpha'_{k+1})^{\e'_{k+1}}$, hence applying $\for$ to $(\alpha_k^{\e_k},C_k)$ and $(\alpha_k^{\e_k},D_k)$ triggers the same branch of Definition~\ref{defn:forbac} and $D_{k+1}-C_{k+1}=D_k-C_k>0$.

	In particular, we have shown that $C_m\leq D_m$. Since we know that the signed arrows of $\for(\alpha_m^{\e_m},C_m)$ and $\for(\alpha_m^{\e_m},D_m)$ disagree and $C_m\leq D_m$, Definition~\ref{defn:forbac} forces that $\e_{m+1}=1$ and $\e'_{m+1}=-1$, finishing the proof of the claim.

	By applying the equivalence $p^F_{(\alpha^\e,C)}\equiv p^F_{(\alpha^{-\e},{F(\alpha)-C})}$ and $p^F_{(\alpha^\e,D)}\equiv p^F_{(\alpha^{-\e},{F(\alpha)-D})}$ with the claim, we prove the version of the claim with ``$+$'' replaced with ``$-$'' and ``$\for$'' replaced with ``$\bac$.'' This shows that $p^F_{(\alpha^\e,C)}\preceq_{\alpha^\e}p^F_{(\alpha^\e,D)}$.
\end{proof}

\begin{cor}\label{cor:compat}
	Let $(\alpha^\e,C)$ and $(\beta^\f,D)$ be arrow-flows of $F$. Then $p^F_{(\alpha^\e,C)}$ and $p^F_{(\beta^\f,D)}$ are compatible.
\end{cor}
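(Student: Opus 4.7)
The plan is to suppose, for contradiction, that $p := p^F_{(\alpha^\e,C)}$ and $q := p^F_{(\beta^\f,D)}$ kiss at some witness string $\sigma$ (which, without loss of generality, is a top substring of $p$ and a bottom substring of $q$), and derive a contradiction by re-basepointing both extended strings at a single common signed arrow and invoking Proposition~\ref{prop:arrow-order}.

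First I would handle the generic case, where $\sigma$ contains at least one signed arrow $\gamma^\eta$. Then $\gamma^\eta$ occurs in the walks of both $p$ and $q$. By Lemma~\ref{lem:norep}, $\for$ and $\bac$ are mutual inverses wherever defined, so iterating them along the walk of $p$ starting from $(\alpha^\e,C)$ eventually reaches an arrow-flow $(\gamma^\eta,C')$ whose orbit (viewed as an unmarked extended string) coincides with the orbit of $(\alpha^\e,C)$; hence $p^F_{(\gamma^\eta,C')}=p$ with the mark relocated to this specific occurrence of $\gamma^\eta$ (for a band that revisits $\gamma^\eta$, one tracks along the walk arrow-by-arrow and selects the occurrence lying in $\sigma$). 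Build $(\gamma^\eta,D')$ for $q$ analogously. Proposition~\ref{prop:arrow-order}, applied to the pair $(C',D')$ in whichever order is needed, asserts that the marked extended strings $p^F_{(\gamma^\eta,C')}$ and $p^F_{(\gamma^\eta,D')}$ are compatible; since compatibility is a property of the underlying unmarked extended strings, $p$ and $q$ are compatible, contradicting the hypothesized kiss at $\sigma$.

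Next I would handle the residual case where $\sigma$ is a lazy string at some internal vertex $v$. Here $p$ enters and leaves $v$ along two outgoing arrows of $v$ (with signs $-1,+1$) while $q$ enters and leaves $v$ along two incoming arrows of $v$ (with signs $+1,-1$), so the two walks need not share any arrow at $v$ itself. I would pick any signed arrow $\gamma^\eta$ that belongs to both underlying walks elsewhere and reduce exactly as above; if no such arrow exists, a direct case analysis of the four branches of Definition~\ref{defn:forbac} applied at $v$ finishes the argument. Specifically, the defining branches of $\for$ and $\bac$ at $v$ route flow so that the ``top-through-$v$'' behaviour (giving the $\beta_1^{-1}\beta_2$-pattern seen in $p$) and the ``bottom-through-$v$'' behaviour (giving the $\alpha_1\alpha_2^{-1}$-pattern seen in $q$) come from complementary subintervals of $[0,F(\alpha_i)]$ for any incident arrow $\alpha_i$ at $v$, and a short bookkeeping check shows that no pair $(C,D)$ of arrow-flows on $\tL$ can simultaneously produce the two patterns through $v$ demanded by a lazy kiss.

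The main technical obstacle will be the bookkeeping in the re-basepointing step, particularly for bands. One must verify that iterating $\for$ from $(\alpha^\e,C)$ along the walk of $p$ lands on the specific occurrence of $\gamma^\eta$ that lies inside $\sigma$, not merely some other occurrence that $p$ happens to visit. Tracking the iterations arrow-by-arrow along $p$ handles this directly: since $\for$ and $\bac$ are mutual inverses and the orbit of $(\alpha^\e,C)$ produces the walk of $p$ one signed arrow at a time, each occurrence of $\gamma^\eta$ in the walk of $p$ corresponds to a unique iteration index, and hence to a unique value $C'\in[0,F(\gamma)]$. Once this is set up, the remainder of the argument reduces to a clean appeal to Proposition~\ref{prop:arrow-order} together with the equivalence between compatibility and the agreement of pre-order with post-order at any marked arrow.
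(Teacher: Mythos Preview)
Your approach is essentially the paper's: re-basepoint both extended strings at an arrow lying in the purported incompatibility $\sigma$ and invoke Proposition~\ref{prop:arrow-order}. However, you misread what Proposition~\ref{prop:arrow-order} actually provides. It does not assert global compatibility of $p^F_{(\gamma^\eta,C')}$ and $p^F_{(\gamma^\eta,D')}$; its proof only establishes that both $\preceq^+_{\gamma^\eta}$ and $\preceq^-_{\gamma^\eta}$ hold, which says precisely that the one maximal common substring containing the marked copy of $\gamma^\eta$ is not an incompatibility. (Indeed, if the proposition already gave global compatibility, Corollary~\ref{cor:compat} would be vacuous.)

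In your non-lazy case this misreading is harmless: you mark at $\gamma^\eta\in\sigma$, so the segment controlled by Proposition~\ref{prop:arrow-order} is $\sigma$ itself, and you get the desired contradiction. This matches the paper exactly. But in your lazy case, the shortcut ``pick any signed arrow $\gamma^\eta$ that belongs to both underlying walks elsewhere and reduce exactly as above'' fails: Proposition~\ref{prop:arrow-order} applied at an arrow outside $e_v$ tells you nothing about whether $e_v$ realizes a kiss. The paper handles \emph{every} lazy case by the direct inequality bookkeeping at $v$ that you sketch only as a fallback --- showing explicitly, via the second and third branches of Definition~\ref{defn:forbac}, that the pattern $\alpha_2^{-1}\beta_2$ in one path forces an inequality incompatible with the pattern $\alpha_1\beta_1^{-1}$ in the other. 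Drop the ``elsewhere'' shortcut and run your direct branch analysis uniformly for all lazy $\sigma$; then your argument coincides with the paper's.
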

\begin{proof}
	Let $\sigma:=\alpha_1^{\e_1}\dots\alpha_m^{\e_m}$ be a maximal shared segment of $p^F_{(\alpha^\e,C)}$ and $p^F_{(\beta^\f,D)}$. We will show that $\sigma$ does not realize an incompatibility between $p^F_{(\alpha^\e,C)}$ and $p^F_{(\beta^\f,D)}$.
	First, suppose $m\geq1$. Then we may choose $C'$ such that $p^F_{(\alpha^\e,C)}=p^F_{(\alpha_1^{\e_1},{C'})}$ as unmarked extended strings, and that the marked copy of $\alpha_1^{\e_1}$ of $p^F_{(\alpha_1^{\e_1},{C'})}$ is the beginning of $\sigma$. Similarly choose $D'$ such that $p^F_{(\beta^\f,D)}=p^F_{(\alpha_1^{\e_1},{D'})}$ as unmarked extended strings, and the marked $\alpha_1^{\e_1}$ of $p^F_{(\alpha_1^{\e_1},{D'})}$ is the start of $\sigma$.
	Then Proposition~\ref{prop:arrow-order} shows that $\sigma$ does not realize an incompatibility between $p^F_{(\alpha_1^{\e_1},D')}$ and $p^F_{(\alpha_1^{\e_1},{C'})}$, hence it does not realize an incompatibility between $p^F_{(\alpha^\e,C)}$ and $p^F_{(\beta^\f,D)}$. Since we chose $\sigma$ arbitrarily, we have shown that no shared segment may realize an incompatibility between these two paths, hence they are compatible.

	Now, suppose $m=0$, so that $\sigma$ is just a lazy string $e_v$. Let $\alpha_1\alpha_2$ and $\beta_1\beta_2$ be the two relations of $\tL$ at $v$.
	We must show that if, without loss of generality, $p^F_{(\alpha^\e,C)}$ contains $\alpha_2^{-1}\beta_2$ then $p^F_{(\beta^\f,D)}$ may not contain $\alpha_1\beta_1^{-1}$.
	Suppose to the contrary.
	Choose
	$C'$ such that $p^F_{(\alpha^\e,C)}=p^F_{(\alpha_2^{-1},{C'})}$ as unmarked extended strings and the marked copy of $\alpha_2^{-1}$ of $p^F_{(\alpha_2^{-1},C')}$ is followed by $\beta_2$. Then the third branch of Definition~\ref{defn:forbac} gives $F(\alpha_1)+C'<F(\beta_2)$. Similarly choose $D'$ such that $p^F_{(\beta^\f,D)}=p^F_{(\alpha_1^{1},{D'})}$ as unmarked extended strings, and the marked copy of $\alpha_1^{1}$ of $p^F_{(\alpha_1^{1},{D'})}$ is followed by $\beta_1^{-1}$.
	Then the second branch of Definition~\ref{defn:forbac} gives $D'>F(\beta_2)$. On the other hand,
	$D'\leq F(\alpha_1)\leq F(\alpha_1)+C'<F(\beta_2)$, a contradiction.
	This completes the proof.
\end{proof}

\begin{prop}\label{prop:interval-same-size}
	Let $(\alpha^\e,C)$ and $(\beta^\f,D)$ be arrow-flows of $F$ such that $p^F_{(\alpha^\e,C)}=p^F_{(\beta^\f,D)}$ are equal as unmarked extended strings. Then $|I^F_{(\alpha^\e,C)}|=|I^F_{(\beta^\f,D)}|$.
\end{prop}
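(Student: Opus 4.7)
The plan is to show that the length of the interval $I^F_{(\alpha^\e, C)}$ depends only on the underlying unmarked extended string $p^F_{(\alpha^\e, C)}$, by exhibiting $\for$ as a translation that carries $I^F_{(\alpha^\e, C)}$ bijectively onto $I^F_{\for(\alpha^\e, C)}$.

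First I would reduce to the adjacent case. The two markings $(\alpha^\e, C)$ and $(\beta^\f, D)$ select positions in the same walk underlying the common unmarked extended string $p = p^F_{(\alpha^\e, C)} = p^F_{(\beta^\f, D)}$. By the construction of Definition~\ref{defn:gr}, iterated applications of $\for$ and $\bac$ step through consecutive positions of this walk; by Lemma~\ref{lem:norep} these operations are mutually inverse where both are defined, so chaining them connects any two positions of $p$ (for a route, by walking from one endpoint to the other; for a band, by cyclic traversal; for an irrational string, within each ray emanating from a marked position). It therefore suffices to prove $|I^F_{(\alpha^\e, C)}| = |I^F_{\for(\alpha^\e, C)}|$ whenever $h(\alpha^\e)$ is not fringe.

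Next I would observe that $\for$ is a piecewise translation. Inspecting Definition~\ref{defn:forbac}, each of the four branches defines $\for$ as a map of the form $C \mapsto C + k$ for a constant $k \in \{0,\, -F(\alpha'),\, F(\beta'),\, F(\beta') - F(\alpha')\}$ depending only on which branch is triggered. Consequently, on any subset of $[0, F(\alpha)]$ where the same branch is triggered throughout, $\for(\alpha^\e, \cdot)$ is an isometry. I would then show that the interval $I^F_{(\alpha^\e, C)}$ lies entirely within a single branch: if $C' \in I^F_{(\alpha^\e, C)}$, then $p^F_{(\alpha^\e, C')} = p^F_{(\alpha^\e, C)}$ as marked extended strings, so the signed arrow immediately following the marked $\alpha^\e$ is the same for both, which forces $C$ and $C'$ to activate the same branch of Definition~\ref{defn:forbac}. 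Hence $\for$ translates $I^F_{(\alpha^\e, C)}$ rigidly, preserving its length.

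Finally I would verify that the image equals $I^F_{\for(\alpha^\e, C)}$. For any $C' \in I^F_{(\alpha^\e, C)}$, the marked extended string $p^F_{\for(\alpha^\e, C')}$ differs from $p^F_{\for(\alpha^\e, C)}$ only by erasing the initial signed arrow and advancing the marking, so they agree as marked extended strings, giving $\for(\alpha^\e, C') \in I^F_{\for(\alpha^\e, C)}$. For the reverse inclusion, given $D' \in I^F_{\for(\alpha^\e, C)}$, write $\for(\alpha^\e, C) = (\gamma^\delta, E)$ and apply $\bac$ to $(\gamma^\delta, D')$; by Lemma~\ref{lem:norep} this recovers an arrow-flow $(\alpha^\e, C'')$ with $\for(\alpha^\e, C'') = (\gamma^\delta, D')$ sharing the same marked path as $(\alpha^\e, C)$, hence $C'' \in I^F_{(\alpha^\e, C)}$. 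Combining this bijection with Step~3 yields $|I^F_{(\alpha^\e, C)}| = |I^F_{\for(\alpha^\e, C)}|$.

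The main obstacle will be Step~1, specifically the careful bookkeeping when $p$ has repeated signed arrows (as in a band or a self-intersecting route), so that ``the same marked extended string'' is tracked as an indexed position in the walk rather than merely a choice of signed arrow; but the per-step translation argument of Steps~2--4 is local at each position and so is unaffected by this.
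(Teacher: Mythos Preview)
Your approach is the paper's: show that $\for$ acts as a translation on the interval and hence preserves $|I^F|$ (your Steps~2--4 are exactly the paper's first paragraph, including the double-inclusion via $\for$ and $\bac$), then chain along the walk to connect the two marked positions. The one concrete detail you leave to ``bookkeeping'' but which the paper handles explicitly is orientation reversal: iterating $\for$/$\bac$ from $(\alpha^\e,C)$ reaches the position of the second marking with a signed arrow that may be $\beta^{-\f}$ rather than $\beta^\f$ (since unmarked strings are taken up to inversion), and in that case the intervals are related not by equality but by the reflection $x\mapsto F(\beta)-x$ coming from Remark~\ref{remk:convenient}; this is still length-preserving, which closes the argument.
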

\begin{proof}
	We first claim that, with the notation of Definition~\ref{defn:gr}, $|I^F_{(\alpha_j^{\e_j},{C_j})}|=|I^F_{(\alpha^\e,C)}|$ for all $j\in J$. It suffices to show that the length of the interval $I^F_{(\alpha,\e)}$ is preserved by the map $\for$. In other words, if $\for(\alpha^\e,C)=(\beta^\f,D)$, then $|I^F_{(\alpha^\e,C)}|=|I^F_{(\beta^\f,D)}|$.
	Define \[X:=\begin{cases}
		0&\e=1=\f\\
		-F(\alpha') & \e=1,\ \f=-1,\ \alpha\alpha'\text{ is a string}\\
		F(\beta') & \e=-1,\ \f=1,\ \alpha^{-1}(\beta')^{-1}\text{ is a string}\\
		F(\beta')-F(\alpha') & \e=-1=\f,\ \alpha^{-1}\alpha'\text{ and }\alpha^{-1}(\beta')^{-1}\text{ are strings}.
	\end{cases}\]
	Then by Definition~\ref{defn:forbac}, any arrow-flow
	$(\alpha^\e,C')\in I^F_{(\alpha^\e,C)}$ is sent to the arrow-flow $(\beta^\f,C'+X)$. It follows that $|I^F_{(\beta^\f,D)}|\geq|I^F_{(\alpha^\e,C)}|$. A symmetric argument, applying $\bac$ to arrow-flows $(\beta^\f,D')$ for $D'\in I^F_{(\beta^\f,D)}$, shows that $|I^F_{(\alpha^\e,C)}|\geq|I^F_{(\beta^\f,D)}|$ and completes the proof of the claim.

	Now suppose, for some arrow-flows $(\alpha^\e,C)$ and $(\beta^\f,D)$, that $p^F_{(\alpha^\e,C)}=p^F_{(\beta^\f,D)}$ as unmarked extended strings. Then there must be some arrow $\alpha^{\pm1}$ of $p^F_{(\beta^\f,D)}$ which can be marked to get the marked path $p^F_{(\alpha^\e,C)}$. Say this arrow is $\beta_j^{\f_j}$. First, suppose $\f_j=\e_j$. Then $D_j\in I^F_{\alpha^\e}$, hence $I^F_{(\alpha^\e,C)}=I^F_{(\beta_j^{\f_j},D_j)}$. Then the above claim shows that $|I^F_{(\alpha^\e,C)}|=|I^F_{(\beta_j^{\f_j},D_j)}|=|I^F_{(\beta^\f,D)}|$. On the other hand, if $\f_j=-\e_j$, then Remark~\ref{remk:convenient} shows that $I^F_{(\alpha^\e,C)}=\{F(\alpha)-D'\ :\ D'\in I^F_{(\beta_j^{\f_j},D_j)}\}$. In particular, $|I^F_{(\alpha^\e,C)}|=|I^F_{(\beta_j^{\f_j},D_j)}|$. Then the above claim shows that $|I^F_{(\alpha^\e,C)}|=|I^F_{(\beta_j^{\f_j},D_j)}|=|I^F_{(\beta^\f,D)}|$.
	This completes the proof.
\end{proof}

\begin{defn}\label{defn:THE-ALG}
	Let $F$ be a flow of $\tL$ with arrow-flow $(\alpha^\e,C)$.
	Define $a^F_{(\alpha^\e,C)}:=|I^F_{(\alpha^\e,C)}|$.
	Let $\bK_F$
	be the set of unmarked trails which arise as $p^F_{(\alpha^\e,C)}$ for some arrow-flow $(\alpha^\e,C)$ of $F$ (note that we are taking only the trails and leaving out any irrational strings appearing as some $p^F_{(\alpha^\e,C)}$). Let $\bK_F^+$ be the set of unmarked trails arising as $p^F_{(\alpha^\e,C)}$ for an arrow-flow $(\alpha^\e,C)$ such that $a^F_{(\alpha^\e,C)}\neq0$. For each trail $p\in \bK_F$, define $a_p^F$ to be $a^F_{(\alpha^\e,C)}$, where $p=p^F_{(\alpha^\e,C)}$ as unmarked extended strings (by Proposition~\ref{prop:interval-same-size}, this does not depend on the choice of arrow-flow $(\alpha^\e,C)$ realizing $p$).
\end{defn}

We call the process of calculating the trails of $\bK_F$ and their coefficients the \emph{flow algorithm}.

\begin{cor}\label{lem:mf-trail-clique}
	If $F$ is a flow of $\tL$, then $\bK_F$ (and hence $\bK_F^+$) is a bundle.
\end{cor}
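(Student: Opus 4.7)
The plan is essentially to observe that the work has already been done by Corollary~\ref{cor:compat}, and organize that observation into the statement that $\bK_F$ is a bundle.

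By the definition of $\bK_F$ (Definition~\ref{defn:THE-ALG}), the elements of $\bK_F$ are trails, since irrational strings appearing as some $p^F_{(\alpha^\e,C)}$ are explicitly excluded. Hence there is nothing to check regarding the ``trail'' part of the definition of a bundle; all that remains is to verify pairwise compatibility (including self-compatibility) of the trails in $\bK_F$.

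Given two trails $p, q \in \bK_F$, we may choose arrow-flows $(\alpha^\e,C)$ and $(\beta^\f,D)$ of $F$ such that $p = p^F_{(\alpha^\e,C)}$ and $q = p^F_{(\beta^\f,D)}$ as (unmarked) extended strings. Corollary~\ref{cor:compat} then immediately implies that $p$ and $q$ are compatible as extended strings, and compatibility of extended strings reduces to compatibility of the underlying (unmarked) trails. Self-compatibility of each $p \in \bK_F$ is the same argument specialized to the case $(\alpha^\e,C) = (\beta^\f,D)$: an incompatibility of $p$ with itself would be a top substring of $p$ that is also a bottom substring of $p$, which Corollary~\ref{cor:compat} rules out.

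Thus $\bK_F$ is a set of pairwise compatible, self-compatible trails, i.e., a bundle. Since $\bK_F^+ \subseteq \bK_F$ by Definition~\ref{defn:THE-ALG}, it too is a bundle. The only subtlety worth flagging is that one must interpret ``compatible'' for trails through their extended string presentation, so that compatibility does not depend on which marked realization $p = p^F_{(\alpha^\e,C)}$ one chooses; this is harmless because the notions of top substring and bottom substring used in Definition~\ref{defn:tilt-complex} and Definition~\ref{defn:ext-complex} are properties of unmarked strings. I do not anticipate any real obstacle; the content of the corollary has been packaged into Corollary~\ref{cor:compat}, and this statement simply records its consequence.
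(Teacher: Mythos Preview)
Your proposal is correct and takes essentially the same approach as the paper: the paper's proof is the single sentence ``Corollary~\ref{cor:compat} shows that any two trails of $\bK_F$ are pairwise compatible.'' Your version is more explicit about the trail condition, self-compatibility, and the containment $\bK_F^+\subseteq\bK_F$, but these are all immediate and the core argument is identical.
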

\begin{proof}
	Corollary~\ref{cor:compat} shows that any two trails of $\bK_F$ are pairwise compatible.
\end{proof}

\begin{example}\label{ex:52}
	Consider the fringed quiver $\tL$ with flow $F$ given in Figure~\ref{fig:52}.
	We have
	\[p_{(f_1,C)}^F=\begin{cases}
		f_1f_2f_3&C=0\\
		f_1f_2e_2^{-1}f_2e_2^{-1}f_2e_2^{-1}e_1^{-1}&C\in(0,\frac{1}{2})\\
		f_1f_2e_2^{-1}f_2e_2^{-1}e_1^{-1}&C\in[\frac{1}{2},1].
	\end{cases}\]
	Hence, $\bK_F$ contains $p:=f_1f_2e_2^{-1}f_2e_2^{-1}e_1^{-1}$, $q:=f_1f_2e_2^{-1}f_2e_2^{-1}f_2e_2^{-1}e_1^{-1}$, and $r:=f_1f_2f_3$, with $a_p^F=a_q^F=\frac{1}{2}$ and $a_r^F=0$. The only other trail of $\bK_F$ is $s:=e_1e_2e_3$, with $a_s^F=0$.
	We then obtain $F$ as the bundle combination
	$F=\frac{1}{2}\left(\I(p)+\I(q)\right)$.
	\begin{figure}
		\centering
		\def\svgscale{.21}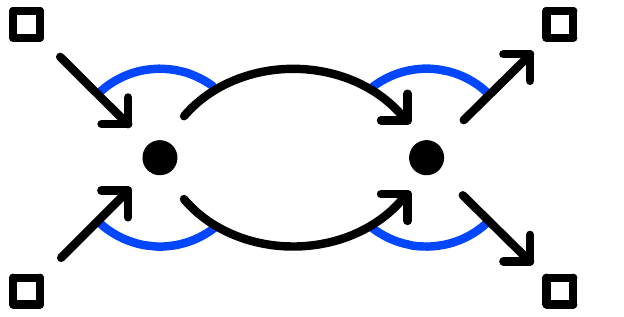
		\caption{The Kronecker double quiver with a flow labelled.}
		\label{fig:52}
	\end{figure}
\end{example}

We finally remark that the tools and results of this section (in particular, Definition~\ref{defn:forbac} and the flow algorithm) may also be phrased in the language of framed directed graphs. In this setting, we use the flow algorithm realize a rational flow as a convex combination indicator vectors of pairwise compatible routes and bands. See Appendix~\ref{sec:appendix} for more information.

\subsection{Existence and uniqueness of bundle combinations}

We now use our flow algorithm to show that any flow is obtained by at most one positive bundle combination, and that every rational flow is obtained by a bundle combination.
We will first use the following definition to relate a bundle combination to its arrow-flows.

\begin{defn}\label{defn:in-arrow}
	Let $F=\sum_{p\in \K\cup \B}b_p\I(p)$ be a bundle combination of a bundle $\K\cup \B$. The cardinality of $\K\cup \B$ is finite by Corollary~\ref{cor:card-upper-bound}. Let $\alpha^\e$ be a signed arrow of the fringed quiver and let $q_1,\dots,q_m$ be the marked trails of $\bK_{\alpha^\e}$ ordered so that $q_1\prec_{\alpha^\e}\dots\prec_{\alpha^\e} q_m$. For any $k\in[m]$, we say that an arrow-flow $(\alpha^\e,C)$ is \emph{in the marked arrow $\alpha^\e$ of $q_k$} if $C\in(\sum_{i=1}^{k-1}b_{q_i},\sum_{i=1}^kb_{q_i})$.
\end{defn}

This definition is justified by the following result.
\begin{lemma}\label{lem:in}
	Let $F=\sum_{q\in\K\cup\B}b_q\I(q)$ be a positive bundle combination.
	Let $(\alpha^\e,C)$ be an arrow-flow in a marked arrow $\alpha^\e$ of a trail $q\in\K\cup\B$ in the sense of Definition~\ref{defn:in-arrow}. Then $p_{(\alpha^\e,C)}^F=q$.
\end{lemma}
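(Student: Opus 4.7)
The plan is to prove by induction on $|j|$ that iterated application of $\for$ and $\bac$ to $(\alpha^\e,C)$ traces out the trail $q$ step by step. Write $q=\cdots\gamma_{-1}^{\delta_{-1}}\gamma_0^{\delta_0}\gamma_1^{\delta_1}\cdots$ with marked arrow $\gamma_0^{\delta_0}=\alpha^\e$. We show that for every $j$ such that $\gamma_j^{\delta_j}$ is defined in $q$, the $j$-th iteration of $\for$ (resp. the $|j|$-th iteration of $\bac$ for $j<0$) yields an arrow-flow $(\gamma_j^{\delta_j},D_j)$ lying in the marked position of the $j$-th occurrence of $\gamma_j^{\delta_j}$ in $q$ in the sense of Definition~\ref{defn:in-arrow}. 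If $q$ is a route, the iteration terminates exactly at the fringe endpoints of $q$; if $q$ is a band, after one full period the iteration returns to its starting arrow-flow, so that Definition~\ref{defn:gr} produces a band equal to $q$.

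The base case $j=0$ holds by hypothesis. For the forward inductive step, suppose the claim at $j$ and set $(\gamma,D):=(\gamma_j^{\delta_j},D_j)$. If $h(\gamma)$ is fringe we are done; otherwise set $v:=h(\gamma)$ and adopt the notation of Definition~\ref{defn:forbac} with $\gamma$ playing the role of $\alpha$. Order the trails in $\bK_{\gamma}$ by $\prec_{\gamma}$ as $q_1\prec\cdots\prec q_m$, so $q=q_k$ and $D\in\bigl(\sum_{i<k}b_{q_i},\sum_{i\leq k}b_{q_i}\bigr)$ by the inductive hypothesis.

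The key observation is that the countercurrent order groups markings in $\bK_\gamma$ by their continuation at $v$: markings continuing as $\gamma\alpha'$ (forward) precede those continuing as $\gamma\beta^{-1}$ (bending back) in $\prec_\gamma^+$, since the former take a positive-sign arrow immediately after $\gamma$ while the latter take a negative-sign arrow. Let $j^*$ be the resulting threshold, and set $F_{\rightarrow}:=\sum_{i\leq j^*}b_{q_i}$. If $k\leq j^*$ then $D<F_{\rightarrow}\leq F(\alpha')$ and the first (or third) branch of Definition~\ref{defn:forbac} fires, giving $\for(\gamma,D)=(\alpha',D')$ for an explicit $D'$. If $k>j^*$, then for the correct branch to fire we need $D>F(\alpha')$, i.e., $F(\alpha')=F_{\rightarrow}$; this is exactly where bundle compatibility is essential. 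Since $q$ bends at $v$ as $\gamma\beta^{-1}$, the lazy string $e_v$ is a bottom substring of $q$, while any trail of $\bK$ carrying the opposite bending pass $(\beta')^{-1}\alpha'$ at $v$ would have $e_v$ as a top substring; such trails would kiss $q$, contradicting Corollary~\ref{cor:compat}. Hence no such trails exist in $\bK$, so $F(\alpha')=F_{\rightarrow}$ and the required branch fires.

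It remains to verify that the output arrow-flow lies in the marked position of $q$ at its next arrow. This follows from the fact that the relative $\prec$-order of the forward-continuing markings in $\bK_\gamma$ agrees, under shifting the marked arrow forward by one, with their $\prec_{\alpha'}$-order in $\bK_{\alpha'}$: for two such markings $q_a\prec_\gamma q_b$, any divergence determining the order occurs outside the common initial substring $\gamma\alpha'$ and is preserved when each is re-marked at its subsequent $\alpha'$. Combined with the fact that the block of such re-marked trails forms the $\prec_{\alpha'}$-initial segment of $\bK_{\alpha'}$ (a symmetric compatibility argument handles any $(\beta')^{-1}\alpha'$ markings when $k\le j^*$), this gives the desired matching of partial-sum intervals. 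The symmetric argument handles $\bac$ and the $\delta_j=-1$ branches of Definition~\ref{defn:forbac}, the latter reduced to the $\delta_j=+1$ case via the equivalence $p^F_{(\alpha^\e,C)}=p^F_{(\alpha^{-\e},F(\alpha)-C)}$ from Remark~\ref{remk:convenient}. The main obstacle is carefully handling all four branches of Definition~\ref{defn:forbac}, but every case reduces to the same two principles above: the countercurrent order groups trails by continuation direction, and bundle compatibility forbids the incompatible pass-type whenever the branching threshold needs to coincide with $F_{\rightarrow}$.
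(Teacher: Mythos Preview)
Your argument is correct and follows essentially the same approach as the paper's proof: both reduce to showing that $\for$ carries an arrow-flow in the $k$-th position of $\bK_{\alpha^\e}$ to one in the corresponding position of $\bK$ at the next arrow of $q$, using the threshold $j^*$ (the paper's $M'$) and the compatibility of the bundle to force $F(\alpha')=F_{\rightarrow}$ when $k>j^*$. One small correction: where you invoke Corollary~\ref{cor:compat} to derive a contradiction, you should instead appeal to the hypothesis that $\K\cup\B$ is a bundle --- Corollary~\ref{cor:compat} concerns compatibility of paths produced by the flow algorithm, whereas here you need compatibility among the given trails of $\bK$.
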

\begin{proof}
	It suffices to show that an arrow-flow $(\alpha^\e,C)$ in a marked arrow $\alpha^\e$ of a trail $q\in \K\cup \B$ is sent through $\for$ (respectively, $\bac$) to an arrow-flow in the arrow following (respectively preceding) the marked $\alpha^\e$ in $q$. We show this statement for $\for$; the $\bac$ statement is symmetric.
	To this end, let $\alpha^\e$ be an arrow of $\tL$. We assume $\e=1$; then $\e=-1$ case is similar.

	Let $q_{1},\dots,q_{M}$ be the complete list of marked trails of $\K\cup \B$ at $\alpha$, ordered so that $q_{1}\prec_{\alpha}\dots\prec_{\alpha} q_{M}$.
	If $h(\alpha)$ is fringe, then there is nothing to show, so suppose $h(\alpha)$ is internal and let $\gamma_1$ and $\gamma_2$ be the arrows of $\tL$ such that $\alpha\gamma_1$ and $\alpha\gamma_2^{-1}$ are strings. Let $\alpha_2$ be the arrow such that $\alpha\alpha_2$ is a relation of $\tL$.
	Define the intervals $I_1,\dots,I_{M}$ of $[0,F(\alpha)]$ as 
	\[I_j:=(\sum_{i=1}^{j-1}b_j,\sum_{i=1}^jb_j).\] Then an arrow-flow $(\alpha,C)$ is in the marked arrow $\alpha$ of $q_j$ if and only if $C\in I_j$.

	{Define $0\leq M'\leq M$ such that $q_j$ contains $\gamma_1$ after its marked copy of $\alpha$ if $1\leq j\leq M'$, and otherwise contains $\gamma_2^{-1}$ after its marked copy of $\alpha$.} Note that we may have $M'=0$. Then $\sum_{j=1}^{M'}b_j\leq F(\gamma_1)$ and $\sum_{j=M'+1}^{M}b_j\leq F(\gamma_2)$.
	For $1\leq j\leq M'$, let $q'_j$ be the trail $q_j$ marked at the arrow $\gamma_1$ following the marked copy of $\alpha$. It then follows that $q'_1,\dots,q'_{M'}$ are the $\prec_{\gamma_1}$-minimal marked trails of $\K\cup \B$ marked at $\gamma_1$, in order (any other trail of $\K\cup\B$ marked at $\gamma_1$ cannot have its marked arrow preceded by $\alpha$, hence must be above these trails in $\prec_{\gamma_1}$).
	Define the intervals $I'_1,\dots,I'_{M'}$ of $[0,F(\gamma_1)]$ as $I'_j=I_j$. The above shows that, for $j\in[M']$, an arrow-flow $(\gamma_1,C)$ is in the marked arrow $\gamma_1$ of $q'_j$ if and only if $C\in I'_j$.
	Since $\sum_{j=1}^{M'}b_j\leq F(\gamma_1)$, it follows from the frist branch of Definition~\ref{defn:forbac} that an arrow-flow $(\alpha,C)$ for $C\in[0,\sum_{i=1}^{M'}b_{i}]$ satisfies $\for(\alpha,C)=(\gamma_1,C)$. This shows that for $1\leq j\leq M'$, the arrow-flow $(\alpha,C)$ is in the marked arrow $\alpha$ of $q_j$ if and only if $\for(\alpha,C)$ is in the marked arrow $\gamma_1$ of $q'_j$, proving the claim for $j\in[M']$.

	It remains to show the claim for $M'<j\leq M$.
	There is nothing to show if $M'=M$, so suppose $M'<M$. Then there is some trail of $\K\cup\B$ containing $\alpha\gamma_2^{-1}$, hence there is no trail of $\K\cup \B$ containing $\alpha_2^{-1}\gamma_1$ since $\K\cup \B$ is a bundle. This shows that every marked trail of $\K\cup\B$ at $\gamma_1$ arises as some $q'_j$ for $j\in[M']$, hence that $\sum_{i=1}^{M'}b_i=F(\gamma_1)$.

	For $M'<j\leq M$, let $q'_j$ be the trail $q_j$ marked at the arrow $\gamma_2^{-1}$ following the marked copy of $\alpha$. Then $q'_1,\dots,q'_{M'}$ are the $\prec_{\gamma_2^{-1}}$-minimal marked trails of $\K\cup\B$ at $\gamma_2^{-1}$, in order. Define the intervals $I'_{M'+1},\dots,I'_{M}$ of $[0,F(\gamma_2)]$ as $I'_j=(\sum_{i=M'+1}^{j-1}b_i,\sum_{i=M'+1}^jb_i)=\{a-\sum_{i=1}^{M'}b_j\ :\ a\in I_j\}=\{a-F(\gamma_1)\ :\ a\in I_j\}$. Then, for $j\in\{M'+1,\dots,M\}$, an arrow-flow $(\gamma_2^{-1},C)$ is in the marked arrow $\gamma_2^{-1}$ of $q'_j$ if and only if $C\in I'_j$.

	An arrow-flow $(\alpha,C)$ for $C\in(\sum_{i=1}^{M'}b_{i},F(\alpha))=(F(\gamma_1),F(\alpha))$ is sent to $(\gamma_2^{-1},C-\sum_{i=1}^{M'}b_i)=(\gamma_2^{-1},C-F(\gamma_1))$ by the second branch of Definition~\ref{defn:forbac}. Hence, we see that for $j\in\{M'+1,\dots,M\}$, an arrow-flow $(\alpha,C)$ is in $I_j$ if and only if $\for(\alpha,C)$ is in $I'_j$. This completes the proof of the claim that, for all $j\in[M]$,
		an arrow-flow $(\alpha,C)$ is in the marked arrow $\alpha$ of $q_j$ if and only if $\for(\alpha,C)$ is in the marked arrow of $q'_j$.
\end{proof}

Lemma~\ref{lem:in} shows that an arrow-flow being in a marked arrow of a trail of a bundle combination of a flow $F$ in the sense of Definition~\ref{defn:in-arrow} has an interpretation only in terms of the flow algorithm on the flow $F$, without using any of the data of the bundle combination. We now use this to show that a bundle combination is determined by its underlying flow.

\begin{prop}\label{lem:only-way}
	If $F=\sum_{q\in \K\cup \B}b_q\I(q)$ is a positive bundle combination,
	then $\K\cup \B=\bK_F^+$.
	Moreover, for any $q\in\K\cup\B$ we have $b_q=a_q^F$.
\end{prop}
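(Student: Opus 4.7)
The plan is to derive everything from Lemma~\ref{lem:in}, which identifies the trail $p^F_{(\alpha^\e,C)}$ produced by the flow algorithm with the trail $q_j$ whose interval $(\sum_{i<j}b_{q_i},\sum_{i\le j}b_{q_i})$ contains $C$, for any positive bundle combination $F=\sum_q b_q\I(q)$. Fix any arrow $\alpha$ of $\tL$ and let $q_1\prec_{\alpha^\e}\dots\prec_{\alpha^\e}q_m$ be the marked trails of $\bK_{\alpha^\e}$ as in Definition~\ref{defn:in-arrow}. The key auxiliary observation, which should be recorded at the outset, is that these intervals partition $[0,F(\alpha)]$ up to the finite set of boundary points: their total length is $\sum_j b_{q_j}=\sum_{q\in\K\cup\B}b_q\cdot(\#\text{markings of }q\text{ at }\alpha^\e)=\sum_q b_q\I(q)(\alpha)=F(\alpha)$, where the last equality is evaluation of $F=\sum_q b_q\I(q)$ at $\alpha$.

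For the inclusion $\K\cup\B\subseteq\bK_F^+$ together with the inequality $a_q^F\ge b_q$: fix $q\in\K\cup\B$, choose any signed arrow $\alpha^\e$ occurring in $q$, and let $q_j$ be the associated marking of $q$ in $\bK_{\alpha^\e}$. For any $C'$ in the open interval of length $b_q>0$ associated to $q_j$, Lemma~\ref{lem:in} gives $p^F_{(\alpha^\e,C')}=q_j$ as marked trails, hence $p^F_{(\alpha^\e,C')}=q$ as unmarked trails. Since this holds throughout the whole open interval of length $b_q$, the interval $I^F_{(\alpha^\e,C')}$ contains it, so $a_q^F=|I^F_{(\alpha^\e,C')}|\ge b_q>0$, placing $q$ in $\bK_F^+$.

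For the reverse inclusion $\bK_F^+\subseteq\K\cup\B$ together with $a_p^F\le b_p$: take $p\in\bK_F^+$ and pick an arrow-flow $(\alpha^\e,C)$ realizing $p=p^F_{(\alpha^\e,C)}$ with $a_p^F=|I^F_{(\alpha^\e,C)}|>0$. Since $I^F_{(\alpha^\e,C)}$ has positive measure and the open intervals associated to the $q_j$'s cover $[0,F(\alpha)]$ off a finite set, there is some $C'\in I^F_{(\alpha^\e,C)}$ lying in an open interval of some $q_j$. By Lemma~\ref{lem:in}, $p^F_{(\alpha^\e,C')}=q_j$ as marked trails, and by definition of $I^F_{(\alpha^\e,C)}$ we also have $p^F_{(\alpha^\e,C')}=p^F_{(\alpha^\e,C)}$ as marked trails, so $p$ equals the unmarked trail of $q_j$ and lies in $\K\cup\B$. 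Moreover, on each adjacent open subinterval the marked trail $p^F_{(\alpha^\e,\cdot)}$ equals a \emph{different} marked trail $q_{j\pm1}$, so $I^F_{(\alpha^\e,C')}$ cannot extend past the closed interval of length $b_{q_j}$; thus $a_p^F\le b_p$.

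The only real obstacle here is the careful bookkeeping between marked and unmarked trails: Lemma~\ref{lem:in} is phrased at the marked level, while $a_p^F$ and $\bK_F^+$ are defined at the unmarked level, and when $q$ uses $\alpha$ more than once different markings produce different $q_j$'s corresponding to the same unmarked $q$. The proposal handles this uniformly by invoking Proposition~\ref{prop:interval-same-size} to ensure $a_q^F$ is well-defined independently of the chosen marking, and by using the count $\#\{\text{markings of }q\text{ at }\alpha^\e\}=\I(q)(\alpha)$ to verify the partition identity above. Combining both inequalities yields $a_q^F=b_q$ for all $q\in\K\cup\B=\bK_F^+$.
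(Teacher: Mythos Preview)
Your proof is correct and follows essentially the same approach as the paper: both arguments rest entirely on Lemma~\ref{lem:in}, and your first inclusion $\K\cup\B\subseteq\bK_F^+$ with $a_q^F\ge b_q$ is exactly the paper's first step. The only organizational difference is in how the reverse direction is closed. The paper uses a sandwich argument with the global vector inequality $\sum_{p\in\bK_F^+}a_p^F\I(p)\le F$ (which is asserted without justification but follows from disjointness of the marked-trail intervals) together with the already-established $a_q^F\ge b_q$ to force $\bK_F^+=\K\cup\B$ and $a_q^F=b_q$ in one stroke. You instead work locally at a fixed arrow, using your partition identity $\sum_j b_{q_j}=F(\alpha)$ to see that the open intervals cover $[0,F(\alpha)]$ up to finitely many points, and then pin $I^F_{(\alpha^\e,C')}$ inside a single closed $q_j$-interval. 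Your version is more explicit and self-contained (in particular it makes the implicit inequality in the paper's proof transparent), while the paper's is terser; mathematically they are the same argument.
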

In other words, any flow $F$ is obtained by \emph{at most} one positive bundle combination; if this bundle combination exists, then it is $\sum_{p\in \bK_F^+}a_p\I(p)$.
\begin{proof}
	It follows from Lemma~\ref{lem:in} that if $q\in\K\cup\B$, then $q$ is also in $\bK_F^+$ with $a_q^F\geq b_q$.
	Since $\sum_{p\in\bK_F^+}a_p^F\I(p)\leq F=\sum_{q\in\K\cup\B}b_q\I(q)$, we must have $\bK_F^+=\bK\cup\B$ and equalities $a_q^F=b_q$ for all $q\in\\bK_F^+$.
\end{proof}

One might hope that for an arbitrary flow $F$, we will get a bundle combination $F=\sum_{p\in\bK_F^+}a_p^F\I(p)$.
The following example shows that when $F$ is not rational (i.e., $F$ has at least one irrational coordinate), this may fail.

\begin{example}\label{ex:irrational}
	See the double Kronecker fringed quiver and flow of Figure~\ref{fig:irrational}.
	For an arrow-flow $(f_2,C)$, we wish to describe the extended string $p_{(f_2,C)}^F$.
	To this end, we describe the effect of $\for$ on an arrow-flow $(f_2,C)$.
	\begin{enumerate}
		\item If $C>4-\pi$, then
			\begin{align*}
				\for(f_2,C)&=(e_2^{-1},C-(4-\pi))=(e_2^{-1},C+\pi-4)\\
				\for^2(f_2,C)&=(f_2,C+\pi-4).
			\end{align*}
		\item If $C\leq4-\pi$, then
			\begin{align*}
				\for(f_2,C)&=(f_3,C)\\
				\for^2(f_2,C)&=(e_3^{-1},C)\\
				\for^3(f_2,C)&=(e_2^{-1},C+\pi-(4-\pi))=(e_2^{-1},C+2\pi-4)\\
				\for^4(f_2,C)&=(f_2,C+2\pi-4).
			\end{align*}
	\end{enumerate}
	It follows that, for any $m>0$, if $\for^m(f_2,C)=(f_2,D)$ then $D=C+a\pi-b$ for some pair of nonnegative integers $(a,b)\neq(0,0)$. Since $\pi$ is not rational, this means that $\for^m(f_2,C)$ can never equal $(f_2,C)$, and hence that $p^F_{(f_2,C)}$ is an irrational string for any arrow-flow $(f_2,C)$. Then $\bK_F^+$ is empty and $F$ is not equal to $0=\sum_{p\in\bK_F^+}a_p^F\I(p)$.
	By adding the indicator vector of a straight route to $F$, we may obtain a unit flow displaying this same behavior.
\begin{figure}
	\centering
	\def\svgscale{.21}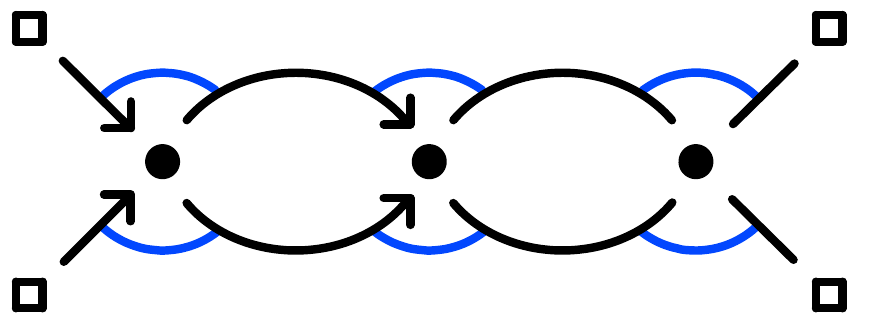
	\caption{The double Kronecker fringed quiver with an irrational flow. Fringe arrows have zero flow.}
	\label{fig:irrational}
\end{figure}
\end{example}

Section~\ref{sec:example} will discuss the bundle spaces of the fringed quiver of Example~\ref{ex:irrational} in depth.

We now show that when $F$ is a rational flow, the sum $\sum_{p\in\bK_F}a_{p}^F\I(p)$ is always a bundle combination realizing $F$. In other words, the issue of Example~\ref{ex:irrational} is specific to irrational flows.

\begin{defn}
	Let $F$ be a nonzero rational flow. Let $\GCD(F)$ be the largest positive (necessarily rational) number $c$ such that, for any edge $\alpha\in E$, we have $F(\alpha)=cm$ for some $m\in\mathbb Z_{\geq0}$.
\end{defn}

\begin{lemma}\label{lem:alg}
	Let $F$ be a rational flow. Then the extended string $p^F_{(\alpha^\e,C)}$ is a trail for any arrow-flow $(\alpha^\e,C)$.
\end{lemma}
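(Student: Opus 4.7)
The plan is to exploit rationality of $F$ to force the arrow-flow dynamics defining $p^F_{(\alpha^\e, C)}$ in Definition~\ref{defn:gr} into a finite state space, so that case (3) (an irrational string) cannot occur. If $F = 0$ then $C = 0$ and Remark~\ref{remk:flow0} yields a straight route, so I would assume $F \neq 0$ and set $c := \GCD(F) > 0$; every $F(\beta)$ then lies in $c\mathbb{Z}_{\geq 0}$. Inspecting the four branches of Definition~\ref{defn:forbac}, each application of $\for$ (and hence of $\bac$) shifts the numerical coordinate by an element of $\{0, -F(\alpha'), F(\beta'), F(\beta') - F(\alpha')\}$, each a multiple of $c$. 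So along the orbit of $(\alpha^\e, C)$, every coordinate $C_j$ satisfies $C_j \equiv C \pmod{c}$.

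For each signed arrow $\gamma^\delta$ the set $\{C' \in [0, F(\gamma)] : C' \equiv C \pmod{c}\}$ has cardinality at most $F(\gamma)/c + 1$, and $\tL$ has only $2|E|$ signed arrows, so the set of arrow-flows reachable from $(\alpha^\e, C)$ is finite. By Lemma~\ref{lem:norep}, $\for$ and $\bac$ are mutually inverse on their common domain, making the dynamics injective wherever defined. In a finite state space with injective dynamics, the forward orbit either terminates at a state whose underlying arrow head is fringe or revisits a previous state; by injectivity the first repetition must be the starting state itself, producing a finite minimal period $M$. A symmetric dichotomy holds for the backward orbit. The mixed case (one side terminating, the other periodic) is ruled out: a periodic backward orbit of period $M$ yields $\bac^M(\alpha^\e, C) = (\alpha^\e, C)$, and applying Lemma~\ref{lem:norep} iteratively shows $\for^k$ is defined for all $k \leq M$ with $\for^M(\alpha^\e, C) = (\alpha^\e, C)$, so the forward orbit is also periodic.

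This places us in case (1) of Definition~\ref{defn:gr}, giving a route, or case (2), giving a band. The main obstacle in confirming case (2) rather than case (3) is verifying the ``if and only if'' minimality condition on the signed-arrow period: if $M$ is the minimal full-state period but the signed-arrow sequence admitted a strictly smaller period $M' < M$, then the coordinate offset $D := C_{M'} - C_0$ would be nonzero and persist along the orbit. Indeed, comparing $\for$ applied to $(\alpha_j^{\e_j}, C_j)$ and $(\alpha_j^{\e_j}, C_{j+M'})$, the two branches of Definition~\ref{defn:forbac} produce distinct next signed arrows, so matching $\alpha_{j+1}^{\e_{j+1}} = \alpha_{j+M'+1}^{\e_{j+M'+1}}$ forces the same branch and hence preserves $D$. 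This would make $C_{kM'} = C_0 + kD$ unbounded, contradicting $C_j \leq \max_\beta F(\beta)$. Hence $p^F_{(\alpha^\e, C)}$ is always a trail.
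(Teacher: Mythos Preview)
Your proof is correct and follows essentially the same approach as the paper: use $\GCD(F)$ to confine the arrow-flow orbit to a finite set, then conclude the result is a route (terminating orbit) or a band (periodic orbit). You are in fact more careful than the paper in explicitly ruling out the mixed terminating/periodic case via Lemma~\ref{lem:norep} and in verifying that the signed-arrow period equals the full arrow-flow period, points the paper's proof passes over without comment.
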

\begin{proof}
	We adopt the notation of Definition~\ref{defn:gr}.
	We first argue that a finite number of arrow-flows $(\beta^\f,D)$ appear as $(\alpha_j^{\e_j},C_j)$ for some $j$. Indeed, each value $C_j$ is contained in the closed interval $[0,\max_{\alpha\in{E}}F(\alpha)]$. It follows from Definition~\ref{defn:gr} that $C_j-C_{j+1}$ is a multiple of $\GCD(F)$ for any valid $j$. This means, further, that for any $j\in J$ we have that $C_j-C$ is a multiple of $\GCD(F)$.
	It follows that there are a finite number of values which $C_j$ may take, and hence a finite number of arrow-flows appearing as $(\alpha_j^{\e_j},C_j)$ for some $j$.

	{If $J$ is a finite interval, then it is immediate that $p_{(\alpha^\e,C)}^F$ is a route.}
	If $J$ is not finite, then we must have $(\alpha_i^{\e_i},C_i)=(\alpha_j^{\e_j},C_j)$ for some $i\neq j$ -- suppose we have chosen $i$ and $j$ so that $i>j$ and $i-j$ is minimal with respect to this property. Then it follows that $(\alpha_a^{\e_a},C_a)=(\alpha_b^{\e_b},C_b)$ for any $a$ and $b$ which are equivalent modulo $i-j$. Then $p_{(\alpha^\e,C)}^F$ is a band. This ends the proof.
\end{proof}

We remark that Lemma~\ref{lem:alg} shows that when $F$ is rational, every $p^F_{(\alpha^\e,C)}$ is a trail, hence is part of the bundle $\bK_F$. Since bundles have finite cardinality by Corollary~\ref{cor:card-upper-bound}, this means that there are a finite number of extended strings (all of which are trails) appearing as $p^F_{(\alpha^\e,C)}$ for some arrow-flow $(\alpha^\e,C)$ of $F$.

\begin{prop}\label{prop:alg-realize-flow}
	Let $F$ be a rational flow. Then $F$ is realized as the positive bundle combination $F=\sum_{p\in \bK_F^+}a_p^F\I(p)$.
\end{prop}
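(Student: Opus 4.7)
The plan is to verify the claimed equality $F=\sum_{p\in\bK_F^+}a_p^F\I(p)$ coordinate-wise: for every arrow $\alpha$ of $\tL$, I would show that $F(\alpha)=\sum_{p\in\bK_F^+}a_p^F\cdot\I(p)(\alpha)$. Positivity of the combination is immediate from the definition of $\bK_F^+$, and $\bK_F^+\subseteq\bK_F$ is again a bundle by Corollary~\ref{lem:mf-trail-clique}, so only this numerical identity requires work.

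First I would fix an arrow $\alpha$ and analyze the intervals $I^F_{(\alpha,C)}$ as $C$ ranges over $[0,F(\alpha)]$. Since $F$ is rational, Lemma~\ref{lem:alg} guarantees that every extended string $p^F_{(\alpha,C)}$ is an actual trail, and combined with the finiteness of the bundle $\bK_F$ (Corollary~\ref{cor:card-upper-bound}), only finitely many distinct marked trails arise. By the maximality in Definition~\ref{defn:gr}, distinct intervals $I^F_{(\alpha,C_1)}\neq I^F_{(\alpha,C_2)}$ are disjoint, and their union is $[0,F(\alpha)]$ up to finitely many boundary points. Hence $F(\alpha)$ equals the sum of the lengths of the distinct intervals $I^F_{(\alpha,C)}$, one per distinct marked trail of the form $p^F_{(\alpha,C)}$.

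Next I would group these intervals by their underlying unmarked trail $p\in\bK_F$. The key accounting step is that the number of distinct marked trails $q$ with underlying unmarked trail (equivalence class) $p$ and marking at the signed arrow $\alpha^{+1}$ is exactly $\I(p)(\alpha)$. This follows because, under the equivalence between marking $p$ at $\alpha^{\e}$ and marking $p^{-1}$ at $\alpha^{-\e}$ (made precise via Remark~\ref{remk:convenient}), every occurrence of $\alpha^{\pm 1}$ in the underlying walk of $p$ yields exactly one such marking at $\alpha^{+1}$. Proposition~\ref{prop:interval-same-size} then ensures that each of the $\I(p)(\alpha)$ intervals attached to the trail $p$ has the same length $a_p^F$.

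Summing these contributions gives
\[
F(\alpha)\;=\;\sum_{p\in\bK_F}\I(p)(\alpha)\cdot a_p^F\;=\;\sum_{p\in\bK_F^+}\I(p)(\alpha)\cdot a_p^F,
\]
since the trails with $a_p^F=0$ contribute nothing. This is the coordinate-wise statement we want, so $F=\sum_{p\in\bK_F^+}a_p^F\I(p)$ is the desired positive bundle combination. The main subtlety, and the step I would be most careful about, is the bookkeeping in the paragraph above: one must use Remark~\ref{remk:convenient} to see that intervals at positive arrow-flows $(\alpha,C)$ alone already enumerate all occurrences of $\alpha^{\pm 1}$ (not only the positively oriented ones) in each underlying trail, so that the count $\I(p)(\alpha)$ appears rather than some signed count.
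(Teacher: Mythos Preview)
Your proposal is correct and follows essentially the same approach as the paper's proof: both verify the identity coordinate-wise by partitioning $[0,F(\alpha)]$ into the finitely many intervals $I^F_{(\alpha,C_j)}$ (using Lemma~\ref{lem:alg} and Corollary~\ref{cor:card-upper-bound}) and summing their lengths. Your write-up is in fact more explicit than the paper's about the bookkeeping step---grouping marked trails by underlying trail, invoking Proposition~\ref{prop:interval-same-size} for equal lengths, and using Remark~\ref{remk:convenient} to get the count $\I(p)(\alpha)$---whereas the paper simply asserts that $\sum_j a^F_{(\alpha,C_j)}=F(\alpha)$ is the $\alpha$-coordinate of the claimed equality and leaves this unpacking to the reader.
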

\begin{proof}
	It suffices to show that $F=\sum_{p\in\bK_F}a_p^F\I(p)$, since $a_p^F=0$ for $p\in\bK_F\backslash\bK_F^+$.
	Corollary~\ref{lem:mf-trail-clique} shows that $\bK_F$ is a bundle, hence $\sum_{p\in\bK_F^+}a_p^F\I(p)$ is a bundle combination. It remains to show that this bundle combination gives $F$.

	Choose an arrow $\alpha$ of $\tL$.
	Define $q_1,\dots,q_m$ to be the marked trails of $(\bK_F)_{\alpha^\e}$ at $\alpha$ ordered by $\prec_\alpha$ (there are a finite number of these because Corollary~\ref{cor:card-upper-bound} implies that $\bK_F$ has finite cardinality). For $j\in[m]$, define $C_j\in[0,F(\alpha)]$ such that we have an equality of marked trails $q_j=p^F_{(\alpha,{C_j})}$.
	If $(\alpha,C)$ is any arrow-flow, then Lemma~\ref{lem:alg} shows that $p^F_{(\alpha,C)}$ is a trail, hence $C$ is contained in the interval $I_{(\alpha,{C_j})}$ for exactly one $j\in[m]$.
	Then \[\sum_{j=1}^ma^F_{(\alpha,{C_j})}=\sum_{j=1}^m|I_{(\alpha,{C_j})}|=F(\alpha).\]
	This is the restriction of the proposed equality $F=\sum_{p\in\bK_F}a_p^F\I(p)$ to the coordinate given by the arrow $\alpha$. Since this holds for any arrow $\alpha$, we must have $F=\sum_{p\in\bK_F}a_p^F\I(p)$, completing the proof.
\end{proof}

We may now phrase the first two statements of Theorem~\ref{ITHM:BRIQUE}.
\begin{thm}\label{thm:comb}
	Let $\tL$ be a fringed quiver.
	\begin{enumerate}
		\item\label{cco1} Any flow $F\in\F_{\geq0}(\tL)$ has at most one representation as a positive bundle combination $F=\sum_{p\in\bK}a_p\I(p)$.
		\item\label{cco3} Every rational flow may be represented as a positive bundle combination.
	\end{enumerate}
\end{thm}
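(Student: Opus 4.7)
The plan is to observe that this theorem is essentially a repackaging of the two propositions immediately preceding its statement, which were proved by analyzing the flow algorithm. The technical work has already been done in Propositions~\ref{lem:only-way} and~\ref{prop:alg-realize-flow}; the theorem combines their conclusions.

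For part~\eqref{cco1}, I would invoke Proposition~\ref{lem:only-way}, which says that any positive bundle combination $F=\sum_{q\in\K\cup\B}b_q\I(q)$ must satisfy $\K\cup\B=\bK_F^+$ and $b_q=a_q^F$ for every $q$. In other words, both the supporting bundle and the coefficients are recovered from the flow $F$ itself via the data $(\bK_F^+,\{a_p^F\})$ computed by the flow algorithm. Hence any two such representations coincide. The heart of the matter was Lemma~\ref{lem:in}, which shows that the flow algorithm locally tracks arrow-flows through the correct marked trails of any given bundle combination; uniqueness then drops out.

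For part~\eqref{cco3}, I would invoke Proposition~\ref{prop:alg-realize-flow} directly: if $F$ is rational, then Lemma~\ref{lem:alg} guarantees that every extended string $p^F_{(\alpha^\e,C)}$ produced by the flow algorithm is actually a trail (not an irrational string), so the finite set $\bK_F^+$ is a genuine bundle by Corollary~\ref{lem:mf-trail-clique}, and the equality $F=\sum_{p\in\bK_F^+}a_p^F\I(p)$ is shown coordinatewise by partitioning each interval $[0,F(\alpha)]$ into the subintervals $I^F_{(\alpha,C_j)}$ associated to the distinct trails of $\bK_F^+$ passing through $\alpha$.

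Since all genuine obstacles (finiteness of the algorithm on rational input, compatibility of the resulting trails, and the matching between arrow-flows and marked arrows of any hypothetical bundle combination) have been overcome in the preceding results, the proof itself is a two-line citation. The only subtle point worth highlighting is that part~\eqref{cco1} does not require rationality of $F$: even an irrational flow can admit \emph{at most} one bundle combination, since Proposition~\ref{lem:only-way} does not assume rationality. Part~\eqref{cco3} is where rationality becomes essential, precisely to avoid the behaviour of Example~\ref{ex:irrational} in which the flow algorithm produces irrational strings and $\bK_F^+$ is empty even though $F\neq 0$.
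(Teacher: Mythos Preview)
Your proposal is correct and matches the paper's own proof exactly: the paper also simply cites Proposition~\ref{lem:only-way} for part~\eqref{cco1} and Proposition~\ref{prop:alg-realize-flow} for part~\eqref{cco3}, with no further argument.
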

\begin{proof}
	Proposition~\ref{lem:only-way} shows~\eqref{cco1}.
	Proposition~\ref{prop:alg-realize-flow} shows~\eqref{cco3}. 
\end{proof}

We now show that bundle combinations of integer flows have integer coefficients. This will be used to show that any maximal clique simplex is unimodular.

\begin{lemma}\label{prop:b}
	Let $F$ be a flow such that $F(\alpha)\in\mathbb Z$ for all arrows $\alpha$. Then all coefficients $a_p^F$ for $p\in\bK_F$ are integers.
\end{lemma}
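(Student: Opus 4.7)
The plan is to trace through the flow algorithm and observe that, when $F$ is integer, every threshold at which a new branch of Definition~\ref{defn:forbac} can be triggered is an integer. Since each $a_p^F$ is the length of an interval whose endpoints are exactly such thresholds, the coefficients will come out integral.

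Fix a trail $p \in \bK_F$ together with an arrow-flow $(\alpha^\e, C)$ realizing it, and write $I := I^F_{(\alpha^\e,C)} = [L,R] \subseteq [0, F(\alpha)]$; it suffices to show that both $L$ and $R$ lie in $\mathbb Z$, as then $a_p^F = R - L$ will be an integer. Using the notation of Definition~\ref{defn:gr}, let $(\alpha_j^{\e_j}, C_j)_{j \in J}$ be the sequence of arrow-flows produced from $(\alpha^\e, C)$ by iterating $\for$ and $\bac$. The first key observation, essentially extracted from the proof of Proposition~\ref{prop:interval-same-size}, is that for $D$ in the interior of $I$ the algorithm chooses the same branch at each step, and so $C_j(D) = D + X_j$ for a shift $X_j$ which is independent of $D$ and is a finite sum of terms from $\{0,\ -F(\alpha'),\ F(\beta'),\ F(\beta')-F(\alpha')\}$; since $F$ is integer-valued, each such term is an integer, so $X_j \in \mathbb Z$ for every $j \in J$.

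Next I would analyze how $I$ can fail to extend further. The interval $I$ is maximal among subintervals of $[0,F(\alpha)]$ on which the branch chosen at every step of the algorithm is constant, so an endpoint of $I$ is either a boundary value of the ambient interval $[0, F(\alpha)]$, or a value of $D$ at which some $C_j(D)$ hits a branch transition of Definition~\ref{defn:forbac}. Inspecting that definition shows that the transitions occur precisely when $C_j = F(\alpha'_j)$ (for $\e_j = 1$) or $C_j = F(\alpha'_j) - F(\beta'_j)$ (for $\e_j = -1$); both of these values are integers because $F$ is integer. Using $C_j = D + X_j$ with $X_j \in \mathbb Z$, solving for $D$ yields an integer; and $0$ and $F(\alpha)$ are already integers. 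Therefore $L, R \in \mathbb Z$, and $a_p^F \in \mathbb Z$ follows.

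The only mildly delicate point will be handling the shape of $J$. When $J$ is finite (a route ending at fringe vertices) the argument above applies directly to the finitely many steps; when $J$ is bi-infinite and $p$ is a band, the eventual periodicity established in Lemma~\ref{lem:alg} ensures that only finitely many distinct shifts $X_j$ appear, so one still reduces to finitely many integer constraints on $D$. This case analysis is the main obstacle, but it is cosmetic rather than substantive, since the integrality of the branch thresholds is what is actually doing the work.
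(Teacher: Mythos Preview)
Your proof is correct and follows essentially the same idea as the paper's: both exploit that the shifts $X_j = C_j - C$ are integers when $F$ is integer-valued, so the branch transitions of Definition~\ref{defn:forbac}---and hence the endpoints of each interval $I^F_{(\alpha^\e,C)}$---land at integers. The paper packages this slightly differently, showing directly that any two values $C < C'$ with no integer strictly between them yield the same marked path; this sidesteps your finite/bi-infinite $J$ case split (and the minor imprecision of writing $I=[L,R]$ when $I$ can be half-open), but the underlying mechanism is identical.
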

\begin{proof}
	Choose a signed arrow $\alpha^\e$ of $\tL$. Choose two values $C$ and $C'$ of $[0,F(\alpha)]$ such that $C<C'$ and no integer lies in the interval $(C,C')$. We show that $p_{(\alpha^\e,C)}^F=p_{(\alpha^\e,C')}^F$.

	Let $a$ be maximal such that $\for^a(\alpha^\e,C)$ exists and notate $\for^j(\alpha^\e,C):=(\alpha_j^{\e_j},C_j)$ for $0\leq j\leq a$.
	{Since $\for(\alpha_j^{\e_j},C_j)=(\alpha_{j+1}^{\e_{j+1}},C_{j+1})$ and $F$ is integer-valued, it is immediate from Definition~\ref{defn:forbac} that $C_j$ and $C_{j+1}$ differ by an integer. This means that each $C_j$ differs from $C=C_0$ by an integer.}

	{We show by induction on $j$ that $\for^j(\alpha^\e,C')=(\alpha_j^{\e_j},C_j+C'-C)$ for any $0\leq j\leq a$. The base case $j=0$ is trivial, as $\for^0(\alpha^\e,C')=(\alpha^\e,C')=(\alpha^\e,C+C'-C)$. Now suppose $0\leq j<a$ and $\for^j(\alpha^\e,C')=(\alpha_j^{\e_j},C_j+C'-C)$.}

	By Definition~\ref{defn:forbac}, there is some value $D_j$ such that the arrow of $\for(\alpha_j^{\e_j},D)$ is oriented positively for $D<D_j$ and negatively for $D>D_j$ (we may have $D_j=0$ or $D_j=F(\alpha)$, in which case one of these conditions is impossible). Since $F$ is integer-valued, $D_j$ is an integer. 
	Since $C_j$ differs from $C$ by an integer, and there is no integer between $C$ and $C'$, it follows that there is no integer between $C_j$ and $C_j+C'-C$.
	Then the signed arrow of
	$\for(\alpha_j^{\e_j},C_j)$ and $\for(\alpha_j^{\e_j},C'_j)$ must agree. Hence, we have 
	\begin{align*}\for^{j+1}(\alpha^\e,C')=\for(\alpha_j^{\e_j},C_j+C'-C)&=(\alpha_{j+1}^{\e_{j+1}},C_{j+1}+(C_j+C'-C)-C_j)\\
	&=(\alpha_{j+1}^{\e_{j+1}},C_{j+1}+C'-C),\end{align*} completing the proof of the induction step. 

	A symmetric argument shows that for any $i$ such that $\bac^i(\alpha^\e,C)$ is defined, we have $\bac^i(\alpha^\e,C')=(\alpha_{-i}^{\e_{-i}},C_{-i}+C'-C)$.
	We have then shown that the marked paths $p^F_{(\alpha^\e,C)}$ and $p^F_{(\alpha^\e,{C'})}$ agree. 
	Since this holds for any $C$ and $C'$ such that no integer lies between them, we have shown that the endpoints of the interval $I_{(\alpha^\e,C)}^F$ are integers for any arrow-flow $(\alpha^\e,C)$. This shows that $a_{(\alpha^\e,C)}^F$ is an integer for every arrow-flow $(\alpha^\e,C)$, completing the proof.
\end{proof}

\begin{cor}\label{lem:simp-unimodular}
	Any maximal clique simplex is unimodular.
\end{cor}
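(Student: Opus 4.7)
The plan is to combine the uniqueness of positive bundle combinations (Theorem~\ref{thm:comb}) with the integrality statement (Lemma~\ref{prop:b}), via the standard criterion that a lattice simplex is unimodular if and only if every lattice point in the cone over it is a nonnegative integer combination of its vertices.

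First I would confirm that $\Delta_1(\K)$ is a full-dimensional simplex for any maximal clique $\K$. Its vertices $\{\I(p) : p \in \K\}$ number $|\K| = 3|V_{\text{int}}| - |E_{\text{int}}|$ by Theorem~\ref{thm:ccc}, which equals $\dim \F_1(\tL) + 1$ by Lemma~\ref{lem:flowdim}. The uniqueness half of Theorem~\ref{thm:comb}(1) forces these indicator vectors to be affinely independent, since two distinct convex combinations yielding the same flow would constitute two distinct positive bundle combinations on subcliques of $\K$; hence $\Delta_1(\K)$ is a genuine simplex of the correct dimension.

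Next, to establish unimodularity I would reduce to the following claim: every integer flow $F \in \Delta_{\geq 0}(\K)$ is a nonnegative integer combination of $\{\I(p) : p \in \K\}$. This suffices because an arbitrary lattice point $v$ in the affine span of $\Delta_1(\K)$ can be translated by $N\sum_{p\in \K}\I(p)$ for $N$ large enough to land inside $\Delta_{\geq 0}(\K)$, and then be written as a $\mathbb Z$-combination of vertices. Given such an integer flow $F$, any expression $F = \sum_{p \in \K} a_p \I(p)$ with $a_p \geq 0$ constitutes a positive bundle combination supported on the subclique $\{p \in \K : a_p > 0\}$. By Theorem~\ref{thm:comb}(1) this representation is unique and must agree with the output $\sum_{p\in\bK_F^+}a_p^F\I(p)$ of the flow algorithm. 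Lemma~\ref{prop:b} then guarantees every $a_p$ is a nonnegative integer, which yields the claim and hence unimodularity.

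The main obstacle is essentially absent — all nontrivial content has been absorbed into Theorem~\ref{thm:comb} and Lemma~\ref{prop:b}. The only subtlety worth stating cleanly is the reduction of unimodularity to the lattice-point integer decomposition property in the cone $\Delta_{\geq 0}(\K)$, together with the observation that every lattice point of $\Delta_{\geq 0}(\K)$ automatically lies in $\F_{\geq 0}(\tL)$ and is therefore a flow to which the previous results apply.
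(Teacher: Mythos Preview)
Your proposal is correct and follows essentially the same approach as the paper: reduce unimodularity to the statement that every integer point in the cone $\Delta_{\geq0}(\K)$ is a nonnegative integer combination of the vertices, then invoke Lemma~\ref{prop:b}. The paper's proof is terser and simply cites Lemma~\ref{prop:b} directly, leaving implicit the uniqueness argument (Theorem~\ref{thm:comb}) that identifies any clique combination of $F$ with the flow-algorithm output $\sum_{p\in\bK_F^+}a_p^F\I(p)$; your version spells this out, along with the verification that $\Delta_1(\K)$ is a full-dimensional simplex, which is a helpful addition.
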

\begin{proof}
	Let $\K$ be a maximal clique and consider its clique simplex $\Delta_1(\K)$. To show that $\Delta_1(\K)$ is unimodular, it suffices to show that any integer point in the cone over $\Delta_1(\K)$ appears as a nonnegative integer combination of the vertices of $\Delta_1(\K)$. 
	This is Lemma~\ref{prop:b}.
\end{proof}

\subsection{Representation-finiteness and boundedness}

Recall that in Section~\ref{sec:PRF} we claimed that representation-finiteness of $\tL$ is equivalent to boundedness of $\F_1(\tL)$, but deferred the proof. Having proven Theorem~\ref{thm:comb}, we now have the technology to prove this result efficiently.
\begin{lemma}\label{rep-finite-union}
	Suppose $\tL$ is representation-finite. Then $\F_1(\tL)$ is the union of the clique simplices of $\tL$.
\end{lemma}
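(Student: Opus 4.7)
The plan is to adapt the proof of Proposition~\ref{prop:alg-realize-flow} to arbitrary (not necessarily rational) flows under the extra assumption of representation-finiteness. First, recall that representation-finiteness of $\tL$ is equivalent to the absence of bands, so every trail is a route and every bundle is a clique. I would then invoke the standard fact that a representation-finite gentle algebra admits only finitely many strings up to equivalence (any infinite family of strings would yield, via a pigeonhole argument on the finite vertex set, a cyclic repetition and hence a band). In particular, there is a uniform bound on the length of strings of $\tL$.

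Next I would strengthen Lemma~\ref{lem:alg} in this setting: for any $F\in\F_1(\tL)$ and any arrow-flow $(\alpha^\e,C)$, the extended walk $p^F_{(\alpha^\e,C)}$ must be a route. The walk generated by iterating $\for$ and $\bac$ is by construction a string walk in $\tL$, so the bounded-length fact above forces the sequence $J$ of Definition~\ref{defn:gr} to be a finite interval terminating at fringe vertices on both ends. This rules out the irrational-string outcome of Example~\ref{ex:irrational}, which is precisely the behavior that obstructed extending Proposition~\ref{prop:alg-realize-flow} to arbitrary flows in the general case.

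With this in hand, $\bK_F$ is a finite set of routes and, by Corollary~\ref{lem:mf-trail-clique}, a clique. The calculation in the proof of Proposition~\ref{prop:alg-realize-flow} now goes through verbatim: fixing any arrow $\alpha$ and listing the marked routes of $\bK_F$ at $\alpha$ as $q_1\prec_\alpha\cdots\prec_\alpha q_m$, every arrow-flow $(\alpha,C)$ lies in exactly one interval $I^F_{(\alpha,C_j)}$, whence $\sum_j a^F_{(\alpha,C_j)}=F(\alpha)$. Summing over arrows yields the positive clique combination $F=\sum_{p\in\bK_F^+}a_p^F\,\I(p)$. Extending $\bK_F^+$ to any maximal clique $\K$ (such a $\K$ exists by Corollary~\ref{cor:card-upper-bound} and the fact that $\bK_F^+$ is a clique), we get $F\in\Delta_1(\bK_F^+)\subseteq\Delta_1(\K)$, so $\F_1(\tL)\subseteq\bigcup_{\K}\Delta_1(\K)$. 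The reverse inclusion is immediate since a unit clique combination is, by construction, a unit flow.

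The main obstacle is really the second step: without the rationality hypothesis used in Lemma~\ref{lem:alg}, one needs a uniform reason for the flow-algorithm walks to terminate. The key insight is that representation-finiteness supplies such a reason purely combinatorially (bounded string length) rather than arithmetically (common denominators), so the obstruction exhibited in Example~\ref{ex:irrational} simply cannot occur. Everything else is a direct import of arguments already established.
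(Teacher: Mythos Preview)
Your proof is correct and follows essentially the same approach as the paper. The paper's proof is terser—it compresses your bounded-string-length argument into the phrase ``it cannot loop back on itself'' and then immediately invokes the clique combination $F=\sum_{p\in\bK_F}a_p^F\I(p)$—but the underlying idea (representation-finiteness forces every flow-algorithm walk to be a route, so the computation of Proposition~\ref{prop:alg-realize-flow} goes through without the rationality hypothesis) is identical to yours.
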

\begin{proof}
	If $\tL$ is representation-finite, then $\tL$ has no bands. This means that for any flow $F\in\F_1(\tL)$ and any arrow-flow $(\alpha^\e,C)$ of $F$, the path $p^F_{(\alpha^\e,C)}$ is defined, since it cannot loop back on itself.
	Hence, any flow $F$ is realized in a clique simplex as $F=\sum_{p\in \bK_F}a_p\I(p)$. We have shown that $\F_1(\tL)$ is the union of the clique simplices.
\end{proof}

\begin{cor}\label{cor:finite-iff-bound}
	The following are equivalent:
	\begin{enumerate}
		\item\label{los1} $\L$ is representation-finite
		\item\label{los2} $\tL$ is representation-finite
		\item\label{los3} $\F_1(\tL)$ is bounded
	\end{enumerate}
\end{cor}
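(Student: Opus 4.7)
The plan is to handle the easy equivalence (1)~$\iff$~(2) first, then prove (2)~$\iff$~(3) using Lemma~\ref{rep-finite-union} for one direction and the indicator vectors of bands for the other.

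First I would observe that (1)~$\iff$~(2) is essentially definitional for gentle algebras: $\L$ (resp.\ $\tL$) is representation-finite if and only if it has no bands, since otherwise the band modules $M(B,n,\lambda)$ give an infinite family of pairwise non-isomorphic indecomposables parameterized by $\lambda \in \k^\times$. Moreover, bands of $\L$ and bands of $\tL$ coincide: any band is a cyclic string using only internal arrows (since fringe vertices are only incident to one arrow and therefore cannot occur in a closed walk), and the relations of $\tL$ restricted to internal arrows are exactly the relations of $\L$. This gives (1)~$\iff$~(2).

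Next I would prove (2)~$\Rightarrow$~(3). By Lemma~\ref{rep-finite-union}, when $\tL$ is representation-finite the turbulence polyhedron equals the union $\bigcup_\K \D_1(\K)$ over maximal cliques $\K$. Since representation-finiteness of $\tL$ means finitely many strings, there are only finitely many routes of $\tL$ (routes being a subset of strings), hence only finitely many cliques. As each clique simplex $\D_1(\K)$ is bounded (being the convex hull of finitely many indicator vectors of routes), $\F_1(\tL)$ is a finite union of bounded sets and therefore bounded.

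Finally I would prove (3)~$\Rightarrow$~(2) by contrapositive: assume $\tL$ is representation-infinite, so $\tL$ has a band $B$. I claim that the indicator vector $\I(B)$ is a nonzero vortex, which immediately gives unboundedness: for any straight route $s$ (which exists and yields a vertex $\I(s) \in \F_1(\tL)$), the ray $\{\I(s) + t\I(B) : t \geq 0\}$ lies entirely in $\F_1(\tL)$, witnessing unboundedness. Nonzeroness of $\I(B)$ is clear. The strength of $\I(B)$ is zero since $B$ contains no fringe arrows. The main technical verification is conservation of flow at each internal vertex $v$ with relations $\alpha_1\alpha_2$ and $\beta_1\beta_2$: one enumerates the eight valid passage substrings of $B$ through $v$ (those of the form $x^\e y^\f$ with $h(x^\e) = v = t(y^\f)$ not producing a relation or an immediate backtrack) and computes directly that $\I(B)(\alpha_1) + \I(B)(\alpha_2) = \I(B)(\beta_1) + \I(B)(\beta_2)$, both sides equaling the total count of passages through $v$ plus an appropriate count of "U-turns."

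The main obstacle is this last bookkeeping verification that $\I(B)$ is genuinely a flow. It is routine but requires being careful about the fact that the conservation law groups arrows by relation rather than by in/out, and about tracking both directions in which an arrow may appear in the band walk.
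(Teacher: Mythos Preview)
Your proposal is correct and follows essentially the same approach as the paper: the paper also dismisses (1)~$\iff$~(2) as immediate, uses Lemma~\ref{rep-finite-union} for (2)~$\Rightarrow$~(3), and for the other direction observes that the indicator vector of a band is a nonzero vortex so that adding any positive multiple of it to a point of $\F_1(\tL)$ stays in $\F_1(\tL)$. The only difference is emphasis: the paper treats the fact that $\I(B)$ is a flow as evident (it is stated earlier in the paper that indicator vectors of trails are always flows), whereas you flag the conservation-of-flow check as the main bookkeeping step.
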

\begin{proof}
	\eqref{los1}$\iff$\eqref{los2} is immediate. We show~\eqref{los2}$\iff$\eqref{los3}.
	First, suppose $\tL$ is representation-infinite. Then there is some band $B$ of $\tL$. The indicator vector $\I(B)$ satisfies conservation of flow but is zero on all fringe arrows. Then for any $\xx\in\F_1(\tL)$, we have $\xx+\lambda\I(V)\in\F_1(\tL)$ for all $\lambda>0$. This shows that $\F_1(\tL)$ is not bounded.
	On the other hand, suppose $\tL$ is representation-finite. Lemma~\ref{rep-finite-union} shows that $\F_1(\tL)$ is the union of a finite number of clique simplices, hence is bounded.
\end{proof}

\subsection{Summarizing subdivision results and giving examples}

We are now able to define the bundle subdivision of the turbulence polyhedron and prove that it is, indeed, a subdivision of $\F_1(\tL)$. This completes the proof of Theorem~\ref{ITHM:BRIQUE} of the introduction.
\begin{defn}
	Define the \emph{(unit) bundle subdivision} of $\F_1(\tL)$ as
	\[\Sbs^{\spircle}(\F_1(\tL)):=\{\D_{\bK}\ :\ \bK\text{ is a maximal bundle of }\tL\}.\]
	Define the \emph{nonnegative bundle subdivision} of $\F_{\geq0}(\tL)$ as
	\[\Sbs^{\spircle}(\F_{\geq0}(\tL)):=\{\D_{\bK}^{\geq0}\ :\ \bK\text{ is a maximal bundle of }\tL\}.\]
\end{defn}
\begin{cor}\label{cor:bundle-subd}
	The bundle subdivision of $\F_1(\tL)$ is a subdivision which covers all rational points, and the nonnegative bundle subdivision of $\F_{\geq0}(\tL)$ is a subdivision which covers all rational points.
	When $\tL$ is representation-finite, there are no bundles and the clique triangulation is complete.
\end{cor}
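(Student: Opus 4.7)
The plan is to combine Theorem~\ref{thm:comb} with a face-identification for intersections of bundle spaces. By construction, $\D_1(\bK)\subseteq\F_1(\tL)$ for every bundle $\bK$, so it remains to verify the density and intersection axioms of Definition~\ref{defn:subd}, plus the rational-coverage strengthening. For density, note that every rational flow $F\in\F_1(\tL)$ admits a positive bundle combination $F=\sum_{p\in\bK_F^+}a_p^F\I(p)$ by Theorem~\ref{thm:comb}~\eqref{cco3}, so $F\in\D_1(\bK_F^+)$; since $\bK_F^+$ is a bundle (Corollary~\ref{lem:mf-trail-clique}), it extends to some maximal bundle $\bK$ with $F\in\D_1(\bK)$. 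This already gives rational-coverage, and since rational flows are dense in the rational polyhedron $\F_1(\tL)$, it gives density of the bundle subdivision.

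For the strong intersection property, let $\bK_1\neq\bK_2$ be maximal bundles of $\tL$. I claim $\D_1(\bK_1)\cap\D_1(\bK_2)=\D_1(\bK_1\cap\bK_2)$, and that this is a proper face of both $\D_1(\bK_1)$ and $\D_1(\bK_2)$. The containment $\D_1(\bK_1\cap\bK_2)\subseteq\D_1(\bK_1)\cap\D_1(\bK_2)$ is immediate from the definition of bundle combinations. For the reverse, suppose $F\in\D_1(\bK_1)\cap\D_1(\bK_2)$; write $F=\sum_{p\in\bK_1}a_p\I(p)=\sum_{p\in\bK_2}b_p\I(p)$ as $\bK_1$- and $\bK_2$-bundle combinations, and let $\bK_i^+(F):=\{p\in\bK_i:\text{coefficient positive}\}$. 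Both are positive bundle combinations of $F$, so Theorem~\ref{thm:comb}~\eqref{cco1} forces $\bK_1^+(F)=\bK_2^+(F)\subseteq\bK_1\cap\bK_2$, giving $F\in\D_1(\bK_1\cap\bK_2)$. Next, $\bK_1\cap\bK_2$ contains all straight routes (since every maximal bundle does, by the proof of Corollary~\ref{cor:card-upper-bound}), so $\D_1(\bK_1\cap\bK_2)$ is defined and is obtained from $\D_1(\bK_i)$ by setting the coefficients of trails in $\bK_i\setminus(\bK_1\cap\bK_2)$ to zero; this is precisely a face of $\D_1(\bK_i)$, and it is proper because $\bK_1\neq\bK_2$ means at least one containment $\bK_1\cap\bK_2\subsetneq\bK_i$ is strict.

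The nonnegative version $\Sbs^{\spircle}(\F_{\geq0}(\tL))$ follows the identical argument: positive nonnegative bundle combinations of a flow $F$ are again unique (the scalar analog of Theorem~\ref{thm:comb}~\eqref{cco1} by scaling $F$ to unit strength when nonzero, and trivially for $F=0$), rational points are covered by Theorem~\ref{thm:comb}~\eqref{cco3} applied after rescaling, and the same face-identification $\D_{\geq0}(\bK_1)\cap\D_{\geq0}(\bK_2)=\D_{\geq0}(\bK_1\cap\bK_2)$ holds. Finally, when $\tL$ is representation-finite it has no bands, so every maximal bundle is a maximal clique and every $\D_1(\bK)$ is a clique simplex; Lemma~\ref{rep-finite-union} then gives the full covering $\F_1(\tL)=\bigcup_{\K\text{ max clique}}\D_1(\K)$.

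The main obstacle is the intersection step: one must be careful that the face of $\D_1(\bK_i)$ cut out by zeroing coefficients indexed by $\bK_i\setminus(\bK_1\cap\bK_2)$ really equals $\D_1(\bK_1\cap\bK_2)$ (rather than something smaller), which requires the presence of straight routes in every maximal bundle so that the unit-strength condition $\sum a_p=1$ remains feasible after restriction. Everything else is a direct application of the uniqueness and existence halves of Theorem~\ref{thm:comb}.
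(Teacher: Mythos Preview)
Your proof is correct and follows the same route as the paper, which simply cites Theorem~\ref{thm:comb} for both density and the strong intersection property and Lemma~\ref{rep-finite-union} for completeness in the representation-finite case; you have essentially unpacked that citation by explicitly identifying $\D_1(\bK_1)\cap\D_1(\bK_2)=\D_1(\bK_1\cap\bK_2)$ via uniqueness of positive bundle combinations. One small imprecision: you write ``at least one containment $\bK_1\cap\bK_2\subsetneq\bK_i$ is strict,'' but the strong intersection property requires the intersection to be a proper face of \emph{both} $\D_1(\bK_i)$, so you need both containments strict---which they are, since $\bK_1$ and $\bK_2$ are distinct \emph{maximal} bundles and hence neither contains the other.
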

\begin{proof}
	Theorem~\ref{thm:comb} shows both density and the strong intersection property, hence the bundle subdivisions are subdivisions.
	Lemma~\ref{rep-finite-union} shows that the subdivisions are complete in the representation-finite case.
\end{proof}

\begin{remk}
	It is possible at this time to prove by working directly with the turbulence polyhedron that the clique simplices are always dense in the turbulence polyhedron, which would complete the proof of Theorem~\ref{ITHM:CLIQUE} by showing that the clique triangulation of a turbulence polyhedron is a unimodular triangulation of $\F_1(\tL)$ in all cases. For brevity, we will instead inherit this density property from the g-vector fan in Corollary~\ref{cor:clique-subd} of Section~\ref{sec:g}.
\end{remk}

\begin{example}\label{ex:gemd}
	Consider the shard fringed quiver of Figure~\ref{fig:nonv-turb}. There are six maximal cliques, all of which are drawn in Figure~\ref{GEMINTRO} (drawn without straight routes for readability). Figure~\ref{GEMINTRO} matches one of these maximal cliques with its corresponding clique simplex. The six maximal clique simplices give a unimodular triangulation of $\F_1(\tL)$.
	\begin{figure}
		\centering
		\def\svgscale{.21}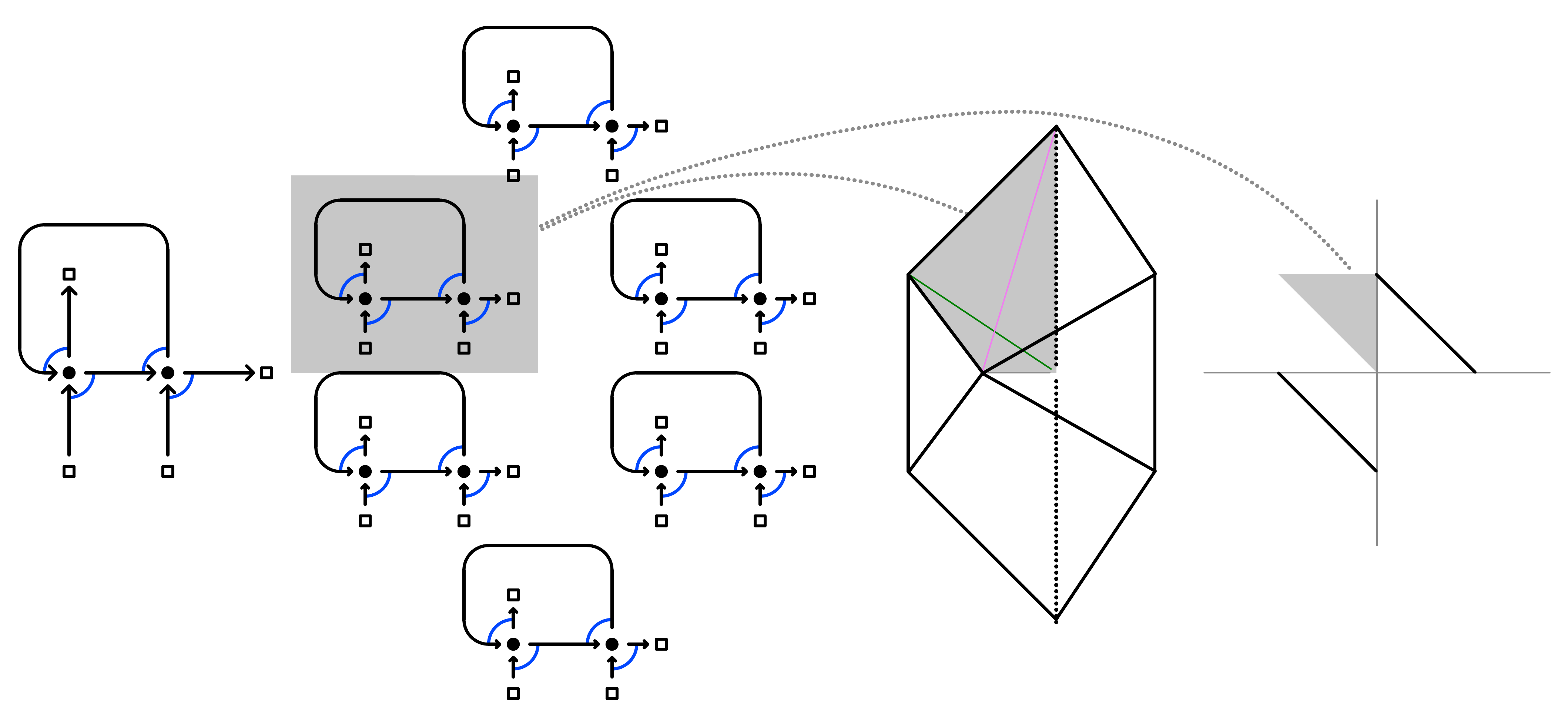
		\caption{The ``shard'' fringed quiver with its six maximal cliques (drawn without straight routes). On the right is its three-dimensional turbulence polyhedron and two-dimensional g-polyhedron.}
		\label{GEMINTRO}
	\end{figure}
\end{example}

\begin{example}\label{ex:kron-eddy-decomp}
	Let $\tL$ be the Kronecker fringed quiver of Figure~\ref{fig:kron1}.
	The vertices of $\F_1(\tL)$ are \[\{\I({f_1}{f_2}{f_3}),\ \I({e_3}^{-1}{f_3}),\ \I({e_1}{f_1}^{-1}),\ \I({e_1}{e_2}{e_3})\},\] and its recession cone is generated by $\I({e_2}{f_2}^{-1})$.
	Figure~\ref{KRONINTRO} shows some maximal bundles of $\tL$.
	The complete list of maximal bundles is as follows:
	\begin{align*}
		\{{e_1}{e_2}{e_3},\ {f_1}{f_2}{f_3},\ &{e_3}^{-1}{f_3},\ {e_1}{f_1}^{-1}\}, \\
		\{{e_1}{e_2}{e_3},\ {f_1}{f_2}{f_3},\ &{e_3}^{-1}({e_2}{f_2}^{-1})^j{f_3},\ {e_3}^{-1}({e_2}{f_2}^{-1})^{j+1}{f_3}\}\ (\textup{for any }j\geq0), \\
		\{{e_1}{e_2}{e_3},\ {f_1}{f_2}{f_3},\ &{e_1}({e_2}{f_2}^{-1})^j{f_1}^{-1},\ {e_1}({e_2}{f_2}^{-1})^{j+1}{f_1}^{-1}\}\ (\textup{for any }j\geq0), \\
		\{{e_1}{e_2}{e_3},\ {f_1}{f_2}{f_3},\ &{e_2}{f_2}^{-1}\}.
	\end{align*}
	Note that all maximal bundles contain the straight routes ${e_1}{e_2}{e_3}$ and ${f_1}{f_2}{f_3}$. The top three rows are all cliques. These simplices give a unimodular triangulation of a dense subset of $\F_1(\tL)$: 
	\begin{itemize} 
		\item the top row gives the clique simplex in $\F_1(\tL)$ with vertices \[\{\I({f_1}{f_2}{f_3}),\I({e_1}{e_2}{e_3}),\I({e_1}{f_1}^{-1}),\I({e_3}^{-1}{f_3})\}.\]
		\item the next row gives the clique simplices \[\{\I({f_1}{f_2}{f_3}),\I({e_1}{e_2}{e_3}),\I({e_3}^{-1}{f_3})+j\I({e_2}{f_2}^{-1}),\I({e_3}^{-1}{f_3})+(j+1)\I({e_2}{f_2}^{-1})\}\] for every $j\in\mathbb Z_{\geq0}$.
		\item the next row gives the clique simplices \[\{\I({f_1}{f_2}{f_3}),\I({e_1}{e_2}{e_3}),\I({e_1}{f_1}^{-1})+j\I({e_2}{f_2}^{-1}),\I({e_1}{f_1}^{-1})+(j+1)\I({e_2}{f_2}^{-1})\}\] for every $j\in\mathbb Z_{\geq0}$.
	\end{itemize}
	The above three clique simplices are, in fact, full-dimensional simplices in the affine span of $\F_1(\tL)$. The final maximal bundle $\{{e_1}{e_2}{e_3},{f_1}{f_2}{f_3},{e_2}{f_2}^{-1}\}$ is the only maximal bundle with a band. The space of bundle combinations of this bundle is the two-dimensional bundle wall
	\[\{a\I({f_1}{f_2}{f_3})+b\I({e_1}{e_2}{e_3})+c\I({e_2}{f_2}^{-1})\ :\ a,b,c\in\mathbb R_{\geq0},\ a+b=1\}.\]
	This bundle wall is a ``rectangle with infinite width.'' Note that the elements of this bundle wall with $c>0$ are precisely those points of $\F_1(\tL)$ which are in none of the clique simplices. Then every point of $\F_1(\tL)$ is in one of the above bundle spaces. The bundle subdivision is the collection of these bundle spaces, which in this case is complete.
	See Figure~\ref{KRONINTRO} for a visual picture of the indicator vectors of the routes and one of the maximal clique simplices.
\end{example}

The turbulence polyhedron of the double Kronecker quiver of Figure~\ref{fig:irrational} is not fully covered by its bundle subdivision. We will not discuss its bundle subdivision in depth until Section~\ref{sec:example} after we develop the theory of vortex dissections. Figure~\ref{fig:doubkron-g} shows a three-dimensional reduced picture of its four-dimensional turbulence polyhedron.

\section{Vertices and Recession Cones of Turbulence Polyhedra}
\label{sec:vert-unb}

We defined the turbulence polyhedron of a fringed quiver by giving a set of hyperplane inequalities; we now describe the vertices and recession cone of the turbulence polyhedron.
We will characterize vertices of $\F_1(\tL)$ as indicator vectors of \emph{elementary routes} and unbounded directions of $\F_1(\tL)$ as coming from \emph{elementary bands}. To define elementariness, we will need some preliminary definitions.

\begin{defn}\label{defn:boosted-crisscrossed}
	We now define what it means for a string $\sigma$ to be \emph{boosted} or \emph{criss-crossed} by a route or band $p$. We must consider separately the case when $\sigma$ is a lazy string and when $\sigma$ contains at least one arrow.
	\begin{enumerate}
		\item Suppose first that $\sigma$ contains at least one arrow. If $\sigma$ is contained as a subpath of $p$ multiple distinct times, then $\sigma$ is a \emph{boosted substring} of $p$. If both $\sigma$ and $\sigma^{-1}$ are contained as a subpath of $p$, then $\sigma$ is a \emph{criss-crossed substring} of $p$. In this case, $\sigma^{-1}$ is also a criss-crossed substring, and we consider $\sigma$ and $\sigma^{-1}$ to be equivalent and make up only one criss-crossed substring.
		\item Suppose now that $\sigma=e_v$ is the lazy string at an internal vertex $v$. Say $\alpha_1\alpha_2$ and $\beta_1\beta_2$ are the relations of $v$. Separate the length-two strings passing through $v$ into the sets $S=\{\alpha_1\beta_2,\alpha_1\beta_1^{-1},\alpha_2^{-1}\beta_2,\alpha_2^{-1}\beta_1\}$ and
			$T=\{\beta_2^{-1}\alpha_1^{-1},\beta_1\alpha_1^{-1},\beta_2^{-1}\alpha_2,\beta_1^{-1}\alpha_2\}$. If $p$ contains more than one element of $S$ or more than one element of $T$ (counting multiplicities), then $e_v$ is a \emph{boosted substring} of $p$. If $p$ contains an element of $S$ and an element of $T$, then $e_v$ is a \emph{criss-crossed substring} of $p$.
	\end{enumerate}
\end{defn}

Intuitively, a string $\sigma$ is a boosted substring of $p$ if $p$ passes through $\sigma$ multiple times in the same direction, and $\sigma$ is a criss-crossed substring of $p$ if $p$ passes through $\sigma$ both forwards and backwards.
We remark that $\tL$ is representation-infinite if and only if some string has a boosted substring.

\begin{example}
	The route $e_1e_2e_3e_1^{-1}$ drawn in red in Figure~\ref{fig:gottabemarked} contains no boosted substrings, and contains one maximal criss-crossed substring $e_1$. In the Kronecker fringed quiver of Figure~\ref{fig:kron1}, the path $e_1e_2f_2^{-1}e_2e_3$ contains no criss-crossed substrings, and contains one maximal boosted substring $e_2$.
\end{example}

\begin{remk}\label{remk:lazyboost}
	If a trail contains the lazy string $e_v$ at a vertex $v$ three or more distinct times, then $e_v$ is a boosted substring of $p$.
\end{remk}

\begin{defn}\label{defn:elband}
	A band $p$ of $\tL$ is \emph{elementary} if it is self-compatible, has no boosted substrings, and has at most one maximal criss-crossed substring.
\end{defn}

We now give a different interpretation of Definition~\ref{defn:elband}. We say that a band $B$ is \emph{simple} if it does not use the same vertex twice (not counting $t(B)=h(B)$). We say that a band is a \emph{barbell}
 if up to cyclic equivalence it is of the form $s\sigma_1 s^{-1}\sigma_2$, where every vertex used appears exactly once with the exception of the vertices of $s$, which are used twice.
Then a self-compatible band is elementary if and only if it is simple or it is a barbell.
In other words, an elementary band must look like one of the two left examples in Figure~\ref{fig:elband}.

\begin{figure}
\centering
	\begin{minipage}[b]{.22\textwidth}
	\centering
	\def\svgscale{.21}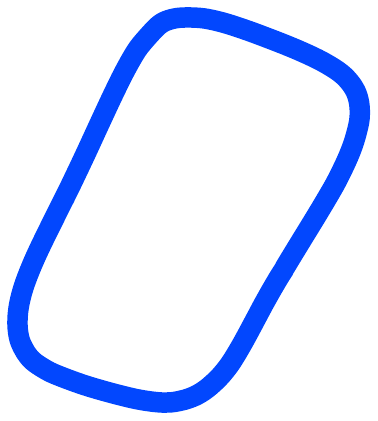

	simple
\end{minipage}
	\begin{minipage}[b]{.22\textwidth}
	\centering
	\def\svgscale{.21}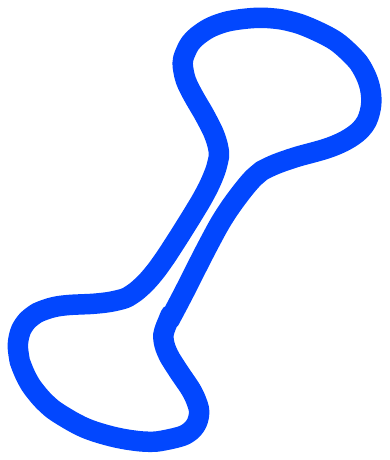

	barbell
\end{minipage}
	\begin{minipage}[b]{.22\textwidth}
	\centering
	\def\svgscale{.21}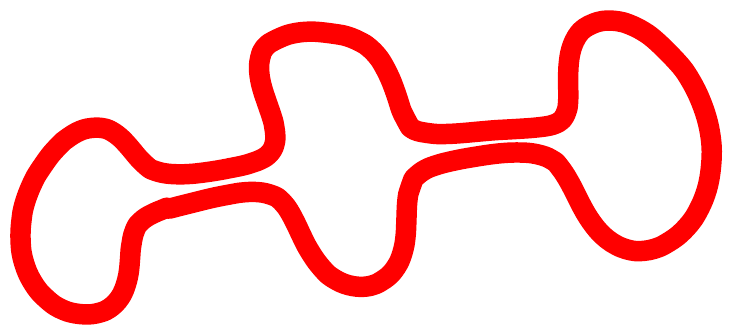

	nonelementary
\end{minipage}
	\begin{minipage}[b]{.22\textwidth}
	\centering
	\def\svgscale{.21}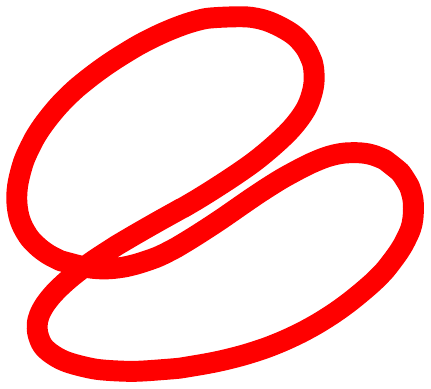

	nonelementary
\end{minipage}
	\caption{Examples of elementary (left) and nonelementary (right) bands.}
	\label{fig:elband}
\end{figure}

\begin{prop}\label{prop:elementary-bands}
	Let $\Lambda$ be a gentle algebra. 
	\begin{enumerate}
		\item\label{r1} The recession cone of $\F_1(\tL)$ is generated by the indicator vectors of bands.
		\item\label{r2} The set of indicator vectors of elementary bands is a minimal generating set for the recession cone of $\F_1(\tL)$.
	\end{enumerate}
\end{prop}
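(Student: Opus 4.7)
The plan is to prove (1) first, then establish both generation and minimality of the claimed set in (2).

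\textbf{Part (1).} Since $\F_1(\tL)$ is a rational polyhedron, its recession cone is a rational polyhedral cone, hence is generated by its rational points. Every rational vortex $V$ can be rescaled to an integer vortex $V'$; Theorem~\ref{thm:comb} together with Lemma~\ref{prop:b} expresses $V'$ as a positive bundle combination $V' = \sum_{p\in\bK_{V'}^+} a_p\I(p)$ with integer coefficients. Since routes have strength $1$ while $V'$ has strength $0$, only bands can appear with positive coefficient. Rescaling back, every rational vortex is a nonnegative combination of band indicator vectors, proving (1).

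\textbf{Part (2), generation.} I would show by induction that $\I(B)$ is a nonnegative integer combination of indicator vectors of elementary bands for every band $B$. The inductive measure is lexicographic: first, $0$ if $B$ is self-compatible and $1$ otherwise; second, the length $|B|$. If $B$ is elementary there is nothing to prove. If $B$ is self-compatible and has a non-lazy boosted substring, one may choose $\sigma$ so that two occurrences of $\sigma$ in $B$ do not overlap; then cyclically $B = \sigma u\sigma v$, both $\sigma u$ and $\sigma v$ are cyclic strings (each a band or a power of a strictly shorter band), and $\I(B) = \I(\sigma u) + \I(\sigma v)$ splits $\I(B)$ into strictly shorter band indicators. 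A boosted lazy substring $e_v$ is handled by splitting $B$ at the two passes through $v$ and two or more maximal criss-crossed substrings by cutting analogously at one of them. Finally, if $B$ is not self-compatible, I apply the flow algorithm to $\I(B)$ to obtain a bundle $\bK_{\I(B)}^+$ of self-compatible trails; by strength considerations they are all bands, and $B$ is absent because $B$ is not self-compatible. The resulting equality $\I(B) = \sum_i c_i\I(B_i)$ with $B_i$ self-compatible strictly decreases the first coordinate of the measure, allowing the induction to continue.

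\textbf{Part (2), minimality.} Fix an elementary band $B$ and suppose $\I(B) = v_1 + v_2$ with $v_1,v_2$ in the recession cone. Nonnegativity forces $v_1$ and $v_2$ to be supported on the edges of $B$. When $B$ is simple, at each internal vertex of $B$ the two $B$-edges $\alpha_i,\alpha_{i+1}$ belong to opposite relations (since $\alpha_i\alpha_{i+1}$ is a string), so the conservation-of-flow equation at that vertex reduces to $v_1(\alpha_i) = v_1(\alpha_{i+1})$; cycling around $B$, $v_1$ is a constant multiple of $\I(B)$. When $B = s\sigma_2 s^{-1}\sigma_1$ is a barbell, the analogous argument along each of $\sigma_1,\sigma_2,s$, combined with the conservation equations at the two vertices where $B$ passes twice, forces $v_1$ to take some value $a$ on the edges of $\sigma_1$ and $\sigma_2$ and value $2a$ on the edges of $s$, so $v_1 = a\I(B)$. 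Thus $\mathbb{R}_{\geq0}\I(B)$ is an extreme ray, and no elementary band indicator can be removed from the generating set.

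The main obstacle is the decomposition step in the generation direction of (2), especially the case analysis for boosted substrings (lazy versus non-lazy, and overlapping occurrences) and for multiple maximal criss-crossed substrings; the direct cut constructions need care when the resulting cyclic subwalks are powers rather than primitive bands. In the self-incompatible case the flow algorithm performs the cut essentially for free, but one must check that the combined lexicographic measure (self-compatibility, then length) does strictly decrease so that the recursion terminates at elementary bands.
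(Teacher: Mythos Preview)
Your Part (1) is fine and matches the paper's implicit reasoning. Your minimality argument in Part (2) is different from the paper's (the paper simply observes that no trail distinct from an elementary band $B$ can stay inside the support of $B$, hence $\I(B)$ cannot be dropped from any generating set), but your extreme-ray argument via conservation of flow is also valid once the barbell case is written out carefully.

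The genuine gap is in the generation direction of Part (2), specifically the case of a self-compatible band $B$ with no boosted substring but at least two maximal criss-crossed substrings. You write ``cutting analogously at one of them,'' but this does not work. If $\sigma$ is criss-crossed, then $B$ contains $\sigma$ and $\sigma^{-1}$; writing $B = \sigma u\,\sigma^{-1} v$ cyclically, neither $\sigma u$ nor $\sigma^{-1}v$ is a closed walk, and the closed walk $\sigma u\,\sigma^{-1}$ fails to be a band because the cyclic junction $\sigma^{-1}\sigma$ contains a forbidden factor $\alpha^{-1}\alpha$. So there is no direct analog of the boosted-case splitting. You also cannot fall back on the flow algorithm here: since $B$ is self-compatible, the algorithm applied to $\I(B)$ simply returns $\{B\}$, which does not decrease your lexicographic measure.

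The paper sidesteps this by inducting on the vortex $W$ (by the number of arrows it uses) rather than on the band. At each step one only needs to produce \emph{some} band $C$ supported inside $W$ with $\I(C)$ already known (by induction) to be a nonnegative combination of elementary band indicators, then subtract the maximal multiple of $\I(C)$ from $W$ to strictly shrink the support. In the two-criss-crossed case the paper performs a small case analysis on the relative positions of the two substrings $s,t$ (the two interleaving patterns $sP_1tP_2t^{-1}P_3s^{-1}P_4$ and $sP_1tP_2s^{-1}P_3t^{-1}P_4$) to manufacture a band $C$ using strictly fewer arrows than $B$; this does \emph{not} yield a decomposition $\I(B)=\I(C)+\text{(something nonnegative)}$, which is why your band-by-band induction cannot be completed as stated. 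If you want to keep your overall structure, you would need to supply an actual decomposition of $\I(B)$ in this case, and I do not see an obvious one.
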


\begin{proof}
	We first show that the indicator vectors of elementary bands generate the recession cone. Then~\eqref{r1} follows.
	It is immediate that the recession cone of $\F_1(\tL)$ is precisely the cone of vortices of $\tL$ (recall that a vortex of $\tL$ is a flow giving zero weight to all fringe arrows). It then suffices to show that any vortex $W$ may be obtained as a nonnegative sum of indicator vectors of elementary bands. Since $\F_1(\tL)$ is defined by rational hyperplanes, its recession cone has a rational generating set, so it suffices to show this only for rational vortices $W$. We show this by induction on the number of arrows used by $W$ -- the base case, when $W$ uses zero arrows, is trivial. Now suppose we have a rational vortex $W$ and we have shown that any rational vortex using fewer arrows than $W$ is a nonnegative combination of indicator vectors of elementary bands.

	We begin by constructing a band $C$ such that every arrow of $C$ is given positive flow by $W$, and that $\I(C)$ is a positive combination of indicator vectors of elementary bands.
	First, choose any band $B\in\bK_W^+$. By Theorem~\ref{thm:comb}, such a band $B$ exists and every arrow of $B$ is given positive flow by $W$.
	If $B$ has no boosted substring, set $B_1:=B$. Otherwise, we may represent $B$ (up to cyclic equivalence) as $B=p\sigma q\sigma$, where $p,q,\sigma$ are strings, $\sigma$ is a boosted substring of $B$, and the string $p\sigma$ has no boosted substrings. Then $B_1:=p\sigma$ is a band with no boosted substrings using only arrows given positive flow by $W$.
	In either case, we have obtained a band $B_1$ with no boosted substrings using only arrows given positive flow by $W$.

	If $B_1$ has no criss-crossed substrings, then it is elementary and we set $C:=B_1$. If $B_1$ has exactly one maximal criss-crossed substring $s$, then up to cyclic equivalence we may write $B_1=s\sigma_1 s^{-1}\sigma_2$ for some strings $\sigma_1,\sigma_2$. If $B_1$ is self-compatible, then it must be elementary and set $C:=B_1$. Otherwise, the band $C:=s\sigma_1s^{-1}\sigma_2^{-1}$ must be self-compatible and hence elementary. In either of these cases, $C$ is itself elementary so $\I(C)$ is trivially a positive combination of indicator vectors of elementary bands.

	Now suppose $B_1$ has two or more maximal criss-crossed substrings.
	Let $s$ be a maximal criss-crossed substring of $B_1$. Then $B_1$ contains both $\alpha_1^{\e_1}s\alpha_2^{\e_2}$ and $\beta_1^{-\e_1}s^{-1}\beta_2^{-\e_2}$ for some signed arrows $\alpha_i^{\e_i}$ and $\beta_j^{-\e_j}$.
	We say that $\{\alpha_1,\alpha_2,\beta_1,\beta_2\}$ are the arrows of $B_1$ \emph{surrounding} $s$.
	In particular, note that the lazy string at $h(s)=t(\alpha_2^{\e_2})$ already appears in $s\alpha_2^{\e_2}$ and $\beta_1^{-\e_1}s^{-1}$, so by Remark~\ref{remk:lazyboost} this lazy string cannot appear any more times in $B_1$. In particular, this means that the arrow $\alpha_2$ (with any orientation) does not appear in $B_1$ outside of the substring $\alpha_1^{\e_1}s\alpha_2^{\e_2}$.
	Similarly, the arrows $\alpha_1,\beta_1,\beta_2$ are used by $B_1$ only once.
	Let $t$ be a different maximal criss-crossed substring of $B_1$. We now split into two cases.
	\begin{enumerate}
		\item Suppose that $B_1$ is equal (up to cyclic equivalence) to $s P_1 t P_2 t^{-1} P_3 s^{-1} P_4$, for some strings $P_i$. Since the arrows surrounding $s$ and $t$ are used by $B_1$ only once, the strings $P_1,P_2,P_3,P_4$ all contain at least one arrow. If necessary, redefine $\alpha_j^{\e_j}$ and $\beta_j^{-\e_j}$ so that the first arrow of $P_1$ is $\alpha_2^{\e_2}$; then this is the only occurrence of $\alpha_2$ in $B_1$.
			Then let $C:= s P_3^{-1} t P_2 t^{-1} P_3 s^{-1} P_4$. This band uses only arrows of $B_1$, but does not use the arrow $\alpha_2$ because we got rid of the substring $P_1$. Hence, the vortex $\I(C)$ uses strictly fewer arrows than $W$ and by the induction hypothesis $\I(C)$ is a nonnegative combination of indicator vectors of elementary bands.
		\item Suppose that $B_1$ is equal (up to cyclic equivalence) to $s P_1 t P_2 s^{-1} P_3 t^{-1} P_4$. Then let $C:= s P_2^{-1} t^{-1} P_4$. As before, this is a band which uses only arrows of $B_1$ but which does not use $\alpha_2$, hence our induction hypothesis gives that $\I(C)$ is a nonnegative combination of indicator vectors of elementary bands.
	\end{enumerate}

	In all cases, we have obtained a band $C$ such that every arrow of $C$ is given positive flow by $W$, and that $\I(C)$ is a positive combination of indicator vectors of elementary bands.
	Let $a_C\in\mathbb R_{\geq0}$ to be maximal such that for all arrows $\alpha\in\tL$, we have $a_C\I(C)\leq W(\alpha)$. Then the vortex $W-a_C\I(C)$ uses a strict subset of the arrows of $W$, hence by the induction hypothesis this vortex is a nonnegative combination of indicator vectors of elementary bands. Then the sum $W=(W-a_C\I(C))+(a_C\I(C))$ also has this property. We have shown that the set of indicator vectors of elementary bands generates the recession cone of $\F_1(\tL)$.

	It remains to show that this generating set is minimal.
	It is not hard to verify that if $B$ is an elementary band, then any band or route $p$ of $\tL$ which is not equal to $B$ must contain some arrow which is not in $B$.
	This shows that the indicator vector $\I(B)$ may not be obtained as a positive sum of indicator vectors of trails which are not $B$. In particular, this shows that no element may be removed from the generating set $\{\I(B)\ :\ B\text{ is an elementary band}\}$ without changing the cone, hence that this generating set is minimal.
\end{proof}

\begin{defn}\label{defn:elroute}
	A self-compatible route $p$ of $\tL$ is \emph{elementary} if either it is simple (i.e., simple as a path, so it does not use the same vertex multiple times) or it is of the form $R\sigma R^{-1}$, where
	\begin{enumerate}
		\item $R$ does not use the same vertex twice,
		\item $\sigma$ uses only the vertex $h(\sigma)=t(\sigma)$ twice, and
		\item $R$ and $\sigma$ share no vertices except $h(R)=t(\sigma)=h(\sigma)$.
	\end{enumerate}
	Equivalently, $p$ is elementary if and only if it either
	\begin{enumerate}
		\item $p$ has no boosted substrings or criss-crossed substrings, or
		\item $p$ has no boosted substring and exactly one maximal criss-crossed substring, and this substring contains a fringe vertex.
	\end{enumerate}
\end{defn}

See Figure~\ref{fig:elpath} for a visualization -- the left two paths are elementary, and the right two are not.
For a concrete example, the path $e_1e_2e_3e_1^{-1}$ shown in red in Figure~\ref{fig:gottabemarked} is elementary, but the path $e_1e_2e_3e_4$ is not elementary.

\begin{figure}
\centering
	\begin{minipage}[b]{.22\textwidth}
	\centering
	\def\svgscale{.21}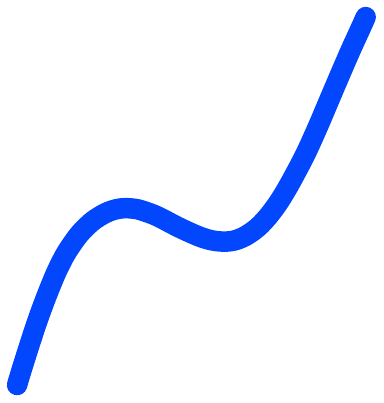

	simple
\end{minipage}
	\begin{minipage}[b]{.22\textwidth}
	\centering
	\def\svgscale{.21}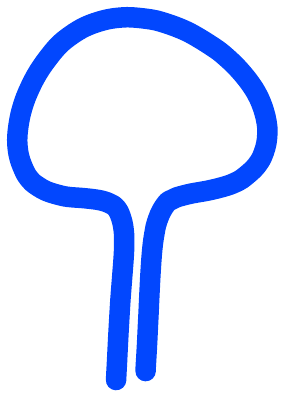

	lollipop
\end{minipage}
	\begin{minipage}[b]{.22\textwidth}
	\centering
	\def\svgscale{.21}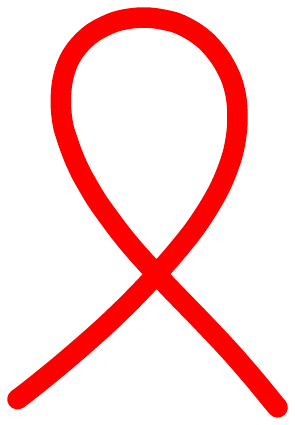

	nonelementary
\end{minipage}
	\begin{minipage}[b]{.22\textwidth}
	\centering
	\def\svgscale{.21}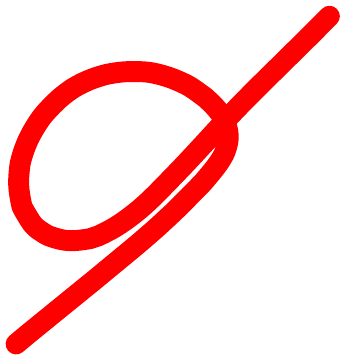

	nonelementary
\end{minipage}
	\caption{Examples of elementary (left) and nonelementary (right) paths.}
	\label{fig:elpath}
\end{figure}

\begin{prop}\label{prop:elementary-routes}
	The map $p\mapsto\I(p)$ gives a bijection between elementary routes of $\tL$ and vertices of $\F_1(\tL)$.
\end{prop}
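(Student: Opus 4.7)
The plan is to prove the bijection in four steps: every vertex of $\F_1(\tL)$ has the form $\I(p)$ for some route $p$; elementary $p$ gives a vertex; non-elementary $p$ does not; and $\I$ is injective on elementary routes.

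For surjectivity onto vertices, I would use uniqueness of positive bundle combinations. Any vertex $F$ of $\F_1(\tL)$ is rational, so by Proposition~\ref{prop:alg-realize-flow} it admits a positive bundle combination $F = \sum_{q \in \bK_F^+} a_q \I(q)$. If $\bK_F^+$ contains a band $B$, then $F \pm \epsilon \I(B) \in \F_1(\tL)$ for small $\epsilon > 0$ (since $\I(B)$ is a vortex bounded by $F$), contradicting vertex-ness; if $\bK_F^+$ contains distinct routes $q_1, q_2$, then $F \pm \epsilon(\I(q_1) - \I(q_2))$ lies in $\F_1(\tL)$ for small $\epsilon > 0$ and provides a similar decomposition. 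Hence $\bK_F^+ = \{p\}$ for some route $p$. To separate elementary from non-elementary, I would use the tangent criterion: by Lemma~\ref{lem:facet}, $\I(p)$ is a vertex iff the only $v \in \mathbb R^E$ with support in the arrow set $A(p)$ of $p$, satisfying the conservation equations and having zero fringe sum, is $v = 0$.

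For elementary $p$ I would propagate this constraint along $p$. If $p$ is simple, the two unused arrows at each internal vertex of $p$ force the two used arrows to have equal $v$-value, so $v$ is constant along $p$, and zero fringe sum (two equal contributions) forces $v = 0$. If $p = R \sigma R^{-1}$ is a lollipop, $v$ is constant with value $d$ on arrows of $R$ and value $c$ on arrows of $\sigma$; conservation at the junction $h(R) = t(\sigma)$ gives $d = 2c$, and the lone fringe arrow (used twice by $p$ but with a single $v$-value $d$) contributes $d$ to the fringe sum, forcing $d = c = 0$. For non-elementary $p$ I would construct a nonzero $v$. If $p$ has a boosted substring, the cutting argument from the proof of Proposition~\ref{prop:elementary-bands} produces a band $B \subseteq A(p)$, and $v = \I(B)$ works. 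Otherwise, $p$ has a maximal criss-crossed substring $\sigma$ with both endpoints internal, since a fringe vertex can be the endpoint of at most one criss-crossed substring of $p$. In the ``single or nested'' case I would decompose $p = P_1 \sigma P_2 \sigma^{-1} P_3$ and take $v = c$ on $P_1$-arrows, $v = -c$ on $P_3$-arrows, and $v = 0$ elsewhere; conservation at $t(\sigma)$ holds because the contributions $c$ and $-c$ cancel against the zero contribution from $\sigma$, conservation at any vertex shared by $P_1$ and $P_3$ follows from a short combinatorial check (each valid length-two string through an internal vertex uses exactly one arrow from each relation pair), and the fringe sum $c + (-c) = 0$ vanishes. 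In the ``sequential'' case with multiple criss-crossed substrings, a more elaborate construction assigning balanced values to the sub-paths and to each closed walk $Q_i$ is required; existence of such a nonzero $v$ follows from the dimensional count $|A(p)| - (\text{internal vertices used by } p) - 1 > 0$.

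Finally, injectivity is direct: for simple elementary $p$ the support of $\I(p)$ is a simple arrow path determining $p$ up to reversal; for a lollipop, the arrows with $\I(p) = 2$ form $R$ and those with $\I(p) = 1$ form $\sigma$, again determining $p$ up to reversal. The main obstacle I anticipate is the explicit construction of $v$ in the sequential multi-criss-crossed case, where conservation at the attachment vertices of successive criss-crossed substrings constrains the sub-path values in an interconnected way requiring careful bookkeeping.
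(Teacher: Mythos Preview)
Your surjectivity step (every vertex is $\I(p)$ for a single self-compatible route $p$) matches the paper's. The difference is in how you separate elementary from non-elementary, and here your approach is more laborious than necessary and has a real gap.

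For non-elementary $p$, the paper does not use a tangent-space criterion at all; it writes $\I(p)$ explicitly as a nontrivial convex combination. In the boosted case this is $\I(p)=\I(q)+\I(B)$, as you have. In the criss-crossed case there is \emph{no} case distinction: you yourself observe that at most one maximal criss-crossed substring can touch a fringe vertex, so any non-elementary self-compatible $p$ with no boosted substring has some maximal criss-crossed $\sigma$ with both endpoints internal. Write $p=P_1\sigma P_2\sigma^{-1}P_3$ and set $q_1=P_1\sigma P_2\sigma^{-1}P_1^{-1}$, $q_2=P_3^{-1}\sigma P_2\sigma^{-1}P_3$. Then $\I(p)=\tfrac{1}{2}(\I(q_1)+\I(q_2))$ is an identity of indicator vectors that holds regardless of whether $P_1,P_2,P_3,\sigma$ share arrows, because indicator vectors add multiplicities. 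The arrows surrounding $\sigma$ are each used exactly once in $p$ (by the no-boosted-substring hypothesis and Remark~\ref{remk:lazyboost}), so $q_1$ omits the first arrow of $P_3$ and $q_2$ omits the last arrow of $P_1$; hence $\I(q_1)\neq\I(p)\neq\I(q_2)$ and $\I(p)$ is not a vertex. Your ``sequential multi-criss-crossed'' case simply does not arise, and the dimensional count you invoke is not needed.

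Your construction of $v$ as ``$c$ on $P_1$-arrows, $-c$ on $P_3$-arrows, $0$ elsewhere'' is the place where your argument actually breaks: when $P_1$ and $P_3$ (or $P_1$ and $\sigma$, etc.) share arrows, this assignment is not well-defined. The correct tangent vector is $v=\I(P_1)-\I(P_3)$, and once you write it that way you are essentially doing the paper's decomposition.

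For the forward direction (elementary $\Rightarrow$ vertex), your propagation argument works but the paper's is shorter: check directly from the two elementary shapes that every route $q\neq p$ uses some arrow not in $p$; since vertices of $\F_1(\tL)$ are indicator vectors of routes and the recession cone lies in the positive orthant, $\I(p)$ then cannot be written nontrivially as a convex combination of vertices plus a recession vector. For injectivity, the paper just invokes uniqueness of positive bundle combinations (Theorem~\ref{thm:comb}): if $\I(p)=\I(q)$ then both $\{p\}$ and $\{q\}$ give positive bundle combinations of the same flow, so $p=q$.
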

\begin{proof}
	We first show that every vertex of $\F_1(\tL)$ must be the indicator vector of an elementary route.
	Take a flow $F$ which is a vertex of $\F_1(\tL)$. Since $\F_1(\tL)$ is a rational polyhedron, the flow $F$ must be rational, hence we may write $F=\sum_{p\in\bK_F^+}a_p^F\I(p)$ as a bundle combination by Theorem~\ref{thm:comb}. Since $F$ is a vertex, the bundle $\bK_F^+$ must consist of only one self-compatible route $\bK_F^+=\{p\}$. If $p$ is elementary, then we are done. If not, then we split into two cases to obtain a contradiction.

		Suppose first that $p$ has a boosted substring $\sigma$. Then write $p=P_1\sigma P_2\sigma P_3$. Define a route $q:=P_1\sigma P_3$ and a band $B:=\sigma P_2$. Then $\I(p)=\I(q)+\I(B)$, so $F=\I(p)$ is not a vertex, a contradiction.
		
		On the other hand, suppose $p$ is nonelementary and has no boosted substring. Since $p\in\bK_F^+$, the trail $p$ must be self-compatible, so $p$ has a maximal criss-crossed substring $\sigma$ which does not contain a fringe arrow.
		Then $p$ contains both $\alpha_1^{\e_1}\sigma\alpha_2^{\e_2}$ and $\beta_1^{-\e_1}\sigma^{-1}\beta_2^{-\e_2}$ for some signed arrows $\alpha_i^{\e_i}$ and $\beta_j^{-\e_j}$. 
			We say that $\{\alpha_1,\alpha_2,\beta_1,\beta_2\}$ are the arrows of $p$ \emph{surrounding} $\sigma$. In particular, note that the lazy string at $h(s)=t(\alpha_2^{\e_2})$ already appears as a substring of both these substrings. Since $p$ has no boosted substring, Remark~\ref{remk:lazyboost} shows that the arrow $\alpha_2$ (with any orientation) does not appear in $p$ outside of the substring $\alpha_1^{\e_1}\sigma\alpha_2^{\e_2}$. Similarly, the arrows $\alpha_1,\beta_1,\beta_2$ are used by $p$ only once.
		Write $p=P_1\sigma P_2\sigma^{-1} P_3$. If necessary, redefine $\alpha_j^{\e_j}$ and $\beta_j^{-\e_j}$ so that the last arrow of $P_1$ is $\alpha_1^{\e_1}$ and the first arrow of $P_2$ is $\alpha_2^{\e_2}$. Define the routes $q_1=P_1\sigma P_2\sigma^{-1} P_1^{-1}$ and $q_2=P_3^{-1}\sigma P_2\sigma^{-1} P_3$. Then $q_1$ does not contain the arrow $\alpha_2$ and $q_2$ does not contain the arrow $\alpha_1$, hence neither $\I(q_1)$ nor $\I(q_2)$ is equal to $\I(p)$. Moreover, $\I(p)=\frac{\I(q_1)+\I(q_2)}{2}$. This shows that $F=\I(p)$ is not a vertex of $\F_1(\tL)$, a contradiction.

	We have shown that the route $p$ must be elementary. This completes the proof that the vertex $F=\I(p)$ is the indicator vector of an elementary route.

	We now show that if $p$ is an elementary route, then $\I(p)$ is a vertex of $\F_1(\tL)$.
	It is not hard to verify that every route $q$ distinct from $p$ contains an arrow which is not in $p$. Since all vertices of $\F_1(\tL)$ are of the form $\I(q)$, and since the recession cone of $\F_1(\tL)$ is in the positive orthant, this means that $\I(p)$ may not be nontrivially realized as $I(p)=G+W$ where $G$ is a convex combination of vertices of $\F_1(\tL)$ and $W$ is in the recession cone of $\F_1(\tL)$. This shows that $\I(p)$ is a vertex.

	It remains only to show that the map $p\mapsto\I(p)$ is injective on elementary routes. Indeed, if two elementary routes $p$ and $q$ had the same indicator vectors $\I(p)=\I(q)$, then this flow $\I(p)=\I(q)$ would have two different expressions as a bundle combination, contradicting uniqueness of Theorem~\ref{thm:comb}. This ends the proof.
\end{proof}

Proposition~\ref{prop:elementary-routes} and Proposition~\ref{prop:elementary-bands} complete the proof of Theorem~\ref{thmZ} of the introduction:

\begin{thm}\label{thm:turb-vert-dir}
	The map $p\mapsto\I(p)$ is a bijection from elementary routes to vertices of $\F_1(\tL)$. The map $B\mapsto\I(B)$ on elementary bands is a bijection onto a minimal generating set for the recession cone of $\F_1(\tL)$.
\end{thm}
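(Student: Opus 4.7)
The plan is to prove the two statements of the theorem separately, following the structure of Propositions~\ref{prop:elementary-bands} and~\ref{prop:elementary-routes}. The recession cone of $\F_1(\tL)$ is evidently the cone of vortices of $\tL$ (flows vanishing on all fringe arrows), since these are exactly the flows which can be added to a unit flow to produce another unit flow. So the band statement reduces to showing that the indicator vectors of elementary bands minimally generate the cone of vortices, and the route statement reduces to identifying the vertices among indicator vectors.

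For the band statement, I would first reduce to rational vortices by rationality of the defining hyperplanes, then induct on the number of arrows in the support of a rational vortex $W$. For the induction step, by Theorem~\ref{thm:comb} there is some band $B\in\bK_W^+$, every arrow of which is given positive flow by $W$. I would surgically convert $B$ into a useful elementary band $C$ also supported in the support of $W$: if $B$ has a boosted substring, write $B = p\sigma q\sigma$ cyclically and replace by $p\sigma$; if $B$ then has no boosted substrings but multiple criss-crossed substrings, use the fact that the arrows immediately surrounding a criss-crossed substring $s$ cannot appear elsewhere in $B$ (by Remark~\ref{remk:lazyboost}) to surgically cut out one arrow $\alpha_2$ and shortcut $B$ through a copy of $s$ or $s^{-1}$, producing a band whose indicator vector is a nonnegative combination of indicator vectors of elementary bands by the induction hypothesis. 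Having obtained such an elementary $C$, choose maximal $a_C \geq 0$ with $a_C\I(C) \leq W$; then $W - a_C\I(C)$ has strictly smaller support and the induction carries through. Minimality is the easier half: if $B$ is elementary then by checking cases (simple or barbell) one verifies every other trail uses some arrow not in $B$, so $\I(B)$ cannot be expressed as a nonnegative sum of other trails' indicator vectors.

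For the route statement, a vertex $F$ of $\F_1(\tL)$ is rational, so Theorem~\ref{thm:comb} writes $F = \sum_{p\in\bK_F^+} a_p^F\I(p)$ uniquely. Being a vertex forces $\bK_F^+ = \{p\}$ for a single self-compatible route $p$, so I must show $p$ is elementary. Suppose not: either $p$ has a boosted substring $\sigma$, in which case writing $p = P_1\sigma P_2\sigma P_3$ gives $\I(p) = \I(P_1\sigma P_3) + \I(\sigma P_2)$ exhibiting $F$ as a sum of a unit flow and a nontrivial vortex, contradicting vertexhood; or $p$ has no boosted substring but has a maximal criss-crossed substring $\sigma$ containing no fringe arrow, in which case writing $p = P_1\sigma P_2\sigma^{-1} P_3$ (with the surrounding arrows $\alpha_j^{\e_j}$ used only once by $p$, using Remark~\ref{remk:lazyboost}), I form the ``folded'' routes $q_1 = P_1\sigma P_2\sigma^{-1}P_1^{-1}$ and $q_2 = P_3^{-1}\sigma P_2\sigma^{-1}P_3$ and check $\I(p) = \tfrac{1}{2}(\I(q_1) + \I(q_2))$ with $q_1, q_2 \neq p$ (since $q_1$ omits the unique arrow $\alpha_2$ and $q_2$ omits $\alpha_1$), again contradicting vertexhood. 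Conversely, if $p$ is elementary, any route $q\neq p$ must use an arrow not in $p$ (a case-check on the simple and ``lollipop'' shapes), so $\I(p)$ cannot lie in the convex hull of other vertices plus the recession cone, hence is a vertex. Injectivity of $p \mapsto \I(p)$ on elementary routes follows immediately from uniqueness of positive bundle combinations in Theorem~\ref{thm:comb}.

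The main obstacle I anticipate is the band induction step, specifically the case analysis when $B_1$ has multiple maximal criss-crossed substrings $s$ and $t$: one must carefully track how $s, s^{-1}, t, t^{-1}$ interleave cyclically in $B_1$ and choose the surgery that removes a specific arrow $\alpha_2$ while preserving the resulting word as a band (not a proper power of a smaller cycle, and still avoiding relations). The subtlety is ensuring the surgically-produced word is genuinely a band and not just a closed walk; verifying this requires using the fact that the arrow $\alpha_2$ is only used once in $B_1$ together with the structure of the two interleaving patterns $sP_1tP_2t^{-1}P_3s^{-1}P_4$ and $sP_1tP_2s^{-1}P_3t^{-1}P_4$.
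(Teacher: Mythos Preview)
Your proposal is correct and follows essentially the same approach as the paper: both statements are reduced to Propositions~\ref{prop:elementary-bands} and~\ref{prop:elementary-routes}, and your sketched arguments for those propositions (induction on the support of a rational vortex via surgery on bands, and the boosted/criss-crossed case analysis for routes using the decompositions $\I(p)=\I(q)+\I(B)$ and $\I(p)=\tfrac12(\I(q_1)+\I(q_2))$) match the paper's proofs in structure and detail, including the two interleaving patterns $sP_1tP_2t^{-1}P_3s^{-1}P_4$ and $sP_1tP_2s^{-1}P_3t^{-1}P_4$ you flag as the main obstacle. The only omission in your outline is the easy intermediate case where $B_1$ has exactly one maximal criss-crossed substring but is not self-compatible, which the paper handles by replacing $B_1=s\sigma_1 s^{-1}\sigma_2$ with $s\sigma_1 s^{-1}\sigma_2^{-1}$; this is a minor gap in exposition rather than in strategy.
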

As a result, the turbulence polyhedron $\F(\tL)$ is equal to the Minkowski sum of (the convex hull of) all indicator vectors of elementary routes with the unbounded cone generated by indicator vectors of elementary bands.

\begin{example}\label{kron-el-routes-bands}
	Recall Example~\ref{ex:kron}, where we claimed that the four vertices of the turbulence polyhedron of the Kronecker fringed quiver are the indicator vectors $\I(\alpha_1\alpha_2^{-1})$, $\I(\gamma_1^{-1}\gamma_2)$, $\I(\alpha_1\beta_2\gamma_1)$, and $\I(\alpha_2\beta_1\gamma_2)$. One may check that these routes are precisely the elementary routes of the Kronecker fringed quiver. Moreover, the recession cone of $\F_1(\tL)$ is generated by the indicator vector $\I(\beta_1\beta_2^{-1})$ of the unique elementary band of $\tL$.
\end{example}

\begin{example}\label{ex:gemstone-vert-dir}
	Let $\tL$ be the shard fringed quiver of Figure~\ref{GEMINTRO} and Example~\ref{ex:gemd}. There are seven self-compatible routes:
	\[\{e_4^{-1}e_2e_3e_4,\ e_1e_2e_3e_1^{-1},\ e_1e_3^{-1}f_2,\ e_4^{-1}e_3^{-1}f_2,\ e_1e_2f_1^{-1},\ e_4^{-1}e_2f_1^{-1},\ f_1f_2,\ e_1e_2e_3e_4\}.\]
	The first six are elementary, and hence give vertices of the polyhedron $\F_1(\tL)$; the route $e_1e_2e_3e_4$ is not elementary and hence gives the unique lattice point of $\F_1(\tL)$ which is not a vertex. The only non-self-compatible route is $e_1e_3^{-1}e_2^{-1}e_4$, which has the same indicator vector as $e_1e_2e_3e_4$. There are no bands of $\tL$, hence there are no unbounded directions of $\F_1(\tL)$.
\end{example}

\begin{example}\label{ex:irrational-vert}
	Let $\tL$ be the double Kronecker quiver of Figure~\ref{fig:irrational}. In Example~\ref{ex:irrational} we saw that there exist flows of $\F_{\geq0}(\tL)$ which are not obtainable as a bundle combination.
	There are four elementary routes: $e_1e_2e_3e_4$, $f_1f_2f_3f_4$, $e_1f_1^{-1}$, and $e_4^{-1}f_4$. There are two elementary bands: $e_2f_2^{-1}$ and $e_3f_3^{-1}$.
	The vertices of $\F_1(\tL)$ are the indicator vectors of elementary routes, and the recession cone of $\F_1(\tL)$ is minimally generated by the two indicator vectors of elementary bands. The turbulence polyhedron $\F_1(\tL)$ is four-dimensional, so we do not include a picture here, but in Section~\ref{sec:example} we will see a three-dimensional reduced version in the form of the g-polyhedron (Figure~\ref{fig:doubkron-g}).
\end{example}

\section{$\g$-vector Fans and g-Polyhedra}
\label{sec:g}

The g-vector fan of a finite-dimensional algebra $\L$ (see Section~\ref{sec:bac}) is closely related to its $\tau$-tilting theory and carries rich representation-theoretic information about $\L$.
In this section, we define a map $\phi$ from the nonnegative flows $\F_{\geq0}(\tL)$ of a gentle algebra $\L$ into the ambient space of the g-vector fan of $\L$ which maps the indicator vector of a route of $\tL$ to the g-vector of its string $\L$-module.
In this way, we establish a tight connection between flows of $\tL$ and g-vectors of $\L$.
We define the \emph{g-polyhedron} $\g_1(\L)$ as the image of the unit turbulence polyhedron $\F_1(\tL)$ under $\phi$.
In the representation-finite case, this lines up with the g-polytope of~\cite{AHIKM}, proving that representation-finite gentle algebras are g-convex. Even in the representation-infinite case, our g-polyhedron is the completion of their g-polytope, showing that representation-infinite gentle algebras are g-convex in a reasonable sense.

\subsection{g-vector polyhedra}

Recall the definition of the g-vector of a support $\tau$-tilting module given in Definition~\ref{defn:g-vector}.
The authors of~\cite[Definition 5.2]{AHIKM} defined a bounded version of the g-vector fan of a finite dimensional algebra $\Lambda$ called the \emph{g-polytope} of $\Lambda$ (which may fail to be convex in general). 
We give a slightly altered definition now, which we call the \emph{g-polyhedron}.

\begin{defn}[{\cite[Definition 5.2]{AHIKM}}]
	If $M=\oplus_{i=1}^nM_i$ is a support $\tau$-tilting $\L$-module, define the \emph{g-polyhedron} $\g_{\geq0}(\Lambda)$
	to be the completion of $\bigcup_{M\in\sttilt\L}\g_{1}(M)$
	(i.e., the subset of $\mathbb R^{V_{\text{int}}}$ obtained by appending to $\bigcup_{M\in\sttilt\L}\g_{1}(M)$ all of its limit points in $\mathbb R^{V_{\text{int}}}$).
\end{defn}

The difference between our above definition and the g-polytope of~\cite[Definition 5.2]{AHIKM} is that we take a completion while they do not.
When $\Lambda$ is $\tau$-tilting finite, there are a finite number of support $\tau$-tilting modules and hence the completion does not do anything and our g-polyhedron agrees with their g-polytope. 
The authors of~\cite{AHIKM} focus mainly on the $\tau$-tilting finite case, and observe that the (noncompleted) g-polytope is never convex when $\Lambda$ is $\tau$-tilting infinite~\cite[Theorem 5.10]{AHIKM}.

The g-polytope of an arbitrary finite dimensional algebra $\Lambda$ may fail to be convex, even in the $\tau$-tilting finite case, and~\cite{AHIKM} is particularly interested in determining when a g-polytope is convex. We will see that when $\L$ is a gentle algebra, the g-polyhedron is always a convex polyhedron!
In this sense, we consider all gentle algebras to be g-convex.

\subsection{$\g$-vectors of routes and bands}

Recall Proposition~\ref{prop:g-vect}, which gave a combinatorial way to calculate the g-vector of the string $\L$-module $\tilde M(p)$ from its bending route $p$ of the fringed quiver. We begin by reinterpreting this as the g-vector of the route $p$, and extending it to define g-vectors for all routes and bands of $\tL$.

\begin{defn}
	\label{defn:g-vector-trail}
	Let $p$ be a route or band of $\tL$. Define $\g(s):=\textbf{a}-\textbf{b}$, where $\textbf{a}=(a_v)_{v\in V_{\text{int}}}$ such that $a_v$ is the number of times the lazy string $e_v$ is a top substring of $p$, and $\textbf{b}=(b_v)_{v\in V_{\text{int}}}$ such that $b_v$ is the number of times the lazy string $e_v$ is a bottom substring of $p$.
\end{defn}

\begin{remk}\label{remk:83}
	The g-vectors of bands and routes of $\tL$ are related to g-vectors of corresponding modules over the gentle algebra $\L$ as follows:
	\begin{enumerate}
		\item When $p$ is a bending route, the g-vector $\g(p)$ is equal to $\g(\tilde M(p))$ as given in Proposition~\ref{prop:g-vect}.
		\item Bands on $\tL$ (and their band modules) may be considered as bands (and band modules) on $\L$. Any band module $\tilde M(B,n,\lambda)$ over $\L$ is not $\tau$-rigid, but has g-vector $n\g(B)$.
		\item When $p$ is a straight route, no internal vertex is at the top or bottom of $v$ and hence $\g(p)$ is the zero vector. This makes sense because straight routes of $\tL$ are not associated to any strings or modules of $\L$,
	\end{enumerate}
\end{remk}

Recall that if $p=\alpha_1^{\e_1}\dots\alpha_m^{\e_m}$ is a band and $\e_m=1$ and $\e_1=-1$ (respectively $\e_m=-1$ and $\e_1=1$), then we consider $e_{t(\alpha_1^{\e_1})}=e_{h(\alpha_m^{\e_m})}$ to be a bottom (respectively top) substring of $p$.
For example, in the Kronecker fringed quiver of Figure~\ref{fig:kron1}, the route $p_1:=\alpha_1\beta_2\beta_1^{-1}\beta_2\beta_1^{-1}\alpha_1^{-1}$ has g-vector $\g(p_1)=(1,-2)$, and the band $p_1:=\beta_1\beta_2^{-1}$ has g-vector $\g(p_2)=(1,-1)$.

\begin{defn}
	Let $\bK=\K\cup\B$ be a maximal bundle or a maximal reduced bundle.
	\begin{itemize}
		\item A \emph{g-bundle combination} of $\bK$ is a linear combination \[\sum_{p\in\K\text{ bending}}a_p\g(p)+\sum_{p\in\B}a_p\g(p),\] where each $a_p$ is nonnegative.
		\item The g-bundle combination is \emph{unit} if $\sum_{p\in\K}a_p\leq1$.
		\item The \emph{g-bundle space} $\g_1(\bK)$ is the space of unit g-bundle combinations of $\bK$. 
			\begin{itemize}
				\item If $\bK$ is a clique (i.e., if $\B=\emptyset$), then we also call this a \emph{g-simplex}. 
				\item If $\bK$ contains a band (i.e., if $\B\neq\emptyset$), then we also call this a \emph{g-wall}.
			\end{itemize}
		\item The \emph{g-bundle cone} $\g_{\geq0}(\bK)$ is the space of (not necessarily unit) g-bundle combinations of $\bK$.
			\begin{itemize}
				\item If $\bK$ contains a band (i.e., if $\B\neq\emptyset$), then we also call this a \emph{g-wall}.
			\end{itemize}

	\end{itemize}
	Note that the straight routes of $\bK$ do not factor into any of these definitions, so that the g-bundle space (resp. g-bundle cone) of a maximal bundle $\bK$ is the same as the g-bundle space (resp. g-bundle cone) of $\bK_{\text{red}}$.
\end{defn}

Since the g-vector of a route $p$ is the same as the g-vector of the $\L$-module $\tilde M(p)$, we have the following fact about g-simplices of maximal cliques (recall the definition of g-simplices and g-cones of support $\tau$-tilting modules from Definition~\ref{defn:g-stuff}).

\begin{remk}\label{remk:g-sttilt-clique}
	Let $\K$ be a maximal clique and let $\tilde M(\K)=\oplus_{p\in\K\text{ bending}}\tilde M(p)$ be the corresponding support $\tau$-tilting module over $\Lambda$. Then we have the equality of g-simplices $\g_{1}(\K)=\g_{1}(\tilde M(\K))$ and g-cones $\g_{\geq0}(\K)=\g_{\geq0}(\tilde M(\K))$.
\end{remk}
It follows, then, that the g-polyhedron of $\L$ is the completion of the g-simplices of maximal cliques of $\tL$, and that the g-vector fan of $\L$ is the collection of g-cones of maximal cliques of $\tL$.

\subsection{The transformation $\phi$}

We relate the turbulence polyhedron $\F_1(\L)$ with the g-polyhedron $\g_1(\L)$ by defining a linear transformation $\phi$ which sends the indicator vector of a route (or band) to its g-vector.
Give $\mathbb R^E$ the basis $\{e_\alpha\ :\ \alpha\in E\}$ and give $\mathbb R^{V_{\text{int}}}$ the basis $\{e_v\ :\ v\in V_{\text{int}}\}$. 

\begin{defn}\label{defn:phi}
		Define a linear transformation 
	\begin{align*}
		\phi:\mathbb R^E&\longrightarrow\mathbb R^{V_{\text{int}}} \\
		e_\alpha&\longmapsto\frac{e_{t(\alpha)}-e_{h(\alpha)}}{2},
	\end{align*} where $e_{t(\alpha)}$ (respectively $e_{h(\alpha)}$) is treated as 0 if $t(\alpha)$ (respectively $h(\alpha)$) is fringe.
\end{defn}
The definition of $\phi$ is motivated by the following remark.
\begin{remk}\label{remk:collapse-exceptional}
	The map $\phi$ sends an indicator vector of a band or bending route of $\tL$ to its g-vector, and it sends any straight route of $\tL$ to zero.
\end{remk}

A consequence of Remark~\ref{remk:collapse-exceptional} and Remark~\ref{remk:g-sttilt-clique} is the following:
\begin{prop}\label{prop:phi-on-simplices}
	If $\K$ is a maximal clique and $M(\K)$ is the corresponding support $\tau$-tilting $\L$-module, then $\phi(\Delta_{\geq0}(\K))=\g_{\geq0}(\K)=\g_{\geq0}(M(\K))$ and $\phi(\Delta_1(\K))=\g_{1}(\K)=\g_{1}(M(\K))$.
\end{prop}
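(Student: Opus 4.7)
The plan is to separate the statement into an ``algebraic identification'' piece and a ``transformation'' piece. The equalities $\g_{\geq 0}(\K)=\g_{\geq 0}(M(\K))$ and $\g_1(\K)=\g_1(M(\K))$ are essentially definitional: by Remark~\ref{remk:g-sttilt-clique}, the g-vectors of bending routes of $\K$ coincide with the g-vectors of the indecomposable summands of $M(\K)$, and straight routes are discarded on both sides (they contribute nothing to either $\g_{\geq 0}(\K)$ or to the support $\tau$-tilting module $M(\K)$). So these two equalities can be disposed of by a single citation of that remark.

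The substance is to show $\phi(\D_{\geq 0}(\K))=\g_{\geq 0}(\K)$ and $\phi(\D_1(\K))=\g_1(\K)$. For the cone statement, I would use that $\D_{\geq 0}(\K)$ is by definition the nonnegative cone on the indicator vectors $\{\I(p):p\in\K\}$. Since $\phi$ is linear, $\phi(\D_{\geq 0}(\K))$ is the nonnegative cone on $\{\phi(\I(p)):p\in\K\}$. By Remark~\ref{remk:collapse-exceptional}, $\phi(\I(p))=\g(p)$ for bending routes and $\phi(\I(p))=0$ for straight routes; deleting the zero generators does not change the cone, leaving precisely $\g_{\geq 0}(\K)$.

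For the unit statement, the forward inclusion is immediate: if $F=\sum_{p\in\K}a_p\I(p)$ with $a_p\geq 0$ and $\sum_p a_p=1$, then $\phi(F)=\sum_{p\in\K\text{ bending}}a_p\g(p)$ with $\sum_{p\in\K\text{ bending}}a_p\leq 1$, which is a unit g-bundle combination. For the reverse inclusion, given $G=\sum_{p\in\K\text{ bending}}a_p\g(p)\in\g_1(\K)$ with $\sum a_p\leq 1$, I would lift $G$ by choosing any straight route $p_0\in\K$ and setting $a_{p_0}:=1-\sum_{p\in\K\text{ bending}}a_p$ (and $a_p:=0$ for all other straight routes); this produces a convex combination in $\D_1(\K)$ mapping to $G$ under $\phi$.

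The one point worth flagging, rather than a genuine obstacle, is that this lift requires the existence of at least one straight route in $\K$. This is guaranteed by the earlier observation that no string can kiss a straight route, so every maximal clique of $\tL$ contains every straight route of $\tL$; combined with the fact that a nonempty fringed quiver always has at least one straight route (extend any arrow forward and backward along the unique ``no-relation'' choice until reaching fringe vertices), the lift is always available. Apart from this bookkeeping, the proof reduces to linearity of $\phi$ and the content already packaged in Remarks~\ref{remk:collapse-exceptional} and~\ref{remk:g-sttilt-clique}.
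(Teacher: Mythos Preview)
Your proposal is correct and takes essentially the same approach as the paper, which simply states the proposition as a consequence of Remarks~\ref{remk:collapse-exceptional} and~\ref{remk:g-sttilt-clique} without further detail. Your explicit lift using a straight route to get the reverse inclusion $\g_1(\K)\subseteq\phi(\D_1(\K))$ fills in the one step the paper leaves implicit, and your justification that a maximal clique always contains a straight route is exactly the right bookkeeping.
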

\begin{cor}
	The map $\phi$ restricted to the affine span of $\F_1(\tL)$ preserves normalized volume.
\end{cor}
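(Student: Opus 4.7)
The plan is to reduce the claim to the unimodularity of a single maximal clique simplex together with its $\phi$-image. Fix any maximal clique $\K$ and let $M(\K) := \bigoplus_{p \in \K \textup{ bending}} \tilde{M}(p)$ be the associated support $\tau$-tilting $\L$-module. By Proposition~\ref{prop:phi-on-simplices}, $\phi(\Delta_1(\K)) = \g_1(M(\K))$; by Corollary~\ref{lem:simp-unimodular} and Theorem~\ref{thm:uni-cite}, both $\Delta_1(\K)$ and $\g_1(M(\K))$ are unimodular simplices. Consequently the vertex set $\{\I(p) : p \in \K\}$ is an affine basis for the lattice $L_1 := \mathbb{Z}^E \cap \textup{aff}(\F_1(\tL))$, while $\{\hat{0}\} \cup \{\g(\tilde{M}(p)) : p \in \K \textup{ bending}\}$ is an affine basis for the lattice $L_2 := \mathbb{Z}^{V_{\textup{int}}} \cap \textup{aff}(\g_1(\L))$.

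By Remark~\ref{remk:collapse-exceptional}, $\phi$ sends the $s$ straight routes of $\K$ all to $\hat{0}$ and sends the $|V_{\textup{int}}|$ bending routes of $\K$ bijectively to the remaining vertices of $\g_1(M(\K))$. Thus $\phi$ carries an affine basis of $L_1$ onto an affine basis of $L_2$, so $\phi(L_1) = L_2$. A dimension count using Lemma~\ref{lem:flowdim} gives $\dim(\textup{aff}(\F_1(\tL))) - \dim(\textup{aff}(\g_1(\L))) = (|V_{\textup{int}}| + s - 1) - |V_{\textup{int}}| = s - 1$, matching the dimension of the face of $\Delta_1(\K)$ spanned by the straight routes. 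Since distinct straight routes of $\tL$ use pairwise disjoint sets of arrows, their indicator vectors are disjoint $0/1$ vectors in $\mathbb{Z}^E$, so the $(s-1)$-simplex they span on the unit-strength hyperplane is itself unimodular, and its vertex-differences give a unimodular basis of $L_1 \cap \ker \phi$.

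Combining these observations, $\phi : L_1 \twoheadrightarrow L_2$ is a surjection of affine lattices with unimodular kernel; equivalently, the induced map $L_1/(L_1 \cap \ker \phi) \to L_2$ is a unimodular isomorphism. This is precisely the assertion that $\phi$ preserves normalized volume as a quotient map on $\textup{aff}(\F_1(\tL))$. The subtlest step will be to confirm that the unimodularity statements for $\Delta_1(\K)$ and $\g_1(M(\K))$ genuinely identify ``integer affine span of vertices'' with the full lattices $L_1$ and $L_2$, rather than merely a sublattice of finite index; once this is in hand, the remainder is bookkeeping in linear algebra over $\mathbb{Z}$.
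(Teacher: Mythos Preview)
Your proof is correct and takes essentially the same approach as the paper's: both invoke Corollary~\ref{lem:simp-unimodular} and Theorem~\ref{thm:uni-cite} to see that $\Delta_1(\K)$ and $\g_1(M(\K))$ are unimodular, and Proposition~\ref{prop:phi-on-simplices} to see that $\phi$ carries one to the other, which is already enough to conclude. Your additional disjoint-supports argument for kernel unimodularity is correct but redundant, since once the edge vectors of $\Delta_1(\K)$ from a fixed straight vertex form a $\mathbb{Z}$-basis of $L_1$ and the bending ones map to a $\mathbb{Z}$-basis of $L_2$, the straight ones automatically form a $\mathbb{Z}$-basis of the kernel.
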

Denote by $\text{aff}(\F_1(\tL))$ the affine span of $\F_1(\tL)$ in $\mathbb R^{E}$.
\begin{proof}
	The simplex $\Delta_1(\K)$ is unimodular by Lemma~\ref{lem:simp-unimodular} and $\g_{1}(\K)=\g_{1}(M(\K))$ is unimodular by Theorem~\ref{thm:uni-cite}. Since $\phi(\Delta_1(\K))=\g_1(\K)$ by Proposition~\ref{prop:phi-on-simplices}, the map $\phi|_{\text{Aff}(\F_1(\tL))}$ preserves normalized volume.
\end{proof}

The map $\phi$ is immediately seen to have a few other nice properties.

\begin{lemma}\label{lem:0-int1}
	The polyhedron $\phi(\F_1(\tL))$ is $|V_{\text{int}}|=n$-dimensional in $\mathbb R^{V_{\text{int}}}$. It contains the g-polyhedron $\g_1(\L)$ and contains the origin as an interior point.
\end{lemma}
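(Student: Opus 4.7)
The plan is to prove the three claims of the lemma separately. For the dimension and the containment $\g_1(\L)\subseteq\phi(\F_1(\tL))$, I would first pick any maximal clique $\K$ of $\tL$ and apply Proposition~\ref{prop:phi-on-simplices} to get $\phi(\Delta_1(\K))=\g_1(\K)$, which is an $n$-dimensional unimodular simplex by Theorem~\ref{thm:uni-cite}. Since $\Delta_1(\K)\subseteq\F_1(\tL)$, this already forces $\phi(\F_1(\tL))$ to have dimension at least $n$, and since the ambient space $\mathbb R^{V_{\text{int}}}$ has dimension exactly $n$, equality follows. Ranging the same identification over all maximal cliques (equivalently, over all support $\tau$-tilting modules via Theorem~\ref{thm:kiss-compat}) shows that $\bigcup_M \g_1(M)\subseteq\phi(\F_1(\tL))$. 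The image of a polyhedron under a linear map is again a polyhedron, hence closed, so $\phi(\F_1(\tL))$ also contains the completion $\g_1(\L)$ of this union.

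The claim that the origin is an interior point is the technical heart. The first step is to show $\phi(\F_{\geq0}(\tL))=\mathbb R^{V_{\text{int}}}$. The image $\phi(\F_{\geq0}(\tL))$ is a closed polyhedral cone (being the linear image of the polyhedral cone $\F_{\geq0}(\tL)$) and it contains every g-cone $\g_{\geq0}(\K)$ via Proposition~\ref{prop:phi-on-simplices}. The g-vector fan is dense in $\mathbb R^{V_{\text{int}}}$ by Theorem~\ref{thm:g-vector-dense}, so this closed cone must be all of $\mathbb R^{V_{\text{int}}}$.

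To conclude, I would define $\sigma\colon\mathbb R^{V_{\text{int}}}\to\mathbb R_{\geq0}$ by letting $\sigma(v)$ be the infimum of strength over flows $F\in\F_{\geq0}(\tL)$ with $\phi(F)=v$; this is finite everywhere by the previous step. Elementary scaling and addition arguments show that $\sigma$ is positively homogeneous and subadditive, hence sublinear, and therefore continuous on $\mathbb R^{V_{\text{int}}}$. Since $\sigma(0)=0$ (witnessed by the zero flow), there is a neighborhood $U$ of the origin on which $\sigma<1$. For each $v\in U$, picking $F\in\F_{\geq0}(\tL)$ with $\phi(F)=v$ and strength less than $1$ lets us form $F+(1-\text{strength}(F))\I(q)\in\F_1(\tL)$ for any straight route $q$ (which exists since $s\geq1$ in Lemma~\ref{lem:flowdim}); this padded flow still maps to $v$ since $\phi(\I(q))=0$. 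Hence $U\subseteq\phi(\F_1(\tL))$. The main obstacle is bridging from the global statement $\phi(\F_{\geq0}(\tL))=\mathbb R^{V_{\text{int}}}$ to the local neighborhood statement for $\phi(\F_1(\tL))$; this is precisely what continuity of the sublinear function $\sigma$ provides.
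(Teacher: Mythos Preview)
Your argument is correct. For the dimension claim and the containment $\g_1(\L)\subseteq\phi(\F_1(\tL))$ you do exactly what the paper does: push clique simplices through $\phi$ via Proposition~\ref{prop:phi-on-simplices}, invoke Theorem~\ref{thm:uni-cite} for the dimension, and use closedness of $\phi(\F_1(\tL))$ to absorb the completion.

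For the claim that the origin is interior, your route diverges from the paper's. The paper simply exhibits, for each internal vertex $v$, routes $p_v$ and $q_v$ with $\tilde M(p_v)=P(v)$ and $\tilde M(q_v)=P(v)[1]$, so that $\phi(\I(p_v))=e_v$ and $\phi(\I(q_v))=-e_v$; the convex hull of $\{\pm e_v:v\in V_{\text{int}}\}$ is the standard cross-polytope, which is $n$-dimensional with $0$ in its interior. This is entirely elementary and uses nothing beyond the existence of these specific routes. Your approach instead invokes the density of the g-vector fan (Theorem~\ref{thm:g-vector-dense}) to get $\phi(\F_{\geq0}(\tL))=\mathbb R^{V_{\text{int}}}$, then runs a sublinear-function continuity argument to descend from $\F_{\geq0}$ to $\F_1$ via padding with a straight route. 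This is heavier machinery---Theorem~\ref{thm:g-vector-dense} is a substantial cited result, and the paper deliberately delays using it until Proposition~\ref{prop:density}---but the argument structure is robust and would transfer to settings where one knows density of the fan but lacks an explicit cross-polytope. The paper's version is the one to prefer here, since the routes $p_v,q_v$ are readily available and the proof becomes two lines.
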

\begin{proof}
	The g-simplex of any support $\tau$-tilting module $M$ is the image under $\phi$ of the clique simplex $\Delta_1(M)\subseteq\F_1(\tL)$. Hence, the union of all g-simplices is in $\phi(\F_1(\tL))$. Since $\phi(\F_1(\tL))$ is a polyhedron, it must contain the completion of all g-simplices, which is $\g_{\geq0}(\L)$.

	It remains to show that the g-polyhedron $\g_{\geq0}(\L)$ is $n$-dimensional in $\mathbb R^{V_{\text{int}}}$ and contains the origin as an interior point.
	For any $v\in V_{\text{int}}$, let $p_v$ be the route corresponding to the simple projective module at $v$ and $q_v$ be the route corresponding to the shifted projective module at $v$. Then $\phi(\I(p_v))=\g(P(v))$ has a 1 at $e_v$ and 0's at all other coordinates, and $\phi(\I(q_v))=\g(P_v[1])$ has a -1 at $e_v$ and 0's at all other coordinates. The convex hull of these vectors is $n$-dimensional and contains the origin in its interior, hence $\g_{\geq0}(\L)$ is $n$-dimensional and contains the origin in its interior.
\end{proof}

The following result amounts to saying that up to unimodular equivalence, the map $\phi$ restricted to the set of flows of $\tL$ precisely quotients out by the affine span of the indicator vectors of straight routes.

\begin{lemma}\label{lem:phi-preimage}
	Let $F_1$ and $F_2$ be nonnegative flows of $\tL$. Then $\phi(F_1)=\phi(F_2)$ if and only if $F_2=F_1+\sum_{p\text{ straight route}}a_p\I(p)$ for some (not necessarily positive) coefficients $a_p\in\mathbb R$. If $F_1,F_2\in\F_1(\tL)$, then the coefficients $a_p$ sum to zero.
\end{lemma}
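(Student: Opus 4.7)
The plan is to show that the kernel of $\phi$ restricted to the linear space of flows is precisely the span of indicator vectors of straight routes, by a dimension count.

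First I would handle the easy direction. If $F_2 = F_1 + \sum_{p\text{ straight}} a_p \I(p)$, then since $\phi$ is linear and $\phi(\I(p)) = 0$ for every straight route $p$ by Remark~\ref{remk:collapse-exceptional}, we get $\phi(F_2) = \phi(F_1)$ immediately.

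For the converse, let $V \subseteq \mathbb R^{E}$ denote the linear subspace of functions $F\colon E\to\mathbb R$ satisfying conservation of flow at every internal vertex (dropping the nonnegativity and unit conditions from Definition~\ref{defn:flowgent}). The (unit) turbulence polyhedron $\F_1(\tL)$ is a full-dimensional polyhedron inside the affine hyperplane $V \cap H_U$, so by Lemma~\ref{lem:flowdim} we get $\dim V = \dim \F_1(\tL) + 1 = |E| - |V_{\text{int}}|$. By Lemma~\ref{lem:0-int1}, the image $\phi(\F_1(\tL)) \subseteq \phi(V)$ is $|V_{\text{int}}|$-dimensional, so $\phi(V)$ is exactly $|V_{\text{int}}|$-dimensional. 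The rank-nullity theorem therefore gives $\dim \ker(\phi|_V) = |E| - 2|V_{\text{int}}|$. Using the identities $|E| = 4|V_{\text{int}}| - |E_{\text{int}}|$ and $s = 2|V_{\text{int}}| - |E_{\text{int}}|$ from the proof of Lemma~\ref{lem:flowdim}, this simplifies to $\dim \ker(\phi|_V) = s$, the number of straight routes of $\tL$.

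Next I would show the $s$ indicator vectors $\{\I(p) : p \text{ a straight route}\}$ are linearly independent elements of $\ker(\phi|_V)$. They lie in $V$ (each indicator vector of a route is a flow) and in $\ker \phi$ by Remark~\ref{remk:collapse-exceptional}. For linear independence, observe that each straight route $p$ begins and ends at fringe vertices; distinct straight routes must have distinct starting fringe arrows, since the starting fringe arrow of $p$ determines $p$ uniquely by iteratively following the unique non-relational continuation. Hence the supports of the indicator vectors of straight routes are pairwise disjoint on fringe arrows, forcing linear independence. Combined with the dimension count, we conclude that $\ker(\phi|_V)$ is exactly the span of $\{\I(p) : p \text{ straight}\}$. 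So $\phi(F_1) = \phi(F_2)$ implies $F_2 - F_1 \in \ker(\phi|_V)$, which gives the desired expansion.

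Finally, for the coefficient-sum statement, I would use the strength functional $\text{str}(F) := \tfrac{1}{2}\sum_{\alpha\text{ fringe}} F(\alpha)$ from Definition~\ref{defn:flowgent}. Each straight route $p$ uses exactly two fringe arrows, so $\text{str}(\I(p)) = 1$. If both $F_1, F_2 \in \F_1(\tL)$ then $\text{str}(F_2 - F_1) = 0$, and applying strength to the expansion $F_2 - F_1 = \sum a_p \I(p)$ yields $\sum a_p = 0$. The only subtle step is the dimension count for $\phi(V)$, which is where we genuinely use the density result from Lemma~\ref{lem:0-int1}; everything else is straightforward linear algebra.
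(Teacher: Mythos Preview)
Your proof is correct and follows essentially the same approach as the paper: both arguments are dimension counts that use Lemma~\ref{lem:0-int1} to pin down the image dimension and then conclude that the kernel is exactly the span of straight-route indicators. Your version is slightly more explicit---you work in the linear space $V$ of all flows rather than the affine span of $\F_1(\tL)$, you prove linear independence of the straight-route indicators directly (the paper's proof tacitly assumes the analogous affine independence), and you spell out the strength computation for the coefficient-sum statement---but the underlying idea is the same.
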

\begin{proof}
	Since $\phi(\I(p))=0$ for any straight route $p$, it is immediate that if 
	$F_2=F_1+\sum_{p\text{ straight route}}a_p\I(p)$ then $\phi(F_1)=\phi(F_2)$. We must show the reverse direction.
	By Lemma~\ref{lem:flowdim}, the polyhedron $\F_1(\tL)$ is of dimension $|V_{\text{int}}|-1+s$, where $s$ is the number of straight routes.
	By Remark~\ref{remk:collapse-exceptional}, $\phi$ sends the indicator vector of any straight route to the origin, hence $\phi$ at least quotients out by the affine span of indicator vectors of straight routes. Quotienting out by the indicator vectors of the $s$ straight routes brings the dimension $\dim\F_1(\tL)=|V_{\text{int}}|-1+s$ down by $s-1$ to $|V_{\text{int}}|$. By Lemma~\ref{lem:0-int1}, the polyhedron $\phi(\F_1(\tL))$ is of dimension $n=|V_{\text{int}}|$ hence up to unimodular equivalence $\phi$ precisely quotients out the affine span of the indicator vectors of straight routes.
	This ends the proof.
\end{proof}

\begin{remk}\label{remk:collapse}
	We connect the map $\phi$ to the work of Danilov, Karzanov, and Koshevoy in flow polytopes~\cite{DKK}, particularly those coming from amply framed DAGs. Recall that amply framed DAGs and their flow polytopes may be converted to the language of gentle algebras and their turbulence polyhedra as in Section~\ref{ssec:PGAaAFDG} -- we will phrase their results in our language of gentle algebras and turbulence polyhedra.
	They work with the cone of flows $\F_{\geq0}(\tL)$, rather than working as we do primarily with the unit flows $\F_1(\tL)$, and define a map which collapses the span of the indicators of exceptional routes in $\mathbb R^E$. The equivalent map in our setting would be the map $\psi$ defined as the map quotienting out the affine span $\text{aff}\{\I(p)\ :\ p\in\tL\text{ is a straight route}\}$. They are particularly interested in this map when $\L$ comes from an amply framed DAG. On the other hand, they give no specific coordinates for this quotient map. Lemma~\ref{lem:phi-preimage} shows that the map $\phi$ precisely quotients out the affine span of the indicator vectors of straight routes -- in other words, up to unimodular equivalence, our map $\phi$ gives explicit coordinates to their map $\psi$.
	{A description of the map $\phi$ for a certain generalization of amply framed DAGs (see Remark~\ref{remk:UQAM}) appears in the concurrent work~\cite{UQAM}.}
\end{remk}

\subsection{The image $\phi(\F_1(\tL))$}

We show that the image $\phi(\F_1(\tL))$ is precisely the g-polyhedron of $\L$.

\begin{prop}\label{prop:g-vect-iff-nonbandy}
	Any nonnegative flow $F\in\F_{\geq0}(\tL)$ maps into the g-vector fan through $\phi$ if and only if $F$ is given by a clique combination.
\end{prop}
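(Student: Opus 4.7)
The forward implication is immediate from Proposition~\ref{prop:phi-on-simplices}: if $F=\sum_{p\in\K}a_p\I(p)$ is a clique combination for some clique $\K$, extend $\K$ to a maximal clique $\K'$ so that $F\in\D_{\geq0}(\K')$, giving $\phi(F)\in\phi(\D_{\geq0}(\K'))=\g_{\geq0}(\K')\subseteq\g_{\geq0}(\L)$. For the reverse implication, suppose $\phi(F)\in\g_{\geq0}(\K)$ for some maximal clique $\K$. By Theorem~\ref{thm:uni-cite}, $\{\g(p):p\in\K\text{ bending}\}$ is an integral basis of $\mathbb R^{V_{\text{int}}}$, so there are unique $b_p\geq0$ with $\phi(F)=\sum_{p\in\K\text{ bending}}b_p\g(p)$. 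Set $G:=\sum b_p\I(p)\in\F_{\geq0}(\tL)$; then $\phi(G)=\phi(F)$, and Lemma~\ref{lem:phi-preimage} furnishes real coefficients $c_q$ with $F-G=\sum_{q\text{ straight}}c_q\I(q)$. Since a maximal clique automatically contains every straight route of $\tL$, showing $c_q\geq0$ for every $q$ exhibits $F=\sum b_p\I(p)+\sum c_q\I(q)$ as a $\K$-clique combination and finishes the proof.

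The core step is thus to verify $c_q\geq0$, and my plan uses the distinguished arrow theory. Recall from \cite[Proposition~2.28]{PPP} that in a maximal clique $\K$ every straight route $q$ has exactly one distinguished arrow $\beta_q$. A straight route traverses only forward arrows, so $q$ is the $\prec_{\alpha^\e}$-minimum at each of its arrows; consequently $q$ can be the $\prec_{\beta_q^\e}$-maximum only if it is the unique trail of $\K$ passing through $\beta_q$. This forces $G(\beta_q)=0$ (no bending route of $\K$ uses $\beta_q$) and $(F-G)(\beta_q)=c_q$ (only $q$ among straight routes uses $\beta_q$), so $c_q=F(\beta_q)\geq0$ by nonnegativity of $F$.

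The main obstacle I anticipate is cleanly establishing the equivalence ``$\beta_q$ is the distinguished arrow of $q$ in $\K$'' $\iff$ ``$q$ is the unique trail of $\K$ through $\beta_q$.'' This hinges on unpacking the countercurrent order for straight routes and marrying it to the sharp distinguished-arrow count in \cite[Proposition~2.28]{PPP}. Once that structural fact is in hand, the nonnegativity $c_q\geq0$ is immediate, so this is the single technical crux of the argument.
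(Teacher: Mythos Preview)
Your proposal is correct and follows essentially the same route as the paper's proof: both directions are handled identically, and the crucial step $c_q\geq0$ is argued via the distinguished arrow of the straight route $q$ in the maximal clique $\K$ (the paper cites Lemma~\ref{lem:dist} rather than \cite[Proposition~2.28]{PPP}, but either suffices). The ``obstacle'' you flag is precisely the crux, and your resolution---straight routes are $\prec$-minimal at every arrow, so being $\prec$-maximal at $\beta_q$ forces $q$ to be the unique route of $\K$ through $\beta_q$---is exactly what the paper (implicitly) uses.
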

\begin{proof}
	If $F$ is given by a clique combination, then it is in the clique cone $\Delta_{\geq0}(\K)$ for some maximal clique $\K$. Then $\phi(F)$ is in $\phi(\Delta_{\geq0}(\K))$, which is the g-cone of the support $\tau$-tilting $\L$-module corresponding to $\K$. This shows that $\phi(F)$ is in the g-vector fan of $\L$.

	Conversely, suppose $F$ is a nonnegative flow and $\phi(F)$ is in the g-vector fan $g_{\geq0}(\L)$. Then $\phi(F)$ is in some g-cone $\g_{\geq0}(M)$, where $M=\oplus_{j\in[n]}M_j$ is a support $\tau$-tilting $\L$-module.
	Write $\phi(F)=\sum_{j\in[n]}a_j\g(M_j)$, where each $a_j\geq0$.
	For $j\in[n]$, let $p_j$ be the route of $\tL$ such that $\tilde M(p_j)=M_j$; then the flow
	\[F':=\sum_{j\in[n]}a_j\I(p_j)\]
	maps to $\phi(F)$ through $\phi$.
	Then Lemma~\ref{lem:phi-preimage} shows that $F=F'+\sum_{p\text{ straight route}}a_p\I(p)$ for some coefficients $a_p\in\mathbb R$.
	Let $p$ be a straight route of $\tL$. By Lemma~\ref{lem:dist}, the straight route $p$ has at least one distinguished arrow $\alpha_p$ in the clique $\K$ consisting of the straight routes as well as the routes $p_j$ for $j\in[n]$.
	Then no other route of $\K$ may use $\alpha_p$, hence $F(\alpha_p)=a_p$. Since $F$ is nonnegative, the coefficient $a_p$ is nonnegative. This holds for all straight routes $p$, hence
	\[F=\sum_{j\in[n]}a_j\I(p_j)+\sum_{p\text{ straight route}}a_p\I(p)\]
	realizes $F$ as a clique combination. This completes the proof.
\end{proof}

We now use density of g-cones and g-simplices to prove density of clique cones and clique simplices.

\begin{prop}\label{prop:density}
	Let $\L$ be a gentle algebra.
	\begin{enumerate}
		\item\label{ghsimp} The clique simplices are dense in $\F_1(\tL)$.
		\item\label{ghcone} The clique cones are dense in $\F_{\geq0}(\tL)$.
	\end{enumerate}
\end{prop}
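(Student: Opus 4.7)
The plan is to use Proposition~\ref{prop:g-vect-iff-nonbandy}, which says that a nonnegative flow is a clique combination if and only if its image under $\phi$ lies in the $\g$-vector fan $\g_{\geq0}(\L)$. Combined with the density of the $\g$-vector fan in $\mathbb{R}^{V_{\text{int}}}$ (Theorem~\ref{thm:g-vector-dense}), density of clique simplices reduces to a statement about lifting density through the affine map $\phi$.

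For part~\eqref{ghsimp}, let $F \in \F_1(\tL)$ and $\varepsilon > 0$. The first step is to push $F$ into the relative interior of $\F_1(\tL)$: fix a flow $F^{\circ}$ in the relative interior, set $F_t := (1-t)F + tF^{\circ}$ for small $t>0$, so that $F_t$ lies in the relative interior and $\|F_t - F\| < \varepsilon/2$. By Lemma~\ref{lem:0-int1}, $\phi(\F_1(\tL))$ is full-dimensional in $\mathbb{R}^{V_{\text{int}}}$, so $\phi|_{\text{aff}(\F_1(\tL))}$ is an affine surjection onto $\mathbb{R}^{V_{\text{int}}}$ and hence an open map. A relatively-open ball $B \subseteq \F_1(\tL)$ about $F_t$ of radius $<\varepsilon/2$ therefore maps to an open neighborhood $\phi(B)$ of $\phi(F_t)$ in $\mathbb{R}^{V_{\text{int}}}$. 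By Theorem~\ref{thm:g-vector-dense}, there exists $q \in \phi(B) \cap \g_{\geq0}(\L)$; lifting $q$ gives some $F' \in B$ with $\phi(F') = q$. Then Proposition~\ref{prop:g-vect-iff-nonbandy} forces $F'$ to be a clique combination, placing $F'$ in a maximal clique simplex $\Delta_1(\K)$, and the triangle inequality gives $\|F' - F\| < \varepsilon$.

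Part~\eqref{ghcone} follows by the same argument applied to $\F_{\geq0}(\tL)$. The key observation is that $\phi(\F_{\geq0}(\tL)) = \mathbb{R}^{V_{\text{int}}}$, since $\phi(\F_1(\tL))$ already contains the origin as an interior point (Lemma~\ref{lem:0-int1}) and $\F_{\geq0}(\tL)$ contains all nonnegative rescalings of unit flows. Hence $\phi|_{\text{aff}(\F_{\geq0}(\tL))}$ is again an open affine surjection, and the same pushing-into-the-interior followed by the openness/density argument yields, for any nonnegative flow $F$ and $\varepsilon>0$, a flow $F'$ within $\varepsilon$ of $F$ with $\phi(F') \in \g_{\geq0}(\L)$, hence in a clique cone by Proposition~\ref{prop:g-vect-iff-nonbandy}.

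The only obstacle is mild: the open mapping property of $\phi$ produces an open neighborhood in $\mathbb R^{V_{\text{int}}}$ only when we start from a \emph{relative interior} point of the flow polyhedron, so boundary flows cannot be handled directly. This is precisely what the preliminary convex combination $F_t = (1-t)F + tF^\circ$ fixes, and the rest of the argument is routine.
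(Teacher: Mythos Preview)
Your proof is correct and follows essentially the same approach as the paper: both arguments push into the relative interior, use that $\phi$ carries a small open piece of the flow polyhedron onto an open set in $\mathbb{R}^{V_{\text{int}}}$, invoke density of the $\g$-vector fan (Theorem~\ref{thm:g-vector-dense}) to find a point there, and then lift via Proposition~\ref{prop:g-vect-iff-nonbandy}. The only cosmetic differences are that the paper picks an explicit $n$-dimensional affine slice $A_F$ (spanned by a fixed maximal clique) on which $\phi$ restricts to a bijection, whereas you use the equivalent abstract fact that the affine surjection $\phi$ is an open map; and for part~\eqref{ghcone} the paper rescales to reduce to part~\eqref{ghsimp} rather than repeating the openness argument.
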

\begin{proof}
	We prove~\eqref{ghsimp} first.
	More specifically, we show that for any $F\in\F_{1}(\tL)$ and $\delta>0$, we may find $F'\in\F_{1}(\tL)$ such that $|F'-F|<\delta$.
	It suffices to prove this result when $F$ is rational and gives every arrow of $\tL$ positive flow, since the class of such flows are dense in $\F_1(\tL)$.
	Hence, take a rational unit flow $F$ of $\tL$ which gives every arrow positive flow.

	Let $\K$ be a maximal clique of $\tL$ with bending routes $p_1,\dots,p_n$ and straight routes $q_1,\dots,q_s$. Let $A$ be the ($n$-dimensional) affine span of $p_1,\dots,p_n,q_1$.
	Let $A_F$ be the translation of $A$ in $\mathbb R^E$ which contains $F$.

	It is immediate that $\phi$ maps the convex hull of $p_1,\dots,p_n,q_1$ bijectively onto the g-simplex of $\K$, hence the affine transformation $\phi|_A$ given by restricting $\phi:\mathbb R^E\to\mathbb R^{V_{\text{int}}}$ to $A$ is a bijection onto $\mathbb R^{V_{\text{int}}}$.
	Since $A_F$ is a translation of $A$, it follows that the affine transformation $\phi|_{A_F}$ is a bijection from $A_F$ onto $\mathbb R^{V_{\text{int}}}$.

	Define $\delta'>0$ strictly less than $\min_{\alpha\in E}F(\alpha)$, so that any $F'\in\mathbb R^E$ satisfying $|F'-F|<\delta'$ has all positive coordinates. Define $\kappa:=\min(\delta,\delta')$.
	Let $A_F^\kappa$ be the intersection of $A_F$ with the open ball $\{F'\in\mathbb R^E\ :\ |F'-F|<\kappa\}$. Since $\kappa<\delta'$, we have $A_F^\kappa\subseteq\F_1(\tL)$.
	Since $\phi|_{A_F}$ is a bijection, the image $\phi(A_F^\kappa)$ is a full-dimensional subset of $\mathbb R^{V_{\text{int}}}$. The g-vector fan is dense by Theorem~\ref{thm:g-vector-dense}, so we may choose $\xx\in\phi(A_F^\kappa)$ which is in the g-vector fan. Then $F':=(\phi|_{A_F})^{-1}(\xx)\in A_F^\kappa$ is given by a clique combination by Proposition~\ref{prop:g-vect-iff-nonbandy}, hence is in a clique simplex. Since $F'\in A_F^\kappa$, we have $|F-F'|<\kappa<\delta$, completing the proof of~\eqref{ghsimp}.

	We now treat~\eqref{ghcone}.
	We wish to show that for any $F\in\F_{\geq0}(\tL)$ and $\delta>0$, we may find $F'\in\F_{\geq0}(\tL)$ such that $|F'-F|<\delta$. As before, it suffices to consider the case when $F$ is rational and gives every arrow of $\tL$ positive flow. Let $S=\sum_{\alpha\in E_{\text{fringe}}}F(\alpha)>0$ be the strength of $F$. Scale $F_1=\frac{1}{S}F\in\F_1(\tL)$. By the proof of~\eqref{ghcone}, we may find $F'_1\in\F_1(\tL)$ such that $|F_1-F'_1|<\frac{1}{S}\delta$. Setting $F':=SF'_1$, we then have $|F-F'|=S|F_1-F'_1|<\delta$, completing the proof of~\eqref{ghcone}.
\end{proof}

\begin{cor}\label{cor:phi-im-g}
	The image $\phi(\F_1(\tL))$ is precisely the g-polyhedron.
\end{cor}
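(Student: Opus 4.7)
The plan is to prove the two inclusions $\g_1(\L)\subseteq\phi(\F_1(\tL))$ and $\phi(\F_1(\tL))\subseteq\g_1(\L)$ separately, with both directions following from earlier results in the section together with elementary topological considerations.

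For the inclusion $\g_1(\L)\subseteq\phi(\F_1(\tL))$, I would first observe that by Proposition~\ref{prop:phi-on-simplices}, for every maximal clique $\K$ of $\tL$ we have $\phi(\Delta_1(\K))=\g_1(M(\K))$, where $M(\K)$ is the support $\tau$-tilting module associated to $\K$. Since every support $\tau$-tilting $\L$-module arises as $M(\K)$ for some maximal clique $\K$ of $\tL$ (by Theorem~\ref{thm:kiss-compat}), we obtain
\[\bigcup_{M\in\sttilt\L}\g_1(M)=\bigcup_{\K\text{ maximal clique}}\phi(\Delta_1(\K))\subseteq\phi(\F_1(\tL)).\]
The set $\phi(\F_1(\tL))$ is the image of a (rational) polyhedron under a linear map, hence is itself a polyhedron and in particular is topologically closed. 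Therefore it contains the completion $\g_1(\L)$ of the union above.

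For the reverse inclusion $\phi(\F_1(\tL))\subseteq\g_1(\L)$, I would use Proposition~\ref{prop:density}\eqref{ghsimp}, which states that the union of clique simplices is dense in $\F_1(\tL)$. Given any $F\in\F_1(\tL)$, we may therefore choose a sequence $F_n\in\F_1(\tL)$ with each $F_n$ contained in some clique simplex $\Delta_1(\K_n)$ and $F_n\to F$. Since $\phi$ is linear, hence continuous, we get $\phi(F_n)\to\phi(F)$. By Proposition~\ref{prop:phi-on-simplices} each $\phi(F_n)$ lies in $\g_1(M(\K_n))\subseteq\bigcup_{M\in\sttilt\L}\g_1(M)$, so $\phi(F)$ is a limit point of this union, i.e., lies in its completion $\g_1(\L)$.

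There is no real obstacle here; the work has already been done. The only subtlety worth flagging is that the argument relies crucially on Proposition~\ref{prop:density}, whose proof in turn invokes density of the g-vector fan (Theorem~\ref{thm:g-vector-dense}). In other words, the nontrivial input is the representation-theoretic density result for the g-vector fan of a gentle algebra, which we pull back through $\phi$; once that is in hand, the present corollary is a formal consequence of continuity and closedness.
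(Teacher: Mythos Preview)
Your proposal is correct and follows essentially the same approach as the paper: both directions are proved using Proposition~\ref{prop:phi-on-simplices}, the closedness of $\phi(\F_1(\tL))$ as a linear image of a polyhedron, and Proposition~\ref{prop:density}. The paper's version of the reverse inclusion is phrased more tersely (``$\F_1(\tL)$ is the completion of the clique simplices, hence $\phi(\F_1(\tL))$ is contained in the completion of the g-simplices''), but your sequential/continuity argument unpacks exactly the same reasoning.
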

\begin{proof}
	Since $\phi$ sends a clique simplex to the corresponding g-simplex, the image $\phi(\F_1(\tL))$ contains all of the g-simplices. 
Since $\phi(\F_1(\tL))$ is an image of a polyhedron under a linear transformation and hence is itself a polyhedron, it is closed. Then $\phi(F_1(\tL))$ must contain the {completion} of the g-simplices, which is the g-polyhedron. This shows that $\phi(\F_1(\tL))\supseteq\g_{\geq0}(\L)$.
	On the other hand, Proposition~\ref{prop:density} says that $\F_1(\tL)$ is equal to the completion of the clique simplices, hence $\phi(\F_1(\tL))$ is contained in the completion of the g-simplices. This completes the proof.
\end{proof}

Corollary~\ref{cor:phi-im-g} shows that the g-polyhedron of an arbitrary gentle algebra is a polyhedron. The author considers this important enough to cite separately as an analog of g-convexity of~\cite{AHIKM}.

\begin{thm}\label{thm:g-poly-is-poly}
	The g-polyhedron $\g_{\geq0}(\L)$ of a gentle algebra $\L$ is a convex polyhedron.
\end{thm}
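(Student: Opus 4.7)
The plan is to invoke Corollary~\ref{cor:phi-im-g} together with the standard fact that the image of a convex polyhedron under a linear transformation is again a convex polyhedron. Since the entire infrastructure (the map $\phi$, the identification $\phi(\F_1(\tL)) = \g_{\geq0}(\L)$, the polyhedrality of $\F_1(\tL)$) is already in place, the proof should be essentially a one-liner built on these results.

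First I would recall that $\F_1(\tL)$ is itself a (rational) convex polyhedron in $\mathbb R^E$, since it was defined as the finite intersection $H_U \cap \bigl(\cap_{v \in V_{\text{int}}} H_v\bigr) \cap \bigl(\cap_{\alpha \in E} H_\alpha^{\geq 0}\bigr)$ of rational hyperplanes and half-spaces. Next I would note that $\phi : \mathbb R^E \to \mathbb R^{V_{\text{int}}}$ is a linear transformation by its definition (Definition~\ref{defn:phi}). It is a standard fact in polyhedral geometry (a consequence of Fourier--Motzkin elimination, or equivalently of the Minkowski--Weyl theorem) that the image of a convex polyhedron under a linear map is a convex polyhedron. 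Applying this to $\phi$ and $\F_1(\tL)$ yields that $\phi(\F_1(\tL))$ is a convex polyhedron in $\mathbb R^{V_{\text{int}}}$.

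Finally, I would invoke Corollary~\ref{cor:phi-im-g}, which asserts the equality $\phi(\F_1(\tL)) = \g_{\geq 0}(\L)$ (the g-polyhedron). Combining this with the previous paragraph immediately gives that $\g_{\geq 0}(\L)$ is a convex polyhedron, which is the statement of the theorem.

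There is no real obstacle here; all of the nontrivial content has been absorbed into Corollary~\ref{cor:phi-im-g}, whose proof in turn relied on the density statement of Proposition~\ref{prop:density} (ultimately pulled back from the density of the g-vector fan via Theorem~\ref{thm:g-vector-dense}). The only subtle point worth flagging explicitly in the write-up is why $\phi(\F_1(\tL))$ is automatically closed and polyhedral rather than merely convex: this follows because linear images of polyhedra are polyhedra, so one does not need to separately take a completion to identify $\phi(\F_1(\tL))$ with $\g_{\geq 0}(\L)$.
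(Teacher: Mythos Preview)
Your proposal is correct and matches the paper's approach exactly: the paper presents this theorem as an immediate consequence of Corollary~\ref{cor:phi-im-g}, having already noted in that corollary's proof that $\phi(\F_1(\tL))$ is a polyhedron because it is the image of a polyhedron under a linear map.
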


\begin{remk}
	Lemma~\ref{lem:phi-preimage} shows that $\phi|_{\text{Aff}(\F_1(\tL))}$ is a quotient map along the affine span of indicator vectors of straight routes. Hence, it is a unimodular equivalence precisely when there is only one straight route (for example, as in Figure~\ref{fig:facetweird}). Then Corollary~\ref{cor:phi-im-g} shows that $\F_1(\tL)$ is unimodularly equivalent to $\g_1(\L)$ precisely when there is only one straight route.
\end{remk}

\subsection{Clique triangulation of the turbulence polyhedron}

Proposition~\ref{prop:density} allows us to finally complete our proof of Theorem~\ref{ITHM:CLIQUE}.

\begin{defn}
	Define the \emph{(unit) clique triangulation} of $\F_1(\tL)$ as
	\[\Tt(\F_1(\tL)):=\{\D_{\bK}\ :\ \bK\text{ is a maximal clique of }\tL\}.\]
	Define the \emph{nonnegative clique triangulation} of $\F_{\geq0}(\tL)$ as
	\[\Tt(\F_{\geq0}(\tL)):=\{\D_{\bK}^{\geq0}\ :\ \bK\text{ is a maximal clique of }\tL\}.\]
\end{defn}
Note that the clique triangulation is merely the bundle subdivision without the bundle walls.
\begin{cor}\label{cor:clique-subd}
	The clique triangulation is a unimodular triangulation of $\F_1(\tL)$, and the nonnegative clique triangulation is a unimodular triangulation of $\F_{\geq0}(\tL)$.
	When $\tL$ is representation-finite, both triangulations are complete.
\end{cor}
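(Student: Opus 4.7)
The plan is to verify each requirement of Definition~\ref{defn:subd} in turn for the clique triangulation, drawing on the tools already assembled. First I would check dimension: a maximal clique $\K$ has cardinality $3|V_{\text{int}}|-|E_{\text{int}}|$ by Theorem~\ref{thm:ccc}, which equals $|V_{\text{int}}|+s$ by the straight-route count $s=2|V_{\text{int}}|-|E_{\text{int}}|$ from the proof of Lemma~\ref{lem:flowdim}. So the clique simplex $\D_1(\K)$, as the convex hull of $|V_{\text{int}}|+s$ indicator vectors, has dimension at most $|V_{\text{int}}|+s-1=\dim\F_1(\tL)$, with equality provided the indicator vectors are affinely independent. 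This independence follows from Theorem~\ref{thm:comb}\eqref{cco1}: if the $\I(p)$ for $p\in\K$ were affinely dependent, a flow lying in $\D_1(\K)$ would admit two distinct positive clique combinations. Hence each $\D_1(\K)$ is a $\dim\F_1(\tL)$-simplex, and Lemma~\ref{lem:simp-unimodular} supplies unimodularity.

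Next I would verify density, which is exactly Proposition~\ref{prop:density}\eqref{ghsimp}. The analogous statement for the nonnegative version follows from Proposition~\ref{prop:density}\eqref{ghcone}.

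The main step is the strong intersection property. Given two distinct maximal cliques $\K$ and $\K'$, I would show that $\D_1(\K)\cap\D_1(\K')=\D_1(\K\cap\K')$, which is a common face of both. The inclusion $\supseteq$ is immediate. For $\subseteq$, any flow $F$ in $\D_1(\K)\cap\D_1(\K')$ admits two unit clique combinations $F=\sum_{p\in\K}a_p\I(p)=\sum_{p\in\K'}a'_p\I(p)$; restricting these sums to routes with positive coefficients gives two positive bundle combinations, which must coincide by Theorem~\ref{thm:comb}\eqref{cco1}. So the support is contained in $\K\cap\K'$, and $F\in\D_1(\K\cap\K')$. This is the step I expect to lean most heavily on uniqueness of positive bundle combinations, though the argument itself is short once that uniqueness is in hand.

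Finally, the representation-finite case is immediate from Lemma~\ref{rep-finite-union}: when $\tL$ has no bands, every flow is expressible as a clique combination, so $\F_1(\tL)=\bigcup_{\K}\D_1(\K)$ and the triangulation is complete. The nonnegative statement follows by scaling, since any $F\in\F_{\geq0}(\tL)$ of positive strength $S$ rescales to a unit flow in some $\D_1(\K)$ and hence lies in the cone $\D_{\geq0}(\K)$, while the zero flow lies in every cone.
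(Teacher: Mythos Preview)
Your proposal is correct and follows essentially the same approach as the paper: the paper's proof cites Theorem~\ref{thm:comb} for the strong intersection property, Proposition~\ref{prop:density} for density, and Corollary~\ref{lem:simp-unimodular} for unimodularity (which already forces full dimension), while you spell out each of these steps in more detail and also explicitly handle the representation-finite completeness via Lemma~\ref{rep-finite-union}, which the paper's brief proof leaves implicit.
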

\begin{proof}
	Theorem~\ref{thm:comb} shows the strong intersection property and Proposition~\ref{prop:density} shows density, hence the clique triangulation is a subdivision.
	Corollary~\ref{lem:simp-unimodular} shows that the cells of the clique triangulation are full-dimensional unimodular simplices, hence the clique triangulation is a unimodular triangulation.
\end{proof}

\subsection{Cardinality of bundles}

The fact that the clique simplices are dense provides an interesting proof that the cardinality of a maximal bundle containing at least one band is strictly less than the cardinality of a maximal clique. 
\begin{lemma}\label{lem:trail-space-dim}
	Let $\bK$ be a bundle with at least one route. The dimension of $\Delta_1(\bK)$ is $|\bK|-1$.
\end{lemma}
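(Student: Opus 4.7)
The plan is to prove both a dimension upper bound from parameter counting and a matching lower bound from the uniqueness of positive bundle combinations (Proposition~\ref{lem:only-way}).

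Let $\bK = \K \cup \B$ with $\K \neq \emptyset$. A unit $\bK$-bundle combination is specified by coefficients $(a_p)_{p \in \bK}$ satisfying $a_p \geq 0$ and the single linear constraint $\sum_{p \in \K} a_p = 1$. The set of such tuples is a polyhedron in $\mathbb{R}^{|\bK|}$ of affine dimension $|\K| - 1 + |\B| = |\bK| - 1$. The assignment $\Phi : (a_p)_{p \in \bK} \mapsto \sum_{p \in \bK} a_p \I(p)$ is the restriction of a linear map to this polyhedron, and its image is $\Delta_1(\bK)$. Hence $\dim \Delta_1(\bK) \leq |\bK| - 1$.

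For the matching lower bound I would show that $\Phi$ is injective on its domain. The key observation is the following perturbation trick. Choose any fixed positive unit $\bK$-bundle combination
\[
F^* := \sum_{p \in \K} \tfrac{1}{|\K|} \I(p) + \sum_{B \in \B} \I(B),
\]
which is well-defined since $\K \neq \emptyset$ and has all strictly positive coefficients. Given two unit $\bK$-bundle combinations $F = \sum_{p \in \bK} a_p \I(p)$ and $F' = \sum_{p \in \bK} a'_p \I(p)$ with $F = F'$, set $F_t := (1-t) F + t F^*$ and $F'_t := (1-t) F' + t F^*$ for some $t \in (0,1)$. Both $F_t$ and $F'_t$ are unit $\bK$-bundle combinations (the strength constraint is preserved by convex combination) whose coefficients $(1-t) a_p + t/|\K|$ (for $p \in \K$) and $(1-t) a_B + t$ (for $B \in \B$) are all strictly positive. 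Since $F_t = F'_t$, Proposition~\ref{lem:only-way} (uniqueness of positive bundle combinations) forces the two coefficient tuples to coincide, from which $a_p = a'_p$ for every $p \in \bK$. Thus $\Phi$ is injective, so $\dim \Delta_1(\bK) = |\bK| - 1$.

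The main thing to verify is that the perturbation $F_t$ really is a positive bundle combination in the sense required by Proposition~\ref{lem:only-way}: every coefficient must be strictly positive, and the underlying support must be exactly $\bK$ (not a strict sub-bundle). The first is immediate from the construction; the second is automatic because $F^*$ uses every trail of $\bK$ with positive weight. No obstacle lies beyond this bookkeeping — the result is essentially a direct corollary of the uniqueness half of Theorem~\ref{thm:comb}.
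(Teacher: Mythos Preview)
Your proof is correct and follows essentially the same approach as the paper: both obtain the upper bound trivially and derive the lower bound from uniqueness of bundle combinations (Theorem~\ref{thm:comb} / Proposition~\ref{lem:only-way}). Your perturbation toward a strictly positive combination $F^*$ is a more explicit way of ensuring the two coinciding combinations are \emph{positive} (as required by the uniqueness statement), whereas the paper simply notes that non-injectivity of $\Phi$ would produce two distinct bundle-combination expressions of the same flow and leaves the positivity check implicit; the underlying argument is the same.
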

\begin{proof}
	It is immediate that $\dim\Delta_1(\bK)\leq|\bK|-1$ by definition of the bundle space.
	If the dimension is less than $|\bK|-1$, then there must be a single point in $\Delta_1(\bK)$ with two different expressions as a bundle combination of $\bK$, contradicting Theorem~\ref{thm:comb}.
\end{proof}

\begin{cor}\label{cor:bandy-lowcard}
	The cardinality of a maximal bundle containing at least one band is strictly less than the cardinality of a maximal clique.
\end{cor}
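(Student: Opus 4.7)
The plan is to argue by contradiction: suppose $\bK$ is a maximal bundle containing a band with $|\bK|$ equal to the cardinality of a maximal clique, and derive a contradiction from the uniqueness of positive bundle combinations (Theorem~\ref{thm:comb}) combined with the density of clique simplices (Proposition~\ref{prop:density}).

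First I would pin down dimensions. By Theorem~\ref{thm:ccc} every maximal clique $\K_{\max}$ has $|\K_{\max}| = 3|V_{\text{int}}|-|E_{\text{int}}|$, and combining the identity $s = 2|V_{\text{int}}|-|E_{\text{int}}|$ with Lemma~\ref{lem:flowdim} gives $\dim \F_1(\tL) = |V_{\text{int}}|+s-1 = |\K_{\max}|-1$. Under the contradiction hypothesis $|\bK| = |\K_{\max}|$, Lemma~\ref{lem:trail-space-dim} yields $\dim \Delta_1(\bK) = |\bK|-1 = \dim \F_1(\tL)$, so $\Delta_1(\bK)$ is full-dimensional inside $\F_1(\tL)$.

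Next I would produce an interior point $F$ of $\Delta_1(\bK)$ (relative to the subspace topology on $\F_1(\tL)$). Because $\bK$ contains a band $B$ and $F$ lies in the relative interior, the (unique) $\bK$-bundle combination realizing $F$ is positive, and in particular has a strictly positive coefficient on $\I(B)$. Since $\Delta_1(\bK)$ is full-dimensional, its relative interior is open in $\F_1(\tL)$, so by Proposition~\ref{prop:density} there exists a point $F'$ in this relative interior which also lies in some clique simplex $\Delta_1(\K')$. Shrinking to a smaller neighborhood if necessary, the $\bK$-bundle combination for $F'$ still has a positive coefficient on $\I(B)$, so it is a positive $\bK$-bundle combination involving a band. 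On the other hand, $F'$ admits a positive clique combination (take its $\K'$-clique combination and drop any routes whose coefficient is zero, obtaining a positive bundle combination with no band). These are two distinct positive bundle combinations of $F'$, contradicting Theorem~\ref{thm:comb}~\eqref{cco1}.

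The only subtle point is making sure the interior construction is genuine: since $\Delta_1(\bK)$ is assumed full-dimensional in $\F_1(\tL)$, its relative interior with respect to $\F_1(\tL)$ is nonempty and open in $\F_1(\tL)$, which is exactly what lets density plug in. The hard part is really just packaging this cleanly; once the dimensional reduction is set up, density and uniqueness collide immediately.
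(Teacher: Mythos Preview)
Your proof is correct and follows essentially the same strategy as the paper: assume $|\bK|$ equals the cardinality of a maximal clique, use Lemma~\ref{lem:trail-space-dim} to conclude $\Delta_1(\bK)$ is full-dimensional in $\F_1(\tL)$, and then play density of clique simplices (Proposition~\ref{prop:density}) against uniqueness of positive bundle combinations (Theorem~\ref{thm:comb}). The paper's version is terser---it simply observes that interior points of $\Delta_1(\bK)$ lie in no clique simplex, contradicting density---whereas you spell out the two competing positive bundle combinations explicitly; your ``shrinking to a smaller neighborhood'' remark is unnecessary since any $F'$ in the relative interior of $\Delta_1(\bK)$ already has all $\bK$-coefficients positive, but it does no harm.
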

Recall that $|E|-|V_{\text{int}}|$ is an upper bound for the cardinality of a maximal bundle by Corollary~\ref{cor:card-upper-bound}, and equal to the cardinality of a maximal clique by Theorem~\ref{thm:ccc}.
\begin{proof}
	Say $\bK$ is a bundle containing at least one band with cardinality $|E|-|V_{\text{int}}|$. Then the dimension of its bundle wall $\Delta_1(\bK)$ is $|E|-|V_{\text{int}}|-1=\dim\F_1(\tL)$ by Lemma~\ref{lem:trail-space-dim}.
	In particular, anything in the interior of $\Delta_1(\bK)$ is not in the completion of the clique simplices, contradicting Proposition~\ref{prop:density}.
\end{proof}

\section{Clique and Bundle Subdivisions, Facets, and Presentations of g-polyhedra}
\label{sec:CBS}

In this section, we take results which we have proven about triangulations, subdivisions, vertices, and unbounded directions of $\F_1(\tL)$ and $\F_{\geq0}(\tL)$ and we convert them into statements about the g-polyhedron and g-vector fan of $\L$.

\subsection{Clique and bundle subdivisions of the g-polyhedron}
\label{ssec:CBSG}

We have proven Theorems~\ref{ITHM:CLIQUE} and~\ref{ITHM:BRIQUE}, showing critical facts about existence and uniqueness of bundle combinations and phrasing the clique and bundle subdivisions of the turbulence polyhedron. We now apply the map $\phi$ to these results to obtain analogous results about the g-polyhedron.
We first use Theorem~\ref{ITHM:BRIQUE} to show the analogous result about g-bundle combinations.
\begin{thm}\label{thm:g-decomp}
	Let $\L$ be a gentle algebra. Any point $\xx\in\mathbb R^{V_{\text{int}}}$ has at most one representation as a positive g-bundle combination. Every rational point in $\mathbb R^{V_{\text{int}}}$ may be represented as a g-bundle combination.
\end{thm}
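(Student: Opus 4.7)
The plan is to transfer Theorem~\ref{thm:comb} from flows to g-vectors via the quotient map $\phi$ of Definition~\ref{defn:phi}, using Remark~\ref{remk:collapse-exceptional} (which identifies $\phi(\I(p))$ with $\g(p)$ for bending routes and bands and sends indicators of straight routes to $0$) and Lemma~\ref{lem:phi-preimage} (which shows $\phi$ collapses exactly the affine span of indicators of straight routes).

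For existence, Lemma~\ref{lem:0-int1} places $0$ in the interior of $\phi(\F_1(\tL))\subseteq\mathbb R^{V_{\text{int}}}$, so the cone $\phi(\F_{\geq 0}(\tL))$ is all of $\mathbb R^{V_{\text{int}}}$. For rational $\xx$, the nonempty rational polyhedron $\phi^{-1}(\xx)\cap\F_{\geq 0}(\tL)$ contains a rational flow $F$. Theorem~\ref{thm:comb} yields a positive bundle combination $F=\sum_{p\in\bK_F^+}a_p\I(p)$. Applying $\phi$ kills the straight routes and sends each remaining indicator to its g-vector, giving $\xx=\sum_{p\in\bK_F^+,\,p\text{ bending or band}}a_p\g(p)$. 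Extending $\bK_F^+$ to any maximal bundle and assigning coefficient $0$ to the added bending trails and bands gives the required g-bundle combination.

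For uniqueness, suppose $\xx$ admits two positive g-bundle combinations, one relative to a maximal bundle $\bK_1$ with coefficients $(a_p)$ and another relative to a maximal bundle $\bK_2$ with coefficients $(b_p)$. Form the partial lifts
\[F_1^*:=\sum_{p\in\bK_1,\,p\text{ bending or band}}a_p\I(p),\qquad F_2^*:=\sum_{p\in\bK_2,\,p\text{ bending or band}}b_p\I(p).\]
Both are nonnegative flows with $\phi(F_i^*)=\xx$, so by Lemma~\ref{lem:phi-preimage} we have $F_2^*-F_1^*=\sum_j r_j\I(s_j)$ for some $r_j\in\mathbb R$, where $s_1,\dots,s_m$ are the straight routes of $\tL$. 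These straight-route indicators are linearly independent, as each uses a unique pair of fringe arrows. Pick $D_j^{(2)}>\max(0,-r_j)$ and set $D_j^{(1)}:=D_j^{(2)}+r_j>0$. Since every maximal bundle contains every straight route, the flow
\[F_i:=F_i^*+\sum_j D_j^{(i)}\I(s_j)\]
is a positive bundle combination of $\bK_i$, and the choice of coefficients forces $F_1=F_2$. The uniqueness clause of Theorem~\ref{thm:comb} then gives $\bK_1=\bK_2$ and $a_p=b_p$ for every bending route or band.

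The main obstacle is the lifting step in uniqueness: the naive lifts $F_1^*$ and $F_2^*$ agree under $\phi$ but can differ by a nontrivial combination of straight-route indicators, so one must inflate them along straight routes to produce equal positive bundle combinations. The linear independence of straight-route indicators together with the inclusion of every straight route in every maximal bundle provide exactly the freedom needed to make this lift both possible and positive.
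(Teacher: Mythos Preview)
Your proof is correct and follows essentially the same strategy as the paper: lift the g-bundle combinations to bundle combinations of flows via $\phi$, use Lemma~\ref{lem:phi-preimage} to see that the two lifts differ only by straight-route indicators, then invoke the uniqueness in Theorem~\ref{thm:comb}. The only cosmetic difference is that the paper partitions the straight routes by the sign of $r_j$ and places each on one side, while you add sufficiently large positive multiples of all straight routes to both sides; your version has the minor advantage of guaranteeing strict positivity on every straight-route coefficient without edge cases.
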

\begin{proof}
	We first show that $\xx\in\mathbb R^{V_{\text{int}}}$ may have at most one expression as a positive g-bundle combination. Suppose that we have an equality of positive g-bundle combinations
	\begin{equation}\label{tmb}\sum_{p\in\K_1\text{ bending}}a_p\g(p)+\sum_{p\in\B_1}a_p\g(p)=\sum_{p\in\K_2\text{ bending}}b_p\g(p)+\sum_{p\in\B_2}b_p\g(p).\end{equation}
		By Lemma~\ref{lem:phi-preimage}, the values 
			$\sum_{p\in\K_1\text{ bending}}a_p\I(p)+\sum_{p\in\B_1}a_p\I(p)$ and $\sum_{p\in\K_2\text{ bending}}b_p\I(p)+\sum_{p\in\B_2}b_p\I(p)$ differ only by indicator vectors of straight routes. Then for some partition $A_1\sqcup A_2$ of the set of straight routes of $\tL$ and nonnegative coefficients $a_p$ for $p\in A_1\sqcup A_2$, we have
		the equality of positive bundle combinations
	\begin{equation}\label{tmthree}\sum_{p\in\K_1\text{ bending}}a_p\I(p)+\sum_{p\in\B_1}a_p\I(p)+\sum_{p\in A_1}a_p\I(p)=\sum_{p\in\K_2\text{ bending}}b_p\I(p)+\sum_{p\in\B_2}b_p\I(p)+\sum_{p\in A_2}a_p\I(p).\end{equation}
		Bundle combinations for a fixed flow are unique by Proposition~\ref{lem:only-way}, so the two positive bundle combinations of equation~\eqref{tmthree} are the same, hence the positive g-bundle combinations of equation~\eqref{tmb} are also the same. This shows the first statement.
	It remains to show that if $\xx$ is rational, then it may be expressed as a g-bundle combination. This follows by applying $\phi$ to Theorem~\ref{thm:comb}.
\end{proof}

We now are able to phrase the g-clique triangulations and g-bundle subdivisions.

\begin{defn}\label{defn:ct}
	Define the \emph{g-clique triangulations}
	\begin{align*}\Tt(\g_1(\L))&:=\{\g_1(\K)\ :\ \K\text{ is a maximal clique of }\tL\}
	\text{ and }\\
		\Tt(\g_{\geq0}(\L))&:=\{\g_{\geq0}(\K)\ :\ \K\text{ is a maximal clique of }\tL\}.
	\end{align*}
	Define the \emph{g-bundle subdivisions}
	\begin{align*}\Sbs^{\spircle}(\g_1(\L))&:=\{\g_1(\bK)\ :\ \bK\text{ is a maximal bundle of }\tL\}
	\text{ and }\\
		\Sbs^{\spircle}(\g_{\geq0}(\L))&:=\{\g_{\geq0}(\bK)\ :\ \bK\text{ is a maximal bundle of }\tL\}.
	\end{align*}
\end{defn}

It is a standard construction in representation theory that $\Tt(\g_{\geq0}(\L))$ is a complete triangulation of the g-vector fan $\g_{\geq0}(\L)$, and hence a triangulation of $\mathbb R^{V_{\text{int}}}$ since the g-vector fan is dense in $\mathbb R^{V_{\text{int}}}$ by~\cite{AY}. A corollary of our turbulence polyhedron results is that the other three subdivisions defined above behave well.

\begin{cor}\label{thmDNI}
	The g-clique triangulation $\Tt(\g_1(\L))$ is a unimodular triangulation of the g-polyhedron $\g_1(\L)$.
	The g-bundle subdivision $\Sbs^{\spircle}(\g_1(\L))$ is a subdivision of the g-polyhedron $\g_1(\L)$ which covers all rational points.
\end{cor}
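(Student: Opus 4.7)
The plan is to transport the clique triangulation and bundle subdivision results from the turbulence polyhedron to the g-polyhedron via the quotient map $\phi$. The essential inputs are already in hand: Corollary~\ref{cor:clique-subd} provides the unimodular triangulation of $\F_1(\tL)$ by clique simplices; Corollary~\ref{cor:bundle-subd} provides the bundle subdivision covering all rational points; Proposition~\ref{prop:phi-on-simplices} identifies $\phi(\Delta_1(\K)) = \g_1(\K)$ (and similarly for bundle spaces, by Remark~\ref{remk:collapse-exceptional}, since straight routes collapse under $\phi$); Corollary~\ref{cor:phi-im-g} identifies $\phi(\F_1(\tL)) = \g_1(\L)$; and Theorem~\ref{thm:g-decomp} supplies the uniqueness and rational existence of g-bundle combinations.

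For the first statement, I would first show that the g-simplices $\g_1(\K)$ cover a dense subset of $\g_1(\L)$. This follows by applying $\phi$ to the density statement of Proposition~\ref{prop:density}: any point of $\g_1(\L)$ is a $\phi$-image of a point of $\F_1(\tL)$, which can be approximated by points in clique simplices, whose $\phi$-images lie in g-simplices and approximate the original point by continuity. Unimodularity of each $\g_1(\K)$ is Theorem~\ref{thm:uni-cite} combined with Remark~\ref{remk:g-sttilt-clique}. The remaining work is the strong intersection property: given two maximal cliques $\K_1, \K_2$, I must show $\g_1(\K_1) \cap \g_1(\K_2)$ is a common face. This follows from uniqueness of g-clique combinations (which is the band-free specialization of Theorem~\ref{thm:g-decomp}): any point in the intersection has a unique positive g-clique combination, supported on $\K_1 \cap \K_2$ (after passing to bending routes), so it lies in the face spanned by those common g-vectors.

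For the second statement, I would mirror the argument using g-bundle spaces in place of g-simplices. Density of rational points in g-bundle spaces follows by pushing Corollary~\ref{cor:bundle-subd} through $\phi$: every rational point $\xx \in \g_1(\L)$ lifts (via a choice of rational preimage in $\F_1(\tL)$, using Lemma~\ref{lem:phi-preimage} and rationality of $\phi$) to a rational flow, which is contained in some bundle space $\Delta_1(\bK)$, whose image is the g-bundle space $\g_1(\bK)$. The strong intersection property again follows from uniqueness in Theorem~\ref{thm:g-decomp}, applied this time to the full g-bundle setting: a point in $\g_1(\bK_1) \cap \g_1(\bK_2)$ has a unique positive g-bundle expression, which is supported on bending routes and bands common to $\bK_1$ and $\bK_2$, placing it in the face spanned by these common elements.

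The main obstacle I anticipate is the strong intersection property, since $\phi$ is a quotient map rather than an embedding and one cannot simply pull back face structure from $\F_1(\tL)$. The key point is that Theorem~\ref{thm:g-decomp} already provides the necessary uniqueness directly at the g-vector level, bypassing the need to track preimages under $\phi$. Once uniqueness is available, the face identification argument is standard: if $\xx = \sum a_p \g(p) = \sum b_q \g(q)$ are positive g-bundle combinations indexed by $\bK_1$ and $\bK_2$ respectively, uniqueness forces these combinations to coincide and hence to be supported on $\bK_1 \cap \bK_2$, placing $\xx$ in the face $\g_1(\bK_1 \cap \bK_2)$ which is a common face of both g-bundle spaces.
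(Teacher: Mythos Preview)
Your proposal is correct and aligns with the paper's approach. The paper's proof is the single sentence ``Theorem~\ref{thm:g-decomp} gives the intersection and density properties in all cases,'' and your proposal simply unpacks this: uniqueness of positive g-bundle combinations yields the strong intersection property (exactly as you argue, a point in $\g_1(\bK_1)\cap\g_1(\bK_2)$ has a unique positive expression, forcing it into the common face $\g_1(\bK_1\cap\bK_2)$), while existence for rational points yields density. Your explicit invocation of Theorem~\ref{thm:uni-cite} for unimodularity and of Proposition~\ref{prop:density} pushed through $\phi$ for density of g-simplices fills in details the paper leaves implicit, but the core mechanism is identical.
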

\begin{proof}
	Theorem~\ref{thm:g-decomp} gives the intersection and density properties in all cases.
\end{proof}

We also prove the analogous subdivision results about the g-vector fan.
\begin{cor}
	\label{thmENI}
	The g-clique subdivision $\Tt(\g_{\geq0}(\L))$ recovers the usual g-vector fan of $\L$.
	The g-bundle subdivision $\Sbs^{\spircle}(\g_{\geq0}(\L))$ is a subdivision of $\mathbb R^{V_{\text{int}}}$ which covers all rational points.
\end{cor}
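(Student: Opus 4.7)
For the first claim, the plan is essentially to read off the statement from Section~\ref{sec:bac}. By Theorem~\ref{thm:kiss-compat}, the assignment $\K\mapsto M(\K):=\oplus_{p\in\K\text{ bending}}\tilde M(p)$ bijects maximal cliques of $\tL$ with basic support $\tau$-tilting $\L$-modules, and Remark~\ref{remk:g-sttilt-clique} gives the equality of cones $\g_{\geq0}(\K)=\g_{\geq0}(M(\K))$. Hence the maximal cones of $\Tt(\g_{\geq0}(\L))$ coincide with the maximal cones of the representation-theoretic g-vector fan of $\L$, and the containment of lower-dimensional faces is automatic.

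For the second claim, my plan is to transport the subdivision of Corollary~\ref{thmDNI} from the g-polyhedron $\g_1(\L)$ to the ambient space $\mathbb R^{V_{\text{int}}}$ via conic hulls based at the origin. First I would observe that $\g_{\geq0}(\bK)=\bigcup_{\lambda\geq0}\lambda\cdot\g_1(\bK)$, since the defining constraint $\sum_{p\in\K\text{ bending}}a_p\leq 1$ of $\g_1(\bK)$ scales with $\lambda$ to give an arbitrary nonnegative bound. Because $\g_1(\L)$ contains the origin in its interior by Lemma~\ref{lem:0-int1}, the conic hull of $\g_1(\L)$ from the origin fills all of $\mathbb R^{V_{\text{int}}}$. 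Density then transfers cleanly: any rational $\xx\in\mathbb R^{V_{\text{int}}}$ lies in some rational dilate $\lambda\cdot\g_1(\L)$, and by Corollary~\ref{thmDNI} the rational point $\xx/\lambda$ is contained in some $\g_1(\bK)\in\Sbs^{\spircle}(\g_1(\L))$, whence $\xx\in\g_{\geq0}(\bK)$.

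For the strong intersection property, I would invoke the uniqueness statement of Theorem~\ref{thm:g-decomp}. Given distinct maximal bundles $\bK_1,\bK_2$ and a point $\xx\in\g_{\geq0}(\bK_1)\cap\g_{\geq0}(\bK_2)$, represent $\xx$ as a positive g-bundle combination on each side, with supports given by sub-bundles $\bK_1'\subseteq\bK_1$ and $\bK_2'\subseteq\bK_2$. Uniqueness of positive g-bundle combinations forces $\bK_1'=\bK_2'\subseteq\bK_1\cap\bK_2$, so $\xx\in\g_{\geq0}(\bK_1\cap\bK_2)$; combined with the trivial reverse inclusion, this identifies $\g_{\geq0}(\bK_1)\cap\g_{\geq0}(\bK_2)=\g_{\geq0}(\bK_1\cap\bK_2)$.

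The main obstacle will be verifying that this intersection is in fact a proper face of each $\g_{\geq0}(\bK_i)$, and not merely a common subset. My plan is to show that each cone $\g_{\geq0}(\bK)$ with $\bK$ maximal is simplicial, after which faces indexed by sub-bundles $\bK'\subseteq\bK$ are obtained by the standard face structure of a simplicial cone. Simpliciality amounts to linear independence of the g-vectors of the trails in $\bK_{\text{red}}$, which I expect to again extract from Theorem~\ref{thm:g-decomp}: a nontrivial relation $\sum_p c_p\g(p)=0$ among such g-vectors would split into its positive and negative parts, yielding two distinct positive g-bundle representations of one point and contradicting uniqueness. As a cross-check, the dimension computation in Lemma~\ref{lem:trail-space-dim} together with the quotient description of $\phi$ in Lemma~\ref{lem:phi-preimage} should give the same linear independence. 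Properness of the face is then automatic from $\bK_1\neq\bK_2$ both being maximal.
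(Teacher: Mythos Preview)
Your proof is correct, and for the first claim it matches the paper's one-word ``immediate'' (you just spell out the bijection via Theorem~\ref{thm:kiss-compat} and Remark~\ref{remk:g-sttilt-clique}).

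For the second claim you take a more circuitous path than necessary. The paper's entire proof is ``given by Theorem~\ref{thm:g-decomp}'': that theorem already asserts that every rational point of $\mathbb R^{V_{\text{int}}}$ has a g-bundle combination and that positive g-bundle combinations are unique, which are exactly the density and intersection ingredients for a subdivision. Your detour through $\g_1(\L)$, interiority of the origin, and rational dilates is valid but redundant---Theorem~\ref{thm:g-decomp} was proved precisely by applying $\phi$ to Theorem~\ref{thm:comb}, so the nonnegative-cone version is already available without scaling from the unit slice.

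Where your write-up adds value is in the strong intersection property: the paper (both here and in the parallel Corollaries~\ref{cor:bundle-subd} and~\ref{thmDNI}) simply declares that uniqueness of positive combinations yields the face condition, without saying why $\g_{\geq0}(\bK_1\cap\bK_2)$ is a \emph{face} of each $\g_{\geq0}(\bK_i)$ rather than just a subcone. Your simpliciality argument---splitting a hypothetical linear relation among the $\g(p)$ for $p\in\bK_{\text{red}}$ into positive and negative parts to contradict uniqueness in Theorem~\ref{thm:g-decomp}---is a clean way to fill this gap, and your cross-check via Lemma~\ref{lem:trail-space-dim} and Lemma~\ref{lem:phi-preimage} is also sound.
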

\begin{proof}
	The first statement is immediate.
	The second is given by Theorem~\ref{thm:g-decomp}.
\end{proof}

We remark specifically that $\Sbs^{\spircle}(\g_{\geq0}(\tL))$ is obtained from the g-vector fan by adding in the lower-dimensional g-walls of maximal bundles containing bands.
These new g-walls are hence giving us some information about the complement of the g-vector fan.

\begin{example}\label{ex:kron-g-decomp}
	Recall Example~\ref{ex:kron-eddy-decomp}, which gave the bundle subdivision of the turbulence polyhedron of the Kronecker quiver $\L$ in Figure~\ref{fig:kron1}.
	Figure~\ref{KRONINTRO} shows the g-polyhedron of $\L$, where the left internal vertex is the x-axis and the right is the y-axis.
	Note that the g-polyhedron $\g_1(\L)$ is obtained by quotienting the turbulence polyhedron $\F_1(\tL)$ by the affine span of the indicator vectors of straight routes, drawn as black vertices at the top and bottom. This quotient respects all subdivisions which we consider.

	The vertices of $\g_1(\L)$ are $\g({e_3}^{-1}{f_3})=(0,1)$ and $\g({e_1}{f_1}^{-1})=(-1,0)$. Note that these are the images of $\I({e_3}^{-1}{f_3})$ and $\I({e_1}{f_1}^{-1})$ under $\phi$. The other two vertices of $\F_1(\tL)$ come from straight routes and are hence sent to the origin under $\phi$. The recession cone of $\g_1(\L)$ is the ray generated by $\g({e_2}{f_2}^{-1})=(1,-1)$.
	Example~\ref{ex:kron-eddy-decomp} listed the maximal bundles as
	\begin{align*}
		\{{e_1}{e_2}{e_3},\ {f_1}{f_2}{f_3},\ &{e_3}^{-1}{f_3},\ {e_1}{f_1}^{-1}\}, \\
		\{{e_1}{e_2}{e_3},\ {f_1}{f_2}{f_3},\ &{e_3}^{-1}({e_2}{f_2}^{-1})^j{f_3},\ {e_3}^{-1}({e_2}{f_2}^{-1})^{j+1}{f_3}\} \ (\textup{for any }j\geq0), \\
		\{{e_1}{e_2}{e_3},\ {f_1}{f_2}{f_3},\ &{e_1}({e_2}{f_2}^{-1})^j{f_1}^{-1},\ {e_1}({e_2}{f_2}^{-1})^{j+1}{f_1}^{-1}\}\ (\textup{for any }j\geq0), \text{ and } \\
		\{{e_1}{e_2}{e_3},\ {f_1}{f_2}{f_3},\ &{e_2}{f_2}^{-1}\}.
	\end{align*}
	The top three rows consist only of routes, hence they give g-simplices. Their g-simplices give a unimodular triangulation of a dense subset of $\g_1(\L)$: 
	\begin{itemize} 
		\item The top row gives the g-simplex with vertices 
			\[\{(0,0),\g({e_1}{f_1}^{-1}),\g({e_3}^{-1}{f_3})\}=\{(0,0),(-1,0),(0,1)\}.\]
		\item The next row gives the g-simplex with vertices
			$\{(0,0),(j,1-j),(j+1,-j)\}$
			for any $j\in\mathbb Z_{\geq0}$.
		\item the next row gives the g-simplex with vertices
			$\{(0,0),(j-1,-j),(j,-j-1)\}$
			for any $j\in\mathbb Z_{\geq0}$.
	\end{itemize}
	The final maximal bundle $\{{e_1}{e_2}{e_3},{f_1}{f_2}{f_3},{e_2}{f_2}^{-1}\}$ is the only bundle with a band. This gives the g-wall $\{(a,-a)\ :\ a\in\mathbb R_{\geq0}\}$ consisting of all scalar multiples of the ray $\g({e_2}{f_2}^{-1})=(1,-1)$.
	This g-wall covers every point of $\g_1(\L)$ which is not part of a g-simplex.

	Taking the cone over this subdivided g-polyhedron gives a subdivision for all of $\mathbb R^{V_{\text{int}}}$. The g-vector fan of $\L$ is the union of the g-cones of maximal cliques, whereas the bundle wall of the sole maximal bundle containing a band gives the one-dimensional complement of the g-vector fan.
\end{example}

\begin{example}\label{ex:gem-g}
	Let $\L$ be the gentle algebra of Example~\ref{ex:gemd} (Figure~\ref{fig:nonv-turb}).
	Figure~\ref{GEMINTRO} gives the triangulated g-polyhedron, where the left internal vertex is the x-axis and the right is the y-axis.
\end{example}

We will see an example when the g-bundle subdivision is not complete in Section~\ref{sec:example}.

\begin{remk}\label{remk:bundle-subd-reduced}
	Recall that the map $\bK\mapsto\bK_{\text{red}}$ removing any straight routes from a bundle gives a bijection from maximal bundles to maximal reduced bundles, and that $\g_1(\bK)=\g_1(\bK_{\text{red}})$ and $\g_{\geq0}(\bK)=\g_{\geq0}(\bK_{\text{red}})$. Hence, the g-clique triangulations and g-bundle subdivisions may be thought of as given by bundle spaces of maximal \emph{reduced} cliques and maximal \emph{reduced} bundles, rather than of maximal cliques and maximal bundles as in Definition~\ref{defn:ct}.
\end{remk}

\subsection{Faces of $\F_1(\tL)$ and $\g_{1}(\L)$}

We now wish to describe the faces of $\g(\tL)$, especially the facets and vertices. In order to do so, we first must obtain a better understanding of the proper faces of $\F_1(\tL)$.

\begin{defn}
	Let $W$ be a set of arrows of $\tL$. Define the face $Q_W$ of $\F_1(\tL)$ as
	\[Q_W:=\{F\in\F_1(\tL)\ :\ F(\alpha)=0\text{ for all }\alpha\in W.\}\]
\end{defn}

By Lemma~\ref{lem:facet}, all faces of $\F_1(\tL)$ are of the form $Q_W$ for some $W\subseteq E$ (recall that $E$ is the set of arrows of $\tL$).
In fact, we can do a bit better. We need more terminology to describe sets of arrows.

\begin{defn}
	A set $W\subseteq E$ is
		\emph{closed} if there exists a unit flow $F_W$ on $\tL$ such that $F_W(\alpha)=0$ if and only if $\alpha$ is an arrow of $W$.
\end{defn}

Let $W\subseteq E$. If $W_1$ and $W_2$ are two closed arrow sets containing $W$, then $F_{W_1\cap W_2}:=\frac{F_{W_1}+F_{W_2}}{2}$ is a unit flow such that $F_{W_1\cap W_2}(\alpha)=0$ if and only if $\alpha$ is an arrow of $W_1\cap W_2$, hence $W_1\cap W_2$ is closed. We may then define the \emph{closure} ${\overline{W}}$ of $W$ to be the smallest closed set of arrows containing $W$.

\begin{lemma}\label{lem:closure-respects-face}
	If $W\subseteq E$, then $Q_W=Q_{{\overline{W}}}$.
\end{lemma}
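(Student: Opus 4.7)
The plan is to show the two inclusions separately, with the easy direction coming from the trivial observation that $W\subseteq\overline{W}$ forces $Q_{\overline{W}}\subseteq Q_W$. For the nontrivial direction $Q_W\subseteq Q_{\overline{W}}$, the natural object to introduce is the set
\[W^\ast:=\{\alpha\in E\ :\ F(\alpha)=0\text{ for all }F\in Q_W\}\]
of arrows that are automatically zeroed out by the defining constraints of $Q_W$. By construction $W\subseteq W^\ast$ and $Q_W\subseteq Q_{W^\ast}$; and since $W^\ast\supseteq W$ we also have $Q_{W^\ast}\subseteq Q_W$, so $Q_W=Q_{W^\ast}$.

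The key step will then be to verify that $W^\ast$ is closed in the sense of the definition. For this, the idea is to construct a witness flow by averaging: for each arrow $\alpha\notin W^\ast$, definitionally pick a unit flow $F_\alpha\in Q_W$ with $F_\alpha(\alpha)>0$, and let $F_{W^\ast}$ be the uniform convex combination of these $F_\alpha$ over $\alpha\in E\setminus W^\ast$. Because $Q_W=Q_{W^\ast}$, every $F_\alpha$ vanishes on all of $W^\ast$, so the average does too; meanwhile for any $\beta\notin W^\ast$ the term $F_\beta(\beta)>0$ forces $F_{W^\ast}(\beta)>0$, since all the other terms in the average are nonnegative. Thus $F_{W^\ast}$ is a unit flow vanishing exactly on $W^\ast$, proving that $W^\ast$ is closed.

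The lemma then follows by minimality of the closure: $W^\ast$ is a closed set containing $W$, so $\overline{W}\subseteq W^\ast$, and hence $Q_{W^\ast}\subseteq Q_{\overline{W}}$. Combined with $Q_W=Q_{W^\ast}$, this yields $Q_W\subseteq Q_{\overline{W}}$ as required. I do not anticipate a real obstacle here; the only subtlety is being careful that the averaging argument uses $Q_W=Q_{W^\ast}$ (rather than just $Q_W$) so that the constructed flow $F_{W^\ast}$ actually vanishes on the entirety of $W^\ast$ and not merely on the smaller set $W$.
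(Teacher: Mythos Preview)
Your proof is correct. The paper's argument is organized slightly more directly: rather than introducing the set $W^\ast$ and showing it is closed via averaging, the paper observes for each individual $F\in Q_W$ that its zero set $W_F:=\{\alpha:F(\alpha)=0\}$ is itself closed (witnessed by $F$) and contains $W$, hence contains $\overline{W}$, so $F\in Q_{\overline{W}}$ immediately. Your route is essentially the same idea aggregated over all $F$; it takes one extra step (the averaging construction) but in exchange implicitly establishes the auxiliary fact $W^\ast=\overline{W}$. Also note that your parenthetical worry is unnecessary: that each $F_\alpha$ vanishes on all of $W^\ast$ follows straight from the definition of $W^\ast$, without needing $Q_W=Q_{W^\ast}$.
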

\begin{proof}
	Since ${\overline{W}}\supseteq W$, it is immediate any $F\in Q_{{\overline{W}}}$ is also in $Q_W$, hence $Q_{{\overline{W}}}\subseteq Q_W$.
	We wish to show that any $F\in Q_W$ is in $Q_{{\overline{W}}}$. So, take $F\in Q_W$.
	Let $W_F$ be the set of arrows $\alpha\in E$ such that $F(\alpha)=0$. Since $F\in Q_W$, we have $W_F\supseteq W$. Moreover, since $F$ is a flow, $W_F$ is closed, hence $W_F\supseteq{\overline{W}}$. It then follows that $F(\alpha)=0$ for all $\alpha\in{\overline{W}}$, hence $F\in Q_{{\overline{W}}}$. This completes the proof.
\end{proof}

\begin{prop}\label{prop:closed-arrow-sets-face}
	The map $W\mapsto Q_W$ bijects between closed arrow sets and faces of $\F_1(\tL)$.
\end{prop}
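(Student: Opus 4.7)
The plan is to unpack this as three claims: well-definedness of the map $W \mapsto Q_W$ on closed arrow sets, surjectivity onto the faces of $\F_1(\tL)$, and injectivity. Well-definedness is immediate, since $Q_W$ is the intersection of $\F_1(\tL)$ with the hyperplanes $H_\alpha^{0}$ for $\alpha \in W$, hence is either empty or a face; when $W$ is closed, $Q_W$ contains the witness flow $F_W$ and so is nonempty.

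For surjectivity, I would proceed as follows. Let $Q$ be any nonempty face of $\F_1(\tL)$. Lemma~\ref{lem:facet} asserts that every facet of $\F_1(\tL)$ is of the form $Q_{\{\alpha\}}$ for some $\alpha \in E$, and every face of a polyhedron is the intersection of the facets that contain it. Hence $Q = \bigcap_{\alpha \in W} Q_{\{\alpha\}} = Q_W$ for some $W \subseteq E$. Applying Lemma~\ref{lem:closure-respects-face}, we have $Q = Q_W = Q_{\overline{W}}$, so $Q$ lies in the image of the map restricted to closed sets. (The empty face, if we allow it, corresponds to the ``closed set'' $E$ itself, and the full polyhedron $\F_1(\tL)$ corresponds to $W = \overline{\emptyset}$, which is the set of arrows through which every unit flow must give zero weight.)

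For injectivity, suppose $W_1 \neq W_2$ are both closed. Without loss of generality, choose $\alpha \in W_1 \setminus W_2$. By closedness of $W_2$, there exists a unit flow $F_{W_2} \in \F_1(\tL)$ with $F_{W_2}(\beta) = 0$ iff $\beta \in W_2$. Since $\alpha \notin W_2$, we get $F_{W_2}(\alpha) > 0$, so $F_{W_2} \in Q_{W_2}$ but $F_{W_2} \notin Q_{W_1}$, whence $Q_{W_1} \neq Q_{W_2}$.

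The only step requiring any care is the justification that the closure $\overline{W}$ exists and is itself a closed set, but this was already established just before the statement via the convex combination $F_{W_1 \cap W_2} = \tfrac{1}{2}(F_{W_1} + F_{W_2})$, so no new work is needed. I expect no serious obstacle: the main structural content is already packaged in Lemmas~\ref{lem:facet} and~\ref{lem:closure-respects-face} together with the definition of closedness, and the proof is essentially an assembly of these ingredients.
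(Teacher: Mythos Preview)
Your proof is correct and follows essentially the same approach as the paper: surjectivity via Lemma~\ref{lem:facet} combined with Lemma~\ref{lem:closure-respects-face}, and injectivity by exhibiting the witness flow $F_{W_2}$ distinguishing $Q_{W_1}$ from $Q_{W_2}$. The only difference is that you spell out well-definedness explicitly and add a parenthetical about the extremal cases (empty face and full polyhedron), which the paper omits; note that your remark on the empty face is slightly off, since $W=E$ is not closed under the paper's definition (no unit flow vanishes everywhere), so the bijection as stated concerns only nonempty faces.
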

\begin{proof}
	We first show surjectivity. By Lemma~\ref{lem:facet}, any facet of $\F_1(\tL)$ is obtained by necessitating zero flow through some arrow of $\tL$, hence any face $Q$ of $\F_1(\tL)$ arises as $Q_W$ for some set $W$ of arrows of $\tL$. It follows from Lemma~\ref{lem:closure-respects-face} that $Q_W=Q_{{\overline{W}}}$, so $Q$ is in the image of this map.

	We now show injectivity. Let $W_1$ and $W_2$ be different closed arrow sets. Then, without loss of generality, there must be some arrow $\alpha$ which is in $W_1$ but not $W_2$. By the definition of closedness, there exists a flow $F_{W_2}$ which zeros out on precisely the arrows of $W_2$ -- in particular, $F_{W_2}$ is in the face $Q_{W_2}$, but is not in the face $Q_{W_1}$, as $F_{W_2}(\alpha)>0$. This shows that $Q_{W_2}\neq Q_{W_1}$ and that the map is injective.
\end{proof}

We now obtain two alternative characterizations of closed arrow sets. Given $W\subseteq E$, we say that a route or band $p$ of $\tL$ is \emph{$W$-avoiding} if $p$ uses no arrows of $W$.
\begin{lemma}\label{lem:closed-then-trail}
	The following are equivalent for $W\subsetneq E$:
	\begin{enumerate}
		\item\label{th1} $W$ is closed.
		\item\label{th2} There exists a $W$-avoiding route of $\tL$ and for every $\alpha\not\in W$ there is a $W$-avoiding trail containing $\alpha$.
		\item\label{th3} There is a bundle $\bK$ which contains at least one route and which avoids precisely the arrows of $W$.
	\end{enumerate}
\end{lemma}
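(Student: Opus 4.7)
The plan is to prove the cycle of implications (1)$\Rightarrow$(3)$\Rightarrow$(2)$\Rightarrow$(1), leaning on the flow algorithm (Theorem~\ref{thm:comb}) for the one nontrivial direction.

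For (1)$\Rightarrow$(3), I would argue as follows. By Proposition~\ref{prop:closed-arrow-sets-face}, closedness of $W$ means the face $Q_W$ of $\F_1(\tL)$ is nonempty, and the set of unit flows that are zero on \emph{exactly} $W$ is its relative interior. Since $Q_W$ is a rational polyhedron, its relative interior contains a rational point $F_W$. Apply Theorem~\ref{thm:comb} to express $F_W$ as a positive bundle combination $F_W=\sum_{p\in\bK}a_p\I(p)$. Since bands contribute nothing to the strength, the unit strength of $F_W$ forces $\bK$ to contain at least one route. Finally, an arrow $\alpha$ is given zero flow by $F_W$ if and only if no trail of $\bK$ uses $\alpha$, so $\bK$ avoids precisely $W$.

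The implication (3)$\Rightarrow$(2) is essentially unpacking definitions: the promised route of $\bK$ gives a $W$-avoiding route of $\tL$, and since $\bK$ avoids \emph{precisely} $W$, for each $\alpha\notin W$ some trail of $\bK$ must traverse $\alpha$ and hence serves as the required $W$-avoiding trail containing $\alpha$.

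For (2)$\Rightarrow$(1), I would build the witness flow directly. Let $p$ be the $W$-avoiding route and, for each $\alpha\in E\setminus W$, let $t_\alpha$ be a $W$-avoiding trail containing $\alpha$. For small $\epsilon>0$, set
\[
F \;:=\; \I(p) \,+\, \sum_{\alpha\notin W}\epsilon\,\I(t_\alpha).
\]
Each $\I(q)$ satisfies conservation and nonnegativity, so $F$ is a nonnegative flow. By construction, $F(\beta)=0$ for every $\beta\in W$ and $F(\alpha)>0$ for every $\alpha\notin W$. The strength of $F$ is $1+\epsilon\cdot(\text{number of }t_\alpha\text{ that are routes})$, which is positive, so rescaling $F$ to have strength $1$ produces a unit flow $F_W$ whose zero set is exactly $W$, witnessing closedness.

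The only subtle step is (1)$\Rightarrow$(3), where one must be slightly careful to land on a \emph{rational} flow in the relative interior of $Q_W$ (so that Theorem~\ref{thm:comb} applies and yields an honest bundle) rather than merely any flow zero on $W$; this is guaranteed because $Q_W$ is defined by rational linear inequalities and thus its relative interior is dense in rationals. The remaining two directions are essentially bookkeeping.
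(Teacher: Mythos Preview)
Your proof is correct and follows essentially the same route as the paper's: the cycle $(1)\Rightarrow(3)\Rightarrow(2)\Rightarrow(1)$, with the flow algorithm (Theorem~\ref{thm:comb}) powering the only nontrivial step. The one cosmetic difference is in how you locate the rational flow vanishing on exactly $W$: you invoke the general fact that the relative interior of a rational polyhedron contains rational points, whereas the paper takes the possibly irrational witness $F_W$, sets $\epsilon<\min_{\alpha\notin W}F_W(\alpha)$, and picks a rational $F'_W\in Q_W$ within $\epsilon$ of $F_W$. Both arguments accomplish the same thing, and your $(2)\Rightarrow(1)$ construction (build a flow from $\I(p)$ plus small multiples of the $\I(t_\alpha)$, then rescale) is likewise a harmless variant of the paper's direct normalization.
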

In particular, condition~\eqref{th2} is easy to verify in practice for a set of arrows $W\subseteq E$.
\begin{proof}
	We first show that~\eqref{th1}$\implies$\eqref{th3}.
	Suppose $W\subsetneq E$ is closed. We first find a rational flow $F'_W$ which gives zero flow to precisely the arrows of $W$. Since $W$ is closed, there exists a flow $F_W$ of $\tL$ such that $F_W(\alpha)=0$ if and only if $\alpha\in W$. Let $0<\epsilon<\min_{\alpha\not\in W}F_W(\alpha)$ and choose a rational flow $F'_W$ in the face $Q_W$ of $\F_1(\tL)$ such that $|F'_W-F_W|<\epsilon$. Since $F'_W$ is rational, it may be realized as a positive bundle combination
	$F'_W=\sum_{p\in\bK}a_p\I(p)$.
	Because $|F'_W-F_W|<\epsilon$, we know that $F'_W(\alpha)>0$ for all $\alpha\not\in W$ and hence each $\alpha\not\in W$ is in some trail of $\bK$. Since $F'_W\in Q_W$, we know that $F'_W(\alpha)=0$ for all $\alpha\in W$, hence no $\alpha\in W$ appears in any trail of $\bK$. Since $F'_W$ is a unit flow, it gives positive flow to some fringe arrow, hence $\bK$ must contain at least one route. This completes the proof that~\eqref{th1}$\implies$\eqref{th3}.

	It is immediate that~\eqref{th3}$\implies$\eqref{th2}.
	We finish the proof by showing that~\eqref{th2}$\implies$\eqref{th1}.
	Assuming~\eqref{th2}, we must have a collection $A\cup B$, where $A$ is a set of $W$-avoiding routes, $B$ is a set of $W$-avoiding bands, and every arrow not in $W$ is in some route of $A$ or band of $B$. Choose positive constants $\{a_p\ :\ p\in A\}$ and $\{b_p\ :\ p\in B\}$ such that $\sum_{p\in A}a_p=1$. Then $\sum_{p\in A}a_p\I(p)+\sum_{p\in B}b_p\I(B)$ is a flow zeroing out on precisely the arrows of $W$, hence $W$ is closed.
\end{proof}

\begin{defn}
	Given an arrow $\alpha$ of $\tL$, let $s_\alpha$ denote the straight route of $\alpha$. Let $s_{\alpha}^+$ denote the subpath of $s_\alpha$ beginning with $\alpha$ and ending with a fringe arrow, and let $s_\alpha^-$ denote the subpath of $s_\alpha$ beginning with a fringe arrow and ending with $\alpha$. Given any trail $p$ of $\tL$ and $W\subseteq E$, we use $W\cap p$ to refer to the set of arrows of $W$ which appear in $p$.
\end{defn}

\begin{prop}\label{prop:QW_hyperplane}
	If $W$ is a closed arrow set, then the half-space of $\mathbb R^E$ defined by the inequality 
	\[\sum_{\alpha\in W}\frac{1}{|W\cap s_\alpha|}F(\alpha)\geq0\]
	is a defining half-space of $Q_W$.
\end{prop}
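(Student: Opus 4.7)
The plan is to verify the two defining properties of the linear functional
\[L(F) := \sum_{\alpha\in W}\tfrac{1}{|W\cap s_\alpha|}F(\alpha):\]
namely, that $L(F)\geq 0$ on every flow $F\in\F_1(\tL)$, and that the hyperplane $\{L=0\}$ cuts out precisely $Q_W$ from $\F_1(\tL)$. Both are essentially immediate once the coefficients are seen to be positive, so the main ``work'' is bookkeeping rather than any real geometric argument.

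First, I would observe that for each $\alpha\in W$ the straight route $s_\alpha$ contains $\alpha$, so $\alpha\in W\cap s_\alpha$ and hence $|W\cap s_\alpha|\geq 1$. Every coefficient $\tfrac{1}{|W\cap s_\alpha|}$ is therefore a well-defined strictly positive rational number. Since any $F\in\F_1(\tL)$ satisfies $F(\alpha)\geq 0$ for all arrows (Definition~\ref{defn:flowgent}), every summand defining $L(F)$ is nonnegative, so $L(F)\geq 0$. This shows the half-space $\{F\in\mathbb R^E:L(F)\geq 0\}$ contains $\F_1(\tL)$.

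Next I would check the equality case. If $F\in Q_W$, then by definition $F(\alpha)=0$ for all $\alpha\in W$, so every summand of $L(F)$ vanishes and $L(F)=0$. Conversely, suppose $F\in\F_1(\tL)$ satisfies $L(F)=0$. Since each term $\tfrac{1}{|W\cap s_\alpha|}F(\alpha)$ is nonnegative and has strictly positive coefficient, the only way a sum of such terms can vanish is for each factor $F(\alpha)$ with $\alpha\in W$ to equal zero. Thus $F\in Q_W$.

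Combining these observations, $Q_W=\F_1(\tL)\cap\{L=0\}$ and $\F_1(\tL)\subseteq\{L\geq 0\}$, which is exactly what it means for the half-space $\{L\geq 0\}$ to be a defining half-space for the face $Q_W$. There is no real obstacle here: the specific coefficient choice $\tfrac{1}{|W\cap s_\alpha|}$ is not needed for this half-space statement itself (any set of strictly positive coefficients indexed by $W$ would do), but I would note parenthetically that this particular normalization is the one that makes the linear form behave cleanly under the quotient map $\phi$ of Section~\ref{sec:g}, which is presumably why it is singled out for use in the subsequent facet description of $\g_1(\L)$.
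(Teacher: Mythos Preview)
Your proof is correct and follows essentially the same approach as the paper: both argue that nonnegativity of flows makes the inequality hold everywhere on $\F_1(\tL)$, and that equality forces $F(\alpha)=0$ for all $\alpha\in W$ since the coefficients are strictly positive. Your version is simply more explicit about the bookkeeping, and your closing remark about the specific normalization matches the paper's own observation immediately following the proposition.
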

Note that for any $\alpha\in W$, the set $W\cap s_{\alpha}$ contains at least the arrow $\alpha$, so we are not dividing by zero in the coefficients of Proposition~\ref{prop:QW_hyperplane}.
\begin{proof}
	Since flows of $\F_1(\tL)$ are nonnegative, it is immediate that this inequality is satisfied everywhere in $\F_1(\tL)$. Moreover, it is an equality precisely when all arrows of $W$ are given zero flow, which is the condition for a flow to be in $Q_W$.
\end{proof}

In fact, Proposition~\ref{prop:QW_hyperplane} is true with the constants $\frac{1}{|W\cap s_\alpha|}$ replaced by 1. However, we need this version of the defining hyperplanes for the proof of Theorem~\ref{thm:halfspaces}.
To this end, we now focus on faces of $\g_{1}(\L)$. We will now show that, similar to the face(t)s of $\F_1(\tL)$, face(t)s of $\g_{1}(\L)$ are indexed by certain closed arrow sets.

\begin{defn}
	Let $W\subseteq E$.
	\begin{enumerate}
		\item $W$ is \emph{crooked} if $W$ is closed and contains at least one arrow from every straight route of $\tL$.
		\item $W$ is \emph{barely crooked} if $W$ is closed and contains exactly one arrow from every straight route of $\tL$.
	\end{enumerate}
\end{defn}

\begin{example}
	Consider the fringed quiver on the left of Figure~\ref{GEMINTRO}. The set $\{f_2,e_1\}$ is closed by Lemma~\ref{lem:closed-then-trail} (3), since the highlighted bundle avoids precisely the arrows $e_1$ and $f_2$. Since it contains exactly one arrow from both straight routes of $\tL$, it is barely crooked.
	The set $\{f_2,e_1,e_3\}$ is closed by Lemma~\ref{lem:closed-then-trail} (3) because the bundle $\{f_1e_2^{-1}e_4\}$ avoids precisely the arrows $f_2$, $e_1$, and $e_3$. Because this set contains one arrow from the straight route $f_2f_2$ and two from the straight route $e_1e_2e_3e_4$, this set is crooked but not barely crooked.
	Finally, the set $\{f_1,f_2\}$ is closed because the bundle $\{e_1e_2e_3e_4\}$ avoids precisely $f_1$ and $f_2$, but the set $\{f_1,f_2\}$ is not crooked because it does not contain any arrow from the straight route $e_1e_2e_3e_4$.
\end{example}

\begin{lemma}\label{lem:barelycrooked}
	Let $\K$ be a maximal reduced clique of $\tL$. Then the set of arrows avoided by $\K$ is barely crooked.
\end{lemma}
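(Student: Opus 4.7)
The plan is to enlarge $\K$ by appending all straight routes of $\tL$ to obtain a (non-reduced) maximal clique $\K'$, and then identify $W$ with the set of distinguished arrows of the straight routes of $\K'$ via a counting argument. The key input will be the fact from~\cite[Proposition 2.28]{PPP} (cited in the text after Corollary~\ref{cor:card-upper-bound}) that in a maximal clique, every bending route has exactly two distinguished arrows and every straight route has exactly one.

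First, I will perform the counting. By Theorem~\ref{thm:ccc}, $\K'$ contains exactly $|V_{\text{int}}|$ bending routes and $2|V_{\text{int}}|-|E_{\text{int}}|$ straight routes. Summing distinguished arrow slots gives $2|V_{\text{int}}|+(2|V_{\text{int}}|-|E_{\text{int}}|)=4|V_{\text{int}}|-|E_{\text{int}}|=|E|$, where the final equality is~\cite[Lemma 2.3]{PPP} as used in the proof of Lemma~\ref{lem:flowdim}. On the other hand, a given arrow $\alpha$ can be distinguished for at most one route of $\K'$, since the $\prec_{\alpha^\e}$-maximal element of $\K'_{\alpha^\e}$ is unique. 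These two observations together force every arrow of $\tL$ to be distinguished for \emph{exactly} one route of $\K'$.

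Next, I will show that $W$ is precisely the set of arrows that are distinguished for some straight route of $\K'$. Recall from the remark just after Definition~\ref{defn:wkordG} that a straight route marked at any of its arrows is $\prec_{\alpha}$-minimal; for it to simultaneously be $\prec_{\alpha}$-maximal, it must be the \emph{only} marked route in $\K'_{\alpha^\e}$. Hence if $\alpha$ is distinguished for a straight route in $\K'$, no bending route of $\K'$ may use $\alpha$, which means $\alpha$ is not used by any route of $\K$ and so $\alpha\in W$. Conversely, if $\alpha\in W$, then no bending route of $\K'$ passes through $\alpha$, and the unique straight route $s_\alpha$ containing $\alpha$ is therefore the only route of $\K'$ through $\alpha$, making $\alpha$ trivially distinguished for $s_\alpha$. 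Combining this characterization with~\cite[Proposition 2.28]{PPP} (each straight route has exactly one distinguished arrow) immediately yields $|W\cap s|=1$ for every straight route $s$, giving the ``barely'' part of the conclusion.

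Finally, for closedness, I will apply Lemma~\ref{lem:closed-then-trail}(3) to the bundle $\K$ itself. As a reduced maximal clique, $\K$ is a set of pairwise compatible routes, and by Theorem~\ref{thm:ccc} it contains $|V_{\text{int}}|$ routes (positive in any nontrivial case). By the definition of $W$, the set of arrows avoided by $\K$ is precisely $W$, so Lemma~\ref{lem:closed-then-trail}(3) gives closedness. The main obstacle in the plan is establishing the distinguished-arrow characterization of $W$; once that is in hand, both the barely-crooked cardinality condition and closedness fall out directly.
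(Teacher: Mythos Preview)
Your proof is correct and follows the same approach as the paper: enlarge $\K$ to the maximal clique $\K'$ by adding the straight routes and analyze distinguished arrows. The paper's proof is terser---it cites Lemma~\ref{lem:dist} to produce one distinguished arrow per straight route and asserts the result follows---whereas you carry out the full counting argument (using~\cite[Proposition~2.28]{PPP}) to pin down that $W$ coincides exactly with the set of distinguished arrows of straight routes, and you explicitly invoke Lemma~\ref{lem:closed-then-trail}(3) for closedness; these are the details the paper leaves implicit.
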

\begin{proof}
	Let $\tilde\K$ be the maximal clique obtained by adding the straight routes to $\K$. Lemma~\ref{lem:dist} states that if $p$ is a straight route of $\tilde\K$, then $p$ has at least one distinguished arrow $\alpha_p$. Since $p$ is straight, this means that $\alpha_p$ is the an arrow of $p$ which is not used by any other route of $\tilde\K$. The result follows since this holds for every straight route.
\end{proof}

\begin{defn}\label{defn:fgfgfg}
	Let $v\in V_{\text{int}}$ be an internal vertex of $\tL$. Say $\alpha_v^-\beta_v^+$ and $\beta_v^-\alpha_v^+$ are the relations of $v$, so that $\alpha_v^-\alpha_v^+$ and $\beta_v^-\beta_v^+$ are the length-two oriented strings through $v$.
		Let
			$A_v:=\frac{|W\cap s_{\alpha_v^+}^+|}{|W\cap s_{\alpha}|}-\frac{1}{2}$
			and
			$B_v:=\frac{|W\cap s_{\beta_v^+}^+|}{|W\cap s_{\beta}|}-\frac{1}{2}$.
			Let $S_v=A_v+B_v$.
			We make two remarks about these definitions:
			\begin{enumerate}
				\item In other words, $A_v+\frac{1}{2}$ is the fraction of arrows of $W$ on the straight route containing $\alpha_v^-\alpha_v^+$ which appear after the vertex $v$.
				\item The values $A_v$ and $B_v$ depend on which incoming arrow to $v$ we denote $\alpha_v^-$ and which we denote $\beta^-$, which is a choice we fix arbitrarily for each $v\in V_{\text{int}}$ through the proof of the following theorem. Switching this choice switches the values $A_v$ and $B_v$. On the other hand, the sum $S_v=A_v+B_v$ is a well-defined value depending only on $v$ with no extra choice.
			\end{enumerate}
\end{defn}
\begin{thm}\label{thm:halfspaces}
	Let $W$ be a crooked arrow set. The preimage of the half-space of $\mathbb R^{V_{\text{int}}}$ defined by $\sum_{v\in V_{\text{int}}}S_v\xx(v)\geq-1$ through $\phi$ is precisely the half-space $\sum_{\alpha\in W}\frac{1}{|W\cap s_\alpha|}F(\alpha)\geq0$ of $\mathbb R^E$.
\end{thm}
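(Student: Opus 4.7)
The plan is to show that on $\text{aff}(\F_1(\tL))$ the linear form $F\mapsto \sum_v S_v(\phi(F))(v)+1$ agrees with the linear form $F\mapsto \sum_{\alpha\in W}\tfrac{1}{|W\cap s_\alpha|}F(\alpha)$, so that the corresponding half-spaces coincide. The identity is proved by a direct computation exploiting conservation of flow, the unit-flow condition, and an Abel-summation argument per straight route.

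First I would rewrite $(\phi(F))(v)$ at each internal vertex. From Definition~\ref{defn:phi} we have $(\phi(F))(v)=\tfrac{1}{2}\bigl(F(\alpha_v^+)+F(\beta_v^+)-F(\alpha_v^-)-F(\beta_v^-)\bigr)$. The relations $\alpha_v^-\beta_v^+$ and $\beta_v^-\alpha_v^+$ at $v$ give the conservation of flow equation $F(\alpha_v^-)+F(\beta_v^+)=F(\beta_v^-)+F(\alpha_v^+)$, which lets me collapse the two-term expression to $(\phi(F))(v)=F(\alpha_v^+)-F(\alpha_v^-)=F(\beta_v^+)-F(\beta_v^-)$. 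Using each of these two equal expressions once gives $S_v(\phi(F))(v)=A_v\bigl(F(\alpha_v^+)-F(\alpha_v^-)\bigr)+B_v\bigl(F(\beta_v^+)-F(\beta_v^-)\bigr)$, with the advantage that the two summands now decouple naturally along the two straight routes through $v$.

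Second I would regroup $\sum_v S_v(\phi(F))(v)$ by straight route. Because each internal vertex $v$ lies on exactly two straight routes (the one through its $\alpha$-pair and the one through its $\beta$-pair), the sum becomes $\sum_{s}\sum_{v\in s\text{ internal}} C_{v,s}\bigl(F(\gamma^+_{v,s})-F(\gamma^-_{v,s})\bigr)$, where $\gamma^-_{v,s}$ and $\gamma^+_{v,s}$ are the incoming and outgoing arrows of $s$ at $v$ and $C_{v,s}+\tfrac{1}{2}=|W\cap s^+_v|/|W\cap s|$ is the fraction of arrows of $W$ on $s$ that lie at or after $v$; here the crookedness of $W$ is essential, since it guarantees $|W\cap s|\geq 1$ for every straight route so that these fractions are well-defined.

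Third, for a fixed straight route $s=\gamma_0\gamma_1\cdots\gamma_m$ with $\gamma_0,\gamma_m$ fringe, I would compute the inner sum by Abel summation. Writing $N=|W\cap s|$ and $N_i=|W\cap\{\gamma_i,\ldots,\gamma_m\}|$ with $N_0=N$, $N_{m+1}=0$, and $N_i-N_{i+1}=[\gamma_i\in W]$, the telescoping collapses the inner sum to $\sum_{\alpha\in W\cap s}\tfrac{F(\alpha)}{|W\cap s_\alpha|}-\tfrac{1}{2}\bigl(F(\gamma_0)+F(\gamma_m)\bigr)$. Summing over all straight routes, each $\alpha\in W$ contributes exactly once (via its unique straight route $s_\alpha$) and each fringe arrow contributes to exactly one straight route, so the total becomes $\sum_{\alpha\in W}\tfrac{F(\alpha)}{|W\cap s_\alpha|}-\tfrac{1}{2}\sum_{\alpha\in E_{\text{fringe}}}F(\alpha)$. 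For $F\in\F_1(\tL)$, the unit-flow condition forces the last sum to equal $2$, yielding $\sum_v S_v(\phi(F))(v)=\sum_{\alpha\in W}\tfrac{F(\alpha)}{|W\cap s_\alpha|}-1$ and finishing the proof.

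The main obstacle is the bookkeeping in the regrouping step: each vertex splits its contribution between two straight routes (one via $A_v$, one via $B_v$), and the whole argument hinges on using conservation of flow to rewrite $(\phi(F))(v)$ in the two equivalent one-term forms so that these contributions decouple cleanly along straight routes. Once that decoupling is in place, the Abel summation per straight route and the unit-flow condition together account precisely for the additive constant $-1$ coming from the $-\tfrac{1}{2}$ terms in $A_v$ and $B_v$.
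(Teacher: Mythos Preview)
Your proposal is correct and follows essentially the same strategy as the paper: both proofs use conservation of flow at each vertex to split $S_v(\phi(F))(v)$ into the decoupled form $A_v(F(\alpha_v^+)-F(\alpha_v^-))+B_v(F(\beta_v^+)-F(\beta_v^-))$, and both invoke the unit-flow condition at the end to produce the constant $-1$.

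The only organizational difference is in the regrouping step. The paper regroups the resulting sum by \emph{arrow} and computes the coefficient of each $F(\alpha)$ via a four-way case analysis (internal versus fringe, in $W$ versus not in $W$). You instead regroup by \emph{straight route} and perform an Abel summation along each route, which packages the same telescoping more compactly and makes the role of $|W\cap s|$ transparent from the outset. Both computations are equivalent; yours is arguably a bit cleaner, while the paper's arrow-by-arrow version makes it slightly more explicit why arrows not in $W$ drop out.
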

Recall that the latter half-space is a defining half-space of $Q_W$ by Proposition~\ref{prop:QW_hyperplane}.
\begin{proof}
	Recall from Definition~\ref{defn:phi} that if $F$ is a flow and $v\in V_{\text{int}}$, then $\phi(F)(v)=\frac{1}{2}\big(F(\alpha_v^+)+F(\beta_v^+)-F(\alpha_v^-)-F(\beta_v^-)\big)$. Then pulling back the inequality $\sum_{v\in V_{\text{int}}}(A_v+B_v)\xx(v)\geq-1$ through $\phi$ gives the inequality 
	\begin{equation}\label{eqeq}\sum_{v\in V_{\text{int}}}\frac{A_v+B_v}{2}\big(F(\alpha_v^+)+F(\beta_v^+)-F(\alpha_v^-)-F(\beta_v^-)\big)\geq-1\end{equation}
of $\mathbb R^{E}$.
	Now, for a fixed $v$, we rewrite $\frac{A_v+B_v}{2}\big(F(\alpha_v^+)+F(\beta_v^+)-F(\alpha_v^-)-F(\beta_v^-)\big)$:
	\begin{align*}
		\frac{A_v+B_v}{2}\big(&F(\alpha_v^+)+F(\beta_v^+)-F(\alpha_v^-)-F(\beta_v^-)\big)\\
		=&\frac{A_v}{2}\big(F(\alpha_v^+)+F(\beta_v^+)-F(\alpha_v^-)-F(\beta_v^-)\big) \\
		&\ \ \ \ \ +\frac{B_v}{2}\big(F(\alpha_v^+)+F(\beta_v^+)-F(\alpha_v^-)-F(\beta_v^-)\big) & (\text{Distribute})\\
		=&\frac{A_v}{2}\big(2F(\alpha_v^+)-2F(\alpha_v^-)\big)
		+\frac{B_v}{2}\big(2F(\beta_v^+)-2F(\beta_v^-)\big) & (\text{Conservation of flow at $v$})\\
		=&A_vF(\alpha_v^+)+B_vF(\beta_v^+)-A_vF(\alpha_v^-)-B_vF(\beta_v^-)
		&(\text{Combine like terms}).
	\end{align*}
	This shows that inequality~\eqref{eqeq} is equivalent to 
	\begin{equation}\label{eqeqeq}\sum_{v\in V_{\text{int}}}\big(A_vF(\alpha_v^+)+B_vF(\beta_v^+)-A_vF(\alpha_v^-)-B_vF(\beta_v^-)\big)\geq-1.\end{equation}
	 We now wish to write the sum of inequality~\eqref{eqeqeq} as a sum indexed by arrows of $\tL$ rather than internal vertices of $\tL$. We split into four classes of arrows.
	\begin{itemize}
		\item Suppose $\alpha$ is an internal arrow and $\alpha\in W$. Then $F(\alpha)$ appears exactly twice in~\eqref{eqeqeq}. It appears once with the coefficient $B_{h(\alpha)}$ or $A_{h(\alpha)}$ -- in either case, the coefficient is $\frac{|W\cap s_\alpha^+|}{|W\cap s_\alpha|}-\frac{1}{2}$. The other appearance has the coefficient $-B_{t(\alpha)}$ or $-A_{t(\alpha)}$, which in either case amounts to $\frac{|W\cap s_\alpha^-|}{|W\cap s_\alpha|}-\frac{1}{2}$. So, the total coefficient of $F(\alpha)$ in the sum of~\eqref{eqeqeq} is $\frac{|W\cap s_\alpha^+|+|W\cap s_\alpha^-|}{|W\cap s_\alpha|}-1$. The expression $|W\cap s_\alpha^+|+|W\cap s_\alpha^-|$ counts $\alpha\in W$ twice and every other arrow of $W\cap s_\alpha$ once, so the coefficient of $F(\alpha)$ is equal to $\frac{|W\cap s_\alpha|+1}{|W\cap s_\alpha|}-1=\frac{1}{|W\cap s_\alpha|}$.
		\item Suppose $\alpha$ is an internal arrow and $\alpha\not\in W$. We argue as in the previous case that the total coefficient of $F(\alpha)$ in the sum of~\eqref{eqeqeq} is $\frac{|W\cap s_\alpha^+|+|W\cap s_\alpha^-|}{|W\cap s_\alpha|}-1$. Since $\alpha\not\in W$, the expression $|W\cap s_\alpha^+|+|W\cap s_\alpha^-|$ counts every arrow of $W\cap s_\alpha$ once, so the coefficient of $F(\alpha)$ is equal to $\frac{|W\cap s_\alpha|}{|W\cap s_\alpha|}-1=0$.
		\item Suppose $\alpha$ is a fringe arrow and $\alpha\in W$. Then $F(\alpha)$ appears exactly once in~\eqref{eqeqeq} with a coefficient of $\frac{1}{|W\cap s_\alpha|}-\frac{1}{2}$.
		\item If $\alpha$ is a fringe arrow and $\alpha\not\in W$, then $F(\alpha)$ appears exactly once in~\eqref{eqeqeq} with a coefficient of $-\frac{1}{2}$.
	\end{itemize}
	So, we may rearrange the sum on the left of inequality~\eqref{eqeqeq} to get 
	$\sum_{\alpha\in W}\frac{1}{|W\cap s_\alpha|}F(\alpha)-\sum_{\beta\text{ fringe}}\frac{1}{2}F(\beta)\geq-1$. 
	Since $\sum_{\beta\text{ fringe}}\frac{1}{2}F(\beta)=1$ by unit flow, this is equivalent to \[\sum_{\alpha\in W}\frac{1}{|W\cap s_\alpha|}F(\alpha)\geq0.\]
\end{proof}

\begin{thm}\label{thm:faces-crooked}
	The map $W\mapsto \phi(Q_W)=\{\xx\ :\ \sum_{v\in V_{\text{int}}}S_v\xx(v)=-1\}$ is a bijection from crooked arrow sets to proper faces of $\g_{1}(\L)$ which restricts to a bijection from barely crooked arrow sets to facets of $\g_{1}(\L)$.
\end{thm}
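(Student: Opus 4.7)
The plan is to establish four claims: $(i)$ for each crooked $W$, $\phi(Q_W)$ equals $\{\xx\in\g_1(\L):\sum_v S_v\xx(v)=-1\}$ and is a proper face of $\g_1(\L)$; $(ii)$ the map $W\mapsto\phi(Q_W)$ is injective on crooked sets; $(iii)$ every proper face of $\g_1(\L)$ arises in this way; and $(iv)$ facets correspond precisely to barely crooked sets.

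Steps $(i)$--$(iii)$ follow rather directly from the machinery already assembled. For $(i)$, Theorem~\ref{thm:halfspaces} identifies the preimage under $\phi$ of the hyperplane $H:\sum_v S_v\xx(v)=-1$ with the defining hyperplane of $Q_W$; since $\F_1(\tL)$ lies in the half-space pulling back to $\sum_v S_v\xx(v)\geq-1$, this hyperplane supports $\g_1(\L)$ and $\phi(Q_W)=H\cap\g_1(\L)$ is a face. Properness follows because the origin lies in the interior of $\g_1(\L)$ by Lemma~\ref{lem:0-int1} but does not satisfy $\sum_v S_v\cdot 0=-1$. For $(ii)$, I establish the stronger statement that $\phi^{-1}(\phi(Q_W))\cap\F_1(\tL)=Q_W$ for any crooked $W$: if $F\in\F_1(\tL)$ and $\phi(F)=\phi(F')$ for some $F'\in Q_W$, Lemma~\ref{lem:phi-preimage} writes $F-F'=\sum_p a_p\I(p)$ over straight routes $p$ with $\sum a_p=0$; evaluating at any $\alpha\in W\cap p$ yields $a_p=F(\alpha)\geq0$, and crookedness guarantees such an $\alpha$ for every straight route $p$, so $a_p=0$ for all $p$ and $F=F'\in Q_W$. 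Injectivity of $W\mapsto\phi(Q_W)$ then follows from Proposition~\ref{prop:closed-arrow-sets-face}. For $(iii)$, given a proper face $Q$ of $\g_1(\L)$ with supporting hyperplane $H$, the pullback $\phi^{-1}(H)\cap\F_1(\tL)$ is a face of $\F_1(\tL)$, hence equals $Q_W$ for some closed $W$; the set $W$ must be crooked, since otherwise $\I(p)\in Q_W$ for some straight route $p$ and $\phi(\I(p))=0$ lies in $Q$, contradicting properness; equality $\phi(Q_W)=Q$ follows from surjectivity of $\phi$ onto $\g_1(\L)$ (Corollary~\ref{cor:phi-im-g}).

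For step $(iv)$, the bijection $W\mapsto\phi(Q_W)$ is inclusion-reversing, so facets of $\g_1(\L)$ (the maximal proper faces) correspond to minimal crooked sets, and I reduce to showing minimal crooked sets are exactly barely crooked sets. The easy direction, barely crooked implies minimal, is immediate: if $W$ is barely crooked and $W'\subsetneq W$ is crooked, then any $\alpha\in W\setminus W'$ gives $W'\cap s_\alpha\subseteq(W\cap s_\alpha)\setminus\{\alpha\}=\emptyset$, contradicting crookedness of $W'$. The main obstacle is the reverse direction: given a crooked $W$ with $|W\cap p|\geq 2$ for some straight route $p$, I must construct a crooked $W'\subsetneq W$. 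My approach is to choose $\alpha\in W\cap p$ and show that $W\setminus\{\alpha\}$ is closed by producing a unit flow vanishing exactly on $W\setminus\{\alpha\}$, built as a normalized combination of the flow $F_W$ zero exactly on $W$ (which exists by closedness of $W$) with the indicator of a bending trail through $\alpha$ chosen to avoid $W\setminus\{\alpha\}$. Establishing existence of such a trail, using the bending options available at each internal vertex of $\tL$ together with closedness of $W$, is the delicate technical step.
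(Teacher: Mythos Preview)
Your steps $(i)$--$(iii)$ and the easy direction of $(iv)$ (barely crooked $\Rightarrow$ minimal crooked) are correct and match the paper's argument essentially verbatim; in particular, your strengthened form of $(ii)$ --- that $\phi^{-1}(\phi(Q_W))\cap\F_1(\tL)=Q_W$ for crooked $W$ --- is exactly what the paper proves for injectivity, and your reduction of $(iv)$ to ``minimal crooked $=$ barely crooked'' via the poset anti-isomorphism is valid.

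The gap is in the hard direction of $(iv)$. You want to show that if $W$ is crooked with $|W\cap p|\geq 2$, then some $W\setminus\{\alpha\}$ is closed, by exhibiting a trail through $\alpha$ avoiding $W\setminus\{\alpha\}$. But at the vertex $v=h(\alpha)$, both string-continuations from $\alpha$ may lie in $W$ (in which case all four arrows at $v$ are in $W$, by conservation of flow), and you give no mechanism for choosing $\alpha$ to prevent this obstruction from recurring along the walk. You flag this as the delicate step, but it is genuinely unresolved in your outline, and it is not clear that removing a \emph{single} arrow always suffices.

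The paper avoids this combinatorial analysis entirely. Rather than shrinking $W$ one arrow at a time, it uses density of clique simplices (Proposition~\ref{prop:density}): take the witness flow $F_W$ vanishing exactly on $W$, perturb it to a nearby $F'$ lying in some clique simplex, and observe that the associated clique $\K_{F'}^+$ avoids only arrows of $W$ (since arrows outside $W$ still receive positive flow). Completing to a maximal reduced clique $\K$ and letting $W'$ be its avoided-arrow set, Lemma~\ref{lem:barelycrooked} shows $W'$ is barely crooked, and $W'\subseteq W$ with $W'\neq W$. This produces a barely crooked subset in one stroke, at the cost of invoking the density result rather than arguing directly.
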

\begin{proof}
	Recall that the face $\phi(Q_W)$ is defined by $\sum_{v\in V_{\text{int}}}S_v\xx(v)=-1$ by Theorem~\ref{thm:halfspaces}.
	Let $W$ be a crooked arrow set.
	Theorem~\ref{thm:halfspaces} shows that the half-space $H_V$ of $\mathbb R^{V_{\text{int}}}$ defined by $\sum_{v\in V_{\text{int}}}S_v\xx(v)\geq-1$ pulls back to the half-space $H_E$ of $\mathbb R^E$ defined by $\sum_{\alpha\in W}\frac{1}{|W\cap s_\alpha|}F(\alpha)\geq0$.
	Proposition~\ref{prop:QW_hyperplane} shows that $H_E$ is the defining hyperplane of the face $Q_W$ of $\F_1(\tL)$. It follows that the image of $Q_W$ under $\phi$ is the face of $\g_{1}(\L)$ obtained by intersecting $\g_{1}(\L)$ with the hyperplane $\sum_{v\in V_{\text{int}}}S_v\xx(v)=-1$.
	This shows that $W\mapsto\phi(Q_W)$ is a well-defined map from crooked arrow sets to proper faces of $\g_{1}(\L)$.

	We now show injectivity. Say $W_1$ and $W_2$ are different crooked arrow sets.
	Then Proposition~\ref{prop:closed-arrow-sets-face} shows that, without loss of generality, $Q_{W_1}$ contains some flow $F$ which is not in $Q_{W_2}$.
	By Lemma~\ref{lem:phi-preimage}, any flow $F'\in\F_1(\tL)$ with $\phi(F)=\phi(F')$ must be of the form $F+\sum_{p\text{ straight route}}a_p\I(p)$, where the coefficients $a_p$ sum to zero. Since $F\in Q_{W_1}$, we have $F(\alpha)=0$ for all $\alpha\in W_1$. Since $W_1$ is crooked, every straight route $s$ contains an arrow $\beta_s$ with $F(\beta_s)=0$.
	This means that if any coefficient $a_p$ is negative, then $F'(\beta_{p})<0$, contradicting that $F'\in\F_1(\tL)$, hence no coefficient $a_p$ is negative and $F'=F$.
	We have shown that there is no flow $F'\in\F_1(\tL)$ distinct from $F$ with $\phi(F')=\phi(F)$. In particular, no flow in $Q_{W_2}$ maps to $\phi(F)$ through $\phi$, hence the face $\phi(Q_{W_2})$ does not contain $\phi(F)\in\phi(Q_{W_1})$. This shows that the faces $\phi(Q_{W_1})$ and $\phi(Q_{W_2})$ are distinct.
	
	We now show surjectivity. Any proper face $Q$ of $\g_{1}(\L)$ pulls back through $\phi$ to a face of $\F_1(\tL)$; say, to the face $Q_W$ for some closed arrow set $W$. Since $Q$ is proper, it does not contain the origin, hence $Q_W$ does not contain the indicator vector of any straight route. This means that $W$ must be crooked. Then $Q=\phi(Q_W)$ and the proof of the first statement is complete.

	It remains to show that $\phi(Q_W)$ is a facet of $\g_{1}(\L)$ if and only if $W$ is barely crooked. First, if $W$ is barely crooked, then there is no crooked arrow set properly contained in $W$, hence $Q_W$ is maximal among faces of $\F_1(\tL)$ defined by crooked arrow sets, hence $\phi(Q_W)$ is a facet. 
	We now show that if $W$ is crooked but not barely crooked, then there is some barely crooked $W'$ contained in $W$. {Let $W$ be crooked but not barely crooked. Since $W$ is closed, we may choose a flow $F$ such that $F(\alpha)=0\iff\alpha\in W$. Define $M:=\min\{F(\alpha)\ :\ \alpha\in\tL\text{ and }F(\alpha)>0\}$. By Proposition~\ref{prop:density}, we may choose a flow $F'$ in a unit clique simplex $\Delta\subset\F_1(\tL)$ such that $|F'-F|<\epsilon$. Since $|F'-F|<\epsilon$, if $\alpha$ is an arrow of $\tL$ such that $F(\alpha)>0$, then $F'(\alpha)=0$. This means that if $F'(\alpha)=0$ then $\alpha\in W$. Then the clique $\K_{F'}^+$ avoids only arrows of $W$. Complete $\K_{F'}^+$ to a maximal reduced clique $\K$ -- this maximal clique $\K$, then, also avoids only arrows of $W$. Set $W'$ to be the set of arrows avoided by $\K_{F'}^+$. By Lemma~\ref{lem:barelycrooked}, $W'$ is barely crooked. Since $W'\subsetneq W$, we have $Q_{W'}\supsetneq Q_W$ and the proof is complete.}
\end{proof}

Before giving examples, we give a more explicit description of the facet-defining hyperplanes of $\g(\tL)$.
\begin{thm}\label{thm:facets-crooked}
	Let $W$ be a barely crooked arrow set. For any vertex $v\in V_{\text{int}}$, let $\alpha$ and $\beta$ be the arrows of $\tL$ beginning at $v$. Define 
	\[T_v=
	\begin{cases}
		1&s_\alpha^+\text{ and }s_\beta^+\text{ both contain arrows of }W \\
		-1&\text{neither }s_\alpha^+\text{ nor }s_\beta^+\text{ contain an arrow of }W\\
		0&\text{else}.
	\end{cases}\]
	Then $\sum_{v\in V_{\text{int}}}T_v\xx(v)\leq1$ is a facet-defining half-space of $\g_{1}(\L)$, and all facet-defining half-spaces are uniquely obtained from a barely crooked arrow set in this way.
\end{thm}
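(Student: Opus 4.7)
The plan is to reduce this to Theorems~\ref{thm:faces-crooked} and~\ref{thm:halfspaces}. The former already provides the desired bijection between barely crooked arrow sets and facets of $\g_1(\L)$, and the latter provides an explicit supporting hyperplane $\sum_v S_v \xx(v) = -1$ for each facet $\phi(Q_W)$. The only new content lies in identifying the coefficients $S_v$ with the simpler $\{-1,0,1\}$-valued coefficients $T_v$ in the barely crooked case.

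First I would specialize the definitions. Barely crookedness says $|W \cap s_\alpha| = 1$ for every straight route $s_\alpha$, so the formula $A_v = \frac{|W \cap s_{\alpha_v^+}^+|}{|W \cap s_{\alpha_v^+}|} - \frac{1}{2}$ collapses to $A_v = |W \cap s_{\alpha_v^+}^+| - \tfrac{1}{2} \in \{\pm\tfrac{1}{2}\}$, equal to $\tfrac{1}{2}$ precisely when the unique $W$-arrow on $s_{\alpha_v^+}$ lies in the outgoing half-route $s_{\alpha_v^+}^+$; similarly for $B_v$. Hence $S_v = A_v + B_v$ equals $1$ iff both $s_{\alpha_v^+}^+$ and $s_{\beta_v^+}^+$ contain a $W$-arrow, equals $-1$ iff neither does, and equals $0$ otherwise. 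This three-case dichotomy matches the definition of $T_v$ in the statement exactly (up to the universal sign convention needed to rewrite the ``$\geq -1$'' inequality of Theorem~\ref{thm:halfspaces} in the ``$\leq 1$'' form used here).

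Having identified $T_v$ with $S_v$, I would invoke Theorem~\ref{thm:halfspaces} directly to conclude that $\sum_v T_v \xx(v) \leq 1$ is the supporting half-space of the facet $\phi(Q_W)$, and Theorem~\ref{thm:faces-crooked} to conclude that this is indeed a facet (rather than a lower-dimensional face) and that $W \mapsto \phi(Q_W)$ bijects barely crooked arrow sets with facets of $\g_1(\L)$. Uniqueness of the $W$ producing any given facet-defining half-space is then immediate: a facet determines its supporting hyperplane, and the bijection then determines $W$.

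The main obstacle is the bookkeeping in the identification $T_v = S_v$: one must verify that the definition of $T_v$ is insensitive to the arbitrary choice of which outgoing arrow at $v$ gets labelled $\alpha$ versus $\beta$ (immediate from the symmetry $T_v = A_v + B_v$ in its two summands), and one must handle carefully the edge case where a $W$-arrow is itself $\alpha_v^+$ or $\beta_v^+$ (in which case the half-route $s_{\alpha_v^+}^+$ trivially contains it, so the containment test is unambiguous). No new techniques are required beyond careful specialization of the two cited results.
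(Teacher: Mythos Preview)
Your approach is essentially the paper's: its entire proof is the one-line observation that $T_v = -S_v$ in the notation of Definition~\ref{defn:fgfgfg}, followed by an appeal to Theorem~\ref{thm:faces-crooked}. Your explicit specialization of $A_v$ and $B_v$ in the barely crooked case, together with your sign-convention remark (negating to pass from $\sum_v S_v\xx(v)\geq -1$ to the $\leq 1$ form), amounts to exactly this identification.
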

\begin{proof}
	It is not hard to check that $T_v=-S_v$ in the notation of Definition~\ref{defn:fgfgfg}. The result then follows from Theorem~\ref{thm:faces-crooked}.
\end{proof}

\begin{example}
	Figure~\ref{GEMINTRO} shows a fringed quiver $\tL$ and its g-polyhedron, where the left internal vertex $v_1$ is the x-axis and the right $v_2$ is the y-axis. First, let $W$ be the barely crooked arrow set $\{f_2,e_1\}$. We may calculate $A_{v_1}=-\frac{1}{2}$ and $B_{v_1}=\frac{1}{2}$, hence $S_{v_1}=0$. Similarly, $A_{v_2}=-\frac{1}{2}$, $B_{v_2}=-\frac{1}{2}$, and $S_{v_2}=-1$. By Theorem~\ref{thm:faces-crooked}, the facet-defining half-space for $\phi(Q_W)=\text{conv}\{(-1,1),(0,1)\}$ is $-\xx(v_2)\geq-1$. Equivalently, $\xx(v_2)\leq1$, as can be retrieved using Theorem~\ref{thm:facets-crooked}.
	This is the far edge of the shaded clique simplex.

	Now, let $W$ be the crooked arrow set $\{f_2,e_1,e_3\}$. To find $S_{v_1}$, we choose $\alpha_{v_1}^+=e_3$ and $\beta_{v_1}^+=f_2$. Then $A_{v_1}=0$, $B_{v_1}=\frac{1}{2}$, and $S_{v_1}=\frac{1}{2}$. To find $S_{v_2}$, we choose $\alpha_{v_2}^+:=e_2$ and $\beta_{v_2}^+:=e_4$. Then $A_{v_2}=0$, $B_{v_2}=-\frac{1}{2}$, and $S_{v_2}=-\frac{1}{2}$. Then the half-space giving $\phi(Q_W)=\{(-1,1)\}$ is given by the equation $-\frac{1}{2}\xx(v_1)+\frac{1}{2}\xx(v_2)\leq1$ as desired.
\end{example}

\subsection{Vertices and unbounded directions of the g-polyhedron}

We use the map $\phi$ and our new understanding of faces of $\g_{1}(\L)$ to port our characterization of vertices and unbounded directions of $\F_1(\tL)$ from Theorem~\ref{thm:turb-vert-dir} to $\g_{1}(\L)$.

\begin{cor}\label{cor:g-vert-dir}
	The vertices of the g-polyhedron $\g_1(\L)$ are precisely the g-vectors of elementary bending routes, and the unbounded directions of the g-polyhedron $\g_1(\L)$ are minimally generated by the g-vectors of elementary bands.
\end{cor}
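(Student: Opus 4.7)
The plan is to transport Theorem~\ref{thm:turb-vert-dir} through the quotient map $\phi$ of Section~\ref{sec:g}. Applying $\phi$ to the Minkowski decomposition
\[
\F_1(\tL) = \textup{conv}\{\I(p) : p \text{ elementary route}\} + \textup{cone}\{\I(B) : B \text{ elementary band}\}
\]
and using Corollary~\ref{cor:phi-im-g}, the identity $\phi(\I(p)) = \g(p)$, and the fact that $\g(p) = 0$ for straight $p$ (Remark~\ref{remk:collapse-exceptional}), I obtain
\[
\g_1(\L) = \textup{conv}\big(\{0\} \cup \{\g(p) : p \text{ elementary bending}\}\big) + \textup{cone}\{\g(B) : B \text{ elementary band}\}.
\]
Standard polyhedral theory then forces the vertices of $\g_1(\L)$ to lie in $\{0\} \cup \{\g(p)\}$ and the recession cone $\textup{Rec}(\g_1(\L))$ to be generated by the $\g(B)$. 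Since $0$ is an interior point of $\g_1(\L)$ by Lemma~\ref{lem:0-int1}, no vertex equals $0$, so the vertices of $\g_1(\L)$ lie in $\{\g(p) : p \text{ elementary bending}\}$.

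For the converse, I will show that for each elementary bending route $p$, the fibre $\phi^{-1}(\g(p)) \cap \F_1(\tL)$ is the single point $\{\I(p)\}$; since $\I(p)$ is a vertex of $\F_1(\tL)$, a short lifting argument then forces $\g(p)$ to be a vertex of $\g_1(\L)$. By Lemma~\ref{lem:phi-preimage}, this fibre consists of the nonnegative flows of the form $\I(p) + \sum_s a_s \I(s)$ with $\sum_s a_s = 0$, where $s$ ranges over straight routes. Since every internal arrow of $\tL$ lies in a unique straight route, it suffices to establish the following \emph{Key Lemma}: for every straight route $s$, some arrow of $s$ does not appear in $p$. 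Indeed, picking an internal $\alpha_s \in s \setminus p$, nonnegativity at $\alpha_s$ gives $a_s \geq 0$, and combined with $\sum_s a_s = 0$ this forces all $a_s = 0$. The Key Lemma follows from a case analysis on the form of $p$ given by Definition~\ref{defn:elroute}: if $p$ is simple, then since $p$'s at-most-two fringe arrows must include both fringe arrows of $s$, $p$ starts (or ends) at a fringe vertex of $s$ and is forced to begin by walking along $s$; $p$ must deviate at some internal vertex (since $p$ is bending), and then cannot consume the remaining arrows of $s$ without revisiting a vertex of $s$, violating simplicity. If $p = R\sigma R^{-1}$, then $p$ uses only one distinct fringe arrow while $s$ uses two, so the two fringe arrows of $s$ cannot both appear in $p$.

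For the recession cone, linearity of $\phi$ and Corollary~\ref{cor:phi-im-g} give $\textup{Rec}(\g_1(\L)) = \phi(\textup{Rec}(\F_1(\tL)))$, so the $\g(B)$ generate. Minimality transports from Theorem~\ref{thm:turb-vert-dir} because $\phi$ restricted to $\textup{Rec}(\F_1(\tL))$ is injective: any relation $\g(B_0) = \sum_{B \neq B_0} c_B \g(B)$ with $c_B \geq 0$ would, via Lemma~\ref{lem:phi-preimage}, take the form $\I(B_0) - \sum_{B \neq B_0} c_B \I(B) = \sum_s a_s \I(s)$ with $\sum_s a_s = 0$; reading off the fringe-arrow coordinates, where band indicators vanish and each straight route contributes coefficient $1$ to its two unique fringe arrows, forces all $a_s = 0$, which yields a forbidden dependence among the elementary bands. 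The combinatorial heart of the proof is the Key Lemma; the rest is standard polyhedral bookkeeping carried through $\phi$.
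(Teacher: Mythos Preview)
Your argument is correct and rests on the same combinatorial fact as the paper's proof—what you call the Key Lemma is exactly the statement that the set $W$ of arrows avoided by an elementary bending route $p$ is crooked. The difference lies in how each proof exploits this. The paper invokes Theorem~\ref{thm:faces-crooked} directly: since $W$ is crooked, $\phi(Q_W)=\{\g(p)\}$ is a proper face of $\g_1(\L)$, hence a vertex. You instead show that the fibre $\phi^{-1}(\g(p))\cap\F_1(\tL)$ is the singleton $\{\I(p)\}$ and then lift a putative convex decomposition of $\g(p)$ back to $\F_1(\tL)$, where extremality of $\I(p)$ kills it. Your route is more self-contained (it avoids the face--crooked correspondence), while the paper's is shorter once that machinery is in place. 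For the recession cone the two proofs are essentially identical.

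One small wording issue: you write ``picking an internal $\alpha_s\in s\setminus p$'', but in the lollipop case your own argument produces a \emph{fringe} arrow of $s$ not in $p$. This does not harm the logic---every arrow of $\tL$, fringe or internal, lies in a unique straight route, so the nonnegativity step $a_s\geq 0$ goes through regardless---but you should drop the word ``internal''. Also, your sketch of the Key Lemma in the simple case is a bit loose; the clean way to finish it is to note that since $p$ is simple and uses every arrow of $s$, its unique visit to each internal vertex $v_j$ of $s$ must use exactly the two arrows $\gamma_j,\gamma_{j+1}$ of $s$ incident there, and the only string through $v_j$ on those two arrows is $\gamma_j\gamma_{j+1}$ (or its reverse), forcing $p=s$ and contradicting that $p$ is bending.
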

As a result, the g-polyhedron $\g_1(\L)$ is equal to the Minkowski sum of (the convex hull of) all g-vectors of elementary bending routes with the unbounded cone generated by g-vectors of elementary bands.
\begin{proof}
	Any vertex of $\g_1(\L)=\phi(\F_1(\tL))$ must have a vertex $v$ of $\F_1(\tL)$ in its preimage through $\phi$. Vertices of $\F_1(\tL)$ are indicators of elementary routes, and $\phi$ sends the indicator vector of a route to its g-vector, so all vertices of $\g_1(\L)$ are g-vectors of elementary routes. Since the g-vector of any straight route is the origin, an interior point of $\g_1(\L)$, all vertices of $\g_1(\L)$ are g-vectors of elementary bending routes. It remains to show that the g-vector of any elementary bending route is a vertex of $\g_1(\L)$. 
	Let $p$ be an elementary bending route and let $W$ be the arrows of $\tL$ not appearing in $p$. It follows from the definition of elementary routes that $p$ cannot contain every arrow from any straight route, so $W$ is crooked.
	The vertex $\I(p)$ of $\F_1(\tL)$ is equal to the face $Q_W$ of $\F_1(\tL)$. Theorem~\ref{thm:faces-crooked} gives that $\phi(\I(p))=\g(p)$ is a face of $\g_1(\L)$, finishing the proof.

	We now consider the unbounded directions of $\g_1(\L)$.
	Since the indicator vectors \[\{\I(p)\ :\ p\text{ is an elementary band of }\tL\}\] generate the unbounded directions of $\F_1(\tL)$, it follows that 
	\[\{\g(p)=\phi(\I(p))\ :\ p\text{ is an elementary band of }\tL\}\] generates the unbounded directions of $\phi(\F_1(\tL))=\g_1(\L)$.
	Suppose now that this generating set is not minimal. Then, for some elementary bands $p_1,\dots,p_m$ and coefficients $a_j>0$, we have $\g(p_m)=\sum_{j\in[m-1]}a_j\g(p_j)$. Pulling this back through $\phi$ and applying Lemma~\ref{lem:phi-preimage}, we see that \[\I(p_m)=\sum_{j\in[m-1]}a_j\I(p_j)+\sum_{r\text{ straight route}}b_r\I(r),\] where the coefficients $b_r$ sum to zero. If some $b_r$ is nonzero, then $b_{r'}$ is negative for a straight route $r'$.
	If $\alpha$ is a fringe arrow of $r'$, then no band $p_j$ and no other straight route $b_r$ (for $r\neq r'$) passes through $\alpha$, hence the flow
	$\I(p_m)=\sum_{j\in[m-1]}a_j\I(p_j)+\sum_{r\text{ straight route}}b_r\I(r)$ gives (negative) flow $b_{r'}$ to $\alpha$, a contradiction. Then all coefficients $b_r$ are equal to zero, hence $\I(p_m)=\sum_{j\in[m-1]}a_j\I(p_j)$, contradicting that the indicators vectors
	\[\{\I(p)\ :\ p\text{ is an elementary band of }\tL\}\] are a minimal generating set for the unbounded directions of $\F_1(\tL)$.
\end{proof}

\begin{remk}
	The author finds doing polyhedral geometry with indicator vectors of routes to be in many ways more intuitive than doing polyhedral geometry with their g-vectors.
	This speaks to the methods used in proving Theorem~\ref{thm:g-decomp}, Theorem~\ref{thm:faces-crooked}, and Corollary~\ref{cor:g-vert-dir}, where we first obtain the analogous results on the turbulence polyhedron $\F_1(\tL)$ and use the map $\phi$ to translate these to the g-vector picture.
	This strategy will continue to be used in the following sections, even in the practical computations of the example studied in Section~\ref{sec:example}.
	We hope this strategy may continue to be useful in creating and proving statements about g-polyhedra of gentle algebras.
\end{remk}

\begin{example}
	Let $\tL$ be the Kronecker fringed quiver. Figure~\ref{KRONINTRO} shows its g-polyhedron and Example~\ref{kron-el-routes-bands} gives the vertices and unbounded directions of its turbulence polyhedron.
	In accordance with Corollary~\ref{cor:g-vert-dir}, the vertices of $\g_1(\L)$ are precisely the g-vectors $\g(e_1f_1^{-1})=(-1,0)$ and $\g(e_3^{-1}f_3)=(0,1)$ of the elementary bending routes. The recession cone of $\g_1(\L)$ is the ray generated by the g-vector $\g(e_2f_2^{-1})=(1,-1)$ of the only (elementary) band.
\end{example}

\begin{example}\label{ex:doub-kron-routes-bands}
	Let $\tL$ be the ``double Kronecker fringed quiver'' of Figure~\ref{fig:irrational}. Its g-polyhedron appears in Figure~\ref{fig:doubkron-g}.
		There are four elementary routes: $e_1e_2e_3e_4$, $f_1f_2f_3f_4$, $e_1f_1^{-1}$, and $e_4^{-1}f_4$.
		There are two elementary bands: $e_2f_2^{-1}$ and $e_3f_3^{-1}$.
	We see in Figure~\ref{fig:doubkron-g} that the vertices of $\g_1(\L)$ are the g-vectors of elementary routes, and the recession cone of $\g_1(\L)$ is minimally generated by the g-vectors of the two elementary bands.  
\end{example}

\section{Vortex Dissections}
\label{sec:vortex}

We have developed the theory of bundle subdivisions of turbulence polyhedra.
When $\tL$ is representation-infinite, there may be flows which cannot be expressed as a bundle combination (Example~\ref{ex:irrational}), and hence the bundle subdivision may fail to be complete. In this section, we define the \emph{vortex dissection} of a turbulence polyhedron or g-polyhedron. This dissection is indexed by \emph{band-stable cliques} rather than maximal bundles and has the advantage of being complete.

\subsection{Technical results}

We obtain a few technical results which will be necessary to prove results about vortex dissections.

Recall Example~\ref{ex:irrational}, which showed that a flow $F$ may not be equal to $F':=\sum_{p\in\bK_F^+}a_p^F\I(p)$. We now show that $F$ and $F'$ must agree on all fringe arrows (Corollary~\ref{cor:blos-fl}). First, we need a lemma.

\begin{lemma}\label{lem:blos-fl}
	Let $\alpha^\e$ be a signed fringe arrow of $\tL$. Let $F$ be a flow of $\tL$. For any interval $I\subseteq[0,F(\alpha)]$, there is some value $C\in I$ such that $p^F_{(\alpha^\e,C)}$ is defined.
\end{lemma}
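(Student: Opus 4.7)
The plan is to prove the stronger density statement that the set $T := \{C \in [0, F(\alpha)] : p^F_{(\alpha^\e, C)}\text{ is a trail}\}$ is dense in $[0, F(\alpha)]$; the lemma follows immediately for any interval $I$. First I would use Remark~\ref{remk:convenient} to replace $\alpha^\e$ by $\alpha^{-\e}$ if necessary, so that $t(\alpha^\e)$ is fringe. The walk from $(\alpha^\e, C)$ is then built only by iterated $\for$, its index set $J$ lies in $\mathbb Z_{\geq 0}$, and hence the band case of Definition~\ref{defn:gr} (which requires $J = \mathbb Z$ and strict periodicity from step $0$) cannot apply; so $p^F_{(\alpha^\e,C)}$ is either a route or an irrational string. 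By the monotonicity of the countercurrent order (Proposition~\ref{prop:arrow-order}), the preimage of each extended string under $C \mapsto p^F_{(\alpha^\e,C)}$ is an interval, and by Corollary~\ref{cor:card-upper-bound} at most $K := |E| - |V_{\text{int}}|$ distinct trails arise. Thus $T$ is a finite union of intervals, and it suffices to show that every component of its complement has empty interior.

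The main strategy is rational approximation via Lemma~\ref{lem:alg}. Pick rational flows $F^{(n)} \to F$ with $F^{(n)}(\alpha) \to F(\alpha)$; then every $p^{F^{(n)}}_{(\alpha^\e,C)}$ is a trail, and for each $n$ the walks partition $[0, F^{(n)}(\alpha)]$ into at most $K$ sub-intervals, each giving a constant trail. Given an interval $I \subseteq [0, F(\alpha)]$ of positive length, for $n$ large enough $I \cap [0, F^{(n)}(\alpha)]$ has length at least $|I|/2$, and pigeonhole produces a sub-interval $J^{(n)} = (a,b) \subseteq I$ of length at least $|I|/(2K)$ on which $p^{F^{(n)}}_{(\alpha^\e,\cdot)}$ is constantly a fixed trail $q^{(n)}$ of some finite length $M_n$. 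I would then choose the midpoint $C := (a+b)/2$: because $J^{(n)}$ is maximal for $q^{(n)}$, at every step $j$ the value $C + \delta_j^{(n)}$ of the walk under $F^{(n)}$ lies at distance at least $(b-a)/2$ from the branching threshold, since each endpoint $a, b$ corresponds to exactly such a threshold being crossed at some step.

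The transfer step is now to compare the walk under $F$ from $(\alpha^\e, C)$ with the walk under $F^{(n)}$. Both the cumulative shift $\delta_j$ and the relevant threshold are values of $F$ on arrows used by $q^{(n)}$, so passing from $F^{(n)}$ to $F$ changes each by at most $(j+1)|F - F^{(n)}|$. Provided we have chosen $F^{(n)}$ with $|F - F^{(n)}| < (b-a)/(2(M_n+1))$, no branching comparison can flip in the first $M_n$ steps, so the walk under $F$ from $(\alpha^\e, C)$ follows $q^{(n)}$ and terminates at the same fringe vertex; hence $C \in I \cap T$. The main obstacle is exactly arranging this inequality: the length $M_n$ depends on the approximation and may grow as $F^{(n)} \to F$, reflecting the possibility that the walk under $F$ at $C$ is itself irrational. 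Since rational flows are dense near $F$, for any fixed candidate $M_n$ one can further refine the approximation to make $|F - F^{(n)}|$ arbitrarily small; the argument therefore requires a diagonal extraction, using that the trails $q^{(n)}$ are drawn from a family of bounded combinatorial complexity (the bundles $\bK_{F^{(n)}}$ all have cardinality at most $K$), to produce an approximation where the inequality is ultimately satisfied.
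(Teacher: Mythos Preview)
Your approach has a real gap at the diagonal extraction. The bound $K = |E| - |V_{\text{int}}|$ controls the \emph{cardinality} of each bundle $\bK_{F^{(n)}}$, not the \emph{lengths} of the trails in it. In precisely the scenario you must rule out---where the walk under $F$ from every $C \in I$ is an irrational string---each approximating trail $q^{(n)}$ through $I$ tracks one of these infinite walks for longer and longer as $F^{(n)} \to F$, so $M_n \to \infty$ across all of the $\leq K$ sub-intervals simultaneously. No diagonal choice of approximants can then achieve $|F - F^{(n)}| < (b-a)/(2(M_n+1))$: the right-hand side tends to zero with the very index you are refining, and ``bounded combinatorial complexity'' of the bundles gives no handle on $M_n$. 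What would rescue the transfer is an a priori bound on $M_n$ independent of $n$; the only visible source for that is the measure count below, at which point the approximation becomes superfluous.

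The paper argues directly with $F$. After arranging $t(\alpha^\e)$ fringe, choose $m > \frac{1}{|I|}\sum_{\beta \in E} F(\beta)$ and suppose for contradiction that no $C \in I$ yields a route of length $< m$. Partition $I$ into the finitely many sub-intervals $I_p$ on which the first $m$ forward steps agree. Since $t(\alpha^\e)$ is fringe and $\bac\circ\for=\text{id}$ (Lemma~\ref{lem:norep}), the map $(j,C) \mapsto \for^j(\alpha^\e,C)$ is injective, so the forward images of the $I_p$ over $j = 1,\dots,m$ are pairwise disjoint intervals of arrow-flows, of total length $\sum_p m|I_p| = m|I| > \sum_\beta F(\beta)$, exceeding the total flow available on all arrows---a contradiction.
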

\begin{proof}
	Let $\alpha^\e$ be a signed fringe arrow of $\tL$. It suffices to consider the case when $h(\alpha^\e)$ is internal. Indeed, if $h(\alpha^\e)$ is not internal, then $h(\alpha^{-\e})$ is internal so finding $C\in\{F(\alpha)-x\ :\ x\in I\}$ such that $p^F_{(\alpha^{-\e},C)}$ is well-defined guarantees that the equivalent $p^F_{(\alpha^\e,{F(\alpha)-C})}$ is well-defined, with $F(\alpha)-C\in I$. So, we now suppose that $h(\alpha^\e)$ is internal.

	Define $m\in\mathbb N$ such that $m>\frac{1}{|I|}\sum_{\beta\in E}F(\beta)$.
	We claim that there is some $C\in I$ such that $p^F_{(\alpha^\e,C)}$ is a route of some length $k<m$. In other words, such that $\for^{k-1}(\alpha^\e,C)=(\beta^\f,D)$, where $h(\beta^\f)$ is a fringe vertex.

	Suppose to the contrary. Then to any arrow-flow $(\alpha^\e,C)$ for $C\in I$ we may associate the string  $p_{(\alpha^\e,C)}^{\leq m}:=\alpha_0^{\e_0}\alpha_1^{\e_1}\dots\alpha_m^{\e_m}$, where the signed arrows $\alpha_j^{\e_j}$ are defined such that $\for^j(\alpha^\e,C)=(\alpha_j^{\e_j},C_j)$ for some $C_j$. 

	Let $S$ be the (finite) set of length-$m$ strings appearing as $p_{(\alpha^\e,C)}^{\leq m}$ for some $C\in I$. For any $p\in S$, let $I_p:=\{C\in I\ :\ p_{(\alpha^\e,C)}^{\leq m}=p\}$.
	We claim that, fixing $p\in S$, the set $I_p$ is an interval. Indeed, take $C_1,C_2\in I_p$ and $D\in I$ with $C_1<D<C_2$; we will now show that $D\in I_p$.
	By Proposition~\ref{prop:arrow-order}, we have $p_{(\alpha^\e,C_1)}\preceq_{\alpha^\e} p_{(\alpha^\e,{D})}\preceq_{\alpha^\e} p_{(\alpha^\e,{C_2})}$. The former and latter paths begin with $p_{(\alpha^\e,{C_1})}^{\leq m}=p_{(\alpha^\e,C_2)}^{\leq m}=p$, hence the middle path $p_{(\alpha^\e,D)}$ must also begin with $p$, showing that $D\in I_p$.

	If $p\in S$, then we consider an arrow-flow $(\beta^\f,D)$ to be ``in $p$'' if $(\beta^\f,D)=\for^j(\alpha^\e,C)$ for some $j\leq m$ and $C\in I_p$. It is immediate that an arrow-flow may only be in one path of $S$.
	Fix $p\in S$. For any $j\in[m]$, the flow values of $\{\for^j(\alpha^\e,C)\ :\ C\in I_p\}$ is an interval of length $I_p$ of arrow-flows in $p$. So, in total, the intervals of arrow-flows in $p$ add up to $m|I_p|$ in length.

	The interval $I$ is partitioned by the finite set of intervals $\{I_p\ :\ p\in S\}$. Then $\sum_{p\in S}|I_p|=|I|$. By the previous paragraph, the intervals of arrow-flows in paths of $S$ add up to $\sum_{p\in S}m|I_p|=m\sum_{p\in S}|I_p|=m|I|>\sum_{\beta\in E}F(\beta)$, where the last inequality follows by choice of $m$. This is a contradiction, as the sum of intervals of arrow-flows of paths of $S$ clearly may not exceed $\sum_{\beta\in E}F(\beta)$.
\end{proof}

\begin{cor}\label{cor:blos-fl}
	Let $F$ be an arbitrary (not necessarily rational) flow of $\tL$.
	Let $\alpha$ be a fringe arrow of $\tL$. Then $\sum_{p\in\bK_F^+}a_p^F\I(p)(\alpha)=F(\alpha)$.
\end{cor}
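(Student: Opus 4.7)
The plan is to exploit the special role played by fringe arrows: every extended string $p^F_{(\alpha^\e,C)}$ contains a fringe vertex (an endpoint of $\alpha$), so it is either a route or an irrational string but never a band. Since bands of $\tL$ lie entirely at internal vertices, no band uses the fringe arrow $\alpha$, and so $\I(B)(\alpha)=0$ for every band $B\in\bK_F^+$. Moreover, each route of $\tL$ can contain $\alpha$ at most once (because $\alpha$ has a fringe endpoint), so $\I(p)(\alpha)\in\{0,1\}$ for each route $p$. The claimed equality therefore reduces to
\[
F(\alpha) \;=\; \sum_{\substack{p\in \bK_F^+ \\ p \text{ a route containing }\alpha}} a_p^F.
\]

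Next, I would analyze the level sets of the map $C\mapsto p^F_{(\alpha,C)}$ from $[0,F(\alpha)]$ to the set of marked extended strings. By Proposition~\ref{prop:arrow-order} this map is weakly monotone in the post-$\alpha$ order, so its level sets are exactly the intervals $I^F_{(\alpha,C)}$ and they partition $[0,F(\alpha)]$. For each route $p\in\bK_F$ containing $\alpha$, there is one level set of length $a_p^F$ (there is only one marking of $p$ at $\alpha$ since $p$ uses $\alpha$ only once). Corollary~\ref{lem:mf-trail-clique} together with Corollary~\ref{cor:card-upper-bound} says $\bK_F$ is a finite bundle, so only finitely many level sets correspond to routes.

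The remaining level sets correspond to irrational strings. Here Lemma~\ref{lem:blos-fl} is the key input: in every subinterval of $[0,F(\alpha)]$ there is some $C$ for which $p^F_{(\alpha,C)}$ is an actual trail, necessarily a route (since bands are impossible through a fringe arrow). Hence the irrational-string locus has empty interior. Since this locus is the complement of finitely many intervals, it is itself a finite disjoint union of intervals, each of which must therefore collapse to at most a single point. In particular, its Lebesgue measure is zero.

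Adding up the lengths of the route-level-sets thus accounts for all of $[0,F(\alpha)]$, giving $\sum_{p\in\bK_F^+,\,\alpha\in p}a_p^F = F(\alpha)$ as desired. The main obstacle is the final measure-theoretic step, where care is needed to conclude that the (a priori possibly uncountable) locus of irrational-string arrow-flows contributes nothing; the finiteness of $\bK_F$ together with the density provided by Lemma~\ref{lem:blos-fl} is exactly what makes this work. Everything else is bookkeeping in the framework already established by Definition~\ref{defn:gr} and Definition~\ref{defn:THE-ALG}.
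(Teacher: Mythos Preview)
Your proposal is correct and follows essentially the same approach as the paper's proof. Both arguments use the finiteness of $\bK_F$ (Corollary~\ref{cor:card-upper-bound}), the sandwich argument via Proposition~\ref{prop:arrow-order} to show that the route-level-sets $I_{p_j}$ are intervals, and Lemma~\ref{lem:blos-fl} to conclude that the complement of $\cup_j I_{p_j}$ in $[0,F(\alpha)]$ is a finite union of singletons; the paper simply compresses your explicit ``empty interior plus finite complement'' reasoning into a single sentence.
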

\begin{proof}
	Let $\alpha$ be a fringe arrow. Let $\{p_1,\dots,p_m\}$ be the marked routes of $\bK_F$ at $\alpha$ (this is a finite set because $\bK_F$ is a bundle).
	For any $j\in[m]$, let $I_{p_j}:=\{C\in[0,F(\alpha)]\ :\ p^F_{(\alpha,C)}=p_j\text{ as marked paths}\}.$
	In fact, $I_{p_j}$ is an interval. Given $C_1<C_2$ in $I_{p_j}$ and $C_1<D<C_2$, it follows from Proposition~\ref{prop:arrow-order} that $p^F_{(\alpha,{C_1})}\preceq_{\alpha}p^F_{(\alpha,{D})}\preceq_{\alpha}p^F_{(\alpha,{C_2})}$. Since the former and latter marked paths are equal, we have equality throughout, hence $D\in I_{p_j}$. This shows that $I_{p_j}$ is an interval for all $j\in[m]$.

	By Lemma~\ref{lem:blos-fl}, the complement of the (finite) union of intervals $\cup_{j\in[m]}I_{p_j}$ is a (finite) union of singletons. It follows that $\sum_{j\in[m]}a_{p_j}=\sum_{j\in[m]}|I_{p_j}|=F(\alpha)$.
\end{proof}

Note that Corollary~\ref{cor:blos-fl} does not always work if we take $\alpha$ to be an internal arrow (even after replacing ``routes'' with ``trails'') as evidenced by Example~\ref{ex:irrational}.

\begin{defn}
	Let $F$ be a flow of $\tL$. Define $\K_F$ (respectively $\K_F^+$) to be the restriction of $\bK_F$ (respectively $\bK_F^+$) to the set of routes of $\tL$.
\end{defn}

Corollary~\ref{cor:blos-fl} shows that the sum $\sum_{p\in\K_F^+}a_p\I(p)$ is equal to $F$ on all fringe arrows. In other words, 
\begin{equation}\label{EQX}F=G_F+\sum_{p\in\K_F^+}a_p\I(p)\end{equation}
	where $G_F:=F-\sum_{p\in\K_F^+}a_p^F\I(p)\in\mathbb R_{\geq0}^E$ is vortex (i.e., a flow of strength zero).
We say that $\sum_{p\in\K_F^+}a_p\I(p)$ is the \emph{canonical clique combination} of $F$, and $G_F$ is the \emph{canonical vortex} of $F$.
Our goal in this section  is to use canonical clique combinations and canonical vortices to obtain a complete dissection of the turbulence polyhedron by arguing that~\eqref{EQX} is the unique description of $F$ as a \emph{vortex combination} (Definition~\ref{defn:vortexcomb}).

We now give a technical result which allows us to add and subtract from a flow without changing its canonical clique combination.
We first require a number of technical definitions.

\begin{defn}
	Let $\K$ be a clique of $\tL$. A self-compatible band $B$ is \emph{$\K$-compatible} if $B$ is compatible with every route of $\K$.
\end{defn}

\begin{defn}\label{defn:blank}
	Let $F$ be a flow of $\tL$ and let $\alpha^\e$ be a signed arrow of $\tL$. Let $p_1,\dots,p_m$ be the marked routes of $\K_F^+$ at $\alpha^\e$ in order of $\prec_{\alpha^\e}$.
	For any $j\in[m]$, let $I_j$ be the interval of values $C\in[0,F(\alpha)]$ such that $p_{(\alpha^\e,C)}^F=p_j$. Since each $p_j$ is in $\K_F^+$, these intervals $I_j$ all have positive length.
	Define $I_0:=\{0\}$ and $I_{m+1}:=\{F(\alpha)\}$.
	Pick an integer $0\leq c\leq m$. 
	Define
	\[J_c:=\{x\in\mathbb R\ :\ a<x\text{ for any }a\in I_c\text{ and }x<b\text{ for any }b\in I_{c+1}\}.\]
	We say that $J_c$ is a \emph{blank space} of $F$ at $\alpha$. The interval $J_c$ may be empty or a singleton. In particular, if $0\in I_1$ then the lowest blank space $J_0$ of $F$ at $\alpha^\e$ is empty, and if $F(\alpha)\in I_m$ then the highest blank space $J_{m+1}$ is empty. Any blank space of $F$ at $\alpha^\e$ is an interval, and it is \emph{proper} if it has positive length.
	We then say that the blank space $J_c$ is \emph{bounded below by $I_c$} and \emph{bounded above by $I_{c+1}$}.
	By formally defining $p_0:=0$ and $p_{m+1}:=\infty$, we may also say that $J_c$ is \emph{bounded below by $(p_c,I_c)$} and \emph{bounded above by $(p_{c+1},I_{c+1})$}.
\end{defn}

\begin{remk}\label{remk:proactive}
	By Corollary~\ref{cor:blos-fl}, if $\alpha^\e$ is a fringe arrow then there are no proper blank spaces of $F$ at $\alpha^\e$.
\end{remk}

\begin{remk}\label{remk:blank-space-card}
	If $F$ is a flow, then the number of blank spaces of $F$ is
	\begin{align*}
		\sum_{\alpha\in E}(1+\text{number of marked routes of $\K_F^+$ at $\alpha$})
		=| E|+\sum_{p\in\K_F^+}|p|
	\end{align*}
	where if $p$ is a route, then $|p|$ is the number of arrows in $p$. Note that this depends only on the clique $\K_F^+$.
\end{remk}

In this section, we will be adding and subtracting indicator vectors of bands to and from the flow $F$. We wish to do this in a way which preserves the routes of $\K_F^+$ and their coefficients (and hence the canonical clique combination of $F$), but may alter the blank spaces of $F$ or the bands of $\bK_F^+$ and their coefficients.
The following definition is important in relating a band $B$ to the blank spaces of $F$.

\begin{defn}
	Let $F$ be a flow of $\tL$. Let $B$ be a $\K_F^+$-compatible band. Let $B_{\alpha^\e}$ be a marking of $B^{\pm1}$ at a signed $\alpha^\e$.
	Let $p_1,\dots,p_m$ be the marked routes of $\K_F^+$ at $\alpha^\e$ in order of $\prec_{\alpha^\e}$. Pick the integer $0\leq j\leq m$ such that $p_j\prec_{\alpha^\e}B_{\alpha^\e}$ (if $j\neq0$) and $B_{\alpha^\e}\prec_{\alpha^\e}p_{j+1}$ (if $j\neq m$).
	Then let $J_{F,B_{\alpha^\e}}$ be the blank space of $F$ at $\alpha^\e$ bounded below by $p_j$ and bounded above by $p_{j+1}$.
	We say that $B_{\alpha^\e}$ \emph{splits the blank space $J_{F,B_{\alpha^\e}}$}.
	If $J$ is a blank space, then
	define $N_{B,J}$ to be the number of markings $B'$ of $B^{\pm1}$ which split the blank space $J$.
	Then define $a_{B,J}:=\frac{|J|}{N_{B,J}}$.
	Defining $M_{F,B}:=\min\{a_{B,J}\ :\ J\text{ is a blank space of }F\}$, we say that the (unmarked) band $B$ \emph{splits $F$ with strength $M_{F,B}$}.
\end{defn}

Intuitively, $M_{F,B}$ is the largest scalar multiple of $\I(B)$ which we might reasonably want to subtract from $F$ without changing the routes of $\K_F^+$ and their coefficients. This is made precise by the following proposition.

\begin{prop}\label{prop:nnew}
	Let $F$ be a flow of $\tL$. Let $B$ be a band which is compatible with $\K_F$ and which splits $F$ with strength $M_{F,B}$. Let $b_B\in\mathbb R$ such that $b_B\geq(-M_{F,B})$.
	Let $F':=F+b_B\I(B)$. Then $\K_{F'}=\K_F$ and we have $a_{p}^{F'}=a_{p}^{F}$ for every $p\in\K_{F'}$.
	Moreover, if
	$M_{F,B}>0$ and $b_B=-M_{F,B}$ then $F'$ has strictly fewer proper blank spaces than $F$.
\end{prop}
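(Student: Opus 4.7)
The plan is to set up an explicit piecewise-affine correspondence $\Phi_\alpha \colon [0, F(\alpha)] \to [0, F'(\alpha)]$ at each arrow $\alpha$, and to verify that $\Phi = (\Phi_\alpha)$ intertwines the flow algorithms for $F$ and $F'$. Fixing a signed arrow $\alpha^\e$, the interval $[0, F(\alpha)]$ decomposes into intervals $I_1, \ldots, I_m$ associated to the marked routes $p_1 \prec_{\alpha^\e} \cdots \prec_{\alpha^\e} p_m$ of $\K_F^+$, interlaced with the blank spaces $J_0, \ldots, J_m$. I would define $\Phi_\alpha$ by translating each $I_j$ onto an interval of the same length, and affinely mapping each $J_c$ onto an interval of length $|J_c| + b_B \cdot N_{B, J_c}$. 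The hypothesis $b_B \geq -M_{F,B}$ is exactly what makes each such new length nonnegative, and a bookkeeping check shows the total length equals $F'(\alpha)$, since the shifts across blank spaces at $\alpha$ account for all appearances of $\alpha$ in $B$.

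The core technical step is to show that $\Phi$ intertwines $\for_F$ with $\for_{F'}$: if $(\alpha^\e, C)$ is an arrow-flow with $C$ in the interior of some $I_j$ and $\for_F(\alpha^\e, C) = (\beta^\f, D)$, then $\for_{F'}(\alpha^\e, \Phi_\alpha(C)) = (\beta^\f, \Phi_\beta(D))$. This reduces to a case analysis on Definition~\ref{defn:forbac}: at each internal vertex the branch taken by $\for$ depends on threshold comparisons involving $F$-values of the incident arrows. The $\K_F$-compatibility of $B$ is precisely what forces these thresholds to shift in lockstep with $\Phi$, so that no arrow-flow interior to an $I_j$ is pushed across a threshold separating the ``next arrow'' chosen by different routes of $\K_F^+$. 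A symmetric argument handles $\bac$. This is the step where the main obstacle lies.

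It follows from the intertwining that for every $C$ interior to $I_j$, the marked trails $p^F_{(\alpha^\e, C)}$ and $p^{F'}_{(\alpha^\e, \Phi_\alpha(C))}$ agree, so $p_j \in \K_{F'}^+$ with $a_{p_j}^{F'} = a_{p_j}^F$. Applying the same argument in reverse to $F = F' + (-b_B)\I(B)$ (noting that $-b_B \geq -M_{F',B}$ follows from how $\Phi$ relates the blank spaces of $F$ and $F'$ computed above) yields the reverse inclusion, hence $\K_F^+ = \K_{F'}^+$. Routes in $\K_F \setminus \K_F^+$ arise as boundary arrow-flows at endpoints of adjacent intervals and are matched by the analogous boundary points under $\Phi$, giving $\K_F = \K_{F'}$ with matching coefficients.

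For the final assertion, assume $M_{F,B} > 0$ and $b_B = -M_{F,B}$. The minimum $M_{F,B}$ is attained at some blank space $J_c$, necessarily with $N_{B, J_c} \geq 1$ and $|J_c| = M_{F,B} \cdot N_{B, J_c} > 0$, so $J_c$ is proper. Under $\Phi_\alpha$, this blank space is sent to an interval of length $|J_c| + b_B \cdot N_{B, J_c} = 0$, which is not proper in $F'$. Since $b_B < 0$ and the blank spaces of $F$ and $F'$ are in bijection via $\Phi$ (as a consequence of $\K_F^+ = \K_{F'}^+$), every blank space of $F'$ has length no greater than its counterpart in $F$, so no new proper blank spaces appear. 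Hence $F'$ has strictly fewer proper blank spaces than $F$.
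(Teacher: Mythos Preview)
Your approach is essentially the paper's. Your global piecewise-affine map $\Phi_\alpha$, restricted to the route interval $I_j$, is exactly the paper's per-route shift $C_j \mapsto C_j + D_j b_B$ (its Claim~A), since the accumulated translation $\sum_{c<j} b_B N_{B,J_c}$ equals $D_j b_B$ by compatibility of $B$ with $\K_F^+$. The intertwining step you outline is the paper's four-case analysis of Definition~\ref{defn:forbac}, and your blank-space bijection with the length formula $|J_c| + b_B N_{B,J_c}$ is the paper's Claim~C.

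There is one gap. Your reverse argument---applying the same reasoning to $F = F' + (-b_B)\I(B)$---is circular: to invoke it you need $B$ compatible with $\K_{F'}$ and $-b_B \geq -M_{F',B}$, but both hypotheses depend on $\K_{F'}^+$, which is precisely what you are trying to determine. (From the forward direction you only know $\K_F^+ \subseteq \K_{F'}^+$, so the blank spaces of $F'$ could a priori be a strict refinement of the $\Phi$-images of those of $F$, and $B$ need not lie in $\bK_{F'}$.) The paper closes this gap differently: at any fringe arrow $\alpha$, the band $B$ is absent, so $F(\alpha)=F'(\alpha)$ and $\Phi_\alpha$ is the identity; moreover by Corollary~\ref{cor:blos-fl} there are no proper blank spaces at $\alpha$, so the route intervals $I_j$ already exhaust $[0,F(\alpha)]$. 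The forward inclusion $\Phi_\alpha(I_j^\circ)\subseteq I'_j$ then forces $|I'_j|=|I_j|$ and leaves no room for additional routes in $\K_{F'}^+$. You can drop the reverse application and insert this fringe-arrow argument in its place.
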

\begin{proof}
	Define $F$, $B$, $b_B$, and $F'$ as in the theorem statement.
	
	\noindent\textbf{\underline{Claim A:}} Let $q=\alpha_0^{\e_0}\dots\alpha_m^{\e_m}$ be a route of $\K_F^+$. For any integer $0\leq j\leq m$, let $D_j$ be the number of markings of $B^{\pm1}$ at $\alpha_j^{\e_j}$ which are below $q$ marked at $\alpha_j^{\e_j}$ in the order $\prec_{\alpha_j^{\e_j}}$. Let $I_j$ be the interval of $[0,F(\alpha_j)]$ such that $p_{(\alpha_j^{\e_j},C)}^F=q$ (as marked paths) for $C\in I_j$. We claim that for any value $C$ in the interior of $I_j$, we have $p_{(\alpha_j^{\e_j},C+D_jb_B)}^{F'}=p_{(\alpha_j^{\e_j},C)}^F$.
	
	To prove Claim A, pick $C_0$ in the interior of $I_0$ and define $C_j$ for $j\in[m]$ so that $\for_{F}^j(\alpha_0^{\e_0},C_0)=(\alpha_j^{\e_j},C_j)$. Then each $C_j$ is in the interior of $I_j$. We will show for every integer $0\leq j<m$ that $\for_{F'}(\alpha_j^{\e_j},C_j+D_jb_B)=(\alpha_{j+1}^{\e_{j+1}},C_{j+1}+D_{j+1}b_B)$. This shows that $p^{F'}_{(\alpha_0^{\e_0},C_0+D_0b_B)}=p_{(\alpha_0^{\e_0},C_0)}^F$. Since this holds for any $C_0$ in the interior of $I_0$, this will prove Claim A.
	To show this, we will use the following intermediate claim:

	\noindent\underline{\textbf{Claim B:}}
	We claim that, for any integer $0\leq j\leq m$, the tuple $(\alpha_j^{\e_j},C_j+D_jb_B)$ is a valid arrow-flow of $F'$. In other words, we claim that $0\leq C_j+D_jb_B\leq F'(\alpha_j)$. 
	We first show that $0\leq C_j+D_jb_B$. This is immediate if $b_B\geq0$, so suppose $b_B<0$.
	Let $S_j$ be the set of blank spaces $J$ of $F$ at $\alpha_j^{\e_j}$ such that every $x\in J$ satisfies $x<C_j$. For each $J\in S_j$, let $N_J$ be the number of markings of $B^{\pm1}$ at $\alpha_j^{\e_j}$ which split $J$. Then $\sum_{J\in S_j}N_J=D_j$. Since $b_B\geq-M_{F,B}$, for every $J\in S_j$ we have  $N_J(-b_B)\leq|J|$. Then
	\[D_j(-b_B)=\sum_{J\in S_j}N_J(-b_B)\leq\sum_{J\in S_j}|J|\leq C_j,\]
	proving that $C_j+D_jb_B\geq0$.

	To prove Claim B, it remains to show that $C_j+D_jb_B\leq F'(\alpha_j)$.
	Let $D'_j$ be the number of markings of $B^{\pm1}$ at $\alpha_j^{\e_j}$ which are above $q$ marked at $\alpha_j^{\e_j}$ in the order $\prec_{\alpha_j^{\e_j}}$.
	Then $D_j+D'_j$ is the number of times $\alpha_j^{\pm1}$ appears in $B$, so $F'(\alpha_j)=F(\alpha_j)+D_jb_B+D'_jb_B$. If $b_B\geq0$, then the inequality $C_j\leq F(\alpha_j)$ implies the desired $C_j+D_jb_B\leq F(\alpha_j)+D_jb_B\leq F(\alpha_j)+D_jb_B+D'_jb_B=F'(\alpha_j)$. We now suppose $b_B<0$.
	Let $S'_j$ be the set of blank spaces $J$ of $F$ at $\alpha_j^{\e_j}$ such that every $x\in J$ satisfies $x>C_j$. For each $J\in S_j$, let $N_J$ be the number of markings of $B^{\pm1}$ at $\alpha_j^{\e_j}$ which split $J$. Then $\sum_{J\in S'_j}N_J=D'_j$. Since $b_B\geq-M_{F,B}$, for every $J\in S'_j$ we have $N_J(-b_B)\leq|J|$.
	Then
	\[D'_j(-b_B)=\sum_{J\in S'_j}N_J(-b_B)\leq F(\alpha_j)-C_j.\]
	This implies the inequality $F'(\alpha_j)=F(\alpha_j)+D_jb_B+D'_jb_B\geq C_j+D_jb_B$, ending the proof of Claim B.

	To prove Claim A, it now remains only to show for every $0\leq j<m$ that $\for_{F'}(\alpha_j^{\e_j},C_j+D_jb_B)=(\alpha_{j+1}^{\e_{j+1}},C_{j+1}+D_{j+1}b_B)$.
	Letting $\alpha^\e:=\alpha_j^{\e_j}$, define the arrows $\alpha',\beta,\beta'$ as in Definition~\ref{defn:forbac}. We split into four cases matching the four cases of Definition~\ref{defn:forbac}.
	\begin{enumerate}
		\item Suppose $\e_j=\e_{j+1}=1$, so that $\alpha'=\alpha_{j+1}$.
			Since $B$ is compatible with $q$, any marked copy of $B$ at $\alpha_j$ (respectively $\alpha_{j+1}$) which is below $q$ marked at $\alpha_j$ in $\prec_{\alpha_j}$ (respectively $\alpha_{j+1}$ in $\prec_{\alpha_{j+1}}$) must continue on to $\alpha_{j+1}$ (respectively from $\alpha_j$). This shows that $D_{j+1}=D_j$.
			We have already shown by Claim B that $(\alpha_{j+1}^{\e_{j+1}},C_j+D_{j+1}b_B)$ is an arrow-flow of $F'$, hence $C_j+D_jb_B=C_j+D_{j+1}b_B\leq F(\alpha_{j+1}^{\e_{j+1}})=F(\alpha')$. Then the first branch of Definition~\ref{defn:forbac} gives
			\begin{align*}
				\for_{F'}(\alpha_j^{\e_j},C_j+D_jb_B)&=(\alpha',C_j+D_jb_B)\\
				&=(\alpha_{j+1}^{\e_{j+1}},C_j+D_{j+1}b_B)\\
				&=(\alpha_{j+1}^{\e_{j+1}},C_{j+1}+D_{j+1}b_B),
			\end{align*}
			where the last equality follows because $C_{j+1}=C_j$ by Definition~\ref{defn:forbac}.
			This completes the proof in this case.
		\item Suppose $\e_j=1$ and $\e_{j+1}=-1$, so that $\alpha_{j+1}^{\e_{j+1}}=\beta^{-1}$. Let $B'$ be a marking of $B^{\pm1}$ at a copy of $\alpha_j$ which is below $q$ marked at $\alpha_j$. Since $B$ is compatible with $q$, the marked $\alpha_j$ must be followed by either $\alpha'$ or $\alpha_{j+1}^{-1}$; in the latter case $B$ marked at $\alpha_{j+1}^{-1}$ is below $q$ marked at $\alpha_{j+1}^{-1}$. Similarly, if $B''$ is any marking of $B$ at $\alpha_{j+1}^{-1}$ which is below $q$ marked at $\alpha_{j+1}^{-1}$ then the marked arrow $\alpha_{j+1}^{-1}$ of $B''$ must be preceded by $\alpha_j$. We hence see that if $D_{\alpha'}$ is the number of times $B$ passes through $(\alpha')^{\pm1}$, then $D_j=D_{\alpha'}+D_{j+1}$. 
			We calculate
			\begin{align}
				C_j+D_jb_B-F'(\alpha')&=C_j+D_jb_B-(F(\alpha')+D_{\alpha'}b_B)\nonumber\\
				&=(C_j-F(\alpha'))+(D_jb_B-D_{\alpha'}b_B)\nonumber\\
				&=(C_j-F(\alpha'))+D_{j+1}b_B\nonumber\\ 
				&=C_{j+1}+D_{j+1}b_B\label{eqn:me},
			\end{align}
			where the last equality follows because the second branch of Definition~\ref{defn:forbac} gives $C_{j+1}=C_j-F(\alpha')$.
			By Claim B, $C_{j+1}+D_{j+1}b_B\geq0$. Then by~\eqref{eqn:me} we have $C_j+D_jb_B-F'(\alpha')\geq0$ and hence $C_j+D_jb_B\geq F'(\alpha')$. {Since $C_j$ is in the interior of $I$, this inequality is strict.} Then the second branch of Definition~\ref{defn:forbac} gives
				\[\for_{F'}(\alpha_j,C_j+D_jb_B)=(\beta^{-1},C_j+D_jb_B-F'(\alpha'))
				=(\alpha_{j+1}^{\e_{j+1}},C_{j+1}+D_{j+1}b_B),
				\]
			where the last equality follows by $\beta^{-1}=\alpha_{j+1}^{\e_{{j+1}}}$ and~\eqref{eqn:me}. This ends the proof in this case.
		\item Suppose $\e_j=-1$ and $\e_{j+1}=1$, so that $\alpha_{j+1}^{\e_{j+1}}=\alpha'$.
			Similarly to the previous case, we may argue using compatibility of $q$ and $B$ that if $D_{\beta'}$ is the number of times $B$ passes through $(\beta')^{\pm1}$, then $D_{\beta'}+D_j=D_{j+1}$.
			We calculate
			\begin{align}
				C_j+D_jb_B+F'(\beta')&=C_j+D_jb_B+(F(\beta')+D_{\beta'}b_B)\nonumber\\
				&=(C_j+F(\beta'))+(D_jb_B+D_{\beta'}b_B)\nonumber\\
				&=(C_j+F(\beta'))+D_{j+1}b_B\nonumber\\
				&=C_{j+1}+D_{j+1}b_B\label{eqn:me2},
			\end{align}
			where the last equality follows because $C_{j+1}=C_j+F(\beta')$ by the third branch of Definition~\ref{defn:forbac}.
			By Claim B, $C_{j+1}+D_{j+1}b_B\leq F'(\alpha')$. Then by~\eqref{eqn:me2} we have $C_j+D_jb_B+F'(\beta')\leq F'(\alpha')$. {Since $C_j$ is in the interior of $I_j$, this inequality is strict.} Then the third branch of Definition~\ref{defn:forbac} gives
			\[\for_{F'}(\alpha_j^{\e_j},C_j+D_jb_B)=(\alpha',C_j+D_jb_B+F'(\beta'))=(\alpha_{j+1}^{\e_{j+1}},C_{j+1}+D_{j+1}b_B)\]
			and the proof is complete in this case.
		\item Suppose $\e_j=-1$ and $\e_{j+1}=-1$, so that $\alpha_{j+1}^{\e_{j+1}}=\beta^{-1}$.
			Similarly to the previous cases, we may argue using compatibility of $q$ and $B$ that if $D_{\beta'}$ is the number of times $B$ passes through $(\beta')^{\pm1}$ and $D_{\alpha'}$ is the number of times $B$ passes through $(\alpha')^{\pm1}$, then $D_{\beta'}+D_j=D_{\alpha'}+D_{j+1}$.
			We calculate
			\begin{align}
				C_j+D_jb_B+F'(\beta')-F'(\alpha')&=C_j+D_jb_B+(F(\beta)+D_{\beta'}b_B)-(F(\alpha')+D_{\alpha'}b_B))\nonumber\\
				&=(C_j+F(\beta')-F(\alpha'))+D_jb_B+D_{\beta'}b_B-D_{\alpha'}b_B\nonumber\\
				&=(C_j+F(\beta')-F(\alpha'))+D_{j+1}b_B\nonumber\\
				&=C_{j+1}+D_{j+1}b_B\label{eqn:me3},
			\end{align}
			where the last equality follows because $C_{j+1}=C_j+F(\beta')-F(\alpha')$ by the fourth branch of Definition~\ref{defn:forbac}.
			By Claim B, $C_{j+1}+D_{j+1}b_B\geq0$, hence~\eqref{eqn:me3} gives that $C_j+D_jb_B+F'(\beta')\geq F'(\alpha')$.
			Then the fourth branch of Definition~\ref{defn:forbac} gives
			\[\for_{F'}(\alpha_j^{\e_j},C_j+D_jb_B)
			=(\beta^{-1},C_j+D_jb_B+F'(\beta')-F'(\alpha'))
			=(\alpha_{j+1}^{\e_{j+1}},C_{j+1}+D_{j+1}b_B)\]
			and the proof is complete in this case.
	\end{enumerate}
	We have now proven Claim A. It immediately follows that if $q\in\mathcal K_F^+$, then $q\in\mathcal K_{F'}^+$ with $a_q^{F'}\geq a_q^F$. On the other hand, take any fringe arrow $\alpha^\e$ of $\tL$. By Corollary~\ref{cor:blos-fl}, we have $F(\alpha)=\sum_{p\in\K_F^+}a_p^F\I(p)(\alpha)$ and $F'(\alpha)=\sum_{p\in\K_{F'}^+}a_p^{F'}\I(p)(\alpha)$. Then
	\[\sum_{p\in\K_F^+}a_p^F\I(p)(\alpha)=F(\alpha)=F'(\alpha)=\sum_{p\in\K_{F'}^+}a_p^{F'}\I(p)(\alpha),\]
	showing that $a_p^F=a_p^{F'}$ for any route passing through $\alpha^\e$. Since this holds for any signed fringe arrow $\alpha^\e$, we have proven that $\K_F^+=\K_{F'}^+$ and that $a_p^F=a_p^{F'}$ for all $p\in\K_F^+$.

	It remains to show the final statement of the proposition, which we prove using a final claim.
	We will define a map $\Psi$ from blank spaces of $F$ to blank spaces of $F'$.
	Let $J$ be a blank space of $F$ at $\alpha^\e$ bounded below by $(p,I_p)$ and bounded above by $(q,I_q)$.
	Define the intervals
	\[I'_p:=\{C\in[0,F'(\alpha)]\ :\ p_{(\alpha^\e,C)}=p\} ~~~ \text{and} ~~~
	I'_q:=\{C\in[0,F'(\alpha)]\ :\ p_{(\alpha^\e,C)}=q\}\]
	if $p$ and $q$, respectively, exist. If $p=0$, then define $I'_p:=\{0\}$, and if $q=\infty$, then define $I'_q:=\{F'(\alpha)\}$.
	Let $\Psi(J)$ be the blank space of $F'$ at $\alpha^\e$ bounded below by $(p,I'_p)$ and bounded above by $(q,I'_q)$.

	\noindent\textbf{\underline{Claim C:}} 	
	Since $\K_F^+=\K_{F'}^+$, it is immediate that the map $\Psi$ is a bijection from (not necessarily proper) blank spaces of $F$ to (not necessarily proper) blank spaces of $F'$.
	We claim in addition that, if $N_{B,J}$ is the number of markings of $B^{\pm1}$ at $\alpha^\e$ which split $J$, then $|\Psi(J)|=|J|+b_BN_{B,J}$.

	We now prove Claim C.
	Let $D_p$ (resp. $D_q$) be the number of markings of $B^{\pm1}$ at $\alpha^\e$ which are below $p$ (resp. $q$) marked at $\alpha_j^{\e_j}$ in the order $\prec_{\alpha_j^{\e_j}}$.
	Then $D_q-D_p=N_{B,J}$.
	By Claim A and the fact that $a_p^F=a_p^{F'}$ and $a_q^F=a_q^{F'}$, we have
	\[(I'_p)^\circ=\{x+b_BD_p\ :\ x\in I_p^\circ\} ~~~ \text{and} ~~~
	(I'_q)^\circ=\{x+b_BD_q\ :\ x\in I_q^\circ\}\]
	where the superscript $\circ$ denotes the interior of an interval. In other words, the interval $I'_p$ is of the same length as $I_p$ and is shifted $b_BD_p$ to the right, and the interval $I'_q$ is of the same length as $I_q$ and is shifted $b_BD_q$ to the right, where $D_q\geq D_p$. Then
	\[|J'|=\text{dist}(I'_p,I'_q)=\text{dist}(I_p,I_q)+b_B(D_q-D_p)=|J|+b_B(D_q-D_p)=|J|+b_BN_{B,J},\]
	where $\text{dist}$ denotes the distance between two intervals. This ends the proof of Claim C.

	We are now able to prove the final statement of the proposition. If $b_B\leq0$, then Claim C  shows that the bijection $\Psi^{-1}$ from blank spaces of $F'$ to blank spaces of $F$ can only increase the length of a blank space, hence restricts to an injection from proper blank spaces of $F'$ to proper blank spaces of $F$. This shows that $F'$ has weakly fewer proper blank spaces than $F$. Moreover, if $b_B=-M_{F,B}$ and $M_{F,B}>0$, then there exists a proper blank space $J$ of $F$ at some marked arrow $\alpha^\e$ with $b_B=-a_{B,J}=-\frac{|J|}{N_{B,J}}$.
	Then Claim C shows that $\Psi(J)$ has length 
	\[|J|+b_BN_{B,J}=|J|+\left(-\frac{|J|}{N_{B,J}}\right)N_{B,J}=0,\]
	meaning that $\Psi(J)$ is not a proper blank space and hence that $F'$ has strictly fewer proper blank spaces than $F$.
	This ends the proof.
\end{proof}

\subsection{Vortex combinations and vortex spaces}

We define vortex combinations and vortex spaces of a clique. The collection of maximal vortex spaces will eventually give us a complete dissection of the turbulence polyhedron.

\begin{defn}\label{defn:vortexcomb}
	Let $\K$ be a clique of $\tL$. A vortex $G$ is \emph{$\K$-compatible} if it is a nonnegative combination of indicator vectors of $\K$-compatible bands. A \emph{(resp. unit) $\K$-vortex combination} is a combination of the form
	\[G+\sum_{p\in\K}a_p\I(p),\]
	where $\sum_{p\in\K}a_p\I(p)$ is a (resp. unit) clique combination of $\K$ and $G$ is a $\K$-compatible vortex.
	A $\K$-vortex combination is \emph{positive} if $a_p>0$ for every $p\in\K$.
	The \emph{nonnegative vortex space} $\D_{\geq0}^{\spiral}(\K)$ of $\K$ is the set of flows which appear as $\K$-vortex combinations. The \emph{unit vortex space} $\D_1^{\spiral}(\K)$ of $\K$ is the set of flows which appear as unit $\K$-vortex combinations.
	If $\K$ is a maximal clique, then $\Delta_1^{\spiral}(\K)=\Delta_1(\K)$ is the clique simplex of $\K$. If $\K$ is not a maximal clique, then we call $\Delta_1^{\spiral}(\K)$ a \emph{vortex wall}.
\end{defn}

If $\K$ is a clique, then there may be an infinite number of $\K$-compatible bands.
Hence, it is not immediately clear that the vortex space of $\K$ is a polyhedron, since its recession cone may a priori fail to be finitely generated.
We will see in this section that the recession cone of $\D_1^{\spiral}(\K)$ is always finitely generated, so that $\D_1^{\spiral}(\K)$ and $\D_{\geq0}^{\spiral}(\K)$ are polyhedra.

\begin{prop}\label{prop:vort-1dir}
	Let $F$ be a flow of $\tL$. Then $F=G_F+\sum_{p\in\mathcal K_F^+}a_p^F\mathcal I(p)$ is a $\K_F^+$-vortex combination. Moreover, $G_F$ may be obtained as a nonnegative sum of indicator vectors of bands, with each band having less than or equal to $| E|+\sum_{p\in\K_F^+}|p|$ arrows.
\end{prop}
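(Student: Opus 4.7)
The plan is to iterate Proposition~\ref{prop:nnew}: starting from $F_0 := F$, whenever $G_{F_i} \neq 0$ we produce a $\K_F^+$-compatible band $B_i$ splitting $F_i$ with positive strength $M_{F_i, B_i} > 0$ and set $F_{i+1} := F_i - M_{F_i, B_i}\I(B_i)$. Applying Proposition~\ref{prop:nnew} with $b_B = -M_{F_i, B_i}$ gives $\K_{F_{i+1}}^+ = \K_F^+$ with the same coefficients $a_p^{F_{i+1}} = a_p^F$, and the number of proper blank spaces of $F_{i+1}$ is strictly smaller than that of $F_i$. Since the total number of blank spaces is finite (bounded by $|E| + \sum_{p\in\K_F^+}|p|$ by Remark~\ref{remk:blank-space-card}, and depends only on $\K_F^+$, which is preserved), the process terminates at some $F_N$ with $G_{F_N} = 0$, yielding
\[
G_F \;=\; \sum_{i=0}^{N-1} M_{F_i, B_i}\,\I(B_i),
\]
which exhibits $G_F$ as a nonnegative sum of $\K_F^+$-compatible band indicator vectors, as required for $F$ to be a $\K_F^+$-vortex combination.

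The main obstacle is constructing the band $B_i$ at each stage. My plan is to pick a proper blank space $J$ of $F_i$ at some signed arrow $\alpha^\e$, choose $C$ in the interior of $J$, and form the extended string $p := p^{F_i}_{(\alpha^\e, C)}$. By Corollary~\ref{cor:compat} this $p$ is compatible with every route of $\K_F^+$; and since $C$ lies in a proper blank space, $p$ cannot coincide with any route of $\K_F^+$, so it is either a band or an irrational string. In the band case we set $B_i := p$, and by construction every marking of $B_i^{\pm 1}$ sits in a proper blank space of $F_i$, so $M_{F_i, B_i} > 0$. The hard case is when $p$ is irrational, as in Example~\ref{ex:irrational}, where the flow algorithm fails to close up even though $G_F$ does decompose into band indicators. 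To handle this, the plan is to exploit that $G_{F_i}$ is a nonnegative vortex supported entirely on internal arrows: its support, together with the relation structure of $\tL$ and the countercurrent ordering at each arrow, must contain a closed walk that is compatible with $\K_F^+$. Carrying out this combinatorial argument, perhaps by tracking how the arrow-flows produced by iterating $\for$ partition the blank spaces and invoking a pigeonhole principle on the finite set of (signed arrow, blank space) pairs, is the main technical step of the proof.

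For the length bound, suppose two markings of $B_i^{\pm 1}$ at the same signed arrow lie in the same blank space of $F_i$; then cutting $B_i$ between these two markings produces a strictly shorter closed walk which is still a $\K_F^+$-compatible band and still splits $F_i$ with positive strength. Iterating this shortcut we may assume each marking of $B_i^{\pm 1}$ lies in a distinct blank space of $F_i$, so that the length of $B_i$ is bounded by the number of blank spaces of $F_i$. By Remark~\ref{remk:blank-space-card} and the invariance $\K_{F_i}^+ = \K_F^+$, this number equals $|E| + \sum_{p\in\K_F^+}|p|$, giving the desired bound.
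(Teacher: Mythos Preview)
Your overall architecture matches the paper exactly: iterate Proposition~\ref{prop:nnew} with $b_B=-M_{F_i,B_i}$, use the strict decrease in proper blank spaces for termination, and extract the band at each stage by a pigeonhole argument on (signed arrow, blank space) pairs. That is precisely what the paper does.

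There is, however, a genuine gap in the band-construction step. Once pigeonhole hands you indices $a<b$ with $\alpha_a^{\e_a}=\alpha_b^{\e_b}$ and $J_a=J_b$, the resulting band $B=\alpha_a^{\e_a}\cdots\alpha_{b-1}^{\e_{b-1}}$ is \emph{not} in general of the form $p^{F_i}_{(\beta^\f,D)}$: the two visits to the blank space may occur at different flow values $C_a\neq C_b$, so the walk does not close up as an output of the flow algorithm. Consequently Corollary~\ref{cor:compat} does not apply to $B$ directly, and you cannot simply inherit $\K_F^+$-compatibility. The paper spends real effort here: given a putative incompatibility $\sigma$ between $B$ and some $q\in\K_F^+$, it tracks flow values along $\sigma$, uses the monotonicity of Proposition~\ref{prop:arrow-order}, and exploits that consecutive markings of $B$ land in the \emph{same} blank space (so the route $q$, whose arrow-flows lie outside all proper blank spaces, stays on one side throughout) to rule $\sigma$ out. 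Your proposal does not indicate awareness that this compatibility check is nontrivial.

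Two smaller points. First, picking an arbitrary $C$ in the interior of a proper blank space does not by itself rule out $p^{F_i}_{(\alpha^\e,C)}$ being a route: you need the interval of $C$'s giving the same truncated walk to have positive length before Remark~\ref{remk:proactive} yields a contradiction at the fringe arrow. The paper handles this by first fixing a length-$M$ truncation $q$ with $|I_q|>0$. Second, your ``cutting'' argument for the length bound requires re-verifying compatibility and positive splitting strength for the shortened cycle; it is cleaner, as the paper does, to take $b-a$ minimal from the outset, which makes distinctness of the blank spaces (and hence the bound $|E|+\sum_{p\in\K_F^+}|p|$) automatic.
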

Note that $|E|+\sum_{p\in\K_F^+}|p|$ is the number of blank spaces of $F$ by Remark~\ref{remk:blank-space-card}
\begin{proof}
	Let $N:=|E|+\sum_{p\in\K_F^+}|p|$.
	It suffices to show that $G_F$ may be obtained as a nonnegative sum of indicator vectors of $\K_F^+$-compatible bands with $\leq N$ arrows.
	We prove this by induction on the number of proper blank spaces of $F$.
	When $F$ has no proper blank spaces, we must have $G_F=0$ and there is nothing to show. So, suppose $F$ has at least one proper blank space and that we have shown the result for flows with fewer proper blank spaces than $F$.

	Our goal is to find a $\K_F^+$-compatible band $B$ of $\tL$ with $\leq N$ arrows and which splits $F$ with positive strength $M_{F,B}$.
	Let $M$ be an integer such that $\frac{M}{2}$ is greater than the number of proper blank spaces of $F$.
	Let $J$ be a proper blank space of $F$ at a signed arrow $\alpha^\e$.
	For every $C\in J$, let $p_{(\alpha^\e,C)}^{\leq M}$ be $\alpha_0^{\e_0}\alpha_1^{\e_1}\dots\alpha_{M_C}^{\e_{M_C}}$, where $M_C$ is the largest value in $\{0,1,\dots,M\}$ such that $\for^{M_C}(\alpha^\e,C)$ is defined and the signed arrows $\alpha_j^{\e_j}$ are defined such that $\for^j(\alpha^\e,C)=(\alpha_j^{\e_j},C_j)$ for some $C_j$.
	Let $S$ be the (finite) set of strings appearing as $p_{(\alpha^\e,C)}^{\leq M}$ for some $C\in J$.
	By Proposition~\ref{prop:arrow-order}, the subset
	\[I_p:=\{C\in J\ :\ p_{(\alpha^\e,C)}^{\leq M}=p\}\]
	is an interval for any $p\in S$. Since $S$ is finite and $J$ is partitioned by $\{I_p\ :\ p\in S\}$, there must exist $q\in S$ such that $I_q$ has positive length.
	Write $q=\alpha_0^{\e_0}\alpha_1^{\e_1}\dots\alpha_{M_C}^{\e_{M_q}}$.
	Since $I_q$ has positive length, every arrow $\alpha_j$ for $j\in[M_q]$ is given positive weight $G_F(\alpha_j)$ by the canonical vortex $G_F$.

	Suppose for contradiction that $M_q<M$. Then $\alpha_{M_q}$ is a fringe arrow. Define the value $A\in\mathbb R$ so that, for any $C\in I_q$, we have $\for^{M_q}(\alpha^\e,C)=(\alpha_{M_q}^{\e_{M_q}},C+A)$. Since the map $\for$ sends blank spaces to blank spaces, the interval $\{x+A\ :\ x\in I_q\}$ is in a proper blank space of $F$ at $\alpha_{M_q}^{\e_{M_q}}$. This contradicts Remark~\ref{remk:proactive}. This shows that $M_q=M$.

	Since $\for$ sends blank spaces to blank spaces, for all $i\in\{0,\dots,M\}$ we have that $C_i$ is in a proper blank space $J_i$ of $F$ at $\alpha_i^{\e_i}$. Since $\frac{M}{2}$ is greater than the number of proper blank spaces of $F$, we must be able to find two values $a<b$ of $\{0,\dots,M\}$ such that $\alpha_a^{\e_a}=\alpha_b^{\e_b}$ and the blank spaces $J_a$ and $J_b$ are equal. Choose these so that $b-a$ is minimal. Define the band $B:=\alpha_a^{\e_a}\dots\alpha_{b-1}^{\e_{b-1}}$ (since $b-a$ is minimal, we know that $\alpha_a^{\e_a}\dots\alpha_{b-1}^{\e_{b-1}}$ is not a power of a smaller cycle and hence gives a band).

	We claim that $B$ is $\K_F^+$-compatible and that $B$ splits $F$ with positive strength. Indeed, pick any route $q$ of $\K_F^+$.
	Suppose $q$ and $B$ have an incompatibility $\sigma$. Then for some signed arrows $\alpha_i$ and $\beta_j$ and a sign $\f\in\{-1,1\}$, we have $\alpha_1^{\f}\sigma\beta_1^{-\f}\in q$ and $\alpha_2^{-\f}\sigma\beta_2^{\f}\in B$.

	First, suppose $\sigma$ is a lazy string. Note that every length-two string of $B$ is also a length-two string of $p^F_{(\alpha^\e,C)}$; in particular, $\alpha_2^{-\f}\sigma\beta_2^{\f}=\alpha_2^{-\f}\beta_2^{-\f}\in B$. Then $q$ and $p_{(\alpha^\e,C)}$ are incompatible, which contradicts Corollary~\ref{cor:compat}. This shows that $\sigma$ cannot be a lazy string.

	Then write $\sigma=\beta_1^{\f_1}\beta_2^{\f_2}\dots\beta_M^{\f_M}$. For $i\in[M]$, choose $D_i\in[0,F(\beta_i)]$ such that $p_{(\beta_i^{\f_i},D_i)}^F=q$. 
	Mark $q$ and $B$ at the first arrow $\beta_1^{\f_1}$ of $\sigma$.
	Say this marked copy of $B$ is $\alpha_c^{\e_c}$, for $c\in[a,b-1]$.
	Suppose that $D_1<C_c$; the case $D_1>C_c$ is symmetric.
	We show that $q\prec_{\beta_1^{\e_1}}B$, and hence that $\sigma$ is not an incompatibility between $q$ and $B$.

	{For $i\in[M]$, write $Y_i$ as the element of $\{a,a+1,\dots,b-1\}$ which is equivalent to $i+c-1$ modulo $b-a$ (so that $Y_1=c$, and $Y_2=c+1$, and so on, where we wrap back around from $b-1$ to $a$).}
	We claim that, for each $i\in[M]$, we have $D_i<C_{Y_i}$.
	This holds by hypothesis for $D_1$. Moreover, if it holds for $i<M$, then it follows from Proposition~\ref{prop:arrow-order} that $D_{i+1}<C_{Y_i+1}$. If $Y_i<b-1$, then $Y_{i+1}=Y_i+1$ so this is the same as $D_{i+1}<C_{Y_{i+1}}$.
	If $Y_i=b-1$, then $Y_{i+1}=a$ we may have $C_{Y_{i+1}}=C_{a}\neq C_b= C_{Y_i+1}$. On the other hand, since the markings $B_a$ of $B$ at $\alpha_a^{\e_a}$ and $B_b$ of $B$ at $\alpha_b^{\e_b}$ split the same blank space $J_a=J_b$ of $F$ at $\alpha_a^{\e_a}=\alpha_b^{\e_b}$, the values $C_{Y_{i+1}}$ and $C_{Y_{i}+1}$ are both in the blank space $J_a=J_b$. The value $D_{i+1}$ is not in $J_a=J_b$ because $p_{(\beta_{i+1}^{\e_{i+1}})}^F$ is a route with positive coefficient $a_{(\beta_{i+1}^{\e_{i+1}})}^F$, hence 
$D_{i+1}<C_{Y_i+1}$ implies that $D_{i+1}<C_{Y_{i+1}}$. This ends the proof that $D_i<C_{Y_i}$ for all $i$.
By assumption the arrows of $\for(\beta_M^{\e_M},D_M)$ and $\for(\alpha_{Y_M}^{\e_{Y_M}},C_{Y_M})$ differ; since $D_M<C_{Y_M}$ it then follows from Definition~\ref{defn:forbac} that the signed arrow $\for(\beta_M^{\e_M},D_M)$ has positive sign $\e=1$ while the arrow of $\for(\alpha_{Y_M}^{\e_{Y_M}},C_{Y_M})$ has negative sign. Similarly, the arrow of $\bac(\beta_1^{\e_1},D_1)$ has positive sign while $\bac(\alpha_c^{\e_c},C_c)$ has negative sign. This shows that $\sigma$ is not an incompatibility between $B$ and $q$, completing our contradiction.
We have now shown that $B$ is $\K$-compatible.

Moreover, we have shown that for any $c\in\{a,a+1,\dots,b-1\}$, the marked band $B_c$ at $\alpha_c^{\e_c}$ splits the proper blank space $J_c$.
This completes the proof that the band $B$ splits $F$ with positive coefficient.
	Moreover, by choice of $a$ and $b$, no two markings of $B$ split the same blank space of $F$ at any signed arrow $\alpha^\e$. Since $N$ is equal to the number of blank spaces of $F$ by Remark~\ref{remk:blank-space-card}, this shows that $B$ has $\leq N$ arrows.

	Now let $F':=F-M_{B,F}\I(B)$.
	Then Proposition~\ref{prop:nnew} shows that $\K_{F'}^+=\K_{F}^+$ and, for any $p\in\K_{F'}^+$, we have $a_{p}^{F'}=a_p^F$. Moreover, the same proposition shows that $F'$ has strictly fewer proper blank spaces than $F$. Then our induction hypothesis shows that $G_{F'}$ is a nonnegative sum of indicator vectors of $\K_{F'}^+$-compatible (hence $\K_F^+$-compatible) bands with $\leq N$ arrows.
	Then write
	\begin{align*}
		F&=M_{B,F}\I(B)+F'\\
		&=M_{B,F}\I(B)+G_{F'}+\sum_{p\in\K_{F'}^+}a_p^{F'}\I(p)\\
		&=M_{B,F}\I(B)+G_{F'}+\sum_{p\in\K_F^+}a_p^F\I(p)
	\end{align*}
	and observe that by definition, $G_F=M_{B,F}\I(B)+G_{F'}$. This shows that $G_F$ is a positive combination of $\K$-compatible bands with $\leq N$ arrows and the proof is complete.
\end{proof}

\begin{prop}\label{prop:vort-2dir}
	Let $F$ be a flow of $\tL$ and say $F=G+\sum_{p\in\K}a_p\I(p)$ is a positive vortex combination. Then $\K=\K_F^+$ and $G=G_F$.
\end{prop}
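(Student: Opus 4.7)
The strategy is to reduce the claim to the uniqueness result for positive bundle combinations, Proposition~\ref{lem:only-way}. By the definition of a $\K$-compatible vortex, we may write $G = \sum_{B \in \B} b_B \I(B)$ with each $b_B > 0$ and each $B \in \B$ a $\K$-compatible band. If the bands in $\B$ happen to be pairwise compatible, then $\K \cup \B$ is a bundle and
\[F = \sum_{p\in\K} a_p \I(p) + \sum_{B\in\B} b_B \I(B)\]
is a positive bundle combination of $\K \cup \B$. Proposition~\ref{lem:only-way} then yields $\bK_F^+ = \K \cup \B$ with $a_p^F = a_p$ for $p \in \K$ and $a_B^F = b_B$ for $B \in \B$; restricting to routes gives $\K_F^+ = \K$ with matching coefficients, and then $G = F - \sum_{p\in\K_F^+} a_p^F \I(p) = G_F$.

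The main obstacle is that, a priori, two bands in $\B$ may be mutually incompatible even though both are individually $\K$-compatible. My plan is to establish the following lemma as a preliminary step: any $\K$-compatible vortex $G$ admits a decomposition as a nonnegative combination of pairwise compatible $\K$-compatible bands. The argument would proceed by a splicing/resolution step. If $B_1, B_2 \in \B$ are incompatible via some common substring $\sigma$ (appearing as a top substring of $B_1^{\pm 1}$ and a bottom substring of $B_2^{\pm 1}$), then exchanging the portions of $B_1$ and $B_2$ at $\sigma$ produces self-compatible walks whose indicator vectors sum to $\I(B_1) + \I(B_2)$. A careful case analysis, distinguishing whether $\sigma$ is lazy or contains at least one arrow, shows these walks decompose into $\K$-compatible bands, and a suitable complexity measure on the decomposition, such as the number of incompatibility incidences weighted by coefficient products, strictly decreases at each splicing step. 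Iterating until no incompatibilities remain yields the desired pairwise-compatible decomposition.

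Once this lemma is in hand, the reduction sketched in the first paragraph completes the proof. The hardest part is the splicing lemma itself: one must check that the swap produces genuine bands (not reducible or non-primitive walks), that $\K$-compatibility is preserved (which should follow since every arrow in a new band comes from $B_1$ or $B_2$ and $\K$-compatibility is a purely local kissing condition), and that the chosen complexity measure strictly decreases so that iteration terminates. An alternative route that avoids splicing is to directly generalize Lemma~\ref{lem:in} to the setting of $\K$-vortex combinations by arguing that, even when bands in $\B$ are mutually incompatible, the flow algorithm's trajectory while tracking a route $p \in \K$ is insensitive to the tie-breaking order among incompatible bands occupying the same $\prec_{\alpha^\e}$-slot relative to $p$; this suffices to conclude $\K \subseteq \K_F^+$ with $a_p \leq a_p^F$, after which the fringe-arrow identity of Corollary~\ref{cor:blos-fl} forces $\K_F^+ = \K$ with $a_p^F = a_p$, and subtraction gives $G = G_F$.
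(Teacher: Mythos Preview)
Your main approach via the splicing lemma has a genuine gap: the lemma is false. Consider the double Kronecker fringed quiver of Example~\ref{ex:irrational} with $\K$ the set of straight routes (or $\K=\emptyset$). The vortex $G=\pi\,\I(B_{(1,0)})+(4-\pi)\,\I(B_{(0,1)})$ is a $\K$-compatible vortex by construction. By Proposition~\ref{prop:dbands}, no two distinct self-compatible bands of this quiver are compatible, so a pairwise-compatible decomposition of $G$ could use only a single band $B_{(b,c)}$; but then the ratio of the $e_2$- and $e_3$-coordinates would be $b/c\in\mathbb Q$, contradicting $\pi/(4-\pi)\notin\mathbb Q$. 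Hence $G$ admits no decomposition into pairwise compatible $\K$-compatible bands. (This is exactly the phenomenon behind Example~\ref{ex:irrational}.) Your intuition that $\K$-compatibility is ``purely local'' and therefore preserved under splicing is also incorrect: kissing depends on which substrings occur as top or bottom substrings, and splicing two bands can create new top/bottom substrings that neither original band had.

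The paper's argument avoids this obstacle entirely. It never needs the bands to be pairwise compatible. Instead, writing $G=\sum_{i=1}^m b_i\I(B_i)$ and setting $F_j=\sum_{i\le j}b_i\I(B_i)+\sum_{p\in\K}a_p\I(p)$, one shows by induction on $j$ (using Proposition~\ref{prop:nnew} with $b_B=b_{j+1}\geq 0\geq -M_{F_j,B_{j+1}}$) that $\K_{F_j}^+=\K$ with the same coefficients; the base case $j=0$ is Theorem~\ref{thm:comb}. The key point is that Proposition~\ref{prop:nnew} only requires $B_{j+1}$ to be compatible with $\K_{F_j}^+=\K$, not with the other $B_i$. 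Your alternative proposal in the final sentence is closer in spirit to this, but the clean inductive use of Proposition~\ref{prop:nnew} is what makes the argument work without any compatibility hypothesis among the bands.
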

\begin{proof}
	Since $F=G+\sum_{p\in \K}a_p\I(p)$ is a vortex combination, we may choose a set of $\K$-compatible bands $S=\{B_1,\dots,B_m\}$ with nonnegative coefficients $\{b_i\ :\ i\in[m]\}$ so that $G=\sum_{i=1}^mb_i\I(B_i)$.
	For any $j\in\{0,1,\dots,m\}$, define $F_j:=\sum_{i=1}^jb_i\I(B_i)+\sum_{p\in\K}a_p\I(p)$. First, it follows from Theorem~\ref{thm:comb} that $\K_{F_0}^+=\K$ and that $a_p^{F_0}=a_p$ for any $p\in\K$.
	Say for some $j\in\{0,1,\dots,m-1\}$ that we have $\K_{F_j}^+=\K$ and that $a_p^{F_j}=a_p$ for any $p\in\K$.
	It then follows from Proposition~\ref{prop:nnew} that $F_{j+1}=b_{j+1}\I(B_{j+1})+F_j$ satisfies this property as well.
	Applying this for all $j\in\{0,1,\dots,m-1\}$ in turn gives that $F_m=F$ satisfies $\K_F^+=\K$ and that $a_p^F=a_p$ for any $p\in\K$.
\end{proof}

\begin{cor}\label{cor:vort-poly}
	If $\K$ is a clique, then $\D_1^{\spiral}(\K)$ is a polyhedron with vertex set $\{\I(p)\ :\ p\in\K\}$, and $\D_{\geq0}^{\spiral}(\K)$ is a polyhedral cone.
\end{cor}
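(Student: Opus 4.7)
My plan is to derive the corollary as a straightforward consequence of Propositions~\ref{prop:vort-1dir} and~\ref{prop:vort-2dir}, with the real content being that the recession cone of $\Delta_1^{\spiral}(\K)$ is finitely generated (which is not obvious a priori, since there may be infinitely many $\K$-compatible bands).

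The central step is to prove that for each clique $\K$, every $\K$-compatible vortex is a nonnegative combination of indicator vectors from a finite list of $\K$-compatible bands. Set $N := |E| + \sum_{p\in\K}|p|$, and let $\B_\K$ be the set of $\K$-compatible bands with at most $N$ arrows; this is a finite set. Given any $\K$-compatible vortex $G$, I would form the unit flow $F := G + \tfrac{1}{|\K|}\sum_{p\in\K}\I(p)$. This is a positive $\K$-vortex combination, so Proposition~\ref{prop:vort-2dir} forces $\K_F^+ = \K$ and $G_F = G$. Then Proposition~\ref{prop:vort-1dir} expresses $G = G_F$ as a nonnegative sum of indicator vectors of $\K_F^+$-compatible bands, each with at most $|E| + \sum_{p\in\K_F^+}|p| = N$ arrows, hence from $\B_\K$. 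This shows the cone of $\K$-compatible vortices equals $\operatorname{cone}\{\I(B) : B \in \B_\K\}$, which is finitely generated.

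Once this is established, $\Delta_1^{\spiral}(\K)$ is the Minkowski sum $\operatorname{conv}\{\I(p) : p\in\K\} + \operatorname{cone}\{\I(B) : B \in \B_\K\}$ of a polytope and a finitely generated cone, hence a polyhedron; and $\Delta_{\geq0}^{\spiral}(\K) = \operatorname{cone}\{\I(p) : p \in \K\} \cup \operatorname{cone}\{\I(B) : B \in \B_\K\}$ (taking the full conic hull) is a polyhedral cone. For the vertex statement, I would argue that each $\I(p)$ for $p\in\K$ is an extreme point: suppose $\I(p) = \lambda F_1 + (1-\lambda) F_2$ with $F_i \in \Delta_1^{\spiral}(\K)$ distinct and $\lambda \in (0,1)$. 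Writing each $F_i = G_i + \sum_q a_{i,q}\I(q)$ as a unit $\K$-vortex combination and averaging gives a $\K$-vortex combination of $\I(p)$ with vortex $\lambda G_1 + (1-\lambda)G_2$ and route coefficients $\lambda a_{1,q}+(1-\lambda)a_{2,q}$. Restricting to the routes with positive coefficient yields a positive vortex combination for $\I(p)$, which by Proposition~\ref{prop:vort-2dir} applied to both this combination and the trivial positive combination $\I(p) = 0 + 1\cdot\I(p)$ must have underlying clique $\{p\}$ and zero vortex. Since $G_1, G_2$ are nonnegative combinations of band indicators, $\lambda G_1 + (1-\lambda)G_2 = 0$ forces $G_1 = G_2 = 0$, and the coefficient constraints force $F_1 = F_2 = \I(p)$, a contradiction. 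That no other vertices exist is immediate from the Minkowski decomposition, since every point is $\I(p)$ plus something in the recession cone, which shifts it off any extreme set not containing $\I(p)$.

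The main obstacle is justifying the uniform bound $N$ on band lengths needed for finite generation of the recession cone; fortunately, this has already been packaged into Proposition~\ref{prop:vort-1dir}, so the argument reduces to the clean trick of adding a small positive multiple of each route in $\K$ to turn an arbitrary $\K$-compatible vortex into a positive $\K$-vortex combination, where the two earlier propositions can be applied in tandem.
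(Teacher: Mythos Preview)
Your argument is correct and follows the same line as the paper's: bound the length of the bands needed via Proposition~\ref{prop:vort-1dir}, invoke Proposition~\ref{prop:vort-2dir} for uniqueness, and conclude the Minkowski-sum description and vertex set. Your device of adding $\tfrac{1}{|\K|}\sum_{p\in\K}\I(p)$ to force $\K_F^+=\K$ is actually a shade more careful than the paper's phrasing, since it guarantees the bands produced by Proposition~\ref{prop:vort-1dir} are $\K$-compatible rather than merely $\K_F^+$-compatible; just replace $\tfrac{1}{|\K|}$ by any positive constant to cover the edge case $\K=\emptyset$.
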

\begin{proof}
	Let $S$ be the (finite) set of $\K$-compatible bands of $\tL$ with less than or equal to $N:=|E|+\sum_{p\in\K}|p|$ arrows. 
	By Proposition~\ref{prop:vort-2dir}, any vortex combination is of the form $G_F+\sum_{p\in\K_F^+}a_p^F\I(p)$ for some flow $F$. Then by the second part of Proposition~\ref{prop:vort-1dir}, $G_F$ is a positive combination of indicator vectors of bands of $S$. This shows that $\D_1^{\spiral}(\K)$ is the polyhedron given by the Minkowski sum of the clique simplex $\D_\K$ with the indicator vectors of bands of $S$. This similarly shows that $\D^{\spiral}_{\geq0}(\K)$ is the (polyhedral) cone generated by the indicator vectors of routes of $\K$ and of the bands of $S$. 

	It remains to find the vertices of $\D_1^{\spiral}(\K)$. It is immediate that all such vertices must be indicator vectors of routes of $\K$.
	Moreover, if $p\in\K$, then the only way to represent $\I(p)$ as a $\K$-vortex combination is as the trivial sum $\I(p)$ by Proposition~\ref{prop:vort-2dir}, hence $\I(p)$ is a vertex of $\D_1^{\spiral}(\K)$.
\end{proof}

We may now show the first statement of Theorem~\ref{ITHM:VORTEX} from the introduction.

\begin{thm}\label{thm:vort-decomp}
	Let $F$ be a flow of $\tL$. Then $F=G_F+\sum_{p\in\K_F^+}a_p^F\I(p)$ is the \emph{unique} way to express $F$ as a positive $\K$-vortex combination, for any clique $\K$.
\end{thm}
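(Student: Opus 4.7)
The plan is to deduce Theorem~\ref{thm:vort-decomp} essentially as a direct corollary of Propositions~\ref{prop:vort-1dir} and~\ref{prop:vort-2dir}, which together supply existence and uniqueness, respectively. First, for existence: Proposition~\ref{prop:vort-1dir} asserts that $F=G_F+\sum_{p\in\K_F^+}a_p^F\I(p)$ is a $\K_F^+$-vortex combination, where $G_F$ is realized explicitly as a nonnegative combination of indicator vectors of $\K_F^+$-compatible bands. By construction, the set $\K_F^+$ is exactly the subset of $\K_F$ consisting of routes $p$ with $a_p^F>0$, so this combination is automatically positive in the sense of Definition~\ref{defn:vortexcomb}.

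Next, for uniqueness: suppose $F=G+\sum_{p\in\K}a_p\I(p)$ is any positive $\K$-vortex combination for some clique $\K$. Proposition~\ref{prop:vort-2dir} applies directly and yields $\K=\K_F^+$ and $G=G_F$. The equalities $a_p=a_p^F$ for all $p\in\K$ are also obtained inside the proof of Proposition~\ref{prop:vort-2dir}: after writing $G=\sum_{i=1}^m b_i\I(B_i)$ and setting $F_0:=\sum_{p\in\K}a_p\I(p)$, the positive bundle combination uniqueness of Theorem~\ref{thm:comb}~\eqref{cco1} gives $\K=\K_{F_0}^+$ and $a_p=a_p^{F_0}$, and then adding the bands $b_i\I(B_i)$ one at a time and invoking Proposition~\ref{prop:nnew} at each step shows these coefficients are preserved in passing from $F_0$ to $F$. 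So I would simply cite this paragraph of the previous proof when transcribing $a_p=a_p^F$ into my argument.

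I do not anticipate a real obstacle: the theorem is essentially a packaging result, and the two ingredients are already in place. What required genuine work (and lives earlier in the section) is the constructive realization of $G_F$ as a finite nonnegative combination of $\K_F^+$-compatible bands in Proposition~\ref{prop:vort-1dir}, which rested on the blank-space counting argument together with the perturbation lemma Proposition~\ref{prop:nnew}, and the inductive reduction of positive vortex-combination uniqueness to positive bundle-combination uniqueness in Proposition~\ref{prop:vort-2dir}. Given those, the theorem follows in a handful of lines: existence from Proposition~\ref{prop:vort-1dir}, uniqueness from Proposition~\ref{prop:vort-2dir}.
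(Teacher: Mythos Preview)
Your proposal is correct and matches the paper's own proof essentially verbatim: the paper simply cites Proposition~\ref{prop:vort-1dir} for existence and Proposition~\ref{prop:vort-2dir} for uniqueness. One small simplification: you do not need to dig into the proof of Proposition~\ref{prop:vort-2dir} to extract $a_p=a_p^F$, since once $\K=\K_F^+$ and $G=G_F$ are known, subtracting gives $\sum_{p\in\K}a_p\I(p)=\sum_{p\in\K_F^+}a_p^F\I(p)$, and then uniqueness of positive bundle combinations (Theorem~\ref{thm:comb}) forces $a_p=a_p^F$.
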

\begin{proof}
	$F=G_F+\sum_{p\in\K_F^+}a_p^F\I(p)$ is a $\K_F^+$-vortex combination by Proposition~\ref{prop:vort-1dir} and uniqueness follows from Proposition~\ref{prop:vort-2dir}.
\end{proof}

We would like to say that Theorem~\ref{thm:vort-decomp} gives us a dissection of $\F_1(\tL)$. To do this, we must figure out which cliques $\K$ give maximal vortex spaces $\D_1^{\spiral}(\K)$.

\subsection{Vortex dissections}

We wish to show that the set of maximal vortex spaces gives us a dissection for the turbulence polyhedron. When we worked with bundle dissections, it was immediate that maximal bundle spaces were precisely the bundle spaces of maximal bundles, but this is no longer the case for vortex spaces. Hence, we first need to figure out which cliques give maximal vortex spaces.

\begin{lemma}\label{lem:fgh}
	Let $\K$ be a clique and let $B$ be a self-compatible band. Then $B$ is $\K$-compatible if and only if $\I(B)$ is $\K$-compatible.
\end{lemma}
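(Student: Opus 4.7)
The forward direction is immediate: if $B$ is $\K$-compatible, then $\I(B) = 1 \cdot \I(B)$ already realizes $\I(B)$ as a nonnegative combination of indicator vectors of $\K$-compatible bands, so $\I(B)$ is $\K$-compatible.

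For the reverse, suppose $\I(B) = \sum_i c_i \I(B_i)$ with each $B_i$ a $\K$-compatible self-compatible band and each $c_i > 0$. The strategy is to consider the auxiliary flow $F := \I(B) + \sum_{p \in \K} \I(p)$ and use uniqueness of positive bundle combinations (Theorem~\ref{thm:comb}) to force $B$ into $\bK_F^+$. First, I would show that $\K_F^+ = \K$ with $a_p^F = 1$ for each $p \in \K$. Starting from $F_0 := \sum_{p \in \K} \I(p)$, which has $\K_{F_0}^+ = \K$ with all coefficients 1 by Theorem~\ref{thm:comb}, I would set $F_i := F_{i-1} + c_i \I(B_i)$ and apply Proposition~\ref{prop:nnew} at each step: since $B_i$ is compatible with $\K$ and $c_i > 0 \geq -M_{F_{i-1}, B_i}$, the clique $\K_{F_i}^+ = \K$ and coefficients are preserved. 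After $m$ steps, $F = F_m$ satisfies $\K_F^+ = \K$ with all coefficients $1$.

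Next, by Theorem~\ref{thm:comb}(2) the rational flow $F$ admits a unique positive bundle combination, and by the previous step it has the form $F = \sum_{p \in \K} \I(p) + \sum_{B' \in \B_F^+} a_{B'}^F \I(B')$ where $\B_F^+$ denotes the bands of $\bK_F^+$. Subtracting $\sum_{p \in \K} \I(p)$ yields
\[
\I(B) = \sum_{B' \in \B_F^+} a_{B'}^F \I(B'),
\]
which is a positive bundle combination of $\I(B)$ whose underlying bundle $\B_F^+$ sits inside $\bK_F^+$. Since $B$ is self-compatible, $\{B\}$ is itself a bundle and $\I(B) = 1 \cdot \I(B)$ is another positive bundle combination of the same flow. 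Theorem~\ref{thm:comb}(1) (uniqueness, established in Proposition~\ref{lem:only-way}) then forces $\B_F^+ = \{B\}$ and $a_B^F = 1$. In particular, $B$ lies in $\bK_F^+$, which is a bundle containing every route of $\K$, so $B$ is $\K$-compatible.

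The main technical point is the iterative application of Proposition~\ref{prop:nnew} in the first step: I need to verify that at each stage the relevant $B_i$ is compatible with the clique of $F_{i-1}$ and that the split-strength condition is satisfied. Both follow easily here because the $B_i$ are $\K$-compatible by hypothesis and we are adding (not subtracting) multiples of $\I(B_i)$, but the verification deserves care. Once the clique of $F$ is pinned down as $\K$, the proof reduces to invoking uniqueness of positive bundle combinations twice — once to describe $\bK_F^+$ and once to collapse the decomposition of $\I(B)$ to a single band.
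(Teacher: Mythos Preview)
Your proof is correct but takes a genuinely different route from the paper's.

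The paper argues by contrapositive with an explicit separating linear functional: if $B$ kisses some $q\in\K$ at a substring $\sigma$, one compares $\sum_{\alpha\in S_{\text{in}}}(\cdot)(\alpha)$ against $\sum_{\alpha\in S_{\text{out}}}(\cdot)(\alpha)$, where $S_{\text{in}}$ and $S_{\text{out}}$ are the arrows entering and leaving the vertices of $\sigma$. Self-compatibility of $B$ together with the kiss forces a strict inequality one way for $\I(B)$, while compatibility with $q$ forces the opposite (weak) inequality for every $\K$-compatible band and hence for every nonnegative combination of their indicator vectors. This is elementary and independent of the flow algorithm; it literally exhibits a half-space containing all $\K$-compatible vortices but excluding $\I(B)$.

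Your argument instead leverages the flow-algorithm machinery. Note that your iterative use of Proposition~\ref{prop:nnew} is exactly the proof of Proposition~\ref{prop:vort-2dir}, so you could simply cite that result: writing $F=\I(B)+\sum_{p\in\K}\I(p)$ as a positive $\K$-vortex combination immediately yields $\K_F^+=\K$ with all coefficients $1$. From there your uniqueness argument is clean. The trade-off is clarity versus economy: the paper's proof is self-contained and gives geometric insight (an explicit hyperplane), whereas yours is shorter once the heavier results are in place and illustrates how much the uniqueness theorems already encode. One minor point: the hypothesis of Proposition~\ref{prop:nnew} is stated for $\K_F$ rather than $\K_F^+$, but the paper itself applies it (in Proposition~\ref{prop:vort-2dir}) exactly as you do, requiring only compatibility with $\K_F^+$, so your usage is consistent with the paper's.
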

\begin{proof}
	It is immediate by Definition~\ref{defn:vortexcomb} that if $B$ is $\K$-compatible then the vortex $\I(B)$ is $\K$-compatible.
	We now show that if $B$ is not $\K$-compatible, then $\I(B)$ is not $\K$-compatible.
	Let $\sigma$ be an incompatibility between $B$ and a route $q$ of $\K$.
	Define
	\begin{align*}
		S_{\text{in}}:=\{\alpha\in\tL\ :\ h(\alpha)\in\sigma\}\text{~~~ and ~~~}S_{\text{out}}:=\{\alpha\in\tL\ :\ t(\alpha)\in\sigma\}.
	\end{align*}
	Let $\alpha_1,\beta_1\in S_{\text{in}}$ and $\alpha_2,\beta_2\in S_{\text{out}}$ and suppose that $q$ contains $\alpha_1\sigma\beta_2^{-1}$ and $B$ contains $\alpha_2^{-1}\sigma\beta_1$. 
		The case where $q$ contains $\alpha_2^{-1}\sigma\beta_1$ and $B$ contains $\alpha_1\sigma\beta_2^{-1}$ is symmetric.
		
		Since $B$ is self-compatible and contains $\alpha_2^{-1}\sigma\beta_1$, the band $B$ may not contain any substring of the form $\gamma_1\sigma'\gamma_2^{-1}$, where $\gamma_i$ are arrows and $\sigma'$ is a substring of $\sigma$. It follows that $\sum_{\alpha\in S_{\text{in}}}\mathcal I(B)(\alpha)<\sum_{\alpha\in S_{\text{out}}}\mathcal I(B)(\alpha)$.
		On the other hand, any $p\in\K$ is compatible with $q$, hence $\sum_{\alpha\in S_{\text{in}}}\mathcal I(p)(\alpha)\geq\sum_{\alpha\in S_{\text{out}}}\mathcal I(p)(\alpha)$. Then any positive combination $G$ of $\K$-compatible bands
		satisfies the same inequality
	$\sum_{\alpha\in S_{\text{in}}}G(\alpha)\geq\sum_{\alpha\in S_{\text{out}}}G(\alpha)$.
	Since $\sum_{\alpha\in S_{\text{in}}}\mathcal I(B)(\alpha)<\sum_{\alpha\in S_{\text{out}}}\mathcal I(B)(\alpha)$, we have shown that $\I(B)$ is not a positive combination of $\K$-compatible bands, and hence is not $\K$-compatible.
\end{proof}

We will now define the class of cliques $\K$ giving maximal vortex spaces $\D_1^{\spiral}(\K)$.

\begin{defn}\label{defn:bandstable}
	A clique $\K$ of $\tL$ is \emph{band-stable} if for any clique $\K'\supsetneq\K$, there exists a band $B_{\K'}$ which is $\K$-compatible but not $\K'$-compatible.
\end{defn}

\begin{thm}\label{thm:vort-sub}
	The following are equivalent for a clique $\K$ of $\tL$:
	\begin{enumerate}
		\item the clique $\K$ is band-stable,
		\item the vortex space $\D_1^{\spiral}(\K)$ is maximal, and
		\item the vortex space $\D_{\geq0}^{\spiral}(\K)$ is maximal.
	\end{enumerate}
\end{thm}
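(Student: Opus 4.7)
The plan is to reduce all three conditions to a single containment criterion for vortex spaces indexed by cliques: for any two cliques $\K, \K'$,
\[\D_{\geq 0}^{\spiral}(\K) \subseteq \D_{\geq 0}^{\spiral}(\K') \iff \D_1^{\spiral}(\K) \subseteq \D_1^{\spiral}(\K') \iff \K \subseteq \K' \text{ and every } \K\text{-compatible band is } \K'\text{-compatible}.\]

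For the forward direction of this criterion, I would apply uniqueness of positive vortex combinations (Proposition~\ref{prop:vort-2dir}) to each vertex $\I(p)$ of $\D_1^{\spiral}(\K)$: any expression of $\I(p)$ as a $\K'$-vortex combination must, after passing to the positive subset of routes used, equal the canonical positive vortex combination $\I(p) = 0 + 1\cdot\I(p)$, forcing $p \in \K'$; this gives $\K \subseteq \K'$. Then for a $\K$-compatible band $B$ and any chosen $p \in \K$, the unit flow $\I(p) + \I(B)$ lies in $\D_1^{\spiral}(\K)$ (using that $B$ is compatible with every route of $\K$); writing it as a $\K'$-vortex combination and again invoking uniqueness pins down $\I(B)$ as the canonical vortex, so $\I(B)$ is a $\K'$-compatible vortex, and Lemma~\ref{lem:fgh} promotes this back to $\K'$-compatibility of $B$. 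For the backward direction, any unit $\K$-vortex combination $G + \sum_{p\in\K}a_p \I(p)$ rewrites directly as a unit $\K'$-vortex combination by appending zero coefficients for routes in $\K' \setminus \K$; the band hypothesis ensures that $G$, being a nonnegative combination of $\K$-compatible bands, is automatically a $\K'$-compatible vortex. The $\D_{\geq 0}^{\spiral}$ version follows from the same argument without the unit constraint.

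With this criterion in hand, the theorem follows quickly. When $\K \subsetneq \K'$, Corollary~\ref{cor:vort-poly} combined with the same uniqueness argument shows that any vertex $\I(p)$ for $p \in \K' \setminus \K$ lies in $\D_1^{\spiral}(\K')$ but not in $\D_1^{\spiral}(\K)$, so a containment $\D_1^{\spiral}(\K) \subseteq \D_1^{\spiral}(\K')$ is automatically strict. Therefore $\D_1^{\spiral}(\K)$ fails to be maximal precisely when there exists $\K' \supsetneq \K$ such that every $\K$-compatible band is $\K'$-compatible, which is exactly the negation of band-stability as in Definition~\ref{defn:bandstable}. This gives (1)$\iff$(2), and the identical argument applied to nonnegative vortex spaces gives (1)$\iff$(3).

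The main obstacle will be the forward direction of the containment criterion, specifically the step that extracts $\K'$-compatibility of an arbitrary $\K$-compatible band $B$ from the mere containment of vortex spaces. The trick is to test the containment on the cleverly chosen unit flow $\I(p) + \I(B)$ rather than trying to inspect the vortex $\I(B)$ in isolation: coupling $B$ to a route lets Proposition~\ref{prop:vort-2dir} force $\I(B)$ to appear as the unique canonical vortex in any positive $\K'$-vortex combination, after which Lemma~\ref{lem:fgh} translates vortex-level compatibility back into band-level compatibility.
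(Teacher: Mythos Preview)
Your proposal is correct and follows essentially the same approach as the paper: both proofs use the test flows $\I(p)$ and $\I(p)+\I(B)$ together with the uniqueness of positive vortex combinations (Proposition~\ref{prop:vort-2dir}) and Lemma~\ref{lem:fgh} to translate between containment of vortex spaces and the combinatorial conditions on cliques and bands. The only difference is organizational---you package the argument as a general containment criterion for arbitrary pairs of cliques before specializing, while the paper argues the two directions of the equivalence more directly---but the technical content is identical.
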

\begin{proof}
	Suppose first that $\K$ is band-stable. We show that $\D_1^{\spiral}(\K)$ and $\D_{\geq0}^{\spiral}(\K)$ are maximal by showing that for any clique $\K'\neq\K$, there exists a point of $\D_1^{\spiral}(\K)\subseteq\D_{\geq0}^{\spiral}(\K)$ which is not in $\D_1^{\spiral}(\K')$ or $\D_{\geq0}^{\spiral}(\K')$.
	First, suppose $\K'\not\supseteq\K$. Then there exists $p\in\K$ which is not in $\K'$. Then $\I(p)\in\D_1^{\spiral}(\K)$ and $\I(p)\not\in\D_{\geq0}^{\spiral}(\K')$. On the other hand, suppose $\K'\supsetneq\K$. By band-stability of $\K$, there exists some $\K$-compatible band $B$ which is not $\K'$-compatible. Let $p$ be any route of $\K$. Then $\I(B)+\I(p)$ is a $\K$-vortex combination, but $\I(B)$ is not $\K'$-compatible by Lemma~\ref{lem:fgh} so $\I(B)+\I(p)$ is not a $\K'$-vortex combination.
	This completes the proof of this direction.

	Now suppose $\K$ is not band-stable. Then there exists a clique $\K'\supsetneq\K$ such that every $\K$-compatible band is $\K'$-compatible. Then it is immediate that $\D_1^{\spiral}(\K')\supseteq\D_1^{\spiral}(\K)$. Moreover, taking some $p\in\K'\backslash\K$, the vector $\I(p)$ is in $\D_1^{\spiral}(\K')$ but not $\D_1^{\spiral}(\K)$, so $\D_1^{\spiral}(\K')\supsetneq\D_1^{\spiral}(\K)$. Similarly, $\D_{\geq0}^{\spiral}(\K')\supsetneq\D_{\geq0}^{\spiral}(\K)$. This completes the proof.
\end{proof}

We may now define the vortex dissections of $\F_1(\tL)$ and $\F_{\geq0}(\tL)$.
\begin{defn}
	Define the \emph{(unit) vortex dissection} of $\F_1(\tL)$ as
	\[\Sbs^{\spiral}(\F_1(\tL)):=\{\D_{1}^{\spiral}(\K)\ :\ \K\text{ is a band-stable clique of }\tL\}.\]
	Define the \emph{nonnegative vortex dissection} of $\F_{\geq0}(\tL)$ as
	\[\Sbs^{\spiral}(\F_{\geq0}(\tL)):=\{\D_{\geq0}^{\spiral}(\K)\ :\ \K\text{ is a band-stable clique of }\tL\}.\]
\end{defn}
We may now complete the proof of Theorem~\ref{ITHM:VORTEX} of the introduction.
\begin{thm}\label{thmCbody}
	The vortex dissections $\Sbs^{\spiral}(\F_1(\tL))$ and $\Sbs^{\spiral}(\F_{\geq0}(\tL))$ are complete dissections of $\F_1(\tL)$ and $\F_{\geq0}(\tL)$, respectively.
\end{thm}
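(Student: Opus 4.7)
The plan is to prove Theorem~\ref{thmCbody} in two stages: first completeness (every flow lies in some vortex space indexed by a band-stable clique), and second the weak intersection property. Both reduce to consequences of Theorem~\ref{thm:vort-decomp} combined with the band-stability criterion of Theorem~\ref{thm:vort-sub}, and the hard work is packaged inside the ``extension to a band-stable clique'' and the dimensional analysis of the intersection.

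For completeness, I would begin with an arbitrary flow $F \in \F_{\geq 0}(\tL)$. Theorem~\ref{thm:vort-decomp} expresses $F$ as a positive $\K_F^+$-vortex combination $F = G_F + \sum_{p\in\K_F^+} a_p^F \I(p)$, so $F \in \Delta^{\spiral}_{\geq 0}(\K_F^+)$. If $\K_F^+$ is band-stable, we are done; otherwise Definition~\ref{defn:bandstable} supplies a clique $\K' \supsetneq \K_F^+$ such that every $\K_F^+$-compatible band is also $\K'$-compatible. A direct unwinding of the definitions shows $\Delta^{\spiral}_{\geq 0}(\K_F^+) \subseteq \Delta^{\spiral}_{\geq 0}(\K')$, since an existing $\K_F^+$-vortex combination of $F$ can be reread as a $\K'$-vortex combination by adjoining zero coefficients on $\K' \setminus \K_F^+$. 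Iterating extends $\K_F^+$ to a band-stable clique (the process terminates because clique cardinality is bounded by Theorem~\ref{thm:ccc}). The unit case $\F_1(\tL)$ follows by restricting the argument to strength-$1$ flows.

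For the weak intersection property, let $\K_1 \neq \K_2$ be distinct band-stable cliques and let $F \in \Delta^{\spiral}(\K_1) \cap \Delta^{\spiral}(\K_2)$. Expressing $F$ as a positive vortex combination and invoking the uniqueness half of Theorem~\ref{thm:vort-decomp} (Proposition~\ref{lem:only-way} and Proposition~\ref{prop:vort-2dir}), we obtain $\K_F^+ \subseteq \K_1 \cap \K_2$ and $G_F \in C_{\K_1} \cap C_{\K_2}$, where $C_{\K}$ denotes the cone of $\K$-compatible vortices. In the easy case where $\K_1 \not\subseteq \K_2$, any $p_0 \in \K_1 \setminus \K_2$ satisfies $p_0 \notin \K_F^+$, so the coefficient of $\I(p_0)$ in the $\K_1$-vortex representation of $F$ must vanish; this forces $F$ into the proper face $\{a_{p_0} = 0\}$ of $\Delta^{\spiral}(\K_1)$, which has strictly smaller dimension, and the symmetric statement handles $\K_2 \not\subseteq \K_1$.

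The main obstacle is the containment case $\K_1 \subsetneq \K_2$ (or its symmetric), where the intersection simplifies to $\mathrm{conv}\{\I(p) : p \in \K_1\} + C_{\K_2}$, while $\Delta^{\spiral}(\K_1) = \mathrm{conv}\{\I(p) : p \in \K_1\} + C_{\K_1}$. Band-stability of $\K_1$ supplies a $\K_1$-compatible band $B_0$ that is not $\K_2$-compatible, and Lemma~\ref{lem:fgh} upgrades this to $\I(B_0) \in C_{\K_1} \setminus C_{\K_2}$, so $C_{\K_2} \subsetneq C_{\K_1}$. The remaining task is to promote this strict set containment to a strict affine-span containment; I would argue this by selecting $B_0$ so that $\I(B_0)$ is an extreme ray of $C_{\K_1}$, then using the uniqueness of positive vortex combinations to forbid $\I(B_0)$ from lying in $\mathrm{aff}\{\I(p) : p \in \K_1\} + \mathrm{lin}(C_{\K_2})$ (any such expression, together with the unique positive representations of both sides, would produce a second positive vortex combination of the same flow, contradicting Theorem~\ref{thm:vort-decomp}). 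The symmetric comparison with $\Delta^{\spiral}(\K_2)$ is handled by a vertex of the form $\I(p_0)$ for $p_0 \in \K_2 \setminus \K_1$, using affine independence of $\{\I(p) : p \in \K_2\}$ (a consequence of uniqueness of bundle combinations in Theorem~\ref{thm:comb}). The nonnegative statement follows from the unit statement by scaling, since the cones $\Delta^{\spiral}_{\geq 0}(\K)$ are cones over $\Delta^{\spiral}_1(\K)$ and their intersection structure is preserved.
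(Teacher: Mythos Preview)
Your completeness argument and your handling of the non-containment case of the weak intersection property are correct and match the paper's approach, which simply cites Corollary~\ref{cor:vort-poly}, Theorem~\ref{thm:vort-decomp}, and Theorem~\ref{thm:vort-sub} without spelling out details; in particular the paper does not isolate the containment case $\K_1\subsetneq\K_2$ separately.

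Your proposed resolution of that containment case has a gap. You want $\I(B_0)$ to lie outside the affine span of the intersection, and you invoke uniqueness of positive vortex combinations to get there. But Theorem~\ref{thm:vort-decomp} (via Proposition~\ref{prop:vort-2dir}) determines only the clique $\K_F^+$, the route coefficients $a_p^F$, and the single vector $G_F\in\mathbb R^E$; it does not control how $G_F$ is written as a combination of band indicator vectors. Concretely, a linear relation $\I(B_0)=\sum_i d_i\,\I(B_i)$ with each $B_i$ a $\K_2$-compatible band and the $d_i$ of mixed sign yields, after moving negative terms across, two distinct nonnegative band-expansions of the same vortex $W$. Both sides are positive vortex combinations of $W$ with empty clique part and $G_W=W$, so uniqueness holds trivially and no contradiction arises. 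There is also a smaller issue in the setup: band-stability of $\K_1$ supplies \emph{some} $\K_1$-compatible band incompatible with $\K_2$, but not one whose indicator is an extreme ray of $C_{\K_1}$. What actually must be shown to finish the argument along your lines is the strict inequality $\dim C_{\K_2}<\dim C_{\K_1}$, and your uniqueness-based sketch does not establish this.
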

\begin{proof}
	Corollary~\ref{cor:vort-poly} shows that the cells are polyhedra.
	Theorem~\ref{thm:vort-decomp} and Theorem~\ref{thm:vort-sub} give the weak intersection property.
	Because every flow may be obtained as a vortex combination by Theorem~\ref{thm:vort-decomp}, the dissection is complete.
\end{proof}

Like bundle dissections, a vortex dissection always contains the clique triangulation because every maximal clique is band-stable.  When $\tL$ is representation-finite, all three dissections are equal. Unlike bundle subdivisions, vortex dissections are always complete.
Because maximal clique simplices are dense in $\F_1(\tL)$, no vortex wall is full-dimensional in $\F_1(\tL)$ or $\F_{\geq0}(\tL)$. 

\begin{example}\label{ex:kron-vortex-decomp}
	Let $\tL$ be the Kronecker fringed quiver of Figure~\ref{KRONINTRO}.
	As is always the case, every maximal clique of $\tL$ is band-stable. Since the unique band $e_2f_2^{-1}$ of $\tL$ is not compatible with any bending route, the only band-stable clique which is not maximal is the set of straight routes $\{e_1e_2e_3,f_1f_2f_3\}$.
	The corresponding vortex space is 
	\[\Delta_1^{\spiral}(\{e_1e_2e_3,f_1f_2f_3\})=\{x\I(e_1e_2e_3)+y\I(f_1f_2f_3)+z\I(e_2f_2^{-1})\ :\ x,y,z\in\mathbb R_{\geq0}\text{ and }x+y=1\}.\]
	Note that this is the same as the bundle space $\Delta_1(\{e_1e_2e_3,f_1f_2f_3,e_2f_2^{-1}\})$. Hence, the vortex dissection of the Kronecker fringed quiver is the same as its bundle subdivision given in Example~\ref{ex:kron-eddy-decomp}, with the caveat that the unique cell which is not a simplex is indexed by the band-stable clique $\{e_1e_2e_3,f_1f_2f_3\}$ in the vortex dissection and is indexed by the maximal bundle $\{e_1e_2e_3,f_1f_2f_3,e_2f_2^{-1}\}$ in the bundle subdivision.
\end{example}

The fringed quiver of Example~\ref{ex:gemd} is representation-finite, so the vortex dissection and bundle subdivision of its turbulence polyhedron both agree with the clique triangulation as given in Example~\ref{ex:gemd}.

\subsection{Vortex dissections of the g-polyhedron and g-vector fan}

Recall Section~\ref{ssec:CBSG}, wherein we took the clique and bundle subdivisions of the turbulence polyhedron $\F_1(\tL)$ and cone of flows $\F_{\geq0}(\tL)$ and ported them through $\phi$ to get analogous subdivisions of the g-polyhedron $\g_1(\L)$ and the space $\mathbb R^{V_{\text{int}}}$, respectively.
We now apply the same strategy to the vortex dissections of $\F_1(\tL)$ and $\F_{\geq0}(\tL)$.

\begin{defn}
	Let $\K$ be a clique. Let $\mathcal B_\K$ be the set of $\K$-compatible bands.
	A \emph{g-vortex combination} of $\K$ is a sum of the form
	\[z+\sum_{p\in\K}a_p\g(p),\]
	where all coefficients $a_p$ are nonnegative and
	$z=\sum_{B\in\mathcal B_K}b_B\g(B)$ is a finite nonnegative combination of g-vectors of $\K$-compatible bands.
	We call $z$ the \emph{canonical g-vortex} of the combination.
	The combination is \emph{unit} if $\sum_{p\in\K}a_p\leq0$.
	Define the \emph{(resp. unit) g-vortex space} $\g_{\geq0}^{\spiral}(\K)$ (resp. $\g_1^{\spiral}(\K)$) as the space of (resp. unit) vortex combinations of $\K$.
	If $\K$ is not a maximal clique, then we refer to $\g_{1}^{\spiral}(\K)$ and $\g_{\geq0}^{\spiral}(\K)$ as \emph{g-vortex walls}.
\end{defn}

\begin{prop}\label{prop:g-vort-max}
	The following are equivalent for a clique $\K$ of $\tL$:
	\begin{enumerate}
		\item the clique $\K$ is band-stable,
		\item the g-vortex space $\g_1^{\spiral}(\K)$ is maximal, and
		\item the g-vortex space $\g_{\geq0}^{\spiral}(\K)$ is maximal.
	\end{enumerate}
\end{prop}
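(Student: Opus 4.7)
The strategy is to port the proof of the flow-side analogue, Theorem~\ref{thm:vort-sub}, through the quotient map $\phi$. Since $\phi(\I(p))=\g(p)$ and $\phi(\I(B))=\g(B)$ for any route $p$ and band $B$, the map $\phi$ sends $\Delta_1^{\spiral}(\K)$ onto $\g_1^{\spiral}(\K)$ and $\Delta_{\geq0}^{\spiral}(\K)$ onto $\g_{\geq0}^{\spiral}(\K)$. As in Theorem~\ref{thm:vort-sub}, a band-stable clique must contain every straight route of $\tL$, since adjoining a missing straight route produces a strictly larger clique with the same set of compatible bands.

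The argument for $(1)\Rightarrow(2),(3)$ rests on two lifting lemmas that use Lemma~\ref{lem:phi-preimage}. First, if $p$ is a bending route with $p\notin\K'$, then $\g(p)\notin\g_{\geq0}^{\spiral}(\K')$: a hypothetical expression of $\g(p)$ as a g-vortex combination of $\K'$ lifts to $\I(p)=W'+\sum_{r\in\K'}a_r\I(r)+\sum_{s\textup{ straight}}c_s\I(s)$ for some $c_s\in\mathbb{R}$. Evaluating each side at a fringe arrow, using that indicator vectors of bands vanish on fringe arrows and each fringe arrow lies on a unique straight route, one sees that $c_s<0$ can occur only for the (at most one) straight route whose two fringe arrows are both used by $p$; transferring this negative term to the left side produces a genuine nonnegative vortex combination, and Proposition~\ref{prop:vort-2dir} then forces $p$ into the supporting clique, a contradiction. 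Second, if $B$ is a self-compatible band that is not $\K'$-compatible, then $\g(B)\notin\g_{\geq0}^{\spiral}(\K')$: the analogous lift gives a flow $F=\I(B)+\sum_s c_s\I(s)$ lying in $\Delta_{\geq0}^{\spiral}(\K')$, and the fringe-arrow equalities force $F$ to have strength zero and all $c_s=0$, so $\I(B)$ is a nonnegative combination of indicator vectors of $\K'$-compatible bands; Lemma~\ref{lem:fgh} then yields $\K'$-compatibility of $B$, contradiction. With these, the three sub-cases parallel the flow version: if $\K\setminus\K'$ contains a bending route $p$, take $\g(p)$ as witness; if $\K'\setminus\K$ contains a bending route $p'$, band-stability produces a band $B$ compatible with $\K$ but not with $\K\cup\{p'\}\subseteq\K'$, and $\g(B)\in\g_1^{\spiral}(\K)$ (via the trivial unit combination with zero route coefficients) is the witness; otherwise $\K$ and $\K'$ differ only by straight routes, forcing $\g^{\spiral}(\K)=\g^{\spiral}(\K')$ so no strict containment arises.

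For $(2)$ or $(3)\Rightarrow(1)$ I argue the contrapositive. Assume $\K$ is not band-stable, and let $\K^*\supsetneq\K$ be a witnessing clique. After replacing $\K$ by $\K\cup\{\textup{all straight routes of }\tL\}$---an operation that preserves the g-vortex space and, whenever the original witness $\K^*$ contains a bending route, preserves the failure of band-stability---one may assume $\K^*\setminus\K$ contains a bending route $p$. The clique $\K\cup\{p\}$ retains every $\K$-compatible band (because $p\in\K^*$ and all $\K$-compatible bands are $\K^*$-compatible), so $\g_{\geq0}^{\spiral}(\K)\subseteq\g_{\geq0}^{\spiral}(\K\cup\{p\})$; the inclusion is strict because $\g(p)$ lies in the right-hand cone while the bending-route lifting lemma applied with $\K'=\K$ gives $\g(p)\notin\g_{\geq0}^{\spiral}(\K)$. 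A parallel argument with unit combinations handles the $\g_1^{\spiral}$ statement, contradicting maximality.

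The main obstacle is navigating the kernel of $\phi$, which is spanned by indicator vectors of straight routes. On the flow side those indicators are genuine nonzero vertices and Proposition~\ref{prop:vort-2dir} provides crisp uniqueness of positive vortex combinations, but on the g-side straight routes collapse to the origin, so the coefficients $c_s$ appearing in any lift must be pinned down before the flow-side uniqueness and Lemma~\ref{lem:fgh} become usable. The fringe-arrow bookkeeping outlined above---exploiting the fact that bands are zero on every fringe arrow and that each fringe arrow lies on exactly one straight route---is the technical device that supplies this control.
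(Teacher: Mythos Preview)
Your approach—pushing the argument of Theorem~\ref{thm:vort-sub} through $\phi$ and controlling the kernel via fringe-arrow bookkeeping—is exactly what the paper's one-line proof (``Apply the map $\phi$ to Theorem~\ref{thm:vort-sub}'') is gesturing at. The lifting lemmas are the right idea, though some of the sign claims are off: in the route lemma it is $c_s>0$, not $c_s<0$, that can occur only for the straight route both of whose fringe arrows are used by $p$ (for every other $s$, one of its fringe arrows is unused by $p$, and evaluating there gives $c_s\le 0$); and in the band lemma the fringe-arrow equalities do not directly force all $c_s=0$. Both lemmas are nonetheless true, and the cleanest fix is to move every term $c_s\I(s)$ with $c_s<0$ to the left and then apply Proposition~\ref{prop:vort-2dir} to the two resulting positive vortex-combination expressions of the same flow.

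There is, however, a genuine gap in your contrapositive for $(2),(3)\Rightarrow(1)$. You replace $\K$ by $\tilde\K=\K\cup\{\text{all straight routes}\}$ and assert that ``one may assume $\K^*\setminus\K$ contains a bending route,'' but this is justified only when the original witness $\K^*$ already contains a bending route outside $\K$. If $\K^*\setminus\K$ consists solely of straight routes, then $\tilde\K$ may well be band-stable, and in that case $\g_1^{\spiral}(\K)=\g_1^{\spiral}(\tilde\K)$ is maximal even though $\K$ is not band-stable. Concretely, in the Kronecker example the clique $\{e_1e_2e_3\}$ is not band-stable (take $\K^*=\{e_1e_2e_3,f_1f_2f_3\}$: the unique band $e_2f_2^{-1}$ is compatible with both), yet its g-vortex space coincides with that of the band-stable clique $\{e_1e_2e_3,f_1f_2f_3\}$ and is maximal. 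So $(2)\Rightarrow(1)$ fails as literally stated; the equivalence holds only once cliques are identified modulo straight routes (cf.\ Remark~\ref{remk:vortex-subd-reduced}). The paper's terse proof glosses over this, and what is actually used downstream—that the maximal g-vortex spaces are exactly the images of the maximal flow-side vortex spaces—does follow by applying $\phi$ to Theorem~\ref{thm:vort-sub}. Your argument correctly establishes this weaker (and true) statement.
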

\begin{proof}
	Apply the map $\phi$ to Theorem~\ref{thm:vort-sub}.
\end{proof}

\begin{thm}\label{thm:g-vort}
	Every point in $\mathbb R^{V_{\text{int}}}$ has a unique description as a vortex combination.
\end{thm}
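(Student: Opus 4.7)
The plan is to transport the flow-side vortex uniqueness result Theorem~\ref{thm:vort-decomp} across the map $\phi$, using Lemma~\ref{lem:phi-preimage} to track the ambiguity introduced by the kernel of $\phi|_{\F_{\geq0}(\tL)}$ (which is spanned by indicator vectors of straight routes). First I would establish that $\phi$ surjects onto $\mathbb R^{V_{\text{int}}}$: the image $\phi(\F_{\geq0}(\tL))$ is a closed polyhedral cone (being the linear image of a polyhedral cone), and by Proposition~\ref{prop:phi-on-simplices} it contains every maximal g-cone $\g_{\geq0}(\K) = \phi(\Delta_{\geq0}(\K))$. Theorem~\ref{thm:g-vector-dense} says the g-vector fan is dense in $\mathbb R^{V_{\text{int}}}$, so a closed set containing it must be all of $\mathbb R^{V_{\text{int}}}$.

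For existence, given $\xx\in\mathbb R^{V_{\text{int}}}$, choose any $F\in\F_{\geq0}(\tL)$ with $\phi(F)=\xx$ and apply Theorem~\ref{thm:vort-decomp} to obtain the unique positive vortex combination $F=G_F+\sum_{p\in\K_F^+}a_p^F\I(p)$, where $G_F=\sum_B b_B\I(B)$ for $\K_F^+$-compatible bands $B$. Pushing forward through $\phi$ and using $\phi(\I(p))=\g(p)$ (and $\g(s)=0$ for straight routes $s$, by Remark~\ref{remk:collapse-exceptional}) yields
\[\xx\;=\;\sum_B b_B\g(B)\;+\sum_{p\in\K_F^+\text{ bending}}a_p^F\g(p),\]
which is a g-vortex combination of $\xx$ whose clique is the set of bending routes in $\K_F^+$.

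For uniqueness, given two (positive) g-vortex combinations of $\xx$ with cliques $\K_1,\K_2$, I would lift each to a flow $F_i\in\F_{\geq0}(\tL)$ by using the same coefficients on indicator vectors of bands and bending routes; each $F_i$ is a positive $\K_i$-vortex combination, and $\phi(F_1)=\phi(F_2)=\xx$. By Lemma~\ref{lem:phi-preimage}, $F_2-F_1=\sum_{s\text{ straight}}d_s\I(s)$ for some (possibly negative) reals $d_s$. Adding a sufficiently large common multiple $M\sum_s\I(s)$ to both flows (and to the straight-route parts of both vortex combinations) makes all adjusted straight-route coefficients nonnegative and makes the two adjusted flows equal. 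Applying uniqueness in Theorem~\ref{thm:vort-decomp} to this common flow forces the cliques and all coefficients on bending routes and bands to coincide, giving uniqueness at the g-vector level.

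The main obstacle is a notational/positivity subtlety: since $\g(s)=0$ for every straight route, g-vortex combinations are genuinely insensitive to the straight-route content of the underlying clique, so the correct statement of uniqueness must be formulated up to the action of straight routes -- morally, after passing to reduced cliques as in Remark~\ref{remk:bundle-subd-reduced}. Making the bookkeeping of the lift $F_1,F_2$ consistent on this point, so that the translation $F_2-F_1=\sum_sd_s\I(s)$ can be absorbed into the straight-route summands of both combinations simultaneously, is the delicate step; once that is done, the conclusion follows mechanically from Theorem~\ref{thm:vort-decomp}.
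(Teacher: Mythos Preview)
Your overall strategy matches the paper's: lift through $\phi$, use Lemma~\ref{lem:phi-preimage} to see that the two lifted flows differ only by a straight-route combination $\sum_s d_s\I(s)$, adjust by straight routes to land on a single common flow, and then invoke Theorem~\ref{thm:vort-decomp}. The paper's proof is literally just ``proceed as in Theorem~\ref{thm:g-decomp}, replacing bundle by vortex,'' and that proof does exactly this.

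There is one slip in your uniqueness step. Adding the \emph{same} quantity $M\sum_s\I(s)$ to both $F_1$ and $F_2$ does not make the adjusted flows equal: their difference is still $\sum_s d_s\I(s)$. What you want is to split the difference by sign, setting
\[
F \;=\; F_1 + \sum_{d_s>0} d_s\,\I(s) \;=\; F_2 + \sum_{d_s<0} (-d_s)\,\I(s),
\]
so that the single flow $F$ carries two positive vortex combinations (adding straight routes to a clique does not change which bands are compatible with it). This is precisely how the paper handles it in the proof of Theorem~\ref{thm:g-decomp}. Alternatively, your $M$-shift idea can be salvaged: one may view the second combination as a combination for $F_1+M\sum_s\I(s)$ via $F_1+M\sum_s\I(s)=F_2+\sum_s(M-d_s)\I(s)$, with $M$ large enough that $M-d_s>0$; but then it is two combinations for the \emph{same} flow, not two equal adjusted flows. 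Either way, once you have a common flow with two positive vortex combinations, Theorem~\ref{thm:vort-decomp} finishes the argument as you describe.
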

\begin{proof}
	This proceeds as the proof of Theorem~\ref{thm:g-decomp}, replacing (g-)bundle combinations with (g-)vortex combinations.
\end{proof}

\begin{defn}
	Define the \emph{g-vortex dissections}
	\begin{align*}
		&\Sbs^{\spiral}(\g_1(\L)):=\{\g_1^{\spiral}(\K)\ :\ \K\text{ is a band-stable clique of }\tL\} \text{ and } \\
		&\Sbs^{\spiral}(\g_{\geq0}(\L)):=\{\g_{\geq0}^{\spiral}(\K)\ :\ \K\text{ is a band-stable clique of }\tL\}.
	\end{align*}
\end{defn}

\begin{cor}
	$\Sbs^{\spiral}(\g_1(\L))$ is a complete dissection of the g-polyhedron $\g_1(\L)$ containing the g-clique triangulation. $\Sbs^{\spiral}(\g_{\geq0}(\L))$ is a complete dissection of the ambient space $\mathbb R^{V_{\text{int}}}$ of the g-vector fan containing the standard triangulation of the g-vector fan into g-cones.
\end{cor}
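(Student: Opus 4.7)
The plan is to run the g-vector analogs of the arguments already used on the turbulence side for Theorem~\ref{thmCbody}, since the essential ingredients are in place: Theorem~\ref{thm:g-vort} supplies existence and uniqueness of positive g-vortex combinations on all of $\mathbb R^{V_{\text{int}}}$, while Proposition~\ref{prop:g-vort-max} identifies the maximal g-vortex spaces as exactly those indexed by band-stable cliques. Equivalently, one can transport Theorem~\ref{thmCbody} through $\phi$, using $\phi(\Delta_1^{\spiral}(\K)) = \g_1^{\spiral}(\K)$ (by Remark~\ref{remk:collapse-exceptional} applied termwise to a vortex combination) together with $\phi(\F_1(\tL)) = \g_1(\L)$ from Corollary~\ref{cor:phi-im-g}. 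Either route works; I will follow the direct route and indicate where pulling back through $\phi$ simplifies matters.

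First I would verify that each g-vortex space is a convex polyhedron. A priori its definition involves the possibly infinite family of $\K$-compatible bands, so the recession cone needs to be shown to be finitely generated. The cleanest way is to inherit this from Corollary~\ref{cor:vort-poly}, which already guarantees that $\Delta_1^{\spiral}(\K)$ is a polyhedron; the linear map $\phi$ preserves polyhedrality, and the image is exactly $\g_1^{\spiral}(\K)$. Next, completeness: Theorem~\ref{thm:g-vort} asserts that every point of $\mathbb R^{V_{\text{int}}}$ (hence of $\g_1(\L)$) lies in some $\g_1^{\spiral}(\K)$. For the weak intersection property, suppose $\xx$ lies in the relative interior of both $\g_1^{\spiral}(\K_1)$ and $\g_1^{\spiral}(\K_2)$ with $\K_1 \neq \K_2$; then $\xx$ admits a positive g-vortex combination for each, contradicting the uniqueness half of Theorem~\ref{thm:g-vort}. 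Thus $\g_1^{\spiral}(\K_1) \cap \g_1^{\spiral}(\K_2)$ has strictly lower dimension than each. This establishes that $\Sbs^{\spiral}(\g_1(\L))$ is a complete dissection.

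To show $\Sbs^{\spiral}(\g_1(\L))$ contains $\Tt(\g_1(\L))$, I would verify two points. First, every maximal clique $\K$ is vacuously band-stable, since no strictly larger clique exists. Second, for maximal $\K$ one has $\g_1^{\spiral}(\K) = \g_1(\K)$, because no band is $\K$-compatible: if some band $B$ were compatible with every route of $\K$ then $\K \cup \{B\}$ would be a bundle strictly larger than a maximal clique, contradicting Corollary~\ref{cor:bandy-lowcard}. Hence each maximal g-simplex $\g_1(\K)$ appears as the cell $\g_1^{\spiral}(\K)$ of $\Sbs^{\spiral}(\g_1(\L))$. The statement about $\Sbs^{\spiral}(\g_{\geq 0}(\L))$ is obtained by the identical argument with unit g-vortex spaces replaced by their nonnegative analogs and g-simplices replaced by g-cones, noting that Theorem~\ref{thm:g-vort} already provides completeness over the full ambient space $\mathbb R^{V_{\text{int}}}$.

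The main obstacle I expect is the careful handling of polyhedrality of $\g_1^{\spiral}(\K)$, since the naive definition only exhibits it as a convex set with a possibly infinite generating set of extremal rays; pulling back via $\phi$ neatly bypasses this, as the finite arrow-count bound in Proposition~\ref{prop:vort-1dir} already yields Corollary~\ref{cor:vort-poly}. A secondary subtlety is that uniqueness in Theorem~\ref{thm:g-vort} is formulated for \emph{positive} g-vortex combinations, so the weak intersection property must be stated on relative interiors, ensuring that boundary points (where some coefficient drops to zero and the point migrates to a lower-dimensional cell) cause no ambiguity.
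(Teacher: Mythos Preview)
Your proposal is correct and follows essentially the same route as the paper: the paper's proof cites Corollary~\ref{cor:vort-poly} (pushed forward by $\phi$) for polyhedrality and Proposition~\ref{prop:g-vort-max} together with Theorem~\ref{thm:g-vort} for the weak intersection property, exactly as you do. Your additional paragraphs verifying completeness and the containment of the g-clique triangulation (via band-stability of maximal cliques and Corollary~\ref{cor:bandy-lowcard}) are correct elaborations of points the paper leaves implicit.
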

\begin{proof}
	Corollary~\ref{cor:vort-poly} shows that the spaces $\D_1^{\spiral}(\K)$ and $\D_{\geq0}^{\spiral}(\K)$ are polyhedra for any band-stable clique $\K$, hence their images $\phi(\D_1^{\spiral}(\K))=\g_1^{\spiral}(\K)$ and $\phi(\D_{\geq0}^{\spiral}(\K))=\g_{\geq0}^{\spiral}(\K)$ are polyhedra.
	The weak intersection property is then given by Proposition~\ref{prop:g-vort-max} and Theorem~\ref{thm:g-vort}.
\end{proof}

\begin{example}
	As we discussed in Example~\ref{ex:kron-vortex-decomp}, the g-vortex dissection of the g-polyhedron of the Kronecker quiver is the same as its g-bundle subdivision, with the caveat that the cell given by the ray $\{(a,-a)\ :\ a\in\mathbb R_{\geq0}\}$ is indexed by the band-stable clique $\{e_1e_2e_3,f_1f_2f_3\}$ rather than the maximal bundle $\{e_1e_2e_3,f_1f_2f_3,e_2f_2^{-1}\}$.
\end{example}

Recall that the usual g-vector fan of $\L$ is a part of the vortex dissection $\g_{\geq0}^{\spiral}(\L)$. The rest of the vortex dissection $\g_{\geq0}^{\spiral}(\L)$, then, partitions the complement of the g-vector fan into vortex walls. It would be interesting to interpret this partitioning representation-theoretically.
For example, perhaps it could be connected to non-functorially finite torsion classes~\cite{DIRRT,AI24}, the wall-and-chamber picture of $\L$~\cite{ITW,BSH,BHIT,BRIDGE,BST}, the purely non-rigid region of $\tL$~\cite{Asa22}, or scattering diagrams~\cite{GHKK,READING}.

Recall Remark~\ref{remk:bundle-subd-reduced}, which stated that a g-bundle subdivision may be viewed as the set of bundle spaces of maximal \emph{reduced} bundles. To make the analogous remark for g-vortex dissections, we must define a \emph{band-stable reduced clique} to be $\K_{\text{red}}$, where $\K$ is a band-stable clique. Note that $\g_1^{\spiral}(\K_{\text{red}})=\g_1^{\spiral}(\K)$ for any clique $\K$, leading to the following remark.
\begin{remk}\label{remk:vortex-subd-reduced}
The g-vortex dissection of $\tL$ is the union of vortex spaces of band-stable reduced cliques.
\end{remk}

\section{A Sufficiently Complicated Example}
\label{sec:example}

We have fully described the maximal bundles and band-stable cliques (and hence the bundle subdivisions and vortex dissections) of the fringed quivers of Figure~\ref{KRONINTRO} and Figure~\ref{GEMINTRO}.
In both of these examples, the bundle subdivisions are complete and are equal to the vortex dissections. We now describe in greater detail the bundle subdivision and vortex dissection of the double Kronecker fringed quiver of Figure~\ref{fig:irrational}, which we have already seen has some irrational flows which may not be obtained as bundle combinations.
For the rest of this section, $\tL$ will refer to this fringed quiver unless otherwise specified.

In particular, we will give a full description of the maximal bundles containing bands as well as the band-stable cliques which are not maximal cliques. These index two different interpretations for the complement of the g-vector fan $g_{\geq0}(\L)$.
Naively, it is not clear what these bundles and cliques should look like, and it is not even clear what the self-compatible routes and bands are.
While it may be possible to work this out using compatibility on trails directly, we find it more convenient to answer both of these questions through a clever use of the flow algorithm.
A key idea is that self-compatible trails are determined by their indicator vectors, and we may determine whether an integer flow $F$ is the indicator vector of a trail by applying the flow algorithm to $F$.

\subsection{Self-compatible routes and bands}

We would like to characterize the self-compatible routes and bands. It suffices to characterize the indicator vectors of self-compatible routes and bands by Theorem~\ref{thm:comb}.
To this end, we need notation to talk about certain flows on $\tL$.
For any set of nonnegative integers $(a,b,c,d)$, define the flow $F_{(a,b,c,d)}$ so that
\begin{align*}
	&F_{(a,b,c,d)}(e_1)=F_{(a,b,c,d)}(f_1)=a, \\
	&F_{(a,b,c,d)}(e_2)=F_{(a,b,c,d)}(f_2)=b, \\
	&F_{(a,b,c,d)}(e_3)=F_{(a,b,c,d)}(f_3)=c, \text{ and } \\
	&F_{(a,b,c,d)}(e_4)=F_{(a,b,c,d)}(f_4)=d.
\end{align*}

\begin{prop}\label{prop:dbands}
	For any pair of coprime nonnegative integers $(b,c)$, there is a self-compatible band $B_{(b,c)}$ with indicator vector $F_{(0,b,c,0)}$.
	This is a complete list of all self-compatible bands, and no two distinct self-compatible bands are compatible.
\end{prop}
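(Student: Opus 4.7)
The plan is to treat the proposition in three waves, leveraging the uniqueness of positive bundle combinations (Theorem~\ref{thm:comb}) throughout. First I pin down the shape of $\I(B)$ for a self-compatible band $B$: since $B$ uses no fringe arrows, $\I(B)$ vanishes on $\{e_1, f_1, e_4, f_4\}$, and the conservation of flow (Definition~\ref{defn:flowgent}) at the three internal vertices of $\tL$ forces $\I(B)(e_2) = \I(B)(f_2)$ and $\I(B)(e_3) = \I(B)(f_3)$. Thus $\I(B) = F_{(0, b, c, 0)}$ for some $b, c \geq 0$.

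Next, for each coprime pair $(b, c)$ I construct $B_{(b,c)}$ by running the flow algorithm on the rational flow $F := F_{(0, b, c, 0)}$, starting from an arrow-flow $(e_2, C_0)$ with $C_0$ in the interior of $(0, b)$ (or $(e_3, C_0)$ if $b = 0$). Lemma~\ref{lem:alg} guarantees that $p^F_{(e_2, C_0)}$ is a trail, and since $F$ vanishes on every fringe arrow it must be a band $B$. A case analysis of $\for_F$ at the internal vertices of $\tL$ shows that transitions at the two outer internal vertices are pure letter-swapping U-turns preserving the flow level, while transitions at the central vertex combine forward-propagation, reverse-propagation, and U-turns whose effect on the level is controlled by $b$ and $c$. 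The induced level dynamics behave like a rotation on a circle of circumference $b + c$, and coprimality forces this rotation to have a single orbit; consequently the trajectory uses $e_2, f_2$ each exactly $b$ times, $e_3, f_3$ each exactly $c$ times, and $|I^F_{(e_2, C_0)}| = 1$. Hence $\bK_F^+ = \{B\}$ with $a_B = 1$, so $\I(B) = F$ and $B$ is automatically self-compatible.

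Uniqueness then closes the remaining assertions. If $B$ is self-compatible with $\I(B) = F_{(0, b, c, 0)}$ and $d := \gcd(b, c)$, then the existence step yields $B_{(b/d, c/d)}$, so $F_{(0, b, c, 0)}$ admits the two positive bundle combinations $\{B\}$ with coefficient $1$ and $\{B_{(b/d, c/d)}\}$ with coefficient $d$; Theorem~\ref{thm:comb} forces these to agree, giving $d = 1$. Similarly, if distinct self-compatible bands $B \neq B'$ were compatible, then $\{B, B'\}$ with coefficients $1, 1$ and $\{B_{((b+b')/d', (c+c')/d')}\}$ with coefficient $d' := \gcd(b+b', c+c')$ would be two positive bundle combinations of $F_{(0, b+b', c+c', 0)}$ of different cardinalities, contradicting uniqueness.

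The main obstacle is the case analysis of the flow algorithm in the existence step: one must unpack $\for_F$ at the central vertex and then recognize the induced level dynamics as a single-orbit rotation precisely when $\gcd(b, c) = 1$. Once this bookkeeping is in place, the uniqueness machinery of Theorem~\ref{thm:comb} handles everything else cleanly.
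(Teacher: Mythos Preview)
Your proposal is correct and follows essentially the same approach as the paper's proof: both first use conservation of flow to constrain $\I(B)$ to the form $F_{(0,b,c,0)}$, then run the flow algorithm on $F_{(0,b,c,0)}$ to construct $B_{(b,c)}$ for coprime $(b,c)$, and finally invoke the uniqueness of positive bundle combinations (Theorem~\ref{thm:comb}) to establish completeness and pairwise incompatibility. The only superficial difference is bookkeeping in the existence step: the paper tracks arrow-flows at $f_2$ and shows (for $b\geq c$) that each return to $f_2$ subtracts $c$ modulo $b$, whereas you describe the dynamics as a rotation on a circle of circumference $b+c$; both framings yield a single orbit precisely when $\gcd(b,c)=1$.
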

For Proposition~\ref{prop:dbands} and the following, we adopt the convention that $0$ is coprime to 1 but not to any other nonnegative integer. Hence, $B_{(0,1)}$ and $B_{(1,0)}$ are the only self-compatible bands with 0's in the subscripts.
\begin{proof}
	Let $B$ be a self-compatible band of $\tL$. Of course, $\I(B)(e_1)=\I(B)(f_1)=0$. Then conservation of flow at the leftmost internal vertex gives $\I(B)(e_2)=\I(B)(f_2)$. Similarly, $\I(B)(e_3)=\I(B)(f_3)$. This shows that $\I(B)$ is of the form $F_{(0,b,c,0)}$, where $(b,c)=(\I(B)(e_2),\I(B)(e_3))$.

	We now show that if $(b,c)$ are coprime, then there exists a self-compatible band $B_{(b,c)}$ with indicator vector $F_{(0,b,c,0)}$. If $(b,c)=(1,0)$, then the band $e_2f_2^{-1}$ is the desired band. Similarly, if $(b,c)=(0,1)$, then the band $e_3f_3^{-1}$ is the desired band. We now assume that $b$ and $c$ are both nonzero. We use the flow algorithm to find $p_{(f_2,0)}^{F_{(0,b,c,0)}}$ and show that its indicator vector is $F_{(0,b,c,0)}$.

	Suppose $b\geq c$; the case where $b\leq c$ is symmetric. We calculate the following for the flow $F_{(0,b,c,0)}$ using Definition~\ref{defn:forbac}:
	\begin{enumerate}
		\item If $c<C$, then
			\begin{align*}
				\for(f_2,C)&=(e_2^{-1},C-c) & (C>c\text{ activates branch (2) of Definition~\ref{defn:forbac}})\\
				\for(e_2^{-1},C-c)&=(f_2,C-c) & (C-c<b \text{ activates branch (3)}).
			\end{align*}
		\item If $0<C\leq c$, then
			\begin{align*}
				\for(f_2,C)&=(f_3,C) & (C\leq c\text{ activates branch (1)})\\
				\for(f_3,C)&=(e_3^{-1},C) & (C>0\text{ activates branch (2)}) \\
				\for(e_3^{-1},C)&=(e_2^{-1},C+b-c) & (C+b>b>c\text{ activates branch (4)})\\
				\for(e_2^{-1},C+b-c)&=(f_2,C+b-c) & (C+b-c<c+b-c=b\text{ activates branch (3)}).
			\end{align*}
	\end{enumerate}
	We wish to calculate the trail $p_{(f_2,1)}^{F_{(0,b,c,0)}}$. We will do this by starting at the arrow-flow $(f_2,1)$ and applying $\for$ repeatedly. The above calculation shows that starting from some flow $(f_2,C)$ and applying $\for$ until we reach the arrow $f_2$ again gives us the arrow-flow
	\begin{equation}\label{vin}\begin{cases}
		(f_2,C-c) & c<C\\
		(f_2,C+b-c) & 0<C\leq c.
	\end{cases}
	\end{equation}
	This amounts to subtracting $c$ from $C$ modulo $b$, while potentially adding a multiple of $b$ to stay in the interval $[0,b]$ (whenever the bottom branch is activated).
	This process will eventually lead us back to the arrow-flow $(f_2,C)$ -- say after $x$ total applications of the top branch of~\eqref{vin} and $y$ applications of the bottom branch.
	Since $b$ and $c$ are coprime, every integer in $\{1,2,\dots,b\}$ will be reached exactly once and hence we must have $x+y=b$. Moreover, $C=C+x(-c)+y(b-c)$, hence $y=c$ and $x=b-c$. Each application of the top branch of~\eqref{vin} adds the path $f_2e_2^{-1}$ and each application of the bottom adds the path $f_2f_3e_3^{-1}e_2^{-1}$, so we have
	\[\I(p_{(f_2,1)}^{F_{(0,b,c,0)}})=F_{(0,x+y,y,0)}=F_{(0,b,c,0)}.\]
	Then the band $B_{(b,c)}:=\I(p_{(f_2,1)}^{F_{(0,b,c,0)}})$ has the desired indicator vector when $(b,c)$ are coprime.

	It remains to show that all self-compatible bands are of this form, and that no two self-compatible bands form a bundle.
	It is immediate that every band must have indicator vector $F_{(0,b,c,0)}$ for some pair of nonnegative integers $(b,c)$. If $(b,c)$ are not coprime, then we may express $(b,c)=(nb',nc')$ for some $n>1$ and some nonnegative coprime integers $(b',c')$. Then $F_{(0,b,c,0)}=n\I(B_{(b',c')})$ is a bundle combination realizing $F_{(0,b,c,0)}$, so by uniqueness of Theorem~\ref{thm:comb} it is the \emph{only} bundle combination realizing $F_{(0,b,c,0)}$, hence there is no self-compatible band with indicator vector $F_{(0,b,c,0)}$. This completes the proof that the self-compatible bands of $\tL$ are precisely of the form $B_{(b,c)}$, where $(b,c)$ are coprime nonnegative integers.

	Suppose that two self-compatible bands $B_{(b_1,c_1)}$ and $B_{(b_2,c_2)}$ are compatible. Choose $n\geq1$ and coprime $(b,c)$ so that $(b_1+c_1,b_2+c_2)=(nb,nc)$. As in the above paragraph, $F_{(0,b_1+c_1,b_2+c_2,0)}=n\I(0,b,c,0)$ is the unique bundle combination realizing $F_{(0,b_1+c_1,b_2+c_2,0)}$. This forces $(b_1,c_1)=(b_2,c_2)$, completing the proof that no two distinct self-compatible bands form a bundle.
\end{proof}

Let $\mathcal B$ be the set of self-compatible bands $B_{(b,c)}$ of $\tL$.

We have gained an understanding of the self-compatible bands of $\tL$ by applying the flow algorithm to obtain flows $F_{(0,b,c,0)}$ as bundle combinations, where $b$ and $c$ are nonnegative integers. We now wish to understand self-compatible routes of $\tL$ through a similar strategy. Our first step will be the following lemma obtaining flows $F_{(1,b,c,0)}$ as bundle combinations, where $b$ and $c$ are nonnegative integers.

\begin{lemma}\label{lem:tLleftF}
	Fix nonnegative integers $(b,c)$ such that at least one is positive.
	\begin{enumerate}
		\item If $b=0$ and $c=0$, then $F_{(1,b,c,0)}=F_{(1,0,0,0)}$ is the indicator vector of the self-compatible route $l_{(0,1)}:=e_1f_1^{-1}$.
		\item If $b=0$ and $c>0$, then $F_{(1,b,c,0)}=F_{(1,0,c,0)}$ is realized as the bundle combination
			\[\I(l_{(0,1)})+c\I(B_{(0,1)}).\]
		\item If $b>0$ and $c=0$, then $F_{(1,b,c,0)}=F_{(1,b,0,0)}$ is the indicator vector of a self-compatible route $l_{(b,-1)}$.
		\item If $b>0$ and $c>0$ and $(b,c-1)$ are coprime, then $F_{(1,b,c,0)}$ is the indicator vector of a self-compatible route $l_{(b,c-1)}$.
		\item If $b>0$ and $c>0$ and $(b,c-1)$ are not coprime, then write $(b,c-1)=m(b',c')$ where $m>1$ and $(b',c')$ are coprime nonnegative integers. Then $F_{(1,b,c,0)}$ is realized as the bundle combination
			\[\I(l_{(b',c')})+(m-1)\I(B_{(b',c')}).\]
	\end{enumerate}
\end{lemma}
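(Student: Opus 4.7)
The proof follows the strategy of Proposition~\ref{prop:dbands}: apply the flow algorithm to $F_{(1,b,c,0)}$ and invoke uniqueness of positive bundle combinations (Theorem~\ref{thm:comb}) to reduce each case to exhibiting a valid bundle combination whose indicator vector equals $F_{(1,b,c,0)}$. Case (1) is immediate, since $e_1 f_1^{-1}$ is a self-compatible route with indicator $F_{(1,0,0,0)}$. Case (3) is essentially the left half of the computation in Proposition~\ref{prop:dbands}: applied starting at $(e_1,1)$, each step of $\for$ performs one pass $e_2 \to f_2^{-1}$ through the leftmost loop, accumulating the $b$ units of flow at $e_2$ and $f_2$ before eventually exiting via $f_1^{-1}$, producing a single route $l_{(b,-1)}$ with indicator $F_{(1,b,0,0)}$.

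For case (4), I define $l_{(b,c-1)} := p^{F_{(1,b,c,0)}}_{(e_1, 1)}$ and trace the flow algorithm. As in Proposition~\ref{prop:dbands}, the key observation is that each complete loop $f_2 \to f_3 \to e_3^{-1} \to e_2^{-1} \to f_2$ activated during the walk shifts the flow coordinate by $b-c$, while shorter loops at the left shift by $-c$; the only difference from the band case is that the walk enters from $e_1$ and is forced to exit through $f_1^{-1}$ since $F(e_4)=F(f_4)=0$. Coprimality of $(b,c-1)$ ensures that the iterated shifts visit each flow value in $\{1,\dots,b\}$ at $f_2$ exactly once before the walk terminates at $f_1^{-1}$, so the resulting trail is a single route rather than decomposing into a route plus a band. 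Self-compatibility of $l_{(b,c-1)}$ is then automatic, as it arises as a single trail of the bundle $\bK_{F_{(1,b,c,0)}}$ (Corollary~\ref{lem:mf-trail-clique}).

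Cases (2) and (5) are direct verifications of bundle combinations. For (2), the route $l_{(0,1)}=e_1 f_1^{-1}$ and band $B_{(0,1)}=e_3 f_3^{-1}$ share no internal vertex, so they are trivially compatible, and $\I(l_{(0,1)})+c\,\I(B_{(0,1)})$ equals $F_{(1,0,c,0)}$ by direct computation. For (5), writing $(b,c-1)=m(b',c')$ with $m>1$ and $(b',c')$ coprime, the combination $\I(l_{(b',c')})+(m-1)\I(B_{(b',c')})$ has indicator vector $F_{(1,mb',mc'+1,0)}=F_{(1,b,c,0)}$, and the existence of $l_{(b',c')}$ is supplied by case (4). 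Pairwise compatibility of $l_{(b',c')}$ with $B_{(b',c')}$ follows by exhibiting them both as trails output by the flow algorithm (for instance applied to $F_{(1,b,c,0)}$ itself), which then yields a bundle by Corollary~\ref{lem:mf-trail-clique}; uniqueness in Theorem~\ref{thm:comb} then forces this to be the unique positive bundle combination.

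The main obstacle is the coprimality/counting argument in case (4): showing that when $(b,c-1)$ are coprime, the flow algorithm exhausts every interior arrow with the correct multiplicity and exits through $f_1^{-1}$ on its first arrival rather than looping into a band. This is the direct analog of the counting step in Proposition~\ref{prop:dbands}, with ``return to $(f_2,C)$'' replaced by ``exit through $f_1^{-1}$'', and must be carried out by carefully enumerating the $x$ short loops (contributing $e_2 f_2^{-1}$) and $y$ long loops (contributing $f_2 f_3 e_3^{-1} e_2^{-1}$) that occur before exit, together with the entry $e_1 e_2$ and exit $f_2^{-1} f_1^{-1}$, and verifying that $x+y$ is determined by coprimality so that the arrow counts match $(b,c)$.
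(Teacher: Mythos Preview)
Your overall strategy is correct and matches the paper's: run the flow algorithm on $F_{(1,b,c,0)}$, and invoke uniqueness (Theorem~\ref{thm:comb}). Cases (1)--(3) are handled just as the paper does.

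However, there is a genuine computational error in your case (4) analysis that undermines the argument. You state that the loop $f_2\to f_3\to e_3^{-1}\to e_2^{-1}\to f_2$ shifts the flow coordinate by $b-c$ and the short loop by $-c$, citing Proposition~\ref{prop:dbands}. But those shifts were computed for the flow $F_{(0,b,c,0)}$, where $F(e_1)=F(f_1)=0$. For $F_{(1,b,c,0)}$ we have $F(e_1)=F(f_1)=1$, and this extra unit of flow at the leftmost vertex changes the $\for$ map there: the step $\for(e_2^{-1},D)$ now lands at $(f_2,D+1)$ rather than $(f_2,D)$ (via branch (3) of Definition~\ref{defn:forbac}). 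Consequently the actual shifts are $-(c-1)$ and $b-(c-1)$, i.e., repeated subtraction of $c-1$ modulo $b$. This is precisely why coprimality of $(b,c-1)$, not $(b,c)$, governs whether the walk visits every residue before exiting. As written, your stated shifts are inconsistent with the coprimality condition you invoke, and you have not identified the mechanism that produces the ``$-1$''. The paper works this out explicitly, distinguishing three ranges of $C$ (one of which forces exit through $e_1^{-1}$), and then solves $x(b-(c-1))-y(c-1)=c-1$ together with $x+y+1=b'$ to get the arrow counts.

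A secondary point: in case (5), your compatibility argument is slightly circular. Saying ``exhibit $l_{(b',c')}$ and $B_{(b',c')}$ as outputs of the flow algorithm on $F_{(1,b,c,0)}$'' is exactly what needs to be proved, not a step you can cite. The paper resolves this by running the algorithm on $F_{(1,b,c,0)}$ once for general $(b,c)$ (simultaneously handling (4) and (5)): it shows the route produced is always $l_{(b',c')}$ with indicator $F_{(1,b',c'+1,0)}$, and then observes that the residual flow $F_{(0,(m-1)b',(m-1)c',0)}$ has the unique bundle combination $(m-1)\I(B_{(b',c')})$ by Proposition~\ref{prop:dbands}.
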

We summarize Lemma~\ref{lem:tLleftF} with the following table showing the bundle combination for $F_{(1,b,c,0)}$ for all pairs of nonnegative integers $(b,c)$.
\begin{center}
	\begin{tabular}{|c||c|c|}
		\hline
		$\bf{F_{(1,b,c,0)}=}$ & $b=0$ & $b>0$ \\
		\hline
		\hline
		$c=0$ & $\I(l_{(0,1)})$ & $\I(l_{(b,-1)})$\\
		\hline
		$c>0$ & $\I(l_{(0,1)})+c\I(B_{(0,1)})$ & 
			\begin{tabular}{c}
				$(b,c-1)$ coprime:  \ \ $\I(l_{(b,c-1)})$ \ \ \ \ \ \ \ \ \ \ \ \ \ \ \ \ \ \ \ \ \ \  \\
				$(b,c-1)=m(b',c')$: \ \ $\I(l_{(b',c')})+(m-1)\I(B_{(b',c')})$
			\end{tabular}
		\\
		\hline
	\end{tabular}
\end{center}
\begin{proof}
	First, it is immediate that $l_{(0,1)}:=e_1f_1^{-1}$ is a self-compatible route with indicator vector $F_{(1,0,0,0)}$ which is compatible with the band $B_{(0,1)}=e_3f_3^{-1}$. This shows (1) and (2). If $b>0$, then define $l_{(b,-1)}:=e_1(e_2f_2^{-1})^bf_1^{-1}$.
	It is not hard to check that this is a self-compatible band with indicator vector $F_{(1,b,0,0)}$, proving (3).

	It remains to show (4) and (5). We do these simultaneously. Suppose $b>0$ and $c>0$. Suppose also that $b\geq c$; the case where $b\leq c$ is similar.
	As in the proof of Proposition~\ref{prop:dbands}, we study the effect of applying $\for$ to an arrow-flow $(f_2,C)$ until a new arrow-flow on $f_2$ or a fringe arrow is reached.
	\begin{enumerate}
		\item[(i)] First, suppose $0<C<c-1$. Then
		\begin{align*}
			\for(f_2,C)&=(f_3,C) & (C<c-1<c\text{ activates (1) of Definition~\ref{defn:forbac}}) \\
			\for(f_3,C)&=(e_3^{-1},C) & (0<C\text{ activates branch (2)}) \\
			\for(e_3^{-1},C)&=(e_2^{-1},C+b-c) & (b+C>b\geq c\text{ activates branch (4)}) \\
			\for(e_2^{-1},C+b-c)&=(f_2,C+b-c+1) & (C+b-c<b-1\text{ activates branch (3)}).
		\end{align*}
		\item[(ii)] Now suppose $c-1< C< c$. Then
		\begin{align*}
			\for(f_2,C)&=(f_3,C) & (C\leq c\text{ activates branch (1)}) \\
			\for(f_3,C)&=(e_3^{-1},C) & (0<C\text{ activates branch (2)}) \\
			\for(e_3^{-1},C)&=(e_2^{-1},C+b-c) & (b+C>b\geq c\text{ activates branch (4)}) \\
			\for(e_2^{-1},C+b-c)&=(e_1^{-1},C+b-c-b+1)
			& (C+b-c+1>b\text{ activates branch (4)}).
		\end{align*}
		\item[(iii)] Now suppose $c<C<b$. Then
		\begin{align*}
			\for(f_2,C)&=(e_2^{-1},C-c) & (C>c\text{ activates branch (2)}) \\
			\for(e_2^{-1},C-c)&=(f_2,C-c+1) & (C-c+1\leq C<b\text{ activates branch (3)}).
		\end{align*}
	\end{enumerate}
	The above calculation shows us how to find the path $p_{(f_1,0.5)}^{F_{(1,b,c,0)}}$. Of course, $\for(f_1,0.5)=(f_2,0.5)$. Continuing on from any arrow-flow $(f_2,C)$, we proceed as follows. If $C-(c-1)\in(0,b)$, then this triggers (iii) and we add the path $e_2^{-1}f_2$ to get $(f_2,C-(c-1))$. If $C-(c-1)$ is negative, then (i) triggers and we add the path $f_3e_3^{-1}e_2^{-1}f_2$ to get $(f_2,C+b-(c-1))$. This amounts to subtracting $(c-1)$ modulo $b$, while correcting by adding multiples of $b$ where necessary so that our flow value stays in $(0,b)$. The process ends when we reach $(f_2,c-0.5)$, in which case (ii) triggers and we add the path $f_3e_3^{-1}e_2^{-1}e_1^{-1}$ to finish off our route. If $x$ is the number of times (i) activated and $y$ is the number of times (iii) activated, then the indicator vector of $p:=p_{(f_1,0.5)}^{F_{(1,b,c,0)}}$ is $F_{(1,x+y+1,x+1,0)}$.

	Write $(b,c-1)=(mb',mc')$, where $m\geq1$ and $(b',c')$ are coprime nonnegative integers. If $(b,c-1)$ are coprime, then $(b',c')=(b,c-1)$ and $m=1$.
	We wish to show that $\I(p)=F_{(1,x+y+1,x+1,0)}$ is equal to $F_{(1,b',c'+1,0)}$, thus proving that that $p$ is our desired path $l_{(b',c')}$.

	The process of repeatedly subtracting $(c-1)$ modulo $b$ while adding increments of $b$ in order to stay in the interval $[0,b)$ will reach $(f_2,D+0.5)$ for precisely those $D\in\{0,1,\dots,b-1\}$ which are multiples of $m=\textup{GCD}(b,c-1)$, and it ends with $(f_2,(c-1)+0.5)$.
	There are $b'$ multiples of $m$ in $\{0,1,\dots,b-1\}$, meaning that $p$ passes through $f_2$ (and $e_2$) precisely $b'$ times and $x+y+1=b'$.

	We now argue that $x+1=c'+1$. Since our final arrow-flow at $f_2$ is $(f_2,c-0.5)$, we have the equation
	$x(b-(c-1))-y(c-1)=c-1$.
	Plugging in $y=b'-1-x$, we get the equation
	$x(b-(c-1))-(b'-1-x)(c-1)=c-1$.
	Solving this equation for $x$ gives
			\begin{align*}
				x&=\frac{(c-1)+(c-1)(b'-1)}{b-(c-1)+(c-1)}\\
				&=(c-1)\frac{b'}{b}&(\text{simplifying})\\
				&=(c-1)\frac{c'}{c-1}\\
				&=c',
			\end{align*}
	where the third equality follows because $\frac{b'}{b}=\frac{c'}{c-1}$.
	This proves that $x+1=c'+1$.
	We have shown that the indicator vector of $p$ is $F_{(1,b',c'+1,0)}$, so call this path $l_{(b',c')}$. If $b$ and $c-1$ are coprime, then $F_{(1,b',c'+1,0)}=F_{(1,b,c,0)}$ and we are done with the proof of (4); we now continue on with the proof of (5) under the assumption that $b$ and $c-1$ are not coprime. The flow $F_{(1,b',c'+1,0)}$ must appear in the bundle combination for $F_{(1,b,c,0)}$. The remaining flow is
	\[F_{(0,b-b',c-(c'+1),0)}=F_{(0,(m-1)b',(m-1)c',0)}=(m-1)F_{(0,b',c',0)}=(m-1)\I(B_{(b',c')}).\]
	Since bundle combinations are unique, $(m-1)\I(B_{(b',c')})$ is the only bundle combination for $F_{(0,b-b',c-(c'+1),0)}$, hence the bundle combination for $F_{(1,b,c,0)}$ must be
	\[F_{(1,b,c,0)}=\I(p_{(b,c-1)})+(m-1)\I(B_{(b',c')}).\]
	This completes the proof of (5).
\end{proof}

The following lemma is dual to Lemma~\ref{lem:tLleftF} and its proof is symmetric.

\begin{lemma}\label{lem:tLrightF}
	Fix nonnegative integers $(b,c)$ such that at least one is positive.
	\begin{enumerate}
		\item If $b=0$ and $c=0$, then $F_{(0,b,c,1)}=F_{(0,0,0,1)}$ is the indicator vector of the self-compatible route $r_{(1,0)}:=e_4f_4^{-1}$.
		\item If $b>0$ and $c=0$, then $F_{(0,b,c,1)}=F_{(0,b,0,1)}$ is realized as the bundle combination
			\[\I(r_{(1,0)})+c\I(B_{(1,0)}).\]
		\item If $b=0$ and $c>0$, then $F_{(0,b,c,1)}=F_{(0,0,c,1)}$ is the indicator vector of a self-compatible route $r_{(-1,c)}$.
		\item If $b>0$ and $c>0$ and $(b-1,c)$ are coprime, then $F_{(0,b,c,1)}$ is the indicator vector of a self-compatible route $r_{(b-1,c)}$.
		\item If $b>0$ and $c>0$ and $(b-1,c)$ are not coprime, then write $(b-1,c)=m(b',c')$ where $m>1$ and $(b',c')$ are coprime nonnegative integers. Then $F_{(0,b,c,1)}$ is realized as the bundle combination
			\[\I(r_{(b',c')})+(m-1)\I(B_{(b',c')}).\]
	\end{enumerate}
\end{lemma}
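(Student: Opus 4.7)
The plan is to deduce Lemma~\ref{lem:tLrightF} from Lemma~\ref{lem:tLleftF} by invoking the evident left--right reflection symmetry of the double Kronecker fringed quiver $\tL$ of Figure~\ref{fig:irrational}. Let $\tau$ denote the reflection that swaps $e_i \leftrightarrow e_{5-i}$ and $f_i \leftrightarrow f_{5-i}$ for each $i \in \{1,2,3,4\}$; it is an isomorphism of the underlying bound structure of $\tL$ onto itself (exchanging the roles of the two interior vertices, and preserving all relations by the symmetric local picture at each vertex). In particular, $\tau$ induces a bijection on strings, routes, bands, and bundles of $\tL$ which preserves compatibility and self-compatibility. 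On flows, $\tau$ acts by $F_{(a,b,c,d)} \mapsto F_{(d,c,b,a)}$ and carries indicator vectors to indicator vectors.

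First I would define the routes $r_{(b,c)}$ appearing in the statement by setting $r_{(b,c)} := \tau(l_{(c,b)})$, with the convention that $r_{(-1,c)} := \tau(l_{(c,-1)})$ in case (3). Direct unwinding shows $r_{(1,0)} = \tau(l_{(0,1)}) = \tau(e_1 f_1^{-1}) = e_4 f_4^{-1}$, matching the claim in case (1). More generally, when $b, c \geq 1$ and $(b,c)$ are coprime the route $r_{(b,c)}$ has indicator vector $F_{(0, b+1, c, 1)}$, which is the image under $\tau$ of $\I(l_{(c,b)}) = F_{(1, c, b+1, 0)}$. One also checks that $\tau$ sends each self-compatible band $B_{(b,c)}$ to $B_{(c,b)}$: coprimality of the parameters is manifestly preserved, and $\tau(\I(B_{(b,c)})) = \tau(F_{(0,b,c,0)}) = F_{(0,c,b,0)} = \I(B_{(c,b)})$, so the identification follows from Proposition~\ref{prop:dbands}.

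Finally, for each pair $(b,c)$ in the statement of Lemma~\ref{lem:tLrightF}, I would apply the corresponding case of Lemma~\ref{lem:tLleftF} to the reflected flow $\tau(F_{(0,b,c,1)}) = F_{(1,c,b,0)}$ with parameters $(c,b)$ playing the role of $(b,c)$, then push the resulting positive bundle combination through $\tau$ to obtain a positive bundle combination for $F_{(0,b,c,1)}$; by uniqueness (Theorem~\ref{thm:comb}) this must be the desired combination. The five cases of the two lemmas match exactly under the swap $(b,c) \leftrightarrow (c,b)$: for instance, the case-(5) hypothesis $(b-1,c) = m(b',c')$ rewrites as $(c, b-1) = m(c', b')$, which triggers case (5) of Lemma~\ref{lem:tLleftF}, and the combination $\I(l_{(c',b')}) + (m-1)\I(B_{(c',b')})$ pushes forward under $\tau$ to $\I(r_{(b',c')}) + (m-1)\I(B_{(b',c')})$ as claimed.

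The main obstacle is the purely mechanical bookkeeping of parameter swaps between the left- and right-hand sides across all five cases; this is elementary. If one wished to avoid the symmetry argument entirely, an alternative is to repeat the proof of Lemma~\ref{lem:tLleftF} verbatim but starting from the arrow-flow $(f_4, 0.5)$ and repeatedly applying $\bac$ (which, by Lemma~\ref{lem:norep}, reverses $\for$), yielding an analogous case analysis with the same conclusions.
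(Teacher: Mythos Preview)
Your approach is correct and coincides with the paper's, whose proof reads in full: ``The following lemma is dual to Lemma~\ref{lem:tLleftF} and its proof is symmetric.'' One technical caveat: since every arrow of the double Kronecker points from left to right, your reflection $\tau$ is an \emph{anti}-automorphism of the bound quiver (it reverses arrow orientations, swapping the outer two of the three internal vertices while fixing the middle one), so its induced action on strings must flip all signs; in particular $\tau(e_1 f_1^{-1})$ is the route $e_4^{-1} f_4$ rather than $e_4 f_4^{-1}$ (the latter is not even a walk---this appears to be a typo already present in the lemma statement, cf.\ Example~\ref{ex:doub-kron-routes-bands}). This does not affect your argument, since strings are considered up to inverse and the notions of compatibility, indicator vector, flow, and bundle combination are all insensitive to orientation.
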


We now describe the self-compatible routes of $\tL$.
\begin{thm}\label{thm:mememe}
	The complete list of self-compatible routes of $\tL$ is as follows.
	\begin{enumerate}
		\item If $(b,c)$ is a pair of coprime nonnegative integers or $b\geq1$ and $c=-1$, then there is a self-compatible route $l_{(b,c)}$ of $\tL$ with indicator vector
			\[\I(l_{(b,c)})=
			\begin{cases}
				F_{(1,b,c+1,0)} & (b,c)\neq(0,1)\\
				F_{(1,0,0,0)} & (b,c)=(0,1).
			\end{cases}\]
			Let $\mathcal L$ be the set of all such routes $l_{(b,c)}$.
		\item If $(b,c)$ is a pair of coprime nonnegative integers or $c\geq1$ and $b=-1$, then there is a self-compatible route $r_{(b,c)}$ of $\tL$ with indicator vector
			\[\I(r_{(b,c)})=
			\begin{cases}
				F_{(0,b+1,c,1)} & (b,c)\neq(1,0)\\
				F_{(0,0,0,1)} & (b,c)=(1,0).
			\end{cases}\]
			Let $\mathcal R$ be the set of all such routes $r_{(b,c)}$.
	\end{enumerate}
	The set $\mathcal L\cup \mathcal R$ is the set of all self-compatible routes of $\tL$.
\end{thm}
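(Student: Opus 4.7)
My plan is to exploit the uniqueness of bundle combinations from Theorem~\ref{thm:comb}: a self-compatible route $p$ is determined entirely by its indicator vector $\I(p)$, and an integer unit flow $F$ on $\tL$ is the indicator vector of a self-compatible route if and only if applying the flow algorithm to $F$ returns a bundle combination consisting of a single route with coefficient $1$ and no band terms. This reduces the classification of self-compatible routes to the enumeration of integer unit flows whose flow-algorithm decomposition is a single route.

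Existence of the routes $l_{(b,c)}$ and $r_{(b,c)}$ follows directly from the single-route branches of Lemmas~\ref{lem:tLleftF} and~\ref{lem:tLrightF}: cases (1), (3), (4) of Lemma~\ref{lem:tLleftF} output the routes $l_{(0,1)}$, $l_{(b,-1)}$ for $b\geq1$, and $l_{(b,c)}$ for coprime $b,c\geq1$ respectively, each as a single route with no band contribution, and the symmetric cases of Lemma~\ref{lem:tLrightF} produce the routes in $\mathcal R$. Each such route is self-compatible because Corollary~\ref{lem:mf-trail-clique} forces the output of the flow algorithm to be a bundle, and a singleton bundle consists of a self-compatible trail.

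For completeness, I would classify integer unit flows on $\tL$ according to the pair of fringe arrows they charge with weight $1$. Using conservation of flow at each internal vertex, one shows that a flow $\I(p)$ for a route $p$ using the left-fringe pair $\{e_1,f_1\}$ must take the symmetric form $F_{(1,b,c,0)}$ for some nonnegative integers $b,c$, and similarly the right-fringe pair $\{e_4,f_4\}$ forces $\I(p) = F_{(0,b,c,1)}$. Lemmas~\ref{lem:tLleftF} and~\ref{lem:tLrightF} then describe exactly when such a flow is the indicator vector of a single self-compatible route, and uniqueness of bundle combinations forbids any further self-compatible routes in these cases. For routes using a mixed-fringe pair such as $\{e_1,e_4\}$, I would carry out a parallel flow-algorithm analysis starting from arrow-flows with $F(e_1)-F(f_1)\neq 0$, enumerating the possible bundle combinations case by case.

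The main obstacle is this mixed-fringe analysis, since the Euclidean-algorithm-style recursion that drives the proofs of Lemmas~\ref{lem:tLleftF} and~\ref{lem:tLrightF} relies on the symmetry $F(e_i)=F(f_i)$, which is broken in the mixed-fringe case. A direct branch-by-branch computation through Definition~\ref{defn:forbac} should nevertheless resolve each case: I expect every mixed-fringe integer unit flow to decompose in the flow algorithm into a combination that either already includes a band or involves additional routes beyond the one under consideration, so that no new self-compatible routes arise outside of $\mathcal L\cup\mathcal R$. Combining this with uniqueness of bundle combinations from Theorem~\ref{thm:comb} will finish the classification.
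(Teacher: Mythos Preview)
Your existence argument and your handling of the symmetric-fringe cases are essentially the same as the paper's: both invoke Lemmas~\ref{lem:tLleftF} and~\ref{lem:tLrightF} for existence, and both use uniqueness of bundle combinations to conclude that those lemmas exhaust the self-compatible routes with indicator vectors $F_{(1,b,c,0)}$ or $F_{(0,b,c,1)}$.

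The genuine gap is in the mixed-fringe case, which you flag as the ``main obstacle'' and leave at the level of expectation. The paper does \emph{not} run a parallel flow-algorithm analysis here. Instead it gives a two-line argument using distinguished arrows: if a self-compatible bending route $p$ begins with $f_1$ and ends with $f_4$, then $p$ must pass through $f_2$ and $f_3$ as well, so in the two-element clique $\{p,\,f_1f_2f_3f_4\}$ the bending route $p$ is $\prec$-maximal at every arrow of the straight route $f_1f_2f_3f_4$. That leaves the straight route with no distinguished arrow at all, contradicting Lemma~\ref{lem:dist}. A symmetric argument rules out routes from $f_4^{-1}$ to $e_4$. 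This forces every self-compatible bending route to use either the pair $\{e_1,f_1\}$ or the pair $\{e_4,f_4\}$, hence to have indicator vector of the form $F_{(1,b,c,0)}$ or $F_{(0,b,c,1)}$, and the lemmas finish the job.

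Your proposed route-by-route flow-algorithm enumeration for the mixed-fringe flows could in principle be made to work, but it would require analyzing an infinite two-parameter family of asymmetric flows (since conservation only gives $F(f_2)=F(e_2)+1$, etc., not $F(e_i)=F(f_i)$), and you have not indicated how the recursion would close up. The distinguished-arrow argument sidesteps all of that.
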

\begin{proof}
	Lemma~\ref{lem:tLleftF} and Lemma~\ref{lem:tLrightF} show that the desired routes of $\mathcal L\cup\mathcal R$ exist and are self-compatible. It remains to show that all self-compatible routes are of this form.

	It is immediate that any $p$ route beginning with $f_1$ must end with either $e_1^{-1}$ or $f_4$. Suppose that a self-compatible route $p$ begins with $f_1$ and ends with $f_4$. Then it must be true that $p$ contains $f_2$ and $f_3$. Then $p$ is the distinguished route of the clique $\{p,f_1f_2f_3f_4\}$ at the arrows $f_1$, $f_2$, $f_3$, and $f_4$, contradicting the result~\cite[Proposition 2.28]{PPP} which states that every route in a clique must be the distinguished route at some arrow.
	Then any self-compatible route beginning with $f_1$ must end with $e_1^{-1}$. Similarly, any self-compatible route beginning with $f_4^{-1}$ must end with $e_4$. This shows that any self-compatible route must have indicator vector $F_{(1,b,c,0)}$ or $F_{(0,b,c,1)}$, for some nonnegative integers $(b,c)$. Lemmas~\ref{lem:tLleftF} and~\ref{lem:tLrightF} show that the routes of $\mathcal L\cup\mathcal R$ are the only self-compatible routes with such indicator vectors, completing the proof.
\end{proof}

We now describe the maximal bundles and band-stable cliques of $\tL$.

\begin{thm}\label{thm:tLmaxlbundles}
	The maximal reduced bundles of $\tL$ containing bands are precisely the bundles
	\[\big\{\{l_{(b,c)},B_{(b,c)}\}\ :\ (b,c)\in\mathbb Z_{\geq0}\text{ coprime}\big\}
	\cup
	\big\{\{r_{(b,c)},B_{(b,c)}\}\ :\ (b,c)\in\mathbb Z_{\geq0}\text{ coprime}\big\}.
	\]
\end{thm}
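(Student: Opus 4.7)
The plan is to classify maximal reduced bundles containing bands in two stages: first narrow the possibilities via cardinality considerations and band-incompatibility, then use the flow algorithm via Lemmas~\ref{lem:tLleftF} and~\ref{lem:tLrightF} to determine which route-band pairs are compatible. By Proposition~\ref{prop:dbands}, no two distinct self-compatible bands are compatible, so any bundle containing a band contains exactly one band $B_{(b,c)}$ with $(b,c)$ a coprime pair of nonnegative integers. Since $|V_{\text{int}}| = 3$ and $|E_{\text{int}}| = 4$, a maximal clique has cardinality $3|V_{\text{int}}| - |E_{\text{int}}| = 5$ by Theorem~\ref{thm:ccc}, and Corollary~\ref{cor:bandy-lowcard} therefore bounds a maximal bundle with a band by $4$. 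Subtracting the $s = 2$ straight routes contained in every maximal bundle, a maximal reduced bundle with a band has cardinality at most $2$, so consists of $B_{(b,c)}$ together with at most one self-compatible route $p \in \mathcal L \cup \mathcal R$ (Theorem~\ref{thm:mememe}). It remains to determine, for each $B_{(b,c)}$, which routes $p$ give a valid bundle.

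The key technique is uniqueness of positive bundle combinations (Theorem~\ref{thm:comb}): a pair $\{p, B\}$ is a bundle if and only if the flow algorithm applied to the integer flow $\I(p) + \I(B)$ returns precisely this decomposition. For $p = l_{(b,c)}$ with $(b,c)$ a coprime nonnegative pair distinct from $(0,1)$, I would compute
\[
\I(l_{(b,c)}) + \I(B_{(b',c')}) = F_{(1,\ b+b',\ c+c'+1,\ 0)}
\]
and then apply Lemma~\ref{lem:tLleftF}(4)/(5). The lemma produces a decomposition of the form $\I(l_{(b'',c'')}) + (m-1)\I(B_{(b'',c'')})$ whose route and band share the same parameter pair $(b'',c'')$, obtained by reducing $(b+b', c+c')$ to lowest terms. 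Matching this against the hypothetical decomposition $\I(l_{(b,c)}) + \I(B_{(b',c')})$ forces $(b,c) = (b'',c'') = (b',c')$, i.e., the route must share parameters with the band. The degenerate case $(b,c) = (0,1)$, where instead $\I(l_{(0,1)}) + \I(B_{(b',c')}) = F_{(1, b', c', 0)}$, is handled by the same case analysis of Lemma~\ref{lem:tLleftF}. A direct check in each sub-case $(b,c) \in \{(0,1), (1,0)\}$ and $(b,c)$ with $b,c \geq 1$ then confirms that $\{l_{(b,c)}, B_{(b,c)}\}$ really is a bundle (the decomposition predicted by the lemma with $m=2$ matches the hypothesized pair on the nose). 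A symmetric argument using Lemma~\ref{lem:tLrightF} handles the routes $r_{(b,c)}$.

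The last step is to rule out the remaining candidate routes $l_{(b,-1)}$ for $b \geq 1$ and $r_{(-1,c)}$ for $c \geq 1$. For $l_{(b,-1)}$, I compute $\I(l_{(b,-1)}) + \I(B_{(b',c')}) = F_{(1, b+b', c', 0)}$. When $c' = 0$, the only compatible choice is $(b',c') = (1,0)$ and Lemma~\ref{lem:tLleftF}(3) gives the band-free decomposition $\I(l_{(b+1,-1)})$, which cannot equal $\I(l_{(b,-1)}) + \I(B_{(1,0)})$; when $c' \geq 1$, the same parameter-matching argument as above would force $c'-1 = mc'$ for some positive integer $m$, which has no solution. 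Hence no $l_{(b,-1)}$ is compatible with any band, and the symmetric argument eliminates $r_{(-1,c)}$. Combining the three steps, the only maximal reduced bundles containing a band are exactly $\{B_{(b,c)}, l_{(b,c)}\}$ and $\{B_{(b,c)}, r_{(b,c)}\}$ as claimed; each has cardinality $2$ meeting the upper bound, hence is maximal. The main obstacle is not conceptual but rather the careful bookkeeping of the degenerate sub-cases $(b,c) \in \{(0,1), (1,0)\}$ and the $l_{(b,-1)}$, $r_{(-1,c)}$ routes, each of which reduces to a short direct application of the flow-algorithm computations already packaged in Lemmas~\ref{lem:tLleftF} and~\ref{lem:tLrightF}.
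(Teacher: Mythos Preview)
Your approach is correct and uses the same core technique as the paper: uniqueness of positive bundle combinations (Theorem~\ref{thm:comb}) combined with the explicit bundle decompositions of Lemmas~\ref{lem:tLleftF} and~\ref{lem:tLrightF} to determine exactly which routes are compatible with a given band $B_{(b,c)}$. The one genuine difference is in how you establish \emph{maximality} of $\{l_{(b,c)},B_{(b,c)}\}$ and $\{r_{(b,c)},B_{(b,c)}\}$. The paper verifies directly via the flow algorithm on $F_{(1,2b+1,2c+1,1)}$ that $l_{(b,c)}$ and $r_{(b,c)}$ are incompatible with each other, so that $\{l_{(b,c)},r_{(b,c)},B_{(b,c)}\}$ is not a bundle. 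You instead invoke Corollary~\ref{cor:bandy-lowcard}: since a maximal clique has cardinality $5$, any bundle containing a band has cardinality at most $4$, and after removing the two straight routes a maximal reduced bundle with a band has cardinality at most $2$. This is a clean shortcut that avoids an extra flow-algorithm computation, at the cost of relying on the general density argument behind Corollary~\ref{cor:bandy-lowcard}.

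One small point: your phrasing ``would force $c'-1=mc'$'' in the $l_{(b,-1)}$ step is a bit garbled. The cleanest way to see incompatibility there is simply that matching the \emph{route} in the decomposition $\I(l_{(b'',c'')})+(m-1)\I(B_{(b'',c'')})$ against $l_{(b,-1)}$ would require $c''=-1$, which is impossible since Lemma~\ref{lem:tLleftF}(5) produces only coprime \emph{nonnegative} pairs $(b'',c'')$. Your conclusion is correct, but the justification should be tightened.
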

\begin{proof}
	Since all self-compatible bands are of the form $B_{(b,c)}$, it suffices to show that for fixed coprime nonnegative integers $(b,c)$ the only maximal reduced bundles of $\tL$ containing $B_{(b,c)}$ are $\{l_{(b,c)},B_{(b,c)}\}$ and $\{r_{(b,c)},B_{(b,c)}\}$.

	It is immediate from Theorem~\ref{lem:tLleftF} that $B_{(b,c)}$ is compatible with $l_{(b,c)}$ and $r_{(b,c)}$. By Proposition~\ref{prop:dbands}, it is compatible with no other self-compatible band of $\tL$. We now show that it is not compatible with any other self-compatible routes.

	Indeed, if $B_{(b,c)}$ is compatible with $l_{(b_2,c_2)}$ where $(b,c)\neq(b_2,c_2)$ then $\I(B_{(b,c)})+\I(l_{(b_2,c_2)})$ must be a bundle combination. This is impossible because Lemma~\ref{lem:tLleftF} describes all bundle combinations of flows $F_{(1,b',c',0)}$ (for $b',c'$ nonnegative integers), and all are of the form $\I(l_{(b'',c'')})+m\I(B_{(b'',c'')})$ for some coprime $(b'',c'')$ and $m\geq0$. This completes the proof that $B_{(b,c)}$ is not compatible with any self-compatible route $l_{(b_2,c_2)}$. Similarly, it is not compatible with any $r_{(b_2,c_2)}$. We have shown that the only bending trails compatible with $B_{(b,c)}$ are $l_{(b,c)}$ and $r_{(b,c)}$.

	It remains only to show that $l_{(b,c)}$ and $r_{(b,c)}$ are not compatible with each other. If $(b,c)=(0,1)$, then $l_{(b,c)}=e_1f_1^{-1}$ and $r_{(b,c)}=f_4^{-1}f_3^{-1}f_2^{-1}e_2e_3e_4$ are incompatible. Similarly, $l_{(1,0)}$ and $r_{(1,0)}$ are incompatible. We now show that $l_{(b,c)}$ and $r_{(b,c)}$ are incompatible when $(b,c)$ are coprime positive integers. Suppose to the contrary that they are compatible. Then
	\[F_{(1,2b+1,2c+1,0)}=\I(l_{(b,c)})+\I(r_{(b,c)})\]
	is a bundle combination. This is a contradiction, because the flow algorithm applied to the arrow-flow $(e_1,C)$ of $F_{(1,2b+1,2c+1,1)}$ for any $C\in[0,1]$ gives $p_{(e_1,C)}^{F_{(1,2b+2,2+1,1)}}=e_1e_2e_3e_4$. This shows that $l_{(b,c)}$ and $r_{(b,c)}$ are incompatible.

	We have shown in all cases that that $B_{(b,c)}$ is compatible only with the bending trails $l_{(b,c)}$ and $r_{(b,c)}$, and that all three of these trails do not make up a bundle. It follows that the maximal reduced bundles containing $B_{(b,c)}$ are $\{B_{(b,c)},l_{(b,c)}\}$ and $\{B_{(b,c)},r_{(b,c)}\}$.
\end{proof}

\begin{thm}\label{thm:tLbscliques}
	The band-stable cliques of $\tL$ which are not maximal cliques are
	\[\big\{\{l_{(b,c)}\}\ :\ (b,c)\in\mathbb Z_{\geq0}\text{ coprime}\big\}
	\cup
	\big\{\{r_{(b,c)}\}\ :\ (b,c)\in\mathbb Z_{\geq0}\text{ coprime}\big\}\cup\{\emptyset\}.
	\]
\end{thm}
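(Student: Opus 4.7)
The plan is to use two classification results from the preceding development: by the proof of Theorem~\ref{thm:tLmaxlbundles}, the bending self-compatible routes compatible with a self-compatible band $B_{(b,c)}$ are precisely $l_{(b,c)}$ and $r_{(b,c)}$, and these two routes are themselves incompatible. Since straight routes are compatible with every string, any band-stable clique necessarily contains all straight routes (otherwise adjoining a missing straight route yields a strict enlargement against which no band can witness); following the notational convention of the earlier examples, I identify each listed clique with the full clique obtained by adjoining the two straight routes.

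From this I extract a central observation: a reduced clique $\K$ of $\tL$ admits some $\K$-compatible self-compatible band if and only if $\K\in\{\emptyset,\{l_{(b,c)}\},\{r_{(b,c)}\}\}$ for some coprime $(b,c)\in\mathbb{Z}_{\geq0}$. Indeed, if $B_{(b,c)}$ is $\K$-compatible, then every bending route of $\K$ lies in $\{l_{(b,c)},r_{(b,c)}\}$, whence the incompatibility of $l_{(b,c)}$ with $r_{(b,c)}$ forces $|\K|\leq 1$; the converse is a direct reference to Theorem~\ref{thm:tLmaxlbundles}.

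With the central observation the proof divides cleanly in two directions. First, for any non-maximal reduced clique $\K$ outside the listed families, no band is $\K$-compatible, so the band-stability condition fails vacuously — no witness band exists to distinguish $\K$ from any strict enlargement. Second, I verify band-stability of each listed $\K$ by exhibiting a witness band for every strict enlargement $\K'$. For $\K=\emptyset$, $\K'$ contains a bending route $q$; since $q$ is compatible with at most one self-compatible band by the key classification, infinitely many witness bands are available. For $\K=\{l_{(b,c)}\}$, the unique $\K$-compatible band $B_{(b,c)}$ itself witnesses: any $q\in\K'\setminus\{l_{(b,c)}\}$ is bending, and the only bending route compatible with $B_{(b,c)}$ other than $l_{(b,c)}$ is $r_{(b,c)}$, which cannot appear in $\K'$ since it is incompatible with $l_{(b,c)}\in\K'$. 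The case $\K=\{r_{(b,c)}\}$ is symmetric.

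The principal obstacle is really just careful bookkeeping — keeping track of the straight routes suppressed in the notation, and distinguishing routes of the form $l_{(b,c)}$ and $r_{(b,c)}$ with $(b,c)$ coprime nonnegative from the additional self-compatible routes $l_{(b,-1)}$ and $r_{(-1,c)}$, which are incompatible with every band and thus give singleton cliques that are not band-stable. Beyond this, no combinatorial input is required past what Proposition~\ref{prop:dbands}, Theorem~\ref{thm:mememe}, and Theorem~\ref{thm:tLmaxlbundles} already provide.
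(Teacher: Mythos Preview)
Your proof is correct and follows essentially the same line as the paper's. Both arguments reduce to the facts (from Theorem~\ref{thm:tLmaxlbundles}) that the bending routes compatible with $B_{(b,c)}$ are exactly $l_{(b,c)}$ and $r_{(b,c)}$, and that these two are mutually incompatible; from this both you and the paper conclude that the only (reduced) cliques admitting a compatible band are $\emptyset$, $\{l_{(b,c)}\}$, and $\{r_{(b,c)}\}$, and then verify band-stability for each. Your treatment of the straight-route bookkeeping is in fact more explicit than the paper's, which tacitly works with reduced cliques throughout (compare Example~\ref{ex:kron-vortex-decomp}, where the non-maximal band-stable clique is the set of straight routes, not $\emptyset$).
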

\begin{proof}
	We first show that the given cliques are band-stable.
	If $(b,c)$ are coprime nonnegative integers, then $l_{(b,c)}$ is compatible with $B_{(b,c)}$. 
	By Theorem~\ref{thm:tLmaxlbundles}, no route is compatible with both $l_{(b,c)}$ and $B_{(b,c)}$, hence $\{l_{(b,c)}\}$ is a band-stable clique. Similarly, $\{r_{(b,c)}\}$ is a band-stable clique.
	The empty set is band-stable because any route is incompatible with some band.

	It remains to show that there are no additional band-stable cliques.
	Any clique containing two distinct routes is not compatible with any band, so all band-stable cliques are of cardinality less than or equal to 1. Any route $l_{(b,-1)}$ for $b\geq1$ is compatible with no bands, and hence is not band-stable. Similarly, the routes $r_{(-1,c)}$ for $c\geq1$ are compatible with no bands and hence are not band-stable. This completes the proof.
\end{proof}

\subsection{Bundle subdivisions and vortex dissections}

The previous two theorems described the maximal bundles containing bands, and the band-stable cliques which are not maximal, respectively.
The former is the combinatorial model for the (g-)bundle walls of $\tL$ and the latter is the combinatorial model for the (g-)vortex walls of $\tL$.
We now use this to show that the vortex dissection of $\F_1(\tL)$ is equal to the union of the bundle subdivision of $\F_1(\tL)$ with the vortex wall $\Delta_{1}^{\spiral}(\emptyset)$.

\begin{lemma}\label{lem:ggsf}
	For any coprime nonnegative integers $(b,c)$, we have $\g_{1}(\{l_{(b,c)},B_{(b,c)}\})=\g_{1}^{\spiral}(\{l_{(b,c)}\})$ and $\g_{1}(\{r_{(b,c)},B_{(b,c)}\})=\g_{1}^{\spiral}(\{r_{(b,c)}\})$.
	Hence, every maximal g-bundle space is a maximal g-vortex space.
\end{lemma}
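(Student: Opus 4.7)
The plan is to verify the two displayed equalities directly by unpacking the defining combinations on both sides, using the earlier classification of self-compatible trails to control which bands are available.

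First I would determine the $\{l_{(b,c)}\}$-compatible bands. By Proposition~\ref{prop:dbands} every self-compatible band is some $B_{(b',c')}$ with $(b',c')$ coprime, and any band compatible with $l_{(b,c)}$ extends the bundle $\{l_{(b,c)}\}$ to a larger one that is contained in some maximal reduced bundle with a band. By Theorem~\ref{thm:tLmaxlbundles} the only such maximal reduced bundle containing $l_{(b,c)}$ is $\{l_{(b,c)}, B_{(b,c)}\}$, so the unique $\{l_{(b,c)}\}$-compatible band is $B_{(b,c)}$ itself. The analogous statement holds for $r_{(b,c)}$ with the same band $B_{(b,c)}$.

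With that in hand, both sides of the first equality unfold to the same set. By definition,
\[
\g_1(\{l_{(b,c)},B_{(b,c)}\})=\{\,a\,\g(l_{(b,c)})+d\,\g(B_{(b,c)}):a\in[0,1],\ d\geq0\,\},
\]
since $l_{(b,c)}$ is the only (bending) route in the bundle. On the other hand,
\[
\g_1^{\spiral}(\{l_{(b,c)}\})=\{\,z+a\,\g(l_{(b,c)}):a\in[0,1],\ z\ \text{a nonneg.\ combo of g-vectors of $\{l_{(b,c)}\}$-compat.\ bands}\,\},
\]
and by the previous paragraph any such $z$ is of the form $d\,\g(B_{(b,c)})$ with $d\geq0$. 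The two descriptions agree, proving $\g_1(\{l_{(b,c)},B_{(b,c)}\})=\g_1^{\spiral}(\{l_{(b,c)}\})$. The $r$-case is identical with $l_{(b,c)}$ replaced by $r_{(b,c)}$.

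For the final sentence, recall that the maximal g-bundle spaces fall into two families: the g-simplices $\g_1(\K)$ of maximal cliques $\K$, and the g-bundle walls $\g_1(\bK)$ of maximal bundles $\bK$ that contain a band. A maximal clique is trivially band-stable, so each g-simplex $\g_1(\K)=\g_1^{\spiral}(\K)$ is a maximal g-vortex space by Proposition~\ref{prop:g-vort-max}. For the g-bundle walls, Theorem~\ref{thm:tLmaxlbundles} enumerates all maximal reduced bundles containing a band as $\{l_{(b,c)},B_{(b,c)}\}$ and $\{r_{(b,c)},B_{(b,c)}\}$ for coprime $(b,c)\in\mathbb Z_{\geq0}$; the two displayed equalities just proved identify their g-bundle walls with the g-vortex spaces $\g_1^{\spiral}(\{l_{(b,c)}\})$ and $\g_1^{\spiral}(\{r_{(b,c)}\})$. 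Since $\{l_{(b,c)}\}$ and $\{r_{(b,c)}\}$ are band-stable by Theorem~\ref{thm:tLbscliques}, Proposition~\ref{prop:g-vort-max} guarantees these are maximal g-vortex spaces. The main subtlety is simply bookkeeping: making sure we have correctly identified the unique band compatible with each $l_{(b,c)}$ or $r_{(b,c)}$, which is exactly the content already extracted from Theorem~\ref{thm:tLmaxlbundles}.
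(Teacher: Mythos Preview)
Your proposal is correct and follows essentially the same approach as the paper: identify $B_{(b,c)}$ as the unique $\{l_{(b,c)}\}$-compatible band via Theorem~\ref{thm:tLmaxlbundles}, then observe that both sides unfold to the same set of combinations $a\,\g(l_{(b,c)})+d\,\g(B_{(b,c)})$ with $a\in[0,1]$, $d\geq0$. Your treatment of the final sentence is somewhat more explicit than the paper's (you cite Theorem~\ref{thm:tLbscliques} and Proposition~\ref{prop:g-vort-max} to verify band-stability and maximality), but the substance is identical.
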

Analogous statements hold for unit g-bundle and g-vortex spaces, as well as (unit or nonunit) bundle and vortex spaces.
\begin{proof}
	We prove that
$\g_{1}(\{l_{(b,c)},B_{(b,c)}\})$ and $\g_{1}^{\spiral}(\{l_{(b,c)}\})$ are equal as desired. The analogous $r$ statement is symmetric.
	Theorem~\ref{thm:tLmaxlbundles} shows that the only band compatible with $l_{(b,c)}$ is $B_{(b,c)}$, hence the g-vortex space $\g_{\geq}^{\spiral}(\{l_{(b,c)}\})$ is the set
	\[\{x\g(l_{(b,c)})+y\I(B_{(b,c)})\ :\ x,y\in\mathbb R_{{\geq0}},\ x\leq1\}.\]
	This is the same as the g-bundle space $\g_{1}(\{l_{(b,c)},B_{(b,c)}\})$ by definition.

	We now show the final statement. Every g-simplex is both a maximal g-bundle space and a maximal g-vortex space. The remaining maximal g-bundle spaces are all of the form $\g_{1}(\{l_{(b,c)},B_{(b,c)}\})$ or $\g_{1}(\{r_{(b,c)},B_{(b,c)}\})$ by Theorem~\ref{thm:tLmaxlbundles}, and the above shows that these are also maximal g-vortex spaces. It follows that every maximal g-bundle space is a maximal g-vortex space.
\end{proof}

We have shown that every maximal bundle wall of $\tL$ is a maximal vortex wall.
The g-vortex wall $\D_{{\geq0}}^{\spiral}(\emptyset)$ is the unique cell of the vortex dissection of $\tL$ which is not part of the bundle subdivision of $\tL$. This contains flows like the one in Figure~\ref{fig:irrational} which are not in any bundle space.

\begin{example}
	We finally speak of the g-bundle and g-vortex dissections of the g-polyhedron $\g_1(\L)$, since $\g_1(\L)$ is three-dimensional rather than four-dimensional. Figure~\ref{fig:doubkron-g} shows an image of the g-polyhedron.
	Its vertices are indicator vectors of the elementary bending routes $e_1f_1^{-1}$ and $f_4^{-1}e_4$.
	Its unbounded directions are generated by $\I(e_2f_2^{-1})$ and $\I(e_3f_3^{-1})$.

	We say nothing about the g-simplices of $\tL$, which are modelled by maximal cliques. These are in both the g-bundle and g-vortex dissection of $\g_1(\L)$.
	For any coprime nonnegative integers $(b,c)$, the two-dimensional spaces $\g_1(\{l_{(b,c)},B_{(b,c)}\})=\g_1^{\spiral}(\{l_{(b,c)}\})$
	and
$\g_1(\{r_{(b,c)},B_{(b,c)}\})=\g_1^{\spiral}(\{r_{(b,c)}\})$
appear in both the g-bundle and g-vortex dissections.
	Finally, the two-dimensional vortex wall
	\[\g_1^{\spiral}(\emptyset)=\{(x,y-x,y)\ :\ x,y\in\mathbb R_{\geq0}\}\]
	is in the vortex dissection but not the bundle subdivision; this space is highlighted in Figure~\ref{fig:doubkron-g}.
This is the unique cell of the vortex dissection which is not a cell of the bundle subdivision. Irrational rays in this space are not realized as any g-bundle combination, as in Example~\ref{ex:irrational}, while rational rays along this space are all given by g-vectors of some band $B_{(b,c)}$.

	Note that the intersection of $\g_1^{\spiral}(\emptyset)$ and, say, $\g_1^{\spiral}(\{l_{1,1}\})$ is the ray generated by $\I(l_{(1,1)})$, which is a face of $\g_1^{\spiral}(\{l_{(1,1)}\})$ but not a face of $\g_1^{\spiral}(\emptyset)$. This shows that the vortex dissection does not satisfy the strong intersection property, and is hence not a subdivision.
	\begin{figure}
		\centering
		\def\svgscale{.21}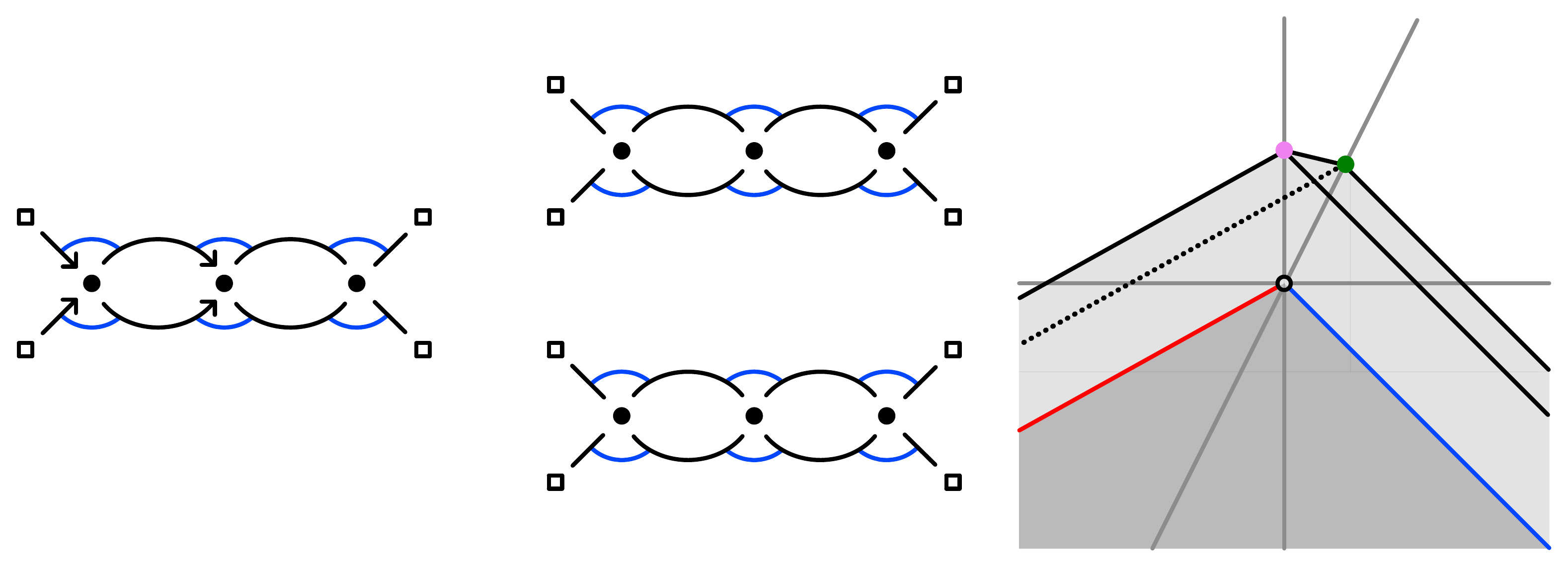
		\caption{A gentle algebra and its g-polyhedron with a g-vortex space highlighted.}
		\label{fig:doubkron-g}
	\end{figure}
\end{example}

\subsection{Complement of the g-vector fan}

We now describe the complement of the g-vector fan of $\tL$.

\begin{cor}\label{cor:cordra}
	The complement of the g-vector fan of $\tL$ is
	\begin{align*}&\{\mathbf{l}_{\geq0}{(b,c)}\ :\ (b,c)\in\mathbb Z_{\geq0}\text{ coprime}\} \\
		\cup
		&\{\mathbf{r}_{\geq0}{(b,c)}\ :\ (b,c)\in\mathbb Z_{\geq0}\text{ coprime}\} \\
	\cup 
	&\{(x,y-x,y)\ :\ (0,0)\neq(x,y)\in\mathbb R_{\geq0}\},\end{align*}
	where for $(b,c)$ coprime the cones $\mathbf{l}_{\geq0}(b,c)$ and $\mathbf{r}_{\geq0}(b,c)$ are defined as
	\begin{align*}
		\mathbf{l}_{\geq0}(b,c)&=
		\begin{cases}
			\{x(b-1,c+1-b,-c-1)+y(b,c-b,-c)\ :\ x\geq0,\ y>0\}&(b,c)\neq(0,1)\\
			\{x(-1,0,0)+y(0,1,-1)\ :\ x\geq0,\ y>0\}&(b,c)=(0,1)
		\end{cases}
		\\
		\mathbf{r}_{\geq0}(b,c)&=
		\begin{cases}
			\{x(b+1,c-b-1,1-c)+y(b,c-b,-c)\ :\ x\geq0,\ y>0\}&(b,c)\neq(1,0)\\
			\{x(0,0,1)+y(1,-1,0)\ :\ x\geq0,\ y>0\}&(b,c)=(1,0)
		\end{cases}
	\end{align*}
\end{cor}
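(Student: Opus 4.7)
The plan is to combine the completeness of the g-vortex dissection $\Sbs^{\spiral}(\g_{\geq0}(\L))$ of $\mathbb R^{V_{\text{int}}}$, which contains the g-vector fan as the union of its g-cones over maximal cliques, with the explicit list of non-maximal band-stable cliques provided by Theorem~\ref{thm:tLbscliques}. I would first observe that in $\tL$ every maximal clique $\K$ has no $\K$-compatible band: by Theorem~\ref{thm:tLmaxlbundles}, every maximal bundle of $\tL$ containing a band has only one bending route, whereas a maximal clique has $|V_{\text{int}}|=3$ bending routes. Consequently $\g^{\spiral}_{\geq0}(\K)=\g_{\geq0}(\K)$ for every maximal clique $\K$, so that the g-vector fan is exactly the union of the maximal-cell g-vortex spaces. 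By Theorem~\ref{thm:g-vort} (uniqueness of positive g-vortex combinations), a point $\xx\in\mathbb R^{V_{\text{int}}}$ lies in the g-vector fan if and only if its positive g-vortex combination has zero band part; otherwise $\xx$ is genuinely in the complement.

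The complement of the g-vector fan is therefore the union of g-vortex walls $\g^{\spiral}_{\geq0}(\K)$ over non-maximal band-stable $\K$, restricted to the part with nontrivial band contribution. By Theorem~\ref{thm:tLbscliques}, these cliques are $\{l_{(b,c)}\}$ and $\{r_{(b,c)}\}$ for coprime nonnegative $(b,c)$, together with $\emptyset$. For $\K=\{l_{(b,c)}\}$, Theorem~\ref{thm:tLmaxlbundles} identifies $B_{(b,c)}$ as the unique $\K$-compatible band, so $\g^{\spiral}_{\geq0}(\{l_{(b,c)}\})$ is the two-dimensional cone generated by $\g(l_{(b,c)})$ and $\g(B_{(b,c)})$; the subset of points with strictly positive band coefficient is precisely $\mathbf{l}_{\geq0}(b,c)$, where writing $x\geq0$ rather than $x>0$ accounts for the shared boundary ray $\mathbb R_{>0}\,\g(B_{(b,c)})$ with $\g^{\spiral}_{\geq0}(\emptyset)$. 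The $\K=\{r_{(b,c)}\}$ case is symmetric. For $\K=\emptyset$, every band of $\tL$ is $\emptyset$-compatible, and since $\g(B_{(b,c)})=b\,\g(B_{(1,0)})+c\,\g(B_{(0,1)})$ (directly from the indicator vector $F_{(0,b,c,0)}$), the cone $\g^{\spiral}_{\geq0}(\emptyset)$ is the two-dimensional cone spanned by the two extreme band g-vectors, with no additional extreme rays introduced by the other bands.

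The last ingredient is explicit computation of the g-vectors via $\phi$ applied to the indicator vectors from Proposition~\ref{prop:dbands}, Lemma~\ref{lem:tLleftF}, and Lemma~\ref{lem:tLrightF}. This yields $\g(B_{(b,c)})=\phi(F_{(0,b,c,0)})=(b,\,c-b,\,-c)$ and, for coprime nonnegative $(b,c)\neq(0,1)$, $\g(l_{(b,c)})=\phi(F_{(1,b,c+1,0)})=(b-1,\,c+1-b,\,-c-1)$, with the special case $\g(l_{(0,1)})=(-1,0,0)$ and the symmetric formulas for $r$. Matching against the statement of the corollary then gives the desired description of each cone. The main obstacle is not conceptual but careful case-by-case bookkeeping: one must separately handle the two degenerate corners $(b,c)=(0,1)$ and $(1,0)$ where the general formulas for $\g(l_{(b,c)})$ or $\g(r_{(b,c)})$ take a different form, and one must verify that the shared boundary behavior between $\mathbf{l}_{\geq0}(b,c)$, $\mathbf{r}_{\geq0}(b,c)$, and $\g^{\spiral}_{\geq0}(\emptyset)$ is accurately captured by the strict/nonstrict inequalities in the statement so that the union covers the complement of the g-vector fan precisely.
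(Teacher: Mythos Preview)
Your proposal is correct and follows essentially the same approach as the paper: identify the complement of the g-vector fan with the set of points whose positive g-vortex combination has nonzero band part, invoke Theorem~\ref{thm:tLbscliques} for the list of non-maximal band-stable cliques, and compute the relevant g-vectors via $\phi$. Your write-up is in fact more thorough than the paper's terse proof---in particular, your justification that maximal cliques admit no compatible band (hence $\g^{\spiral}_{\geq0}(\K)=\g_{\geq0}(\K)$ for maximal $\K$) and your careful discussion of the strict/nonstrict inequalities on the boundary rays are details the paper leaves implicit under the phrase ``the relevant parts.''
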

\begin{proof}
	The complement of the g-vector fan of $\tL$ is the set of g-vortex combinations with nonzero canonical g-vortex.
	This includes all nonzero points of the g-vortex space $\g_{\geq0}(\emptyset)$, which are described as
	$\{(x,y-x,y)\ :\ (0,0)\neq(x,y)\in\mathbb R_{\geq0}\}$.
	The spaces $\mathbf{l}_{\geq0}(b,c)$ and $\mathbf{r}_{\geq0}(b,c)$ are the relevant parts of $\g_{\geq0}^{\spiral}(\{l_{(b,c)}\})$ and $\g_{\geq0}^{\spiral}(\{r_{(b,c)}\})$, respectively. This is a complete list of the g-vortex combinations with nonzero canonical g-vortex by Corollary~\ref{cor:cordra}.
\end{proof}

\subsection{Triple Kronecker quiver}

In the double Kronecker fringed quiver studied in this section, we saw that every maximal (g-)bundle space appeared as a maximal (g-)vortex space. This is not always the case. Let $\tL$ be the triple Kronecker quiver of Figure~\ref{fig:tripkron-g}. Take the maximal reduced bundle $\bK:=\{e_2f_2^{-1},e_4f_4^{-1}\}$. Its bundle wall is the two-dimensional cone
generated by $\I(e_2f_2^{-1})=(1,-1,0,0)$ and $\I(e_4f_4^{-1})=(0,0,1,-1)$.
It is immediate that this two-dimensional cone is contained in the maximal vortex space $\g_1^{\spiral}(\emptyset)$. On the other hand, $\g_1^{\spiral}(\emptyset)$ also contains the ray $\I(e_3f_3^{-1})=(0,1,-1,0)$, hence $\g_1(\bK)\subsetneq\g_1^{\spiral}(\emptyset)$.

\begin{figure}
	\centering
	\def\svgscale{.21}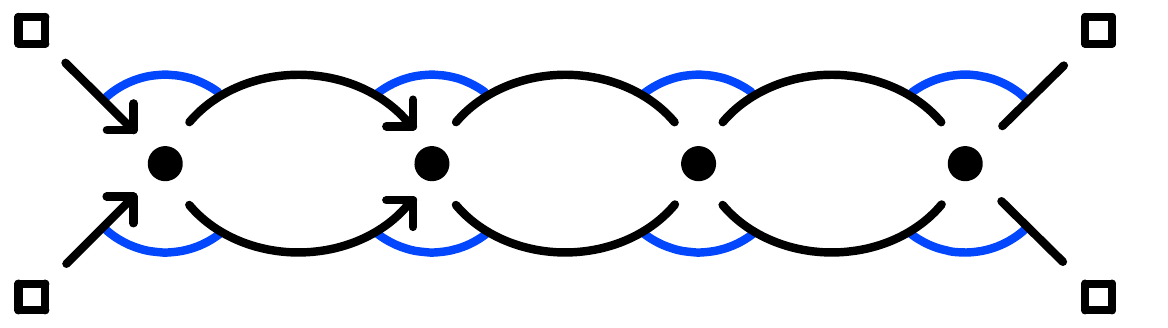
	\caption{The triple Kronecker fringed quiver.}
	\label{fig:tripkron-g}
\end{figure}

\appendix
\section{The Flow Algorithm on Framed Directed Graphs}
\label{sec:appendix}

Our main technical method of proving results about clique triangulations, bundle subdivisions, and vortex dissections of turbulence polyhedra of fringed quivers has been the constructive \emph{flow algorithm} to calculate the description of a flow as a bundle combination.
In fact, the specialization of this flow algorithm to amply framed DAGs is new; the original proof of Danilov, Karzanov, and Koshevoy~\cite{DKK} that a framing on a DAG gives a regular unimodular triangulation of its flow polytope is an inductive proof on the length of the DAG rather than a constructive proof. We believe that this algorithm is a valuable technical tool even in the setting of amply framed DAGs, so we phrase it for amply framed DAGs here.
We remark that the algorithm works verbatim for amply framed (not necessarily acyclic) directed graphs, and should work with slight modification for general (not necessarily amply) framed directed graphs.

\begin{defn}
	Let $F$ be a flow on an amply framed directed graph $(G,\mathfrak{F})$. We say that an \emph{arrow-flow} of $F$ with respect to $(G,\mathfrak{F})$ is a pair $(\alpha,C)$, where $\alpha$ is an edge of $G$ and $C\in[0,F(\alpha)]$.
\end{defn}

We remark that our arrow-flows on gentle algebras included choosing a direction on the arrows; the consistent directions of our framed directed graphs means that we need not do this.
We now translate Definition~\ref{defn:forbac} to the language of amply framed DAGs.
\begin{defn}
	Let $\alpha$ be an arrow of $G$ such that $h(\alpha)$ is an internal vertex.
	\begin{itemize}
		\item Suppose $\psi(\alpha)=1$. Let $\alpha_2$ be the edge of $G$ such that $h(\alpha_2)=h(\alpha)$ and $\psi(\alpha_2)=2$. Let $\beta_1$ and $\beta_2$ be the two edges starting at $h(\alpha)=h(\alpha_2)$ such that $\psi(\beta_1)=1$ and $\psi(\beta_2)=2$. Then for $C\in[0,F(\alpha)$, define
			\[\for(\alpha,C)=
			\begin{cases}
				(\beta_1,C) & C\leq F(\beta_1)\\
				(\beta_2,C-F(\beta_1)) & C>F(\beta_1).
			\end{cases}\]
		\item Suppose $\psi(\alpha)=2$. Let $\alpha_1$ be the edge of $G$
	 such that $h(\alpha_1)=h(\alpha)$ and $\psi(\alpha_1)=1$. Let $\beta_1$ and $\beta_2$ be the two edges starting at $h(\alpha)=h(\alpha_1)$ such that $\psi(\beta_1)=1$ and $\psi(\beta_2)=2$. Then for $C\in[0,F(\alpha)$, define
			\[\for(\alpha,C)=
			\begin{cases}
				(\beta_1,C) & C+F(\alpha_1)<F(\beta_1)\\
				(\beta_2,C) & C+F(\alpha_1)\geq F(\beta_1).
			\end{cases}\]
	\end{itemize}
	If $(\alpha,C)$ is an edge of $G$ such that $t(\alpha)$ is an internal vertex, then let $\bac(\alpha,C)$ be the unique arrow-flow $(\beta,D)$ such that $\for(\beta,D)=(\alpha,C)$.
\end{defn}

We now phrase the flow algorithm in the language of amply framed DAGs.
\begin{defn}\label{defn:forbacdag}
	Suppose $(G,\mathfrak{F})$ is an amply framed DAG and $F$ is a flow of $G$.
	Let $(\alpha,C)$ be an arrow-flow of $F$.
	Let $m_1$ be maximal among values $m\geq0$ such that $\for^m(\alpha,C)$ is defined. For $j\in[m_1]$, define $\alpha_j$ so that $\for^m(\alpha,C)=(\alpha_j,C_j)$ for some $C_j$. Similarly, let $m_2$ be maximal among values $m\geq0$ such that $\bac^m(\alpha,C)$ is defined and for $j\in[m_2]$ define $\alpha_{-j}$ so that $\for^m(\alpha,C)=(\alpha_{-j},C_{-j})$ for some $C_j$.
	Define $p_{(\alpha,C)}^F$ to be the path $\alpha_{-m_2}\dots\alpha_{-1}\alpha\alpha_1\dots\alpha_{m_1}$ marked at $\alpha$.
	Define $a_{(\alpha,C)}^F$ to be the interval of those values $D\in[0,F(\alpha)]$ such that $p_{(\alpha,D)}^F=p_{(\alpha,C)}^F$ as marked paths.
\end{defn}

\begin{defn}
	Let $(G,\mathfrak{F})$ be an amply framed DAG and let $F$ be a flow of $G$. Let $\K_F$ be the set of routes which appear as $p_{(\alpha,C)}^F$ for some arrow-flow $(\alpha,C)$. For any $p\in\K_F$ such that $p=p_{(\alpha,C)}^F$, write $a_p^F:=a_{(\alpha,C)}^F$.
	Let $\K_F^+\subseteq\K_F$ be the set of those routes $p\in\K_F$ such that $a_p^F>0$.
\end{defn}

Note that to calculate $\K_F$, it suffices to check only the arrow-flows $(\alpha,C)$ where $\alpha$ is a source edge, since every route starts with a source edge.

\begin{thm}[{Corollary~\ref{cor:bundle-subd}}]\label{thm:cliquy}
	Let $(G,\mathfrak{F})$ be an amply framed DAG and let $F$ be a flow of $G$. Then
	\[F=\sum_{p\in\K_F^+}a_p^F\I(p)\]
	is the unique expression of the flow $F$ as a positive clique combination.
\end{thm}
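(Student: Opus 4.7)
The plan is to reduce this theorem to Corollary~\ref{cor:bundle-subd} via the correspondence between amply framed DAGs and paired representation-finite gentle algebras developed in Section~\ref{ssec:PGAaAFDG}. First I would handle the minor technicality that the amply framed DAG $(G,\mathfrak{F})$ may not be convenient or gently framed, i.e., it may have source or sink vertices of degree $>1$, or edges directly from a source to a sink. Splitting source/sink vertices does not affect the flow polyhedron, the routes, or the compatibility relation, so we may assume convenience. For each source-to-sink edge $\alpha$, the flow value $F(\alpha)$ contributes only to the unique length-one route through $\alpha$, so such edges can be peeled off to reduce to the gently framed case.

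Once $(G,\mathfrak{F})$ is a convenient gently framed DAG, Proposition~\ref{lem:god} gives a bijection $\Gamma\mapsto\tL(\Gamma)$ onto paired representation-finite fringed quivers, under which arrows (and hence flows) on $\Gamma$ correspond to arrows (and hence flows) on $\tL(\Gamma)$, and routes of $\Gamma$ correspond to routes of $\tL(\Gamma)$. The core step is then to verify that the functions $\for$ and $\bac$ of Definition~\ref{defn:forbacdag} agree with those of Definition~\ref{defn:forbac} under this bijection. This is a case check: an arrow $\tilde\alpha$ of $\tL(\Gamma)$ with $\psi(\tilde\alpha)=1$ points in the same direction as the corresponding 1-edge of $\Gamma$ (so $\e=1$ is the relevant sign), while $\psi(\tilde\alpha)=2$ points opposite to the corresponding 2-edge (so $\e=-1$ is relevant). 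Translating each of the four branches of Definition~\ref{defn:forbac} under this relabelling reproduces exactly the two branches of Definition~\ref{defn:forbacdag} for each of $\psi(\alpha)\in\{1,2\}$.

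Given this matching of $\for$ and $\bac$, the marked path $p_{(\alpha,C)}^F$ of Definition~\ref{defn:forbacdag} coincides with the marked trail $p_{(\alpha^\e,C)}^F$ of Definition~\ref{defn:gr} for the appropriate sign $\e\in\{\pm 1\}$. Because $\tL(\Gamma)$ is representation-finite it has no bands, so by Lemma~\ref{lem:alg} every $p_{(\alpha^\e,C)}^F$ is a route; thus $\K_F$ and $\K_F^+$ on the DAG side agree with $\bK_F$ and $\bK_F^+$ on the fringed quiver side, and $a_p^F$ matches in either setting. Applying Corollary~\ref{cor:bundle-subd} to $F$ viewed as a flow on $\tL(\Gamma)$ then yields both the existence of the representation $F=\sum_{p\in\K_F^+}a_p^F\I(p)$ as a positive clique combination and its uniqueness; representation-finiteness of $\tL(\Gamma)$ means the bundle subdivision coincides with the clique triangulation and is complete, so no rationality hypothesis on $F$ is needed.

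The main obstacle will be bookkeeping around the sign convention: Definition~\ref{defn:forbac} uses signed arrows because flow on a fringed quiver may traverse an arrow either way, whereas Definition~\ref{defn:forbacdag} uses unsigned directed edges. Concretely, when verifying the agreement of the two versions of $\for$, one must track that swapping the role of $\psi=1$ and $\psi=2$ in $\Gamma$ corresponds to reversing the sign on the arrows of $\tL(\Gamma)$, and that the quantities $F(\alpha')$, $F(\beta)$, $F(\beta')$ appearing in the four branches of Definition~\ref{defn:forbac} land in the single conditional $C\leq F(\beta_1)$ (or $C+F(\alpha_1)<F(\beta_1)$) of Definition~\ref{defn:forbacdag}. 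Once this case analysis is carried out once and for all, every intermediate lemma (Lemma~\ref{lem:norep}, Proposition~\ref{prop:arrow-order}, Proposition~\ref{prop:interval-same-size}, Corollary~\ref{cor:compat}, Proposition~\ref{lem:only-way}, Proposition~\ref{prop:alg-realize-flow}) transfers without further effort, completing the proof.
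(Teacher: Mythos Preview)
Your proposal is correct and takes essentially the same approach as the paper: the theorem is stated as a specialization of Corollary~\ref{cor:bundle-subd}, and the paper relies on the correspondence of Section~\ref{ssec:PGAaAFDG} together with the remark that Definition~\ref{defn:forbacdag} is the DAG specialization of Definition~\ref{defn:forbac} rather than giving a separate proof. You have simply spelled out the reduction in detail, including the convenience/gentleness preprocessing and the $\psi\leftrightarrow\varepsilon$ sign bookkeeping, and correctly noted that representation-finiteness removes the rationality hypothesis via Lemma~\ref{rep-finite-union}.
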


Note that the methods of this paper do not show regularity of the triangulations, while this is shown in~\cite{DKK}. It would be interesting to find a combinatorial interpretation for some height functions on vertices realizing these triangulations as regular triangulations.

\begin{example}
	Consider the amply framed DAG of Figure~\ref{fig:cube} with flow
	\[F(e_1)=1\ \ F(e_2)=3\ \ F(f_1)=3\ \ F(f_2)=2.\]
	We wish to express $F$ as a clique combination.
	We have $p_{(e_1,C)}^F=e_1f_1$ for all $C\in[0,1]$, so the route $e_1f_1$ occurs in this clique combination with coefficient 1. We have $p_{(e_2,C)}^F=
	\begin{cases}
		e_2f_1 & C\in[0,2)\\
		e_2f_2 & C\in[2,3],
	\end{cases}$ so $e_2f_1$ and $e_2f_2$ occur in this clique combination with coefficients 2 and 1, respectively. This tells us as desired that $F$ is the clique combination
	\[F=\I(e_1f_1)+2\I(e_2f_1)+\I(e_2f_2).\]
\end{example}

\begin{remk}
	Recall from Section~\ref{ssec:PGAaAFDG} that we may define amply framed directed graphs without requiring acyclicity. Definition~\ref{defn:forbacdag} is then the specialization of the flow algorithm from gentle algebras to amply framed directed graphs. Hence, it may be used to prove the specialization of our gentle algebra results to the setting of amply framed directed graphs.
\end{remk}

\bibliographystyle{plain}
\bibliography{biblio} 

\begin{thebibliography}{10}

\bibitem{UQAM}
Antoine Abram, Jose Bastidas, Benjamin Dequêne, Alejandro Morales, GaYee Park,
  and Hugh Thomas.
\newblock Flow cones of graphs with cycles and locally gentle algebras.
\newblock {\em Work in Progress}.

\bibitem{AIR}
Takahide Adachi, Osamu Iyama, and Idun Reiten.
\newblock $\tau$-tilting theory, 2012.

\bibitem{AHIKM}
Toshitaka Aoki, Akihiro Higashitani, Osamu Iyama, Ryoichi Kase, and Yuya
  Mizuno.
\newblock Fans and polytopes in tilting theory {I}: Foundations, 2024.

\bibitem{AY}
Toshitaka Aoki and Toshiya Yurikusa.
\newblock Complete gentle and special biserial algebras are $g$-tame, 2023.

\bibitem{Asa22}
Sota Asai.
\newblock Non-rigid regions of real grothendieck groups of gentle and special
  biserial algebras.
\newblock {\em arXiv preprint}, 2022.

\bibitem{AI24}
Sota Asai and Osamu Iyama.
\newblock Semistable torsion classes and canonical decompositions in
  {G}rothendieck groups.
\newblock {\em Proc. Lond. Math. Soc. (3)}, 129(5):Paper No. e12639, 58, 2024.

\bibitem{AH81}
Ibrahim Assem and Dieter Happel.
\newblock Generalized tilted algebras of type {$A\sb{n}$}.
\newblock {\em Comm. Algebra}, 9(20):2101--2125, 1981.

\bibitem{AS87}
Skowronski~A. Assem, Ibrahim.
\newblock Iterated tilted algebras of type {$\tilde A_n$}.
\newblock {\em Mathematische Zeitschrift}, 195:269--290, 1987.

\bibitem{WV}
Welleda Baldoni and Mich\`ele Vergne.
\newblock Kostant partitions functions and flow polytopes.
\newblock {\em Transform. Groups}, 13(3-4):447--469, 2008.

\bibitem{WIWT}
Matias~von Bell, Benjamin Braun, Kaitlin Bruegge, Derek Hanely, Zachery
  Peterson, Khrystyna Serhiyenko, and Martha Yip.
\newblock Triangulations of flow polytopes, ample framings, and gentle
  algebras.
\newblock {\em Selecta Math. (N.S.)}, 30(3):Paper No. 55, 34, 2024.

\bibitem{BB}
Sheila Brenner and M.~C.~R. Butler.
\newblock Generalizations of the {B}ernstein-{G}elfand-{P}onomarev reflection
  functors.
\newblock In {\em Representation theory, {II} ({P}roc. {S}econd {I}nternat.
  {C}onf., {C}arleton {U}niv., {O}ttawa, {O}nt., 1979)}, volume 832 of {\em
  Lecture Notes in Math.}, pages 103--169. Springer, Berlin, 1980.

\bibitem{BRIDGE}
Tom Bridgeland.
\newblock Spaces of stability conditions.
\newblock {\em arXiv preprint}, 2006.

\bibitem{BDMTY}
Thomas Br\"{u}stle, Guillaume Douville, Kaveh Mousavand, Hugh Thomas, and Emine
  Y\i ld\i~r\i m.
\newblock On the combinatorics of gentle algebras.
\newblock {\em Canad. J. Math.}, 72(6):1551--1580, 2020.

\bibitem{BHIT}
Thomas Br\"ustle, Stephen Hermes, Kiyoshi Igusa, and Gordana Todorov.
\newblock Semi-invariant pictures and two conjectures on maximal green
  sequences.
\newblock {\em J. Algebra}, 473:80--109, 2017.

\bibitem{BST}
Thomas Br\"ustle, David Smith, and Hipolito Treffinger.
\newblock Wall and chamber structure for finite-dimensional algebras.
\newblock {\em Adv. Math.}, 354:106746, 31, 2019.

\bibitem{BSH}
Thomas Br\"ustle, David Smith, and Hipolito Treffinger.
\newblock Stability conditions and maximal {G}reen sequences in {A}belian
  categories.
\newblock {\em Rev. Un. Mat. Argentina}, 63(1):203--221, 2022.

\bibitem{BMRRT}
Aslak~Bakke Buan, Robert Marsh, Markus Reineke, Idun Reiten, and Gordana
  Todorov.
\newblock Tilting theory and cluster combinatorics.
\newblock {\em Adv. Math.}, 204(2):572--618, 2006.

\bibitem{BMR}
Aslak~Bakke Buan, Robert~J. Marsh, and Idun Reiten.
\newblock Cluster-tilted algebras.
\newblock {\em Trans. Amer. Math. Soc.}, 359(1):323--332, 2007.

\bibitem{BR87}
M.~C.~R. Butler and Claus~Michael Ringel.
\newblock Auslander-{R}eiten sequences with few middle terms and applications
  to string algebras.
\newblock {\em Comm. Algebra}, 15(1-2):145--179, 1987.

\bibitem{DKK}
Vladimir~I. Danilov, Alexander~V. Karzanov, and Gleb~A. Koshevoy.
\newblock Coherent fans in the space of flows in framed graphs.
\newblock In {\em 24th {I}nternational {C}onference on {F}ormal {P}ower
  {S}eries and {A}lgebraic {C}ombinatorics ({FPSAC} 2012)}, volume~AR of {\em
  Discrete Math. Theor. Comput. Sci. Proc.}, pages 481--490. Assoc. Discrete
  Math. Theor. Comput. Sci., Nancy, 2012.

\bibitem{DIRRT}
Laurent Demonet, Osamu Iyama, Nathan Reading, Idun Reiten, and Hugh Thomas.
\newblock Lattice theory of torsion classes: beyond {$\tau$}-tilting theory.
\newblock {\em Trans. Amer. Math. Soc. Ser. B}, 10:542--612, 2023.

\bibitem{EM}
Laura Escobar and Karola M\'esz\'aros.
\newblock Toric matrix {S}chubert varieties and their polytopes.
\newblock {\em Proc. Amer. Math. Soc.}, 144(12):5081--5096, 2016.

\bibitem{Y2}
Rafael~S. Gonz\'alez~D'Le\'on, Alejandro~H. Morales, Eva Philippe, Daniel
  Tamayo~Jim\'enez, and Martha Yip.
\newblock Realizing the {$s$}-permutahedron via flow polytopes.
\newblock {\em S\'em. Lothar. Combin.}, 91B:Art. 60, 12, 2024.

\bibitem{GHKK}
Mark Gross, Paul Hacking, Sean Keel, and Maxim Kontsevich.
\newblock Canonical bases for cluster algebras.
\newblock {\em J. Amer. Math. Soc.}, 31(2):497--608, 2018.

\bibitem{HR}
Dieter Happel and Claus~Michael Ringel.
\newblock Tilted algebras.
\newblock {\em Trans. Amer. Math. Soc.}, 274(2):399--443, 1982.

\bibitem{ITW}
Kiyoshi Igusa, Kent Orr, Gordana Todorov, and Jerzy Weyman.
\newblock Cluster complexes via semi-invariants.
\newblock {\em Compos. Math.}, 145(4):1001--1034, 2009.

\bibitem{LMD}
Ricky~I. Liu, Karola M\'esz\'aros, and Avery St.~Dizier.
\newblock Gelfand-{T}setlin polytopes: a story of flow and order polytopes.
\newblock {\em SIAM J. Discrete Math.}, 33(4):2394--2415, 2019.

\bibitem{MD}
Karola M\'esz\'aros and Avery St.~Dizier.
\newblock From generalized permutahedra to {G}rothendieck polynomials via flow
  polytopes.
\newblock {\em Algebr. Comb.}, 3(5):1197--1229, 2020.

\bibitem{PPP}
Yann Palu, Vincent Pilaud, and Pierre-Guy Plamondon.
\newblock Non-kissing complexes and tau-tilting for gentle algebras.
\newblock {\em Mem. Amer. Math. Soc.}, 274(1343):vii+110, 2021.

\bibitem{Plamondon}
Pierre-Guy Plamondon.
\newblock tau-tilting finite gentle algebras are representation-finite.
\newblock {\em Pacific Journal of Mathematics}, 302(2):709–716, November
  2019.

\bibitem{KPY}
Pierre-Guy Plamondon, Toshiya Yurikusa, and Bernhard Keller.
\newblock Tame algebras have dense g-vector fans.
\newblock {\em Int. Math. Res. Not. IMRN}, (4):2701--2747, 2023.

\bibitem{READING}
Nathan Reading.
\newblock Scattering fans.
\newblock {\em Int. Math. Res. Not. IMRN}, (23):9640--9673, 2020.

\end{thebibliography}

\end{document}